\theoremstyle{plain}
 \newtheorem{thm}{Theorem}[section]
 \newtheorem{prop}[thm]{Proposition}
 \newtheorem{lem}[thm]{Lemma}
 \newtheorem{cor}[thm]{Corollary}
\theoremstyle{definition}
\newtheorem{notation}{Notation}[section]
 \newtheorem{exm}{Example}[section]
 \newtheorem{dfn}{Definition}[section]
  \newtheorem{rem}{Remark}[section]
 \newtheorem{nota}{Notation}[section]
 \numberwithin{equation}{section}
\renewcommand{\leq}{\leqslant}
\renewcommand{\geq}{\geqslant}
\newcommand{\href}[1]{#1} 
\let\origdoublepage\cleardoublepage
\newcommand{\clearemptydoublepage}{%
  \clearpage{\pagestyle{empty}\origdoublepage}}
\let\cleardoublepage\clearemptydoublepage
\begin{document}


\pagestyle{empty}
\pagenumbering{roman}

\begin{titlepage}
        \begin{center}
        \vspace*{1.0cm}

        \Huge
        {\bf On the Enumerative Structures in Quantum Field Theory }

        \vspace*{1.0cm}

        \normalsize
        by \\

        \vspace*{1.0cm}

        \Large
        Ali Mahmoud \\

        \vspace*{3.0cm}

        \normalsize
        A thesis \\
        presented to the University of Waterloo \\ 
        in fulfillment of the \\
        thesis requirement for the degree of \\
        Doctor of Philosophy \\
        in \\
        Mathematics \\

        \vspace*{2.0cm}

        Waterloo, Ontario, Canada, 2020 \\

        \vspace*{1.0cm}

        \copyright\ Ali Mahmoud 2020 \\
        \end{center}
\end{titlepage}

\pagestyle{plain}
\setcounter{page}{2}

\cleardoublepage 

\begin{center}\textbf{Examining Committee Membership}\end{center}
  \noindent
The following served on the Examining Committee for this thesis. The decision of the Examining Committee is by majority vote.
  \bigskip
  
  \noindent
\begin{tabbing}
Internal-External Member: \=  \kill 
External Examiner: \>  Jo Ellis-Monaghan
 \\ 
\> Professor, Dept. of Mathematics and Statistics, \\\>Saint Michael's College  \\
\end{tabbing} 
  \bigskip
  
  \noindent
\begin{tabbing}
Internal-External Member: \=  \kill 
Supervisor(s): \> Karen Yeats\\
\> Professor, Dept. of Combinatorics and Optimization, \\\>University of Waterloo \\
\\
\> Achim Kempf \\
\> Professor, Dept. of Applied Mathematics,\\\> University of Waterloo  \\
\end{tabbing}
  \bigskip
  
  \noindent
  \begin{tabbing}
Internal-External Member: \=  \kill 
Internal Member: \> Jon Yard \\
\> Professor, Dept. of Combinatorics and Optimization, \\\>University of Waterloo \\
\end{tabbing}
  \bigskip
  
  \noindent
   \begin{tabbing}
Internal-External Member: \=  \kill 
Internal Member: \> David Wagner \\
\> Professor, Dept. of Combinatorics and Optimization, \\\>University of Waterloo \\
\end{tabbing}
\bigskip
  
  \noindent
\begin{tabbing}
Internal-External Member: \=  \kill 
Internal-External Member: \> Eduardo Martin-Martinez\\
\> Professor, Dept. of Applied Mathematics,\\\> University of Waterloo \\
\end{tabbing}
  \bigskip
  
  \noindent

\cleardoublepage

\begin{center}
    \Large Author's Declaration
\end{center}
  \noindent
I hereby declare that I am the sole author of this thesis. This is a true copy of the thesis, including any required final revisions, as accepted by my examiners.

  \bigskip
  
  \noindent
I understand that my thesis may be made electronically available to the public.

\cleardoublepage


\begin{center}\textbf{Abstract}\end{center}
This thesis addresses a number of enumerative problems that arise in the context of quantum field theory and in the process of renormalization. In particular, the enumeration of rooted connected chord diagrams is further studied and new applications in quenched QED and Yukawa theories are introduced. Chord diagrams appear in quantum field theory in the context of Dyson-Schwinger equations, where, according to recent results, they are used to express the solutions. In another direction, we study the action of point field diffeomorphisms on a free theory. We give a new proof of a vanishing phenomenon for tree-level amplitudes of the transformed theories.

A functional equation for $2$-connected chord diagrams is discovered, then is used to calculate the full asymptotic expansion of the number of $2$-connected chord diagrams by means of alien derivatives applied to factorially divergent power series. The calculation extends the older result by D. J. Kleitman on counting irreducible diagrams. Namely,  Kleitman's result calculates the first coefficient of the infinite asymptotic expansion derived here and is therefore an approximation of the result presented. In calculating the asymptotics this way  we are following the approach M. Borinsky used for solving the asymptotic counting problem of general connected chord diagrams. The numbers of $2$-connected chord diagrams and the sequence of coefficients of their asymptotic expansion amazingly also appeared, without being recognized, in more physical situations in the work of Broadhurst on 4-loop Dyson-Schwinger-Johnson anatomy, and among the renormalized quantities of quenched QED calculated by M. Borinsky. The underlying chord-diagrammatic structure of quenched QED and Yukawa theory is unveiled here. Realizing this combinatorial structure  makes  the asymptotic analysis of the Green functions of these theories easier, and no longer requires singularity analysis. As a subsidiary outcome, a combinatorial interpretation of sequence \href{https://oeis.org/A088221}{A088221} in terms of chord diagrams is obtained from analysing some of the expressions in the asymptotic expansion of connected chord diagrams in the work of M. Borinsky.

Lastly, a problem from another context is considered. The problem was to reprove the cancellation of interaction terms produced by applying a point field diffeomorphism to a free field theory. The original proof by D. Kreimer and K. Yeats was intricate in a way that did not provide much insight, and, therefore, a proof on the level of generating functions was desired. This is what we do here. We show that the series of  the tree-level amplitudes is exactly the compositional inverse of the diffeomorphism applied to the free theory. The relation to the combinatorial Legendre transform defined by D. Jackson, A. Kempf and A. Morales is outlined although still not settled. Simple combinatorial proofs are also given for some Bell polynomial identities. 
\cleardoublepage


\begin{center}\textbf{Acknowledgements}\end{center}

Thanks are all to God, for showing me the way. Thanks are to God, from whom I sought guidance to understand, learn, build up the bijections and solve the problems. Thanks are to God for giving me the strength to endure hardships and keep working. 

I would like to thank Karen Yeats, would like to sincerely thank you Karen for being so  caring and thoughtful for your students, you do this genuinely in a way that I shall always remember. I would like to thank you for introducing me to this interesting and interdisciplinary topic of which the benefit has been two-folded, namely learning some quantum field theory beside working on enumerative problems. Thanks for the weekly meetings, the group meetings and for the many useful remarks that came along them. Finally, thank you for your great help with my job applications.

I would like to thank Achim Kempf for his support and his ever friendly advice. Whether on campus or  while exercising pull-ups in the CIF, we had plenty of interesting discussions and funny moments.

I would like to thank Ian P. Goulden. Thanks for your continuous support and guidance. I remember how you welcomed my questions in the Symmetric functions course in Spring 2017, and before the comprehensive exams in Spring 2018. I will miss your ``Hello Ali" and your light sense of humour every time you passed in the corridor.

I would like to also give special thanks to David Wagner, from whom I have learnt enumerative combinatorics, and whose brilliant lecturing of CO630 in Winter 2017 was the main reason I chose this research direction.

A great portion of this thesis is a continuation of the work by M. Borinsky. Thank you Michi for your impressive research and for your interesting questions and suggestions. I hope we will be able to work together on similar problems again in the future.

Many thanks to Jo Ellis-Monaghan, Eduardo Martin-Martinez, David Wagner and Jon Yard for reading my thesis.

I would like to also thank Alfred Menezes, Richard Cleve, and (again) Jon Yard. The topics of cryptography and quantum information that I first learnt from you eventually helped me find my next job. Thank you Alfred for your special support. 

I would like to thank everyone in Karen's group and in the Combinatorics and Optimization department. In particular, thanks are to Nick Olson-Harris, Kazuhiro Nomoto, Lukas Nabergall and William Dugan for many useful and unuseful discussions! Thanks are also to the Dirk Kreimer group at Humboldt University in Berlin for their hospitality; special thanks to Paul-Hermann Balduf for insightful comments on the diffeomorphisms problem.

I was lucky to meet many Egyptian friends in Waterloo, with whom the earnest friendship and warmth of Egypt were restored. Thanks are to you Ahmed Wagdy, Ahmed Hamza, Ahmed Ali, Ayman Eltaliawy, Mahmoud Allam, Mortada, Mohamed Ibrahim, Mohamed Arab, and Abdallah Abdelaziz, among many others. 

Special thanks to my old friends in Egypt who always wished me the best: Thank you Mahmoud Esmat, Ahmed Magdy, Abdelaziz Mabrouk and Sayed Mohamed. Thanks are also to all my former students in Egypt. Thank you Mohamed Khaled and Bishoy Girgis.

I must also thank all my professors and instructors at Cairo University who have been always there to teach, help, and guide. Special thanks to professors Fatma Ismail, Nahed Sayed, Alaa-Eldin Hamza,  Soaad Badawy, Hany El Hosseiny, Layla Soeif,, Mohamed Atef Helal, Nefertiti Megahed, Mohamed Asaad, Tarek and Amr Sid Ahmed, Ahmed Ghaleb, Mohamed Adel Hosny, Hossam M. Hassan and Mustafa Ashry. 

Finally, I am totally and ever indebted to my father and mother, without whom I would never have climbed this way up. Thanks are to my sister Amena Mahmoud who has always devoted herself for caring for me and the whole family especially after the loss of our mother in 2018. Thanks are to my brother Mohamed Mahmoud who has always been our spearhead in the academic trail.  Thanks are also to all of my extended family for their constant remembrance and care.

The final thanks are to you Israa, my dear fianc\'ee and future wife in God's blessing. Thank you for supporting me everyday in the past five years, and for enduring being oceans apart for such long times, with your blessed tenderness and patience. May God bless our life together on earth and in heaven with happiness and soundness ever after.  $\sim\;\sim\;\sim$

\cleardoublepage




\renewcommand\contentsname{Table of Contents}
\tableofcontents
\cleardoublepage
\phantomsection    

\addcontentsline{toc}{chapter}{List of Figures}
\listoffigures
\cleardoublepage
\phantomsection		

\addcontentsline{toc}{chapter}{List of Tables}
\listoftables
\cleardoublepage
\phantomsection		


\cleardoublepage
\phantomsection		

\pagenumbering{arabic}


\chapter{Introduction}\setlength{\parindent}{1cm}
The goal of this thesis is to display a number of situations where enumerative combinatorics and algebraic methods are used to reveal more of a basic  structure in quantum field theory problems. This matches with the lines of thought followed for example in \cite{conneskreigeom} to describe the process of renormalization in quantum field theory in the pure terms of Hopf algebras; and in the robust combinatorial definition of the Lgendre transform in \cite{kjm2}; or in \cite{Karenmarkushihn} for expressing solutions of Dyson-Schwinger equations in terms of connected chord diagrams; among many other examples \cite{michi1,michiq, karenthesis}. The problems we address here can be divided into two parts: one part is focused on chord diagrams represented in Chapter \ref{chchords1} and Chapter \ref{chapterchords2}; whereas in the second part we study diffeomorphisms of quantum fields applied to free theories, this is Chapter \ref{difchapter}.  

However, beside the essential difficulty of proving, there is a big difficulty in presenting results that are of this nature. Namely, the difficulty lies in providing the reader with the necessary tools and concepts from physics that are used or talked about to formulate the problems combinatorially. The good news is that, here, we managed to compress an overview of quantum theory in a way that makes the thesis self-contained and gives the reader unfamiliar with quantum field theory a convenient feeling of comprehension. Besides, going through this physics trying to wrap them up was the only way and the only time I truly understood the physical content abode by which I have been working.

Thus, we start by an intense overview of quantum theory in Chapter \ref{chapterqft}. In this first chapter we start with the very basic Newtonian mechanics seen almost by every mathematician at some point. Then we move  through an elementary justification of the correction introduced by quantum mechanics, we then provide all the tools leading to the `postulate' (and not the derivation) of Schr\"{o}dinger equation. Then we will see why this falls short before interpreting more advanced physical phenomena, which leads us to the field point of view. A review of classical field theory is quickly made and then we become finally able to define the notions of quantum field theory used in the sequel of the thesis, including renormalization, Green functions and Dyson-Schwinger equations. We end the first chapter with the abstract algebra concepts of Hopf algebras, as we will use them to describe renormalization quantities in papers coming from the D. Kreimer approach of renormalization.

In Chapter \ref{chchords1} we finally make the jump into the combinatorial class of chord diagrams by showing why this structure is important in QFT, namely by relating to the work of N. Narie and K. Yeats in \cite{yu}  for solving Dyson-Schwinger equations in the context of Yukawa theory, and more generally in \cite{Karenmarkushihn}. The goal of this chapter is to study the expressions appearing in an asymptotic expansion of $C_n$, the number of connected chord diagrams on $n$ chords, obtained by M. Borinsky \cite{michi}. M. Borinsky in \cite{michi} applies \textit{alien} derivatives to factorially divergent power series (chord diagrams are an example of such generating series) to get asymptotic information. In the calculation for $C(x)$, the generating series for connected chord diagrams, the result is a rational function in $C(x)$ times an exponential function in $C(x)$. The power series expansion for the expression in this exponent was noticed to follow sequence \href{https://oeis.org/A088221}{A088221} from the OEIS, for which the best known information was its relation to indecomposable chord diagrams. Our result in this chapter is proving that in fact \href{https://oeis.org/A088221}{A088221} counts pairs of connected chord diagrams when allowing the empty diagram, through providing suitable bijections.

Chapter \ref{chapterchords2} then comes with more about chord diagrams. As M. Borinsky suggested to the author in one meeting, in order to derive asymptotic information about $\mathcal{C}_{\geq2}$, the class of $2$-connected chord diagrams, in an approach similar to the one used in \cite{michi} for chord diagrams, one must first derive a functional equation involving connected and $2$-connected chord diagrams. We show how to derive such an equation and then use it in deriving the desired asymptotic information about the number of $2$-connected chord diagrams. Besides, we also derive a decomposition for connectivity-$1$ chord diagrams (these are diagrams with cuts). Our result extends the older result by D. J. Kleitman on counting irreducible diagrams. Namely,  Kleitman's result calculates the first coefficient of the infinite asymptotic expansion we get here.

It turned out that this calculation is more is interesting, namely,
the number of $2$-connected chord diagrams and the sequence of coefficients of their asymptotic expansion amazingly also appeared, without being recognized, in  the physical context of the work of Broadhurst on 4-loop Dyson-Schwinger-Johnson anatomy \cite{broadhurst}, and among the renormalized quantities of quenched QED calculated by M. Borinsky in \cite{michiq}. 

We give a chord-diagrammatic interpretation of the renormalized quantities in quenched QED and the proper Green functions of Yukawa theory. Realizing this combinatorial structure  makes  the asymptotic analysis of the Green functions of these theories easier, and no longer requires singularity analysis as used in \cite{michiq}. 
In particular, unlike the case of quenched QED, it was intricate to get the relation between Yukawa theory 1PI graphs and connected chord diagrams.

Finally, in Chapter \ref{difchapter}, we consider a problem from another context. The problem was to reprove the cancellation phenomenon of interaction terms produced by applying a point field diffeomorphism to a free field theory. In \cite{karendiffeo}, D. Kreimer and K. Yeats suggested that a proof on the level of generating functions would give more insight, and that is what we will do in Chapter \ref{difchapter}. Namely, we show that the series of  the tree-level amplitudes is exactly the compositional inverse of the diffeomorphism applied to the free theory. This is also conjectured to be related to the combinatorial Legendre transform defined by D. Jackson, A. Kempf and A. Morales, although it is still not very clear how this works. Simple combinatorial proofs are also given for some Bell polynomial identities. 

Chapter \ref{chfurther} outlines a number of future questions and research directions that may be considered.
\setlength{\parindent}{0cm}
\chapter{Quantum Field Theory Brief}\label{chapterqft}

\section{Quantum Mechanics}

In this section we shall start our journey with a quick, yet intense, rehearsal of the required physical background. This introduction is not meant to give a full account of quantum theory, but the promise is that a reader unfamiliar with physics will gain a sense of fulfilled comprehension while reading through the rest of the thesis, and will not have to  search for the many scattered elements of quantum theory every now and then. The key for achieving this satisfaction will simply be through historical justification: we will see how to justify quantization of Newtonian dynamics, how to postulate (and not derive) Schr\"{o}dinger equation, and how the field point of view is necessary when quantum mechanics is combined with special relativity to give what is known as quantum field theory. The reader can refer to \cite{zee, qftgifted, Das} for an overview of quantum theory.

\subsection{Limits of the Classical Theory}
In the study of macroscopic dynamics it has long been believed that Newton's law $$\mathbf{F}=m\textbf{a}$$ is unquestionable, and it was therefore natural to try using it for describing the dynamics of atomic particles around the start of the 20th century, when measurement technologies were revolutionized. The Bragg diffraction experiment (1913) was one of the experiments that pushed towards quantization. The experiment considers the diffraction of X-rays over atoms aligned in planes in a crystal. It was found (experimentally) that the intensity of diffracted rays of wave length $\lambda$ attained its peaks at angles $\theta$ according to the relation now known as Bragg's law: \[n\lambda=2 d \sin \theta,\]
where $d$ is the planes spacing in the crystal, and $n$ is a positive integer. This relation may be seen from Figure \ref{figureBragg} below, where the two diffracted rays are going to interfere constructively when the distance (bold) between the two dots is a multiple of the wave length.
\begin{figure}[!htb]
        \center{\includegraphics[scale=0.5]{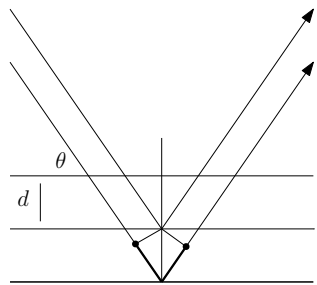}}
        \caption{\label{figureBragg} Bragg Diffraction}
\end{figure}
      
 The intriguing thing about this experiment is that it offered a plot of behaviour that has been reproduced by later experiments which involved substantial differences. In the Davisson-Germer experiments (1927) the X-rays (first granted as being of wave-nature) were replaced by electrons (thought of as pure particles) of certain energy $E$, yet, in the detected positions of scattered electrons the peaks followed a relation similar to that of Bragg. More precisely, the situation was found to fit Bragg's relation if for these electrons we assigned a virtual wavelength given by
      
 \[\lambda=\displaystyle\frac{h}{\sqrt{2m_e E}},\] where $m_e$ is the electron (empirical) mass, and $h=6.626 \times 10^{-34}$ J.s is Planck's constant introduced in the work of Max Planck and Albert Einstein on the photo-electric effect around 1905 (in their work $h$ related a photon's frequency $\nu$ to its energy as per $E=h\nu$). These findings also matched the suggestions of de Broglie about a wave nature of electrons (1925). It is also worth mentioning that the relation $\lambda=h/p$ ($p$ is momentum) played a key
role in establishing the circular-orbit model for atoms, as in the pioneering work of Niels Bohr, in effort to interpret the discrete emission spectra of heated atoms or ions. 
      
To summarize, these experiments, among others, established two things: (1) Wave-particle duality is inevitable; and (2) Newtonian mechanics is inadequate to phrase the new findings consistently. 
      
\subsection{Schr\"{o}dinger Equation}
     
The study of waves and oscillators was already very mature when the need to address wave-particle duality became insistent. In that way Schr\"{o}dinger equation was postulated through the available knowledge of waves and how oscillatory motion can take quantized wavelengths and frequencies. Functions of the form $A(x,t)=\exp[\pm2\pi i(\nu t-x/\lambda)]$ were considered as they were known to correspond to waves evolving in $x$ and $t$ under no external forces. $A(x,t)$ is called the \textit{amplitude} in wave terminology. These amplitudes also satisfy
      
\[\displaystyle\frac{\partial^2A}{\partial x^2}=-\left(\displaystyle\frac{2\pi}{\lambda}\right)^2 A, \quad \text{and}\quad  i\left(\displaystyle\frac{h}{2\pi}\right)\displaystyle\frac{\partial A}{\partial t}= EA. \]
     
Now, with de Broglie relation we will use $\lambda=h/p$, and if we assume no external forces we also have $E=p^2/2m$. The first equation then becomes
     
\[-\left(\displaystyle\frac{h}{2\pi}\right)^2\displaystyle\frac{\partial^2A}{\partial x^2}= EA, \]
     
and thus we get \[-\left(\displaystyle\frac{h}{2\pi}\right)^2\displaystyle\frac{\partial^2A}{\partial x^2}= i\left(\displaystyle\frac{h}{2\pi}\right)\displaystyle\frac{\partial A}{\partial t}. \tag{1}\label{primitive sch}\]
     
Before postulating Schr\"{o}dinger equation, we may think of some hidden analogy between 
\begin{equation}
         E \;\; \text{and} \;\; i\left(\displaystyle\frac{h}{2\pi}\right)\displaystyle\frac{\partial }{\partial t}\;, \;\; \text{and between}\;\;p^2 \;\; \text{and} \;\; -\left(\displaystyle\frac{h}{2\pi}\right)^2\displaystyle\frac{\partial^2}{\partial x^2}.\;\;
\end{equation}
     
The latter also says that we can identify the momentum operator as  \begin{equation}\label{momentum operator}
         p\;\;\; \equiv\;\;\; -i\left(\displaystyle\frac{h}{2\pi}\right)\displaystyle\frac{\partial }{\partial x}.
\end{equation}

This exchange will be important in the next postulates, and it should always be remembered that the equations are not exactly derived, they are more being guessed to comply with experimental findings. 
     
Every measurable physical quantity in classical mechanics can be expressed in terms of positions $\{q_i\}$ and momenta $\{p_i\}$ of the system. To get into the quantized picture we just rewrite the expression by replacing the $q_i$'s and $p_i$'s with the corresponding differential operators. Thus, every measurable quantity is expressed as an operator in quantum mechanics, and the measurements yielded experimentally are eigenvalues of the corresponding operator. An operator of special interest is the one that corresponds to the total energy of the system, called the \textit{Hamiltonian} $H$, its eigenstates (eigenfunctions) are called the \textit{wave functions}.
     
The \textbf{Schr\"{o}dinger equation} can now be postulated as \[H|\psi\rangle=i\hbar\frac{\partial}{\partial t} |\psi\rangle, \label{shrodinger}\tag{2}\]
where $|\psi\rangle$ is called a \textit{quantum state} (a state is a vector in a Hilbert space of \textit{states}, in which the inner product is denoted using the bra-ket notation $\langle\;|\;\rangle$).
In many cases the Hamiltonian is time independent, in which case one obtains the time-independent Schr\"{o}dinger equation: 
\[|\psi(q_i,t)\rangle=e^{-iHt/\hbar}|\psi(q_i,0)\rangle\label{schrodingerr}\tag{3}.\]
The value of the inner product $\langle \psi|\psi\rangle$ gives the probability density function for the positioning to be at $\{q_j\}$ at time $t$.

\section{Necessity of the Field Picture}
     
     In light of the work by Dirac, the attempt to incorporate Einstein's principles of special relativity with  Schr\"{o}dinger equation is impossible. It turns out that the Schr\"{o}dinger equation is strongly functional for particles moving with velocities much less than the speed of light $c$. However, problems arise for relativistic particles due to the uneven treatment of time and space in Schr\"{o}dinger equation. The Klein-Gordon equation was the first relativistic version of Schr\"{o}dinger, however, it was only consistent with spinless particles \cite{peskin, Das}. 
     
     Dirac derived his famous equation to incorporate special relativity in the theory, he realized  `new' wave functions as  four-component objects, which  leads to defining  \textit{spin} theoretically. We are not going to do the math of Dirac's equation, but we shall lightly talk about its consequences for the sake of justification. The reader interested in the whole story can refer to \cite{zee, particle}. 
     
     One aspect of Dirac's equation is that it allowed the existence of `particles' with negative-energy. Dirac accepted this new feature as a telling and not a misleading addition. Dirac gave an elementary picture that negative-energy electrons exist but are unobservable. The only conceivable way they would become observable is an interaction in the form of a real electron collapsing in energy level to become a negative-energy electron. To prevent this Dirac suggested that this unobservable `sea' of negative-energy electrons is already full and no electron takes energy in the gap between $-m_0c^2$ and $m_0c^2$ where $m_0$ is the electron mass.
     
     Elaborating these ideas and the problems that arise from them leads to defining the \textit{positron}, the \textit{antiparticle} of the electron. The existence of positrons was experimentally proven in 1931 by Carl Anderson. \textit{Anti-protons} were also discovered experimentally in the 1950's. The four components of the Dirac wave function were then well understood as representing the two spin components of the electron and the positron.
     
     The idea that an electron and a positron can annihilate each other and produce photons with positive energy (and vice versa) made it more clear that the corpuscular picture of electrons is no longer adequate.
     
     Just as photons were realized as the quanta of the electromagnetic field, an electron must be taken as a manifestation of an `electron field' defined over the whole space-time, and a proton is a manifestation of a `proton field'. 



\section{Elements of Classical Field Theory}

To overview field theory we will follow the lectures given at the summer school \textit{Geometric and Topological Methods for Quantum Field Theory}, in Villa de Leyva (Colombia). July 2-20, 2007. The lectures were based on a graduate course held at Universit\'e Lyon 1 in spring 2006, by Alessandra Frabetti and Denis Perrot \cite{aless}. For a general reference of classical and quantum field theory see \cite{qftgifted, itz, peskin, zee}.

\subsection{Classical Fields}
Here we review classical field theory in order to understand the nature of the questions involved.
The spacetime coordinates are denoted by $x=(x^\mu)$ where $\mu=0,1,\ldots, D-1$. For the usual Minkowski space $D=4$, and the space is endowed with the  metric signature $g=(1,-1,-1,-1)$. The \textit{Wick rotation} transformation allows us to work in the Euclidean space. In a flat space, as we do consider here, a \textit{field} is a vector-valued function. 

\begin{dfn}
A \textit{classical field} $\phi$ will be a $C^\infty$ real function $\phi:\mathbb{R}^D\longrightarrow \mathbb{R}$ with the extra property that all of its derivatives $\partial^n_\mu \phi$ decay rapidly to zero as $|x|\longrightarrow \infty$. An \textit{observable} of the system described by $\phi$ is a real functional in $\phi$, which expresses an observable quantity of the system. 
\end{dfn} 

\begin{dfn}
A field $\phi:\mathbb{R}^D\longrightarrow \mathbb{C}$ that attains complex values is called a \textit{wave function}. In this case a measurement value $|\phi(x)|^2$ describes the \textit{probability amplitude} of finding the particle at position $x$.
\end{dfn}

\subsection{Euler-Lagrange Equation}

In the usual treatment of classical mechanics, the framework of Lagrangian mechanics was found to be the most suitable. We will assume familiarity with basic Lagrangian mechanics, a thorough treatment can be found in any intermediate level mechanics textbook. What we will care for is that there is a \textit{Lagrangian density} $L:\mathbb{R}^D\longrightarrow\mathbb{R}$ associated to any given system. If the system is described by a field $\phi$, then The Lagrangian density at $x$ shall generally depend on $\phi(x)$ as well as the gradient $\partial\phi(x)$. The \textit{action} of the field is then naturally defined to be the functional
\[\phi\mapsto S[\phi]=\int_{\mathbb{R}^D}L(x,\phi(x),\partial\phi(x)) \;d^Dx.\]

By \textit{Hamilton's principle of least action} and the usual calculus of variation discussion \cite{qftgifted} one gets the well-known \textit{Euler-Lagrange equation}

\[\displaystyle\frac{\partial L}{\partial\phi}-\sum_\mu \partial_\mu\left(\displaystyle\frac{\partial L}{\partial(\partial_\mu\phi)}\right)=0.\]

To find the classical field $\phi$ is to solve this equation. The equation can be non-homogeneous and/or non-linear. The non-homogeneous terms appear if the system is not isolated (see below), and the non-linear terms appear in case of interactions. For example, consider the field with Lagrangian density 

\[L(x,\phi, \partial\phi)=\frac{1}{2}(\partial_\mu\;\phi\partial^\mu\phi+m^2\phi^2)-J\phi-\frac{\lambda}{3!}\phi^3-\frac{\mu}{4!}\phi^4.\]

Working out the Euler-Lagrange equation gives

\[(-\Delta+m^2)\phi(x)=J(x)+\frac{\lambda}{2}\phi^2-\frac{\mu}{3!}\phi^3,\]
where $\Delta\phi(x)=\sum_\mu \partial_\mu(\partial_\mu\phi(x))$ is the \textit{Klein-Gordon operator}, the last equation is also known as the \textit{Klein-Gordon equation}. $m$ is the mass, and $J(x)$ is a field and is usually referred to as the $source$ as we will discuss again shortly.

A general Lagrangian for a relativistic particle can be of the form

\begin{equation}\label{use}
L(x,\phi, \partial\phi)=\frac{1}{2}\phi^\dagger A\phi-J\phi-\frac{\lambda}{3!}\phi^3-\frac{\mu}{4!}\phi^4,\end{equation}

where $A$ is a differential operator. The \textit{free Lagrangian} is  $L_\text{free}:=\frac{1}{2}\phi^\dagger A\phi-J\phi$. The source $J(x)$ is  external to the system, hence the system is said to be \textit{isolated (or in vacuum)} if $J=0$. The parameters $\lambda, \mu$ govern the strength of the system's self-interactions (for example, sometimes they are the electric charge), therefore, they are called the \textit{coupling constants}. The \textit{interacting Lagrangian} is defined to be the sum of the interaction terms, namely $L_\text{int}:=-\frac{\lambda}{3!}\phi^3-\frac{\mu}{4!}\phi^4$.

\subsection{Free and Interacting Fields}

1) A field describing a system with a free Lagrangian will be called a \textit{free field} and will be denoted by $\phi_0$. In that case $L=\frac{1}{2}\phi^\dagger A\phi-J\phi $, and whence the Euler-Lagrange equation is \[A\phi_0=J(x).\]

Then $\phi_0$ is found to be the convolution \[\phi_0(x)=\int G_0(x-y) \;J(y) \;d^Dy,\]
where $G_0(x)$ is called the \textit{Green function} of operator $A$, this means it is the distribution satisfying $AG_0=\delta(x)$, where $\delta(x)$ is the Kronecker delta function. The interpretation is that from any point $y$ where the source is non-vanishing, i.e. $J(y)\neq0$, the source affects $\phi$ at position $x$ according to the distribution $G_0(x-y)$. $G_0(x-y)$ is then considered as the \textit{field propagator}.

As an example, for $A$ taken to be the Klein-Gordon operator $-\Delta+m^2$, $G_0$ is found to be the Fourier transform 
\begin{equation}\label{kleinnn}
G_0(x-y)=\int_{\mathbb{R}^D} \displaystyle\frac{e^{-i\;p.(x-y)}}{(2\pi)^D(p^2+m^2)}\; \; d^Dp.    
\end{equation}

2) A field describing a system with an interacting Lagrangian as in equation (\ref{use}) is called a \textit{self-interacting field}, and will have the Euler-Lagrange equation as 
\[A\phi(x)=J(x)+\frac{\lambda}{2}\phi^2-\frac{\mu}{3!}\phi^3.\]

This non-linear equation however generally are hard to solve (if a solution exists). An approach to it when the coupling constants are small enough is to consider the RHS as a perturbation of the free analogue. In that case we get the recursive solution 

\[\phi(x)=\int_{\mathbb{R}^D}G_0(x-y)\; \left[J(y)+\frac{\lambda}{2}\phi(y)^2-\frac{\mu}{3!}\phi(y)^3\right]\;d^Dy,\]

which is then solved as a series in $\lambda$ and $\mu$. As an example we assume for the sake of simplicity that $\mu=0$, in that case we have the recursive equation
\begin{equation}\label{to}
    \phi(x)=\int_{\mathbb{R}^D}G_0(x-y)\; \left[J(y)+\frac{\lambda}{2}\phi(y)^2\right]\; d^Dy.
    \end{equation}

Repeated substitution of $\phi$ into the integral gives

\begin{align}\label{to1}
    \phi(x)&=\int_{\mathbb{R}^D}G_0(x-y)\; J(y)\nonumber\\
           &+\frac{\lambda}{2}\int_{\mathbb{R}^D}G_0(x-y)G_0(y-z)G_0(y-u)\; J(z)\;J(u) \;d^Dy\;d^Dz\;d^Du\nonumber\\
           &+\frac{2\lambda^2}{4}\int_{\mathbb{R}^D}G_0(x-y)G_0(y-z)G_0(y-u)G_0(z-v)G_0(z-w)\;\times \nonumber\\
           &\qquad \qquad \times\; J(z)J(u)J(v)J(w)\;\;d^Dy\;d^Dz\;d^Du\;d^Dv\;d^Dw+ \mathcal{O}(\lambda^3).
\end{align}

Thus the moral is that a classical field with interactions whose Lagrangian density is something like 
\[L(x,\phi, \partial\phi)=\frac{1}{2}\phi^\dagger A\phi-J\phi-\frac{\lambda}{3!}\phi^3-\frac{\mu}{4!}\phi^4 \]

can be perturbatively described as a power series 
\[\phi(x)=\sum_{m,n=0}^\infty \lambda^n \mu^m \phi_{n,m}(x),\]

where each coefficient $\phi_{n,m}(x)$ is a finite sum of integrals that only involves the field propagator $G_0$ and the source $J$. We shall see later how to encrypt these integrals as Feynman graphs.

\section{Elements of Quantum Field Theory}

Here the observables of a quantum systems will be represented as self-adjoint operators on a Hilbert space representing the \textit{state-space} in which a vector is referred to as a \textit{(quantum) state}, the quantum system can be found in one such state. The eigenvalues of an observable represent the possible values the observable can have (note that they are real numbers since the operator is self-adjoint). The expectation value $\langle\psi|F|\psi\rangle$ is interpreted as the probability that the measurement of the observable $F$ is the value for state $\psi$. In this context, we will have \textit{field operators} instead of real-valued functions. 

The standard approach for quantum fields is to make a \textit{Wick rotation} for the time $t$, by $t\mapsto -it$. This transforms the Minkowski spacetime into a usual Euclidean space. A quantum field is then regarded as classical field  (wave function) that fluctuates around its expectation value (a statistical field). This procedure is referred to as the \textit{path integral quantization}.

\subsection{Path Integrals Formulation}

Let $\phi$ be a quantum field. The first expectation value that we will be interested in is $\langle\phi(x)\rangle$, the mean value for $\phi$ at position $x$, or generally $\langle\phi(x_1)\cdots\phi(x_m)\rangle$, the Green function that represents the probability that the field $\phi$ takes the path $x_mx_{m-1}\cdots x_1$. The drawback now is that a quantum field is not necessarily satisfying Hamilton's principle of least action, but can only be interpreted as a perturbation around the classical solution of the Euler-Lagrange equation. In Minkowski space, the probability of observing the quantum field at value $\phi$ is proportional to $\exp\big(i\frac{S[\phi]}{\hbar}\big)$, where $\hbar=h/2\pi$ (the reduced Planck's constant). Translated in the Euclidean space, the probability is proportional to $\exp\big(-\frac{S[\phi]}{\hbar}\big)$. With the limit $\hbar\rightarrow 0$ we verify the classical limit as we are to get a maximal probability to observe the field $\phi$ at a stationary point of the action. 

Thus, as a probability, the Green function is the path integral given by

\[\langle\phi(x_1)\cdots\phi(x_m)\rangle=\displaystyle\frac{\int \phi(x_1)\cdots\phi(x_m) \exp\big(-\frac{S[\phi]}{\hbar}\big)|_{J=0}\; d\phi}{\int  \exp\big(-\frac{S[\phi]}{\hbar}\big)|_{J=0}\; d\phi}.\]

This however is not well-defined, we can't exactly tell what is the precise meaning of integrating over the measure $d\phi$. It is nevertheless a soundness certificate that we can correctly retract to the classical values by taking the limit $\hbar\rightarrow 0$.

\subsection{The Dyson-Schwinger Equation and Connected 
Green Functions}
We now define what is called a \textit{partition function}, which will serve as a generating function for Green functions. As a function in $J$, the partition function is defined as 

\[Z[J]=\int e^{-\frac{S[\phi]}{\hbar}}\; d\phi,\] with the normalization constraint that $Z[J]|_{J=0}=1$. In these terms we get 

\begin{equation}\label{partitionfun}
\langle\phi(x_1)\cdots\phi(x_m)\rangle=\left(\displaystyle\frac{\hbar^m}{Z[J]}\right)\; \left.\displaystyle\frac{\delta^mZ[J]}{\delta J(x_1)\cdots\delta J(x_m)}\right|_{J=0},\end{equation}

where $\delta/\delta J$ is a functional derivative. Working out the calculations using the previous equations and definitions we get 

\[\displaystyle\frac{\delta S[\hbar\delta/\delta J]}{\delta \phi(x)}\;Z[J]=0.\]
This can be seen as a generalization of the classical Euler-Lagrange equation.

As is conventional, we define the \textit{free energy} to be $W[J]=\hbar \log\;Z[J]$ (i.e. $Z[J]=e^{W[J]/\hbar}),$ with the constraint that $W[J]|_{J=0}=0$. Introducing this into equation (\ref{partitionfun}) gives

\begin{align*}
    \langle\phi(x)\rangle&= \left.\displaystyle\frac{\delta W[J]}{\delta J(x)}\right|_{J=0},\\
    \langle\phi(x_1)\phi(x_2)\rangle&=\langle\phi(x_1)\rangle\langle\phi(x_2)\rangle+\hbar \left.\displaystyle\frac{\delta^2W[J]}{\delta J(x_1)\delta J(x_2)}\right|_{J=0},\\
    &\;\;\vdots
\end{align*}

In this we see the Green functions have been expressed recursively with the addition of terms involving the derivatives of the free energy, namely they involve the expressions

\[G(x_1,\ldots,x_m)=\left.\displaystyle\frac{\delta^mW[J]}{\delta J(x_1)\cdots\delta J(x_m)}\right|_{J=0},\]

which are referred to in the literature as the \textit{connected Green functions}. We will see later that the name connected relates to graph theoretic connectivity for Feynman diagrams.

\begin{exm}
Consider an arbitrary quantum field with action
\[S[\phi]=\frac{1}{2}\phi^\dagger A\phi-J\phi-\frac{\lambda}{3!}\int \phi(x)^3 d^Dx,\] and set $G_0=A^{-1}$. The \textit{Dyson-Schwinger equation} for the 1-point Green function in $J$ becomes

\begin{equation}
\langle\phi(x)\rangle_J=\displaystyle\frac{\delta W[J]}{\delta J(x)}=
\int G_0(x-u)\left[J(u)+\frac{\lambda}{2}\bigg[\left(\displaystyle\frac{\delta W[J]}{\delta J(u)}\right)^2+\hbar\displaystyle\frac{\delta^2 W[J]}{\delta J(u)^2}\bigg]\right]d^Du.\label{dys1}\end{equation}

Taking the functional derivative $\delta/\delta J(y)$ and evaluating at $J=0$, we get the Dyson-Schwinger equation for $2$-point connected Green function:
\begin{equation}\label{2point}
G(x,y)=G_0(x-y)+\frac{\lambda}{2}\int G_0(x-u)[2G(u) G(u,y)+\hbar G(u,u,y)] \;d^Du.    
\end{equation}

This can be repeated to get the $n$-point connected Green function in general. Also notice that this can be used to provide a perturbative expansion for the mean value $\langle\phi(x)\rangle_J$ of $\phi$ as a power series in $\lambda$ and involving $J$.

\end{exm}

Thus, we have seen that if $\phi$ is a quantum field with a typical Lagrangian density of \[L(\phi)=\frac{1}{2}\phi^\dagger A\phi-J\phi-\frac{\lambda}{3!} \phi(x)^3, \] then the $n$-point connected Green function can be represented as 
\[G(x_1,\ldots,x_m)=\sum_{n=0}^\infty \lambda^n G_n(x_1,\ldots,x_m), \]where $G_n(x_1,\ldots,x_m)$ are the coefficients obtained as in the example above.

In the next section we will see how to represent the the $G_n$'s diagrammatically by means of the Feynman diagrams. For the sake of clarity we shall split the classical and quantum cases to highlight the comparison.

\subsection{Diagrammatic Representation and Feynman Graphs}
\subsubsection{Classical Fields and Tree Graphs:}

Imagine that we wish to represent the equations we got so far diagrammatically. Appealing to Feynman's notation we set
\begin{align*}
  \text{the field}\;\;  \phi(x)\;&\leftrightarrow\hspace{0.1cm} \raisebox{-0.3cm}{\includegraphics[scale=0.8]{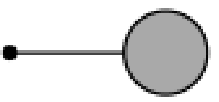}} \\
    &\\
\text{the source}\;\; J(y)\;&\leftrightarrow\;\raisebox{0cm}{\includegraphics[scale=0.8]{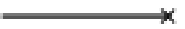}}\\
  &\\
 \text{the propagator}\;\; G_0(x-y)\;&\leftrightarrow\;\raisebox{0cm}{\includegraphics[scale=0.8]{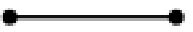}}
\end{align*}

The \textit{amplitude} of a Feynman diagram will be its analytical value.

The Euler-Lagrange equation (eq. \ref{to})
\[\phi(x)=\int_{\mathbb{R}^D}G_0(x-y)\; \left[J(y)+\frac{\lambda}{2}\phi(y)^2\right]\; d^Dy\] can now be represented as 

\begin{equation}\label{eulerdiag}
    \raisebox{-0.2cm}{\includegraphics[scale=0.7]{Figures/field.eps}}\;\;=\;\;\raisebox{0.02cm}{\includegraphics[scale
    =0.7]{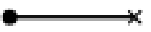}}\;\;+\;\;\frac{\lambda}{2}\;\;\raisebox{-0.9cm}{\includegraphics[scale
    =0.7]{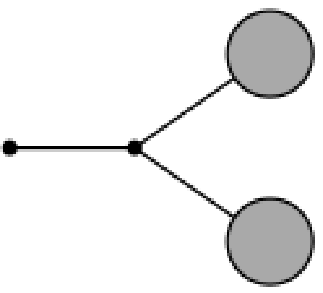}}\;\;,
\end{equation}

notice that the filled circle is standing for the recursive nature of the field. Now, what we did next was to substitute $\phi$ into itself to obtain the perturbative expansion, diagrammatically this will stand for forming trees with a double branching at each vertex (note that there will be no grey disks):

\begin{equation}
  \raisebox{-0.23cm}{\includegraphics[scale=0.7]{Figures/field.eps}}\;=
  \;\includegraphics[scale
    =0.7]{Figures/dottedsource.eps}\;+\;\frac{\lambda}{2}\; \raisebox{-0.42cm}{\includegraphics[scale=0.7]{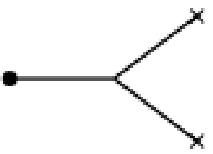}}\;+\;\frac{\lambda^2}{2}\;\raisebox{-0.42cm}{\includegraphics[scale
    =0.7]{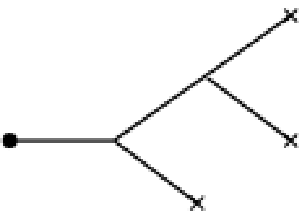}}\;+\frac{\lambda^3}{8}\;
    \raisebox{-0.85cm}{\includegraphics[scale
    =0.7]{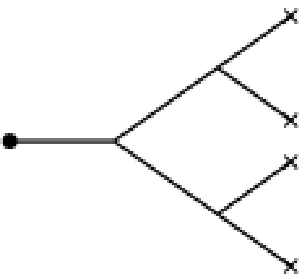}}\;+\;\cdots
\end{equation}

The branchings are double due to the term $\phi(y)^2$. Notice that coefficient of each tree $T$ in the expansion is $\lambda^{v(T)}/ \text{Sym}(T)$, where $v(T)$ is meant to be the number of internal vertices in the tree $T$, and $\text{Sym}(T)$ is the size of the automorphism group of $T$ (graph-theoretic isomorphisms). Also note that if the interacting term in the Lagrangian density was cubic, the trees would have had $4$-valent internal vertices instead of $3$-valent ones in our case. 

The comparison between the analytic solution and the diagrammatic expansion shows that each tree corresponds to one of the integrals in equation (\ref{to1}). The process of moving from the integrals (the amplitudes) to the diagrams is governed by the so-called \textit{Feynman rules}.

\textbf{Feynman Rules:} Assume that the field is analytically solved to be $\phi(x)=\sum_n \lambda^n \phi_n(x)$, then the coefficient $\phi_n(x)$ is represented by the sum of all tree graphs with $n$ internal vertices together with their symmetry factors. The trees are obtained as follows:

\begin{enumerate}
    \item Draw all rooted trees in which the degree of any internal vertex is $3$ (or generally the power of the field in the interacting term of the Lagrangian). The leaves together with their edges are referred to as the \textit{external legs}.
    
    \item Label the root as $x$, and label the vertices (internal and external ones)  by other free variables $y,z,u,v,\ldots$ as we did in the analytic solution.
    
    \item If vertices $\alpha$ and $\beta$ are adjacent, give weight $G_0(\alpha-\beta)$ for the incident edge.
    
    \item Assign weight $\lambda$ to every internal vertex.
    
    \item Assign weight $J(\alpha) $ to a leaf (external leg) $\alpha$.
    
    \item The amplitude $\phi_T(x)$ of a tree $T$ is then obtained by multiplying all the contributing weights and integrating over the free variables involved in the tree.
    
    \item Finally multiply with a symmetry factor $\displaystyle\frac{1}{\text{Sym}(T)}$.
    
\end{enumerate}

It is worth mentioning that $\phi_n(x)=\underset{v(T)=n}{\sum} \displaystyle\frac{1}{\text{Sym}(T)}\; \phi_T(x)$, where $\phi_T(x)$ is the amplitude (the corresponding integral) of the tree $T$.

\subsubsection{Quantum Fields and Loop Corrections:}\label{ddd}

As before, let us first the new set of Feynman notations: 
\begin{align*}
  \text{$m$-point Green function}\;\;  \langle\phi(x_1)\cdots\phi(x_m)\rangle\;&\leftrightarrow\; \raisebox{-0.61cm}{\includegraphics[scale=0.7]{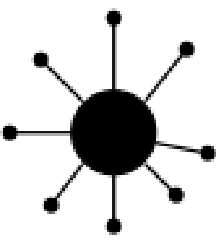}} \\
    &\\
    \text{$m$-point connected Green function}\;\; G(x_1,\ldots,x_m)\;&\leftrightarrow\;\raisebox{-0.61cm}{\includegraphics[scale=0.7]{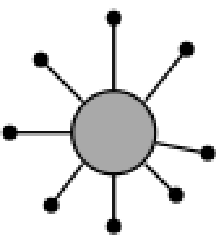}}\\
\text{the source}\;\; J(y)\;&\leftrightarrow\;\raisebox{0cm}{\includegraphics[scale=0.7]{Figures/source.eps}}\\
  &\\
 \text{the propagator}\;\; G_0(x-y)\;&\leftrightarrow\;\raisebox{0cm}{\includegraphics[scale=0.7]{Figures/G.eps}}
\end{align*}

\textbf{Examples:}\\

1) The Dyson-Schwinger equation (\ref{dys1}) for the $1$-point connected Green function  is represented as

\begin{equation}
    \raisebox{-0.2cm}{\includegraphics[scale=0.7]{Figures/field.eps}}\;=\;\includegraphics[scale
    =0.7]{Figures/dottedsource.eps}\;+\;\frac{\lambda}{2}\;\raisebox{-0.91cm}{\includegraphics[scale
    =0.7]{Figures/branch1.eps}}\;+\;\hbar\;\frac{\lambda}{2}\;\raisebox{-0.365cm}{\includegraphics[scale
    =0.7]{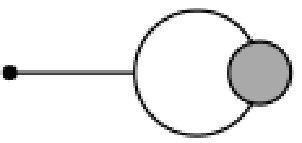}}.
\end{equation}

It is beautiful here to notice that taking the limit $\hbar\rightarrow 0$ gives the classical Euler-Lagrange equation of $\phi$ (equation \ref{eulerdiag}). The loop term corresponds to the correction term in equation (\ref{dys1}) containing the second derivative of the free energy $W[J]$. In case the field is isolated, we only get rid the first term which represents the source.\\

2) The Dyson-Schwinger equation for the $2$-point connected Green function is obtained similarly and we can expect it will contain diagrams that are reduced to two external legs if all edges are contracted (we will see this again later on):

\begin{equation}
    \raisebox{-0.24cm}{\includegraphics[scale=0.7]{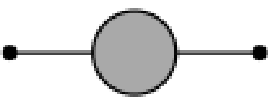}}\;=
    \;\includegraphics[scale=0.7]{Figures/G.eps}\;+
    \;\lambda\;\raisebox{-0.85cm}{\includegraphics[scale=0.7]{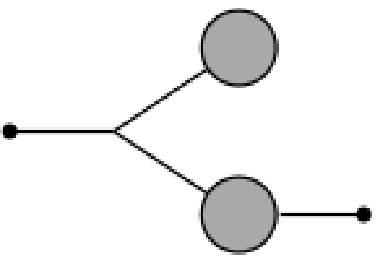}}\;+
    \;\hbar\;\frac{\lambda}{2}\;\raisebox{-0.36cm}{\includegraphics[scale
    =0.7]{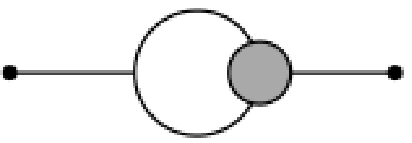}}.
\end{equation}\\

3) The Dyson-Schwinger equation for the $3$-point connected Green function is diagrammatically equivalent to 

\begin{equation}
    \raisebox{-0.5cm}{\includegraphics[scale=0.7]{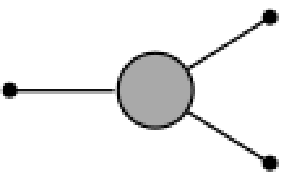}}\;=
    \;\lambda\;\;\raisebox{-1.1cm}{\includegraphics[scale=0.7]{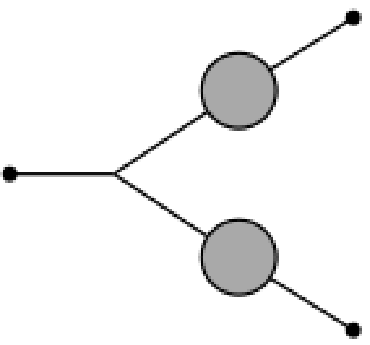}}\;+
    \;\lambda\;\raisebox{-1.1cm}{\includegraphics[scale=0.7]{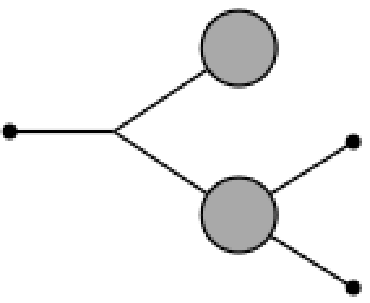}}\;+
    \;\hbar\;\frac{\lambda}{2}\;\raisebox{-0.57cm}{\includegraphics[scale
    =0.7]{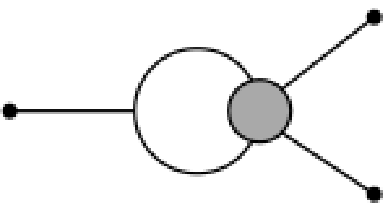}}.
\end{equation}

Now, as we did in the classical case, let us obtain the perturbative solution of the recursive Dyson-Schwinger equation for the $1$-point connected Green function. We expect the solution to include the same trees from the classical setting in addition to the loop correction terms with $\hbar$ factors:

\begin{align*}
  \raisebox{-0.2cm}{\includegraphics[scale=0.7]{Figures/field.eps}}\;
  =&
  \;\includegraphics[scale=0.7]{Figures/dottedsource.eps}\;+\;\frac{\lambda}{2}\;\raisebox{-0.4cm}{\includegraphics[scale=0.7]{Figures/branchcross1.eps}}\;+\;\hbar\frac{\lambda^2}{2}\;
  \raisebox{-0.2cm}{\includegraphics[scale=0.45]{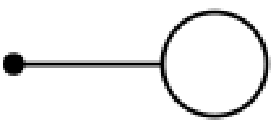}}\;+
    \;
    \frac{\lambda^2}{2}\;
    \raisebox{-0.38cm}{\includegraphics[scale
    =0.7]{Figures/branchcross2.eps}}\;+\;\hbar\;\frac{\lambda^2}{2}\;\raisebox{-0.38cm}{\includegraphics[scale
    =0.7]{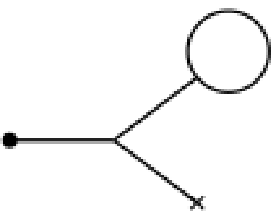}}\;+\\
    &+\;
    \hbar\;\frac{\lambda^2}{2}\;
    \raisebox{-0.2cm}{\includegraphics[scale
    =0.7]{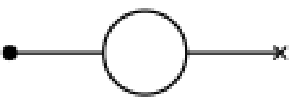}}\;+\;\frac{\lambda^3}{8}\;\raisebox{-0.8cm}{\includegraphics[scale
    =0.7]{Figures/branchcross3.eps}}\;+\;\frac{\lambda^3}{2}\;\raisebox{-0.29cm}{\includegraphics[scale
    =0.45]{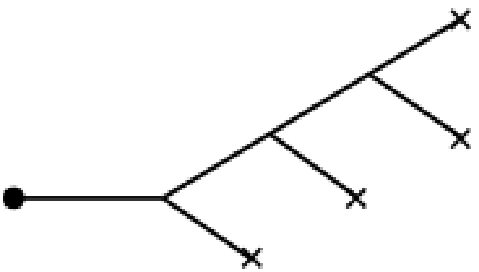}}\;+\\
    &+\;
    \hbar\;\frac{\lambda^3}{2}\;
    \raisebox{-0.78cm}{\includegraphics[scale
    =0.43]{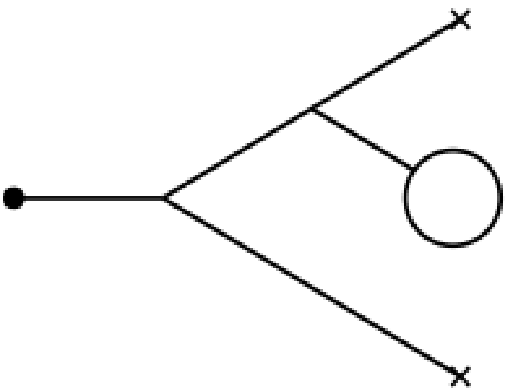}}\;+\;\hbar\;\frac{\lambda^3}{4}\;\raisebox{-0.6cm}{\includegraphics[scale
    =0.43]{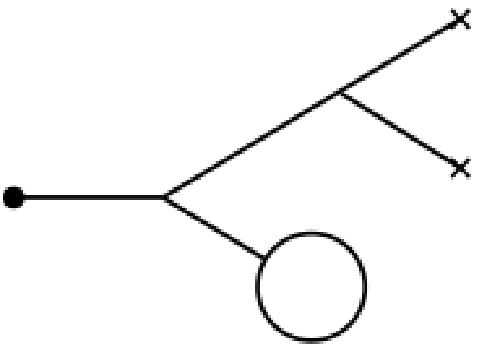}}\;+\;\hbar\;\frac{\lambda^3}{4}\;\raisebox{-0.41cm}{\includegraphics[scale
    =0.45]{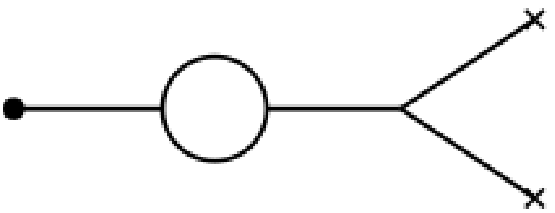}}\;+\\
    &+\;
    \hbar\;\frac{\lambda^3}{2}\;
    \raisebox{-0.78cm}{\includegraphics[scale
    =0.43]{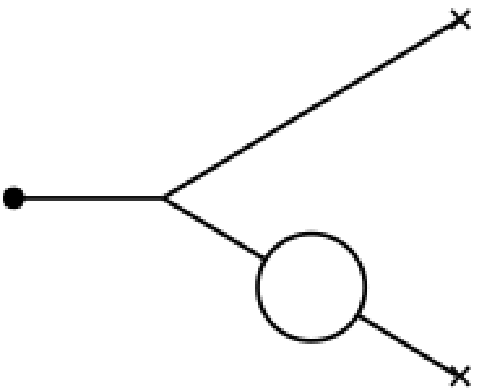}}\;+\;\hbar^2\;\frac{\lambda^3}{4}\;\raisebox{-0.2cm}{\includegraphics[scale
    =0.45]{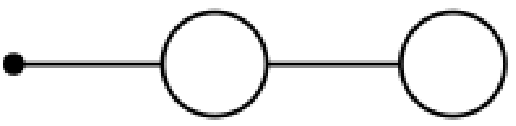}}\;+\;\hbar\;\frac{\lambda^3}{4}\;\raisebox{-0.78cm}{\includegraphics[scale
    =0.7]{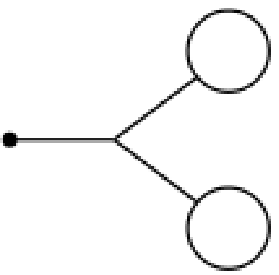}}\;+\\
    &+\;
    \hbar\;\frac{\lambda^3}{2}\;
    \raisebox{-0.75cm}{\includegraphics[scale
    =0.45]{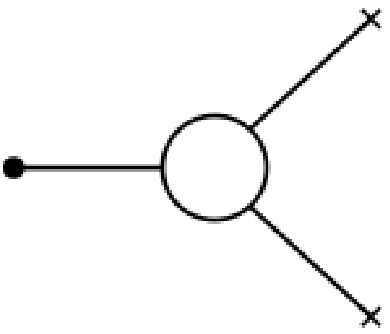}}\;+\;\hbar^2\;\frac{\lambda^3}{4}\;\raisebox{-0.2cm}{\includegraphics[scale
    =0.7]{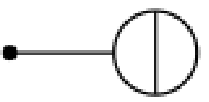}}\;+ \;\mathcal{O}(\lambda^4).
\end{align*}

One again observes that this boils down to the classical counterpart by taking $\hbar\rightarrow 0$. Each graph $\Gamma$ has a factor of $\lambda^{v(\Gamma)}$, where, as in the tree case, $v(\Gamma)$ is the number of internal vertices in $\Gamma$. Also notice that each loop contributes $2$ to $\text{Sym}(\Gamma)$, where $\text{Sym}(\Gamma)=|\text{Aut}(\Gamma)|$, both symbols will be used interchangeably. The power of $\hbar$ expresses the loop number (the number of independent cycles in the graph) of $\Gamma$. The $2$-point connected Green function can be drawn similarly. 

\textbf{Feynman Rules:} Assume that $\phi$ is a quantum field with Lagrangian density given by \[L(\phi)=\frac{1}{2}\phi^\dagger A\phi-J\phi-\frac{\lambda}{3!}\phi^3,\] for which the $m$-point connected Green function is solved as \[G(x_1,\ldots,x_m)=\sum_{n=0}^\infty \lambda^n G_n(x_1,\ldots,x_m).\]

For a given $n$, the coefficient $G_n(x_1,\ldots,x_m)$ will be obtained diagrammatically as the finite sum 
\[G_n(x_1,\ldots,x_m)=\underset{v(\Gamma)=n}{\sum} \displaystyle\frac{\hbar^{\ell(\Gamma)}}{\text{Sym}(\Gamma)}\;A(\Gamma;x_1,\ldots,x_m),\]

where $\ell(\Gamma)$ is the loop number; and $A(\Gamma;x_1,\ldots,x_m)$ is the \textit{amplitude} of the graph $\Gamma$, that is the analytic value of the Feynman diagram. The channel between  the analytic and diagrammatic representations is subject to the following Feynman rules:

\begin{enumerate}
    \item Draw all graphs with $n$ internal vertices and $m$ external legs, in which every internal vertex is $3$-valent ($3$-regular graphs).
    
    \item Give the external legs the fixed labels $x_1,\ldots,x_m$. On the other hand, label the internal vertices with the free variables $y,z,v,u,\ldots$.
    
    \item If vertices $\alpha$ and $\beta$ are adjacent, give weight $G_0(\alpha-\beta)$ for the incident edge.
    
    \item Assign weight $\lambda$ to every internal vertex (these are the $3$-valent ones).
    
    \item Assign weight $\hbar$ to each loop.
    
    \item Assign weight $J(\alpha) $ to an external leg $\alpha$.
    
    \item The amplitude $A(\Gamma;x_1,\ldots,x_m)$ of a graph $\Gamma$ is then obtained by multiplying all the contributing weights and integrating over the free variables involved in the tree.
    
    \item Finally multiply with the symmetry factor $\displaystyle\frac{1}{\text{Sym}(\Gamma)}\;$.
    
\end{enumerate}

\section{Momentum Space Formulation}

All of our work so far took place in the position space, were we expressed all relations in terms of the position coordinates. In this section we will translate all quantities and relations in terms of the momentum operator $p$ (in Minkowski space it is called the \textit{four-momentum}). We recall equation (\ref{momentum operator}) from the background section, namely 

\begin{equation}\label{momentum operatoragain}
         p\;\;\; \equiv\;\;\; -i\hbar\displaystyle\frac{\partial }{\partial x},
     \end{equation}
which says that the operators of momentum and position are conjugate to each other. The eigenstates for the momentum operator are seen to be the states whose wave functions are  

$\langle x|p\rangle=e^{ip\cdot x/\hbar}$, or simply $$\langle x|p\rangle=e^{ip\cdot x}$$ if we suppress $\hbar$ into the definition of $p$.

Thus we can use the Fourier transform to move from the momentum basis to position basis. In other words $|p\rangle=\int e^{ip\cdot x} |x\rangle \;d^Dx$.

This leads to the following representation for each of the expressions in the Dyson-Schwinger equation:
\begin{align*}
    \hat\phi(p)&=\int_{\mathbb{R}^D}\phi(x)  e^{ip\cdot x}\;d^Dx,\\
    \hat J(p)&=\int_{\mathbb{R}^D}J(x)  e^{ip\cdot x}\;d^Dx,\\
    \hat G_0(p)&=\int_{\mathbb{R}^D}G_0(x-y)  e^{ip\cdot (x-y)}\;d^Dx.
\end{align*}

As an example of how things are changed in the momentum point-of-view let us look back at the Green function distribution for the Klein-Gordon equation (see equation (\ref{kleinnn})); we get that in this case
$\hat G_0(p)=\displaystyle\frac{1}{p^2+m^2}$.

The $m$-point connected Green functions transforms as 
\begin{equation}\label{may}
 \hat G^{(m)}(p_1,\ldots,p_m)=\int G(x_1,\ldots,x_m) \; e^{ip_1(x_1-x_m)}\cdots e^{ip_m(x_{m-1}-x_m)}\;d^Dx_1\cdots d^Dx_m.   
\end{equation}
Note that the translation invariance of $G(x_1,\ldots,x_m)$ implies a very important property in the momentum formulation, namely that

which we will refer to as \textit{conservation of momentum}.

For example, this enables us to express the Dyson-Schwinger equation (\ref{2point}) for the $2$-point function  in terms of external momenta: 

\begin{equation}
    \hat G^{(2)}(p)=\hat G_0(p)+\lambda\; \hat G_0(p) \;\hat G^{(1)}(0)\; \hat G^{(2)}(p)+\hbar\; \frac{\lambda}{2}\;\hat G_0(p) \int \frac{1}{(2\phi)^D} \;\hat G^{(3)}(q,p-q,-p) \;d^Dq,
\end{equation}

and the higher order functions are obtained similarly. Now let's see what changes are seen on Feynman graphs in the transition to momentum space.

\subsection{1PI Diagrams}\label{section}

The diagrams in momentum space are structurally the same as before. The only difference is that propagators and external half-edges are now oriented and labeled by momenta $p_1,\ldots, p_k$, taking into regards the conservation of momenta. A propagator gets its direction from the truncated edge that it is matched with. In the previous setting we didn't worry about direction and so we only determined a `place' for a propagator. Diagrammatically, this is 
\begin{center}
    \includegraphics[scale=0.6]{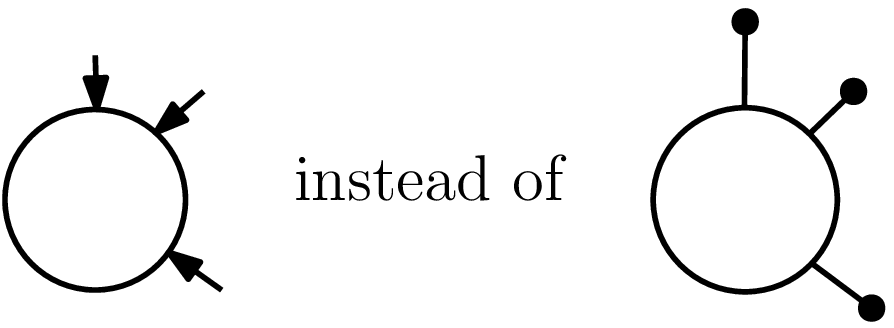}
\end{center}

So, there will be a difference between the short (\textit{amputated}) edges and the propagator edges. Nevertheless, this tells that the diagrammatic equations we derived before are still intact, and therefore, for the sake of simplicity, we shall now and forth ignore the hat notation and will often suppress the orientation for propagators. 

One can verify that, due to the conservation of momenta, the amplitude for a graph $\Gamma$ given by 

\[\Gamma\;=\;\raisebox{-0.52cm}{\includegraphics[scale=0.7]{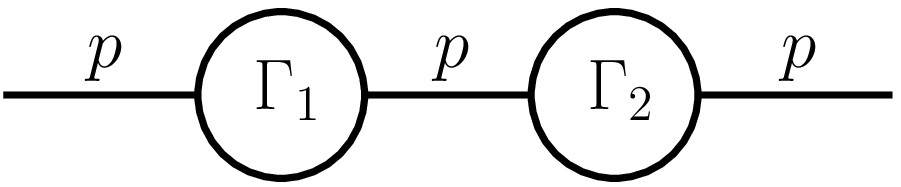}}\;,\]

is \[A(\Gamma;p)=G_0(p)\;A(\Gamma_1;p)\;G_0(p)\;A(\Gamma_2;p)\;G_0(p).\]

This leads to a nice reduction that distinguishes the momentum formulation, namely, we can only focus on bridgeless graphs. One may think of this step as similar to the sufficiency of considering connected graphs made earlier.

\begin{dfn}
A Feynman graph is said to be \textit{one-particle irreducible}, or $1PI$, if it is $2$-edge-connected in the sense of graph theory (has no bridges). That is, a $1PI$ diagram is a graph that stays connected after the removal of any single edge.
\end{dfn}
For example, the graphs 
\begin{center}
    \includegraphics[scale=0.8]{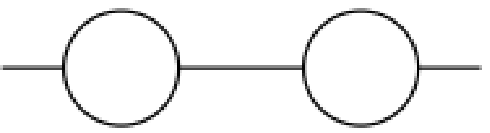} \raisebox{0.41cm}{\;\;and\;\;} \raisebox{0.05cm}{\includegraphics[scale=0.8]{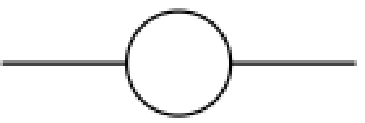}}
\end{center}are not $1PI$, whereas
\begin{center}
    \includegraphics[scale=0.7]{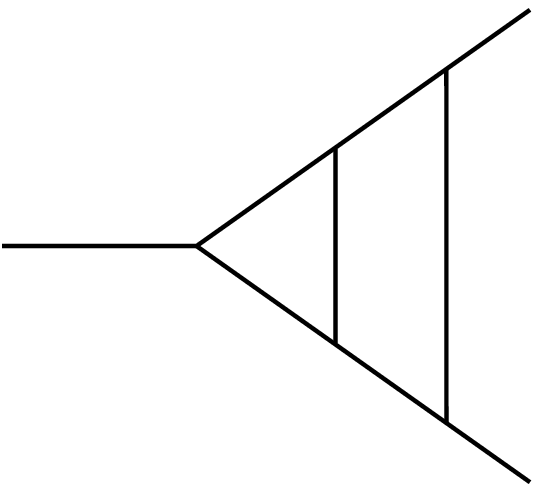}
\end{center}
is a $1PI$ diagram. It is important here to notice that one of the graphs is not $1PI$ because it has long external edges, which are read as propagators. The $1PI$ diagram however is amputated and all the external legs are not propagators. Thus, every connected graph can be decomposed into a tree whose vertices are $1PI$ diagrams, this observation has been used for example in \cite{kjm1,kjm2} to give a combinatorial meaning of the Legendre transform.

For our purposes, this implies that we only need to consider the \text{proper} or \textit{1PI Green functions}:

\[G_{1PI}^{(m)}(p_1,\ldots,p_m)=\underset{\Gamma \;\text{is}\;1PI}{\underset{E(\Gamma)=m}{\sum}} \lambda^{v(\Gamma)}\;\displaystyle\frac{\hbar^{\ell(\Gamma)}}{\text{Sym}(\Gamma)}\;A(\Gamma;p_1,\ldots,p_m),\]

where the sum is over $1PI$ diagrams with $m$ amputated edges. The explicit relation on the level of Green functions is a result in \cite{green}.

\section{Renormalization}

\subsection{Ultraviolet and Infrared Divergencies}
We have seen earlier that the amplitudes, or the analytic versions of the Feynman diagrams, are repeated integrals of products involving the propagator $G_0$ and the source $J$. Now recall that $G_0(x)$ was a distribution of $x$ and has a singularity at $x=0$, which means that higher powers of $G_0$ will be undefined at that point. In the momentum formulation, this leads to a divergency of integrals containing powers of $G_0$. 

\textbf{Example:} Let us consider the Klein-Gordon field in dimension $D$, and consider the amplitude for the simplest case with one loop and two external half edges, say,

\begin{center}
    $\Gamma$\;=\;\;\raisebox{-0.39cm}{\includegraphics[scale=0.78]{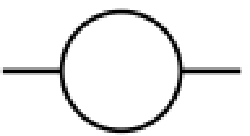}}.
\end{center}

In space-time formulation we get the term $G_0(u-v)^2$ in the integral, and in momentum space we assume that the momentum is $q$ and that $|q|$ is its module in spherical coordinates. The amplitude is 

\[A(\Gamma;p)=\int \displaystyle\frac{1}{q^2+m^2}\;\frac{1}{(p-q)^2+m^2}\;\frac{d^Dq}{(2\pi)^D},\]
which behaves like 

\[\int_{|q|_\text{min}}^\infty \displaystyle\frac{1}{|q|^4}\;{d|q|^D} \;\sim\;\int_{|q|_\text{min}}^\infty \displaystyle\frac{1}{|q|^{4-(D-1)}}\;{d|q|}  \;.\]

This tells that the integral converges if and only if $4-(D-1)>1$, i.e. $D<4$. So the amplitude diverges whenever $D\geq 4$. 

In this section we will be dealing only with \textit{ultraviolet} divergencies. A divergency is  ultraviolet if it occurs as $|q|\rightarrow \infty$ where $q$ is one of the integrated momenta; whereas an \textit{infrared} divergency occurs as $|q|\rightarrow|q|_{\text{min}}$. Infrared divergencies often appear for a vanishing mass where $|q|_{\text{min}}=0$.

\subsection{The Problem of Renormalization}

The ultimate goal in physics is set to obtaining accurate calculations of predictions. for physics, it is more important to use a successful  mathematical model than to prove how much flawless it is. Sometimes a physicist encounters inaccessible values, a divergent integral for example, and in that case the goal will be reset to be obtaining the largest amount of  useful information from this divergent quantity. This situation occurs with Feynman integrals. The amplitude of a Feynman diagram is often divergent, and it has to be \textit{renormalized} to give access to a useful finite value. See \cite{aless, renorm} for a more comprehensive reading.

Thus, given a Feynman graph $\Gamma$ with a divergent amplitude $A(\Gamma)$ (note that we omit the momenta for simplicity), the \textit{renormalization} process eventually gives us a modified finite amplitude $A^{ren}(\Gamma)$, which we call the \textit{renormalized amplitude}. A theory is \textit{normalizable} if it requires satisfying only a finite list of conditions in order to have finite renormalized amplitudes for all Feynman graphs. 

The study of renormalization led to the recognition of the relation between loops and divergence \cite{renorm,conneskreimer}.

\begin{dfn}[Superficial Degree of Divergence]\label{spdivdefinition}
The \textit{superficial degree of divergence} of a graph $\Gamma$ is the integer $\omega(\Gamma)$ for which the amplitude transforms as 
\[A(\Gamma)\;\longrightarrow \;t^{\omega(\Gamma)}\;A(\Gamma)\] when the integrated momenta are transformed as $q_i\longrightarrow tq_i $. The superficial degree of divergence is also  found to be \[\omega(\Gamma)=D\;\ell(\Gamma)-\sum_a w(a),\]where the sum is over all the \textit{power counting weights} (Definition \ref{power counting weight})  of vertices and internal edges in $\Gamma$ determined by the  QFT theory considered, and where $\ell(\Gamma)$ is the number of loops in $\Gamma$.
\end{dfn}

\begin{rem}
For example, since the propagator $G_0$ has denominator of degree $2$ in momentum, whereas the vertices has degree $0$, we will have 

\begin{equation}
    w\big(\;\raisebox{0.1cm}{\includegraphics[scale=0.4]{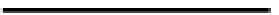}}\;\big)=2
    \quad\text{and}\quad w\big(\;\raisebox{-0.4cm}{\includegraphics[scale=0.34]{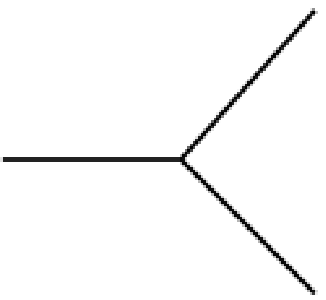}}\;\big)=0
\end{equation}
\end{rem}

If $\omega(\Gamma)\geq 0$ we say that $\Gamma$ is \textit{divergent}; and when equality holds we say that $\Gamma$ is \textit{logarithmically divergent}. These names are justified by thinking of $\omega(\Gamma)$ as being the difference \[ (\text{power of momentum in the numerator}) - (\text{power of momentum in the denominator})\] in $A(\Gamma)$.

\begin{rem}
\textbf{But why is it `superficial' though?} We said that if $\omega(\Gamma)\geq 0$ then the graph is divergent, but is it an `only if'? This is the crucial part. In the case of graphs with one loop this will be an `if and only if'; however, for a many-loop graph $\Gamma$, it may happen that the $\omega(\Gamma)< 0$ but it contains a divergent subgraph (due to the excess of loops), which should still destroy the amplitude for $\Gamma$. This is the reason it is the `superficial', and not the `actual', degree of divergence. 
\end{rem}

\subsubsection{Regularized Amplitudes:}

Before going through the renormalization of $A(\Gamma)$ we may need a finite value to work with, and for whom a certain limit shall reproduce the divergence. The \textit{regularized amplitude} $A_z(\Gamma)$ is chosen such that 

\[A^{ren}(\Gamma)=\lim_{z\rightarrow z_0} A^{ren}_z(\Gamma),\] for some $z_0$.

We are typically considering an amplitude that looks like
\[A(\Gamma)=\int I(\Gamma;q_1,\ldots,q_m)\;\Pi_i\frac{d^Dq_i}{(2\pi)^D},\]
and displays an ultraviolet divergence. So, an example for a regularized amplitude can be obtained by \[A_z(\Gamma)=\int_{|q_i|\leq z} I(\Gamma;q_1,\ldots,q_m)\;\Pi_i\frac{d^Dq_i}{(2\pi)^D},\]
which gives the original problem as $z\rightarrow \infty$.

\subsection{The BPHZ Scheme}\label{BP}

\subsubsection{Single Loop BPHZ:}

Consider the amplitude \[A(\Gamma)=\int I(\Gamma;\textbf{p};q) \;\frac{d^Dq}{(2\pi)^D}\]
of a divergent $1PI$ diagram $\Gamma$ with one loop and superficial degree of divergence $\omega(\Gamma)\geq 0$, where $\textbf{p}=(p_1,\ldots,p_m)$ are the momenta labels for the external legs of $\Gamma$.

Now let $T^{\omega(\Gamma)}$ be the operator that gives a truncated Taylor series up to degree $\omega(\Gamma)$ in the momenta variables $\textbf{p}=(p_1,\ldots,p_m)$ around $\textbf{p}=0$. The Bogoliubov-Parasiuk-Hepp-Zimmermann (BPHZ) subtraction scheme for renormalization gives 

\[A^{ren}(\Gamma)=\int \bigg(I(\Gamma;\textbf{p};q)- T^{\omega(\Gamma)}[I(\Gamma;\textbf{p};q)]\bigg)\;\frac{d^Dq}{(2\pi)^D}.\]

This value is finite as proven by Bogoliubov and Parasiuk, see \cite{bogoliubov,bogoliubovv,bogoliubovvv,bogoliubovvvv}. 

Fixing one regularized amplitude $A_z$, the renormalized amplitude is 

\begin{align*}
    A_z^{ren}(\Gamma;\textbf{p})
    &=A_z(\Gamma;\textbf{p})-T^{\omega(\Gamma)}[A_z(\Gamma;\textbf{p})]\\
    &=A_z(\Gamma;\textbf{p})-\left\{\left.\int I(\Gamma)\frac{d^Dq}{(2\pi)^D}\right|_{\textbf{p}=0}+\sum p_i^\mu\left.\int \frac{\partial I(\Gamma)}{\partial p_i^\mu}\frac{d^Dq}{(2\pi)^D}\right|_{\textbf{p}=0}+\cdots \right\},
\end{align*}

where the terms in the curly brackets continue up to degree $\omega(\Gamma)$ in momenta.
The terms in the curly brackets are called the \textit{(local) counterterms} of $\Gamma$, and they are to be denoted as 
\begin{equation}\label{counterterms}
    z_n(\Gamma)=\displaystyle\frac{-1}{n!}\left.\int \partial^n_\textbf{p} I(\Gamma)\frac{d^Dq}{(2\pi)^D}\;\right|_{\textbf{p}=0}.
\end{equation}

\begin{exm}

Consider the graph 
\begin{center}
    $\Gamma$\;=\;\raisebox{-0.39cm}{\includegraphics[scale=0.78]{Figures/nn.eps}}.
\end{center} as before, and let us assume that $D$ is larger than usual this time to increase the risk of superficial divergence, say $D=6$. In this case $\omega(\Gamma)=6\times 1-2\times 2- 2\times 0=2$, and the graph is divergent.

The amplitude is \[A(\Gamma;p)=\int \displaystyle\frac{1}{q^2+m^2}\;\frac{1}{(p-q)^2+m^2}\;\frac{d^6q}{(2\pi)^6},\]
and we should calculate the renormalized amplitude 

\begin{align*}
A^{ren}(\Gamma;p)
&=A(\Gamma;p)-T^2[A(\Gamma;p)]\\
&=A(\Gamma;p)+\{z_0(\Gamma)+p \;z_1(\Gamma)+p^2\; z_2(\Gamma)\},\end{align*}
by working out the counterterms.

The counterterms are as follows

\begin{align*}
z_0(\Gamma)&=-\int \displaystyle\frac{1}{(q^2+m^2)^2}\;\frac{d^6q}{(2\pi)^6},\\
z_1(\Gamma)&=-\int \displaystyle\frac{2q}{(q^2+m^2)^3}\;\frac{d^6q}{(2\pi)^6}=0,\\
z_2(\Gamma)&=-\int \displaystyle\frac{3q^2-m^2}{(q^2+m^2)^4}\;\frac{d^6q}{(2\pi)^6}.
\end{align*}

and so we get 

\[A^{ren}(\Gamma;p)=\int \displaystyle\frac{4p^3q^3-3p^4q^2-4m^2p^3q+m^2p^4}{(q^2+m^2)^4((p-q)^2+m^2)}\;
\frac{d^6q}{(2\pi)^6}.\]

The integrand has the order $1/|q|^7$ as $|q|\rightarrow\infty$. Thus $A^{ren}(\Gamma;p)$  behaves like 
\[\int_{|q|_\text{min}}^\infty \displaystyle\frac{1}{|q|^7}\;{d|q|^6} \;\sim\;\int_{|q|_\text{min}}^\infty \displaystyle\frac{1}{|q|^2}\;{d|q|}= \left.\displaystyle\frac{-1}{|q|}\right|_{|q|_\text{min}}^\infty=\frac{1}{|q|_\text{min}} \;.\]

\end{exm}

\subsubsection{Many Loops BPHZ:}

The idea for tackling the many-loops case is to recursively renormalize the graph by renormalizing all of its divergent subgraphs first.

\begin{dfn}[Contractions and Residue of a Graph]\label{contr}
Let $\gamma$ be a subgraph of a graph $\Gamma$. A new graph $\Gamma/\gamma$ can be obtained by contracting all the internal edges in $\gamma$. $\Gamma/\gamma$ is called a \textit{contraction} or a \textit{cograph}. Notice that $\gamma$ transforms into either a  vertex (\textit{vertex-type}) or a propagator (\textit{edge-type}).  The \textit{residue} res$(\Gamma)$ of a graph $\Gamma$ is the graph obtained by shrinking all internal edges so that only the external leg structure of $\Gamma$ survives.
\end{dfn}

For example,

\begin{equation}
    \text{res}\bigg(\raisebox{-1.4cm}{\includegraphics[scale=0.27]{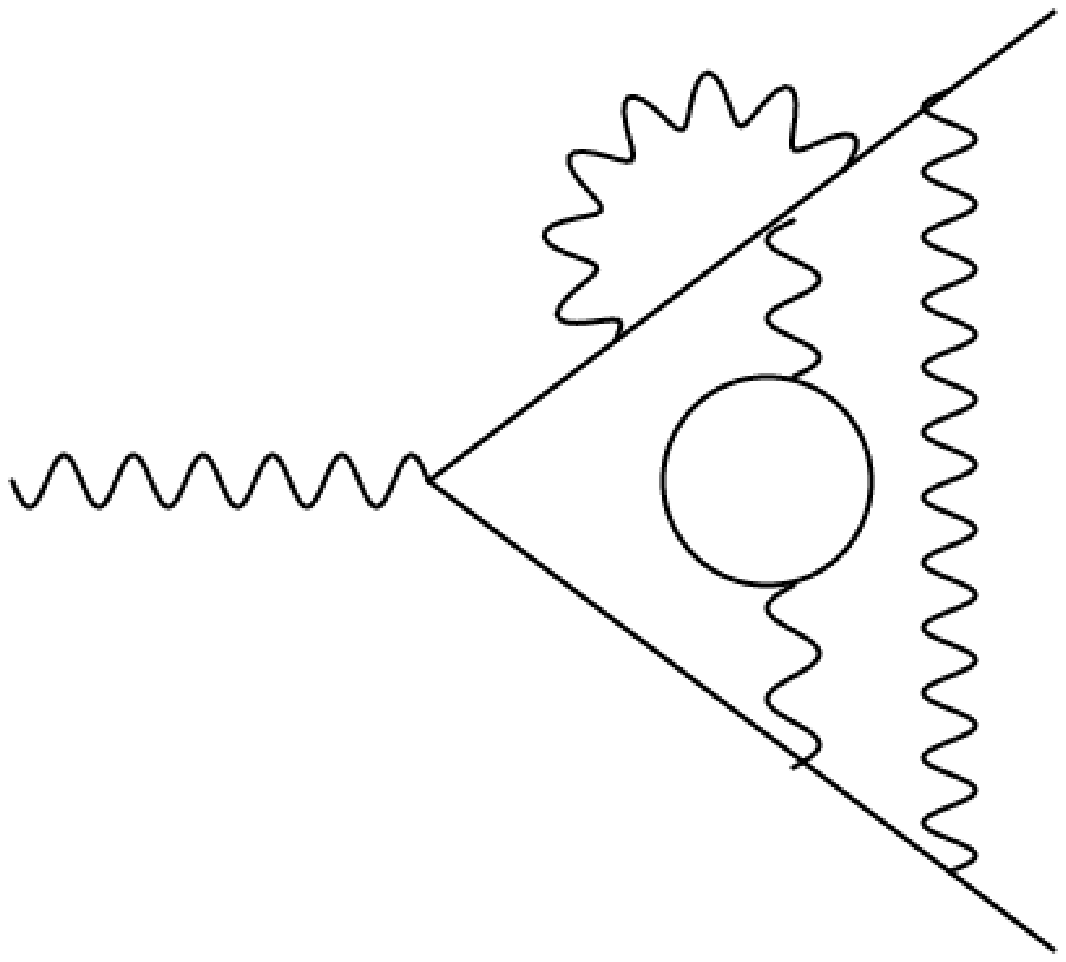}}\bigg)
    \;=\;\text{res}\bigg(\raisebox{-1.28cm}{\includegraphics[scale=0.27]{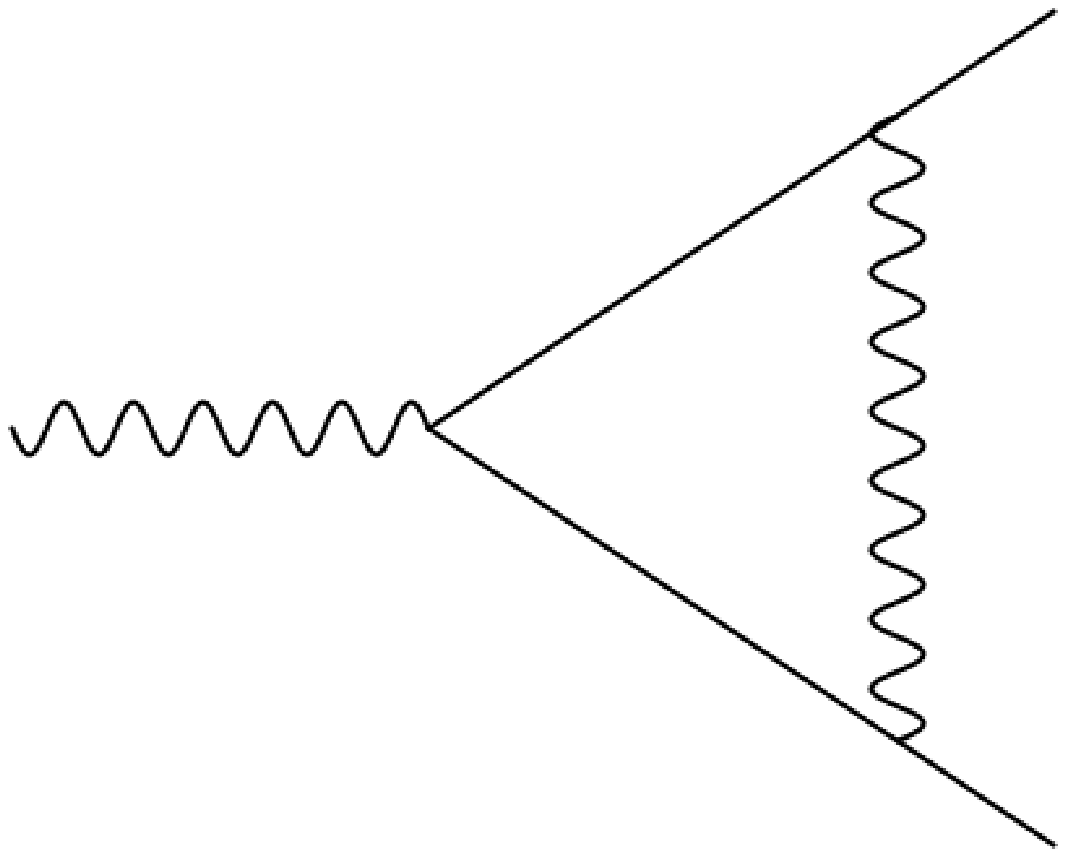}}\bigg)
    \;=\;\raisebox{-0.7cm}{\includegraphics[scale=0.3]{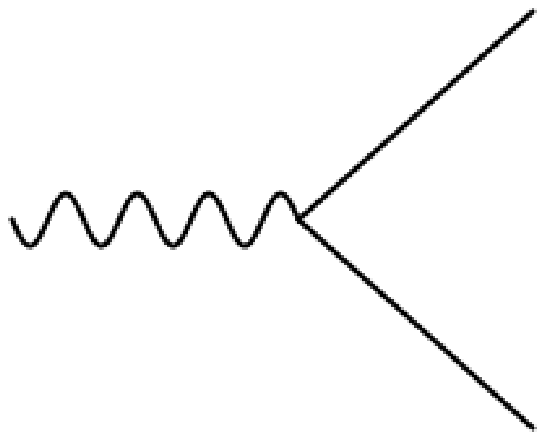}}.
\end{equation}

Now, let $\Gamma$ be a $1PI$ diagram with loop number $\ell$. Assume that either $\omega(\Gamma)\geq0$ or that $\Gamma$ contains divergent subgraphs. Also assume that $\textbf{q}=(q_1,\ldots,q_\ell)$ are the momenta integrated in the amplitude, and as before $\textbf{p}$ is for the external momenta.
The BPHZ scheme for a graph $\Gamma$ with many loops is described as

\begin{equation}
    A^{ren}(\Gamma)=\int \bigg(I^{prep}(\Gamma;\textbf{q})- T^{\omega(\Gamma)}[I^{prep}(\Gamma;\textbf{q})]\bigg)\;\frac{d^Dq_1}{(2\pi)^D}\cdots\frac{d^Dq_\ell}{(2\pi)^D},
\end{equation}

where $I^{prep}(\Gamma) $ is the \textit{prepared} version of $I(\Gamma)$ in which all the divergent subgraphs have been renormalized. This term is defined recursively as 

\begin{equation}
    I^{prep}(\Gamma;\textbf{q})=I(\Gamma;\textbf{q})+\sum_{\gamma_i}\prod_i \bigg(-T^{\omega(\gamma_i)}[I^{prep}(\gamma_i;\textbf{q}_i)]\bigg) \;\displaystyle\frac{I(\Gamma;\textbf{q})}{\prod_i I(\gamma_i;\textbf{q}_i)},
\end{equation}

where the subgraphs considered by the sum are all the disjoint $1PI$ divergent subgraphs of $\Gamma$ (also called the \textit{subdivergences} of the graph). This was partially proven by Bogoliubov and Parasiuk in \cite{bogoliubov} in 1957, then developed by Hepp in 1966 \cite{bogoliubovvvv}, and finally established by Zimmermann in 1969 \cite{bogoliubovvvv}. In 1998, D. Kreimer \cite{kreimerr, kreimerrr, conneskreimer} could show that the BPHZ renormalization scheme is captured by a combinatorial Hopf algebra. This opened the door for many questions about the interaction between  QFT and other abstract areas in mathematics.

By a re-indexing argument  one can also write 

\begin{equation}
     I^{prep}(\Gamma;\textbf{q})=I(\Gamma;\textbf{q})+\sum_{\gamma_i}\prod_i \bigg(-T^{\omega(\gamma_i)}[I^{prep}(\gamma_i;\textbf{q}_i)]\bigg) \;I(\Gamma/\{\gamma_i\};\textbf{q}'),
\end{equation}

counterterms are found similarly (for details see \cite{aless}).

\begin{exm}\label{divex}

We Let $D=6$ and consider the graph 
\[\Gamma\;=\;\raisebox{-1.6cm}{\includegraphics[scale=0.45]{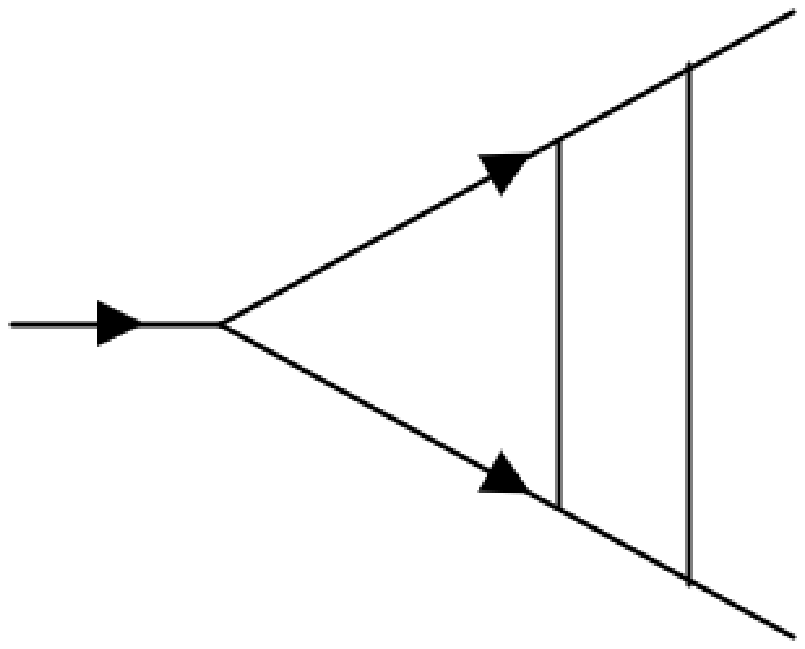}}\;\;,\]

whose amplitude is given by 
\begin{align*}
A(\Gamma;p_1,p_2)=\int& \displaystyle\frac{1}{q_1^2+m^2}\;\frac{1}{(p_1-q_1)^2+m^2}\;\frac{1}{(q_1-q_2)^2+m^2}\\
&\times\frac{1}{q_2^2+m^2}\;\frac{1}{(p_1-q_2)^2+m^2}\;\frac{1}{(q_2-p_2)^2+m^2}\;\frac{d^6q_1}{(2\pi)^6}\frac{d^6q_2}{(2\pi)^6}.\end{align*}

Notice that $\omega(\Gamma)=6\times2-2\times6-2\times0=0$, and so the graph has a logarithmic superficial divergence. Moreover, the following subgraphs are inside $\Gamma$:

\begin{itemize}
    \item The $1PI$ subgraph $\;\;\gamma_1=$\; \raisebox{-1cm}{\includegraphics[scale=0.3]{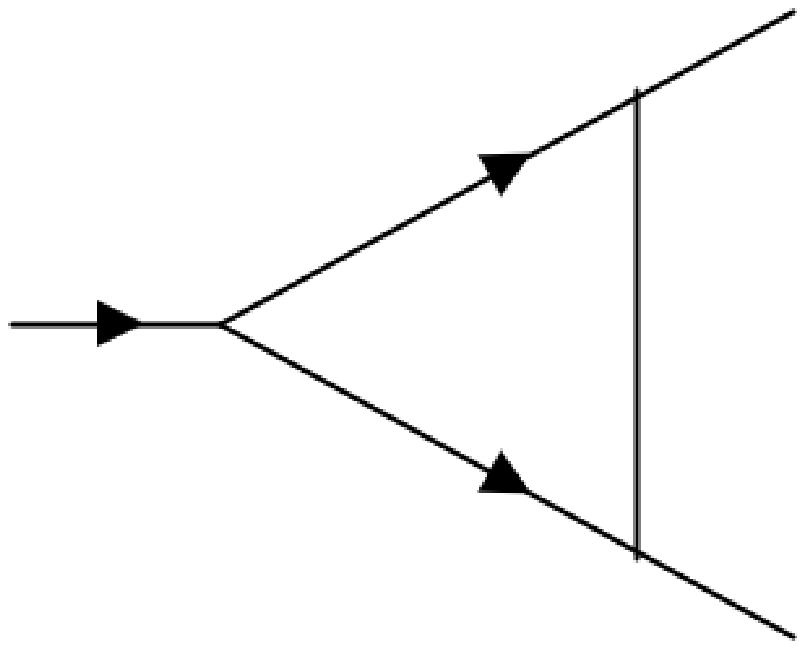}} \;, with one more logarithmic divergence.
    
    \item The $1PI$ subgraph $\;\;\gamma_2=$\; \raisebox{-1.2cm}{\includegraphics[scale=0.3]{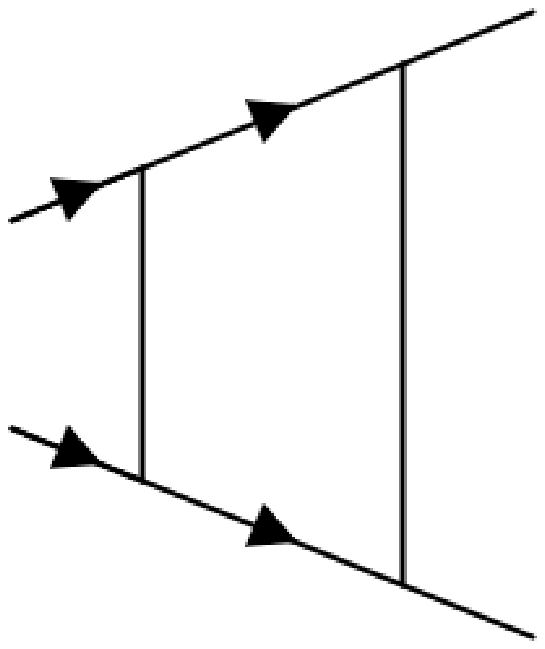}} \;, which is convergent and hence not considered in the BPHZ recursion.
\end{itemize}
    
Thus, the prepared amplitude shall be

\[I^{prep}(\Gamma;p_1,p_2;q_1,q_2)=I(\Gamma)- T^0[I(\gamma_1)] \;I(\Gamma/\{\gamma_1\}),\]

with $$-T^0[I(\gamma_1)]=-I(\gamma_1;p_1,q_2;q_1)|_{p_1=q_1=q_2=0}=\displaystyle\frac{-1}
{(q_1^2+m^2)^3},$$

and \[I(\Gamma/\gamma_1;p_1,p_2;q_2)=\displaystyle\frac{1}{q_2^2+m^2}\;\frac{1}{(p_1-q_2)^2+m^2}\;\frac{1}{(q_2-p_2)^2+m^2}.\] 

\end{exm}

In the next section we will see how the BPHZ subtraction scheme can be recognized in a purely algebraic method in terms of Hopf algebras.

\section{The Hopf Algebra of Feynman Diagrams}

This section is a quick review of the algebraic treatment of renormalization in terms of Hopf algebras. The definitions in this section will be needed to give sense of some of the expressions that our results are related to. We have seen what  renormalization is about analytically in the overview of the work by Bogoliubov, Parasiuk, Hepp, and Zimmermann around 1960's. Almost five decades later, D. Kreimer \cite{kreimerr} showed that the BPHZ scheme is captured by Hopf algebras and the recursive definition of the antipode. In this approach, the Feynman diagrams are used to define a connected graded and commutative Hopf algebra. For an extensive treatment of Hopf algebras see \cite{sweedler}, and see \cite{kurusch,conneskreimer,kreimerr,kreimerrr, manchonhopf, renorm} for a spectrum of results and developments of the approach in renormalization in QFT.

\subsection{Basic Definitions of Hopf Algebras}
We let $\mathbb{K}$ be an infinite  field (characteristic 0). The unit of an algebra will be treated as a map in the sense of category theory. $\mathcal{L}(V,W)$ will mean the group of $\mathbb{K}$-linear maps from the $\mathbb{K}$-vector space $V$ to the $\mathbb{K}$-vector space $W$.

\begin{dfn}[Associative unital algebra]\label{aslg}
An \textit{ associative unital $\mathbb{K}$-algebra} $(A,m,\mathbb{I})$ is a $\mathbb{K}$-vector space $A$ together with two linear maps $m:A\otimes A\rightarrow A$ (product) and $\mathbb{I}:\mathbb{K}\rightarrow A$ (unit) such that :
\begin{align*}
    m\circ(\text{id}\otimes m)&=m\circ(m\otimes\text{id})\\
    m\circ(\mathbb{I}\otimes\text{id})&=m\circ(\text{id}\otimes \mathbb{I}).
\end{align*}
Furthermore, the algebra is said to be \textit{commutative} if $m=m\circ\tau$, where $\tau$ is the twist map $a\otimes b\mapsto b\otimes a$.\end{dfn}

The image $\mathbb{I}(1)$ of $1$ under the unit map $\mathbb{I}$ will often be denoted also by $\mathbb{I}$ with no confusion.

In terms of commutative diagrams, $A$ is an associative unital algebra if the following diagrams commute

\begin{center}
\begin{minipage}{0.5 \textwidth}
\xymatrixcolsep{5pc}\xymatrix{
A\otimes A\otimes A \ar[d]_{\text{id}\;\otimes\; m} \ar[r]^{m\;\otimes\;\text{id}} & A\otimes A \ar[d]^m \\
A\otimes A \ar[r]_m          & A }\end{minipage}
\begin{minipage}{0.5 \textwidth}\quad
\xymatrixcolsep{5pc}\xymatrix{
\mathbb{K}\otimes A\ar[rd]_{\cong} \ar[r]^{\mathbb{I}\;\otimes\;\text{id}} & A\otimes A \ar[d]^m &A\otimes\mathbb{K} \ar[l]_{\text{id}\;\otimes\; \mathbb{I}}\ar[ld]^{\cong}\\
&A& }
\end{minipage}
\end{center}

The categorical dual then becomes

\begin{dfn}[Coassociative counital coalgebra]\label{coaslg}
A \textit{ coassociative counital $\mathbb{K}$-coalgebra} $(C,\Delta,\hat{\mathbb{I}})$ is a $\mathbb{K}$-vector space $C$ together with two linear maps $\Delta:C\rightarrow C\otimes C$ (coproduct) and $\hat{\mathbb{I}}:C\rightarrow \mathbb{K}$ (counit) such that :
\begin{align*}
    (\text{id}\otimes \Delta)\circ\Delta&=(\Delta\otimes\text{id})\circ \Delta\\
    (\hat{\mathbb{I}}\otimes\text{id})\circ\Delta&=(\text{id}\otimes \hat{\mathbb{I}})\circ\Delta.
\end{align*}
Furthermore, the coalgebra is said to be \textit{cocommutative} if $\Delta=\tau\circ\Delta$.\end{dfn}

In terms of commutative diagrams, $A$ is an associative unital algebra if the following diagrams commute

\begin{center}
\begin{minipage}{0.5 \textwidth}
\xymatrixcolsep{5pc}\xymatrix{
C\otimes C\otimes C  & C\otimes C \ar[l]_{\Delta\;\otimes\;\text{id}}  \\
C\otimes C \ar[u]^{\text{id}\;\otimes\; \Delta}         & C\ar[l]^\Delta \ar[u]_\Delta }\end{minipage}
\begin{minipage}{0.5 \textwidth}\quad
\xymatrixcolsep{5pc}\xymatrix{
\mathbb{K}\otimes C  & C\otimes C \ar[l]_{\hat{\mathbb{I}}\;\otimes\;\text{id}}\ar[r]^{\text{id}\;\otimes\; \hat{\mathbb{I}}} &C\otimes\mathbb{K} \\
&C\ar[ur]_{\cong}\ar[u]_\Delta\ar[ul]^{\cong}& }
\end{minipage}
\end{center}

\textit{Sweedler's } notation for the coproduct is often useful: $\Delta (x)=\sum_x\;x'\otimes x''$.

\begin{dfn}[Algebra morphism]
Let $(A,m_A,\mathbb{I}_A)$ and $(B,m_B,\mathbb{I}_B)$ be two associative unital $\mathbb{K}$-algebras. A $\mathbb{K}$-linear map $\varphi:A\longrightarrow B$ is an \textit{algebra morphism}  if 
\begin{align*}
    \varphi\circ\mathbb{I}_A&=\mathbb{I}_B, \text{\;and\;}\\
    \varphi\circ m_A&=m_B\circ(\varphi\otimes \varphi). 
\end{align*}
\end{dfn}

Dually, one defines

\begin{dfn}[Coalgebra morphism]
Let $(C,\Delta_C,\hat{\mathbb{I}}_C)$ and $(D,\Delta_D,\hat{\mathbb{I}}_D)$ be two coassociative counital $\mathbb{K}$-coalgebras. A $\mathbb{K}$-linear map $\psi:C\longrightarrow D$ is a  \textit{coalgebra morphism}  if 
\begin{align*}
    \hat{\mathbb{I}}_D\circ\psi&=\hat{\mathbb{I}}_C, \text{\;and\;}\\
    \Delta_D\circ\psi&=(\psi\otimes \psi)\circ\Delta_C. 
\end{align*}
\end{dfn}

\begin{dfn}[Bialgebras]
A \textit{$\mathbb{K}$-bialgebra} $(B,m,\mathbb{I},\Delta, \hat{\mathbb{I}})$ is a $\mathbb{K}$-vector space such that $(B,m,\mathbb{I})$ is an algebra and $(B,\Delta, \hat{\mathbb{I}})$ is a coalgebra and that the two structures are compatible in the sense that  $m$ (and $\mathbb{I}$) is a coalgebra morphism and $\Delta$ (and  $\hat{\mathbb{I}}$) is an algebra morphism.

Note that only one of the compatibility conditions is enough; one can verify that
$m$ is a coalgebra morphism if and only if $\Delta$ is an algebra morphism.
\end{dfn}

Note that in a bialgebra it must be that $ \hat{\mathbb{I}}(\mathbb{I})=1$ and vanishes for all other elements.

\begin{dfn}[Hopf algebras and the antipode]\label{hopf}
A \textit{Hopf algebra}  $(\mathcal{H},m,\mathbb{I},\Delta, \hat{\mathbb{I}},S)$ is a $\mathbb{K}$-bialgebra $(\mathcal{H},m,\mathbb{I},\Delta, \hat{\mathbb{I}})$
together with a linear map $S:\mathcal{H}\rightarrow \mathcal{H}$ such that  

\[m\circ(S\otimes\text{id})\circ\Delta=\mathbb{I}\circ\hat{\mathbb{I}}=m\circ(\text{id}\otimes S)\circ \Delta.\]

The map $S$ is called the \textit{antipode} of the Hopf algebra.
\end{dfn}

Diagrammatically this is equivalent to the following diagram being commutative:

\begin{center}
\begin{minipage}{0.5 \textwidth}
\xymatrixcolsep{5pc}\xymatrix{
\mathcal{H}\otimes \mathcal{H}\ar[r]^{S\;\otimes\;\text{id}}& 
\mathcal{H}\otimes \mathcal{H}\ar[dr]^m& \\
     \mathcal{H} \ar[d]_\Delta \ar[u]^\Delta\ar[r]^{\hat{\mathbb{I}}} &\mathbb{K}\ar[r]^{\mathbb{I}} &            \mathcal{H}\\
\mathcal{H}\otimes \mathcal{H}\ar[r]^{\text{id}\;\otimes\;S}& \mathcal{H}\otimes\mathcal{H}\ar[ur]_m
}
\end{minipage}

\end{center}

\begin{dfn}[Convolution Product]\label{convo}
Let $f,g$ be two linear maps in $\mathcal{L}(\mathcal{H},\mathcal{H})$. Then their \textit{convolution product} is defined as 
\[f\ast g := m\circ(f\otimes g)\circ \Delta.\]

This product gives again a linear map on $\mathcal{H}$. It can be shown that $(\mathcal{L}(\mathcal{H},\mathcal{H}),\ast,\; \mathbb{I}\circ\hat{\mathbb{I}})$ becomes an algebra. Moreover, $f\circ S$ is the inverse of $f$ with respect to the convolution product, and in that sense, the antipode $S$ may be thought of as the $\ast$-inverse of the identity map $\text{id}_\mathcal{H}$.
\end{dfn}

\subsubsection{Filtration and Connectedness of Hopf Algebras}

\begin{dfn}[Gradedness and Connectedness]\label{Aug}
A Hopf algebra $\mathcal{H}$ is said to be \textit{graded} ($\mathbb{Z}_{\geq0}$-graded to be precise) if it decomposes into a direct sum 
$\mathcal{H}=\oplus_{n=0}^\infty\mathcal{H}_n$, such that

\begin{align*}
m(\mathcal{H}_n\otimes\mathcal{H}_m)&\subseteq \mathcal{H}_{n+m},\\  
\Delta(\mathcal{H}_n))&\subseteq \oplus_{k=0}^n\mathcal{H}_{k}\otimes\mathcal{H}_{n-k},\\
S(\mathcal{H}_n)&\subseteq \mathcal{H}_n.
\end{align*}
If, in addition, $\mathcal{H}_0\cong \mathbb{K}$, the Hopf algebra is said to be \textit{connected}.

\end{dfn}
Given a graded Hopf algebra as above, one finds that $\text{ker}\; \hat{\mathbb{I}}=\text{Aug}\mathcal{H}:=\oplus_{n\geq1}^\infty\mathcal{H}_{n}$, called the \textit{augmentation ideal}.

\begin{dfn}[Filtration]
A Hopf algebra $\mathcal{H}$ is \textit{filtered} if there exists a tower of subspaces $\mathcal{H}^n\subseteq \mathcal{H}^{n+1}, n\in \mathbb{N}$, such that 

\begin{align*}
\mathcal{H}&=\sum_{n=0}^\infty\mathcal{H}^n,\\
m(\mathcal{H}^n\otimes\mathcal{H}^m)&\subseteq \mathcal{H}^{n+m},\\  
\Delta(\mathcal{H}^n))&\subseteq \sum_{k=0}^n\mathcal{H}^{k}\otimes\mathcal{H}^{n-k},\\
S(\mathcal{H}^n)&\subseteq \mathcal{H}^n.
\end{align*}
Note that every graduation implies a filtration by taking $\mathcal{H}^n=\oplus_{k=0}^n\mathcal{H}_k$.
\end{dfn}

\begin{dfn}[Primitive and group-like elements]
An element $x\in\mathcal{H}$ is \textit{primitive} if $\Delta(x)=\mathbb{I}\otimes x+x\otimes {\mathbb{I}}$. An element $x$ is \textit{group-like} if $\Delta(x)=x\otimes x$.
\end{dfn}

\subsection{Physical Theories as Combinatorial Classes of Graphs}

Now we are ready to define the renormalization Hopf algebras of Feynman diagrams, but first let us emphasize the combinatorial rephrasing of the physical setup already seen in the previous sections.

All of the enumerative aspects of QFT considered in this thesis are about Feynman diagrams. Feynman diagrams and their Hopf algebras will be key ingredients in the later chapters, although not explicitly affecting the combinatorial problems we consider. We will proceed by defining  combinatorial QFT theories, Feynman graphs, and Feynman rules. Then we will recover some of the concepts of renormalization. Recommended references for similar treatments are \cite{karenbook,manchonhopf, michi}.

The building block for Feynman graphs is going to be \textit{half edges}. An edge is intuitively understood to be formed from two half edges.

\begin{dfn}
A graph (or diagram) $G$ is a set of half edges for which there is 
\begin{enumerate}
    \item a partition $V(G)$ into disjoint classes of half edges, a class $v\in V(G)$ will be called a \textit{vertex};
    \item a collection $E(G)$ of disjoint pairs of half edges. $E(G)$ will be called the set of \textit{internal edges};
    \item half edges that are not occurring in any of the pairs in $E(G)$ will be called \textit{external edges} or \textit{external legs}.
\end{enumerate}

\end{dfn}

The size of a graph will be the size of its set of half edges. Half edges can be labelled or unlabelled, and sometimes we will use many types of half edges to represent a certain physical theory. We will be concerned at some point with graphs which have a prescribed set of external legs. The \textit{loop number} of a graph is the dimension of its cycle space, or in other words the number of independent cycles. There exists, in any graph, a family of independent cycles with each cycle having an edge not occurring in any of the other cycles in the family. The size of the largest such family is the number of independent cycles in the graph. Such a family of cycles can be obtained by starting with a spanning tree and reading off the new cycle created by adding one of the edges, one edge at a time (and with removing any edge added earlier). The loop number will be very important in our later considerations and will express some sort of size for diagrams with a prescribed scheme of external legs.

By $\text{Aut}(G)$ we mean the group of automorphisms (self isomorphisms) of the graph $G$. In the perturbative expansions that we will see, a Feynman diagram  will have a \textit{symmetry} factor of $1/|\text{Aut}(G)|$ (it is more common to write it as $1/\text{Sym}(G)$. We will usually work with unlabelled graphs, nevertheless, the symmetry factor will allow us to use the exponential relation between connected and disconnected objects.

The good thing about many of the aspects of quantum field theory is that they can be transformed into purely combinatorial and enumerative problems. 

\begin{dfn}\label{power counting weight}
A \textit{combinatorial physical theory} consists of 
\begin{enumerate}
    \item a dimension of spacetime (nonnegative integer);
    \item a number of half edge types, and a set of pairs of half edge types, with each pair representing an admissible edge type in the theory (note that the half edge types in one pair are not necessarily distinct nor identical);
    
    \item a collection of multisets of half edge types to define the options for a vertex in the theory;
    
    \item an integer weight for each edge or vertex type, called a \textit{power counting weight}.
    
\end{enumerate}
\end{dfn}

Thus, a graph in a certain theory $T$ will be a graph whose edges are of the types formed by the admissible pairs of $T$, and each of whose vertices is incident to an admissible multiset of half edges. Note that even oriented and unoriented edges can be formed this way: oriented edges arise from an admissible pair of half edges in which the two types are different, whereas unoriented ones arise from pairs with the two types identical.\\

\begin{exm}
\begin{enumerate}
    \item \textbf{QED: Quantum electrodynamics.} In QED there are 3 half edge types: a half photon, a front half fermion, and a back half fermion. The admissible combinations of half edges to form edges are: (1) a pair  of two half photons to give a photon edge, drawn as a wiggly line \raisebox{-0.1cm}{\includegraphics[scale=0.3]{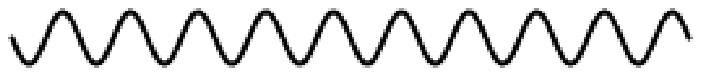}}, with power counting weight 2; and (2) a pair consisting of a front and back halves fermion to give a directed fermion edge \includegraphics[scale=0.8]{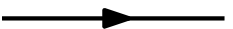}, with power counting weight 1. There is one type of a vertex, namely, a vertex is 3-valent and is incident to one of each half edge type, with power counting weight 0. The spacetime dimension is taken to be 4.
    
    \item \textbf{Yukawa theory:} This theory also has 3 types of half edges: a half meson, a front half fermion, and a back half fermion. The admissible edges are: (1) a meson edge formed by two half mesons (front and back), drawn as \raisebox{-0.1cm}{\includegraphics[scale=0.3]{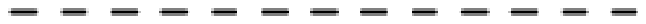}}, and has power counting weight 2; and (2) a pair of a front and back halves fermion to give a directed fermion edge \includegraphics[scale=0.8]{Figures/fermionedge.eps}, with weight 1. Just as in QED, there is one type of vertices, namely, a vertex is 3-valent and is incident to one of each half edge type, with power counting weight 0. The spacetime dimension is taken to be 4. The difference from QED lies in the Feynman rules.
\end{enumerate}
\end{exm}

As mentioned earlier, the significance of quantum field theory is the ability to describe how particles interact and scatter. In an idealized experiment some particles are sent in, they interact and scatter, and then the outcomes are detected. This picture can be visualized as a diagram in which the edges describe propagating particles. The idea then is that, on an atomic scale, we never know what exactly happened and every possible interaction is assigned a probability \textit{scattering amplitude}. This amounts into a weighted sum, known as a \textit{perturbative expansion}.  The probabilities in the theory are computed through what is known as a \textit{Feynman integral}. These integrals encountered by physicists are  often divergent and have to undergo renormalization to retrieve useful information.  As we saw before, Feynman graphs encode these complicated integrals, and the rules for this encoding  in a given QFT are known as \textit{Feynman rules}.

\begin{dfn}[Feynman Graphs]
A Feynman graph in a theory $T$ is combinatorially a graph structure in which edges fall into certain types and vertices are subject to conditions (pertinent to $T$) on the number of edges of a certain types attached to it. A Feynman graph represents an integral through the Feynman rules of the theory, which assigns an integrand factor contribution to every internal edge or vertex. The power counting weights give the degree of an integrated momentum variable. \end{dfn}

\begin{exm}
We have already seen  Example \ref{divex}, in which the integral in Figure \ref{F}  is a divergent Feynman integral and its corresponding Feynman diagram:

\centering
\begin{figure}[H]
   \centering
   \[   \begin{minipage}[h]{0.4\linewidth}
	\vspace{0pt}
	\includegraphics[width=\linewidth]{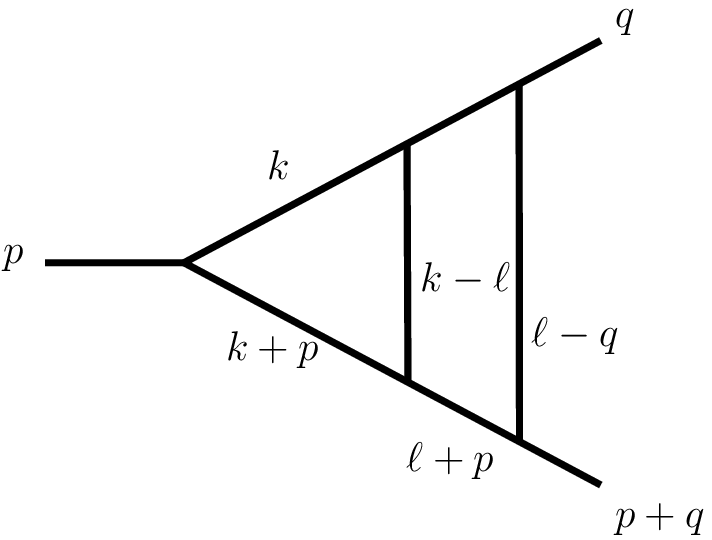}
    \end{minipage}
     = 
   \begin{minipage}[h]{0.5\linewidth}
	\vspace{0pt}
	$\displaystyle \int \int\frac{d^D\ell}{\ell^2(\ell-q)^2(\ell+p)^2} \frac{d^Dk}{k^2(k-\ell)^2(k+p)^2}.$
    \end{minipage}
   \]
   \caption{The Feynman integral of a Feynman graph}\label{F}
\end{figure}

\end{exm}

\subsection{Characters and Cocycles}

\begin{dfn}[Characters]
Let $\mathcal{H}$ be a connected bialgebra and $(A,\cdot,1_A)$ be a $\mathbb{K}$-algebra. A \textit{character} from $\mathcal{H}$ to $A$ is defined to be an algebra morphism with the extra property that $\phi(\mathbb{I})=1_A$. The set of all characters from $\mathcal{H}$ to $A$ is denoted by $G_A^\mathcal{H}$. Further, if $A$ is commutative, $G_A^\mathcal{H}$ becomes a group under convolution product \cite{panzer}. The inverses are denoted $\varphi^{\ast -1}:=\varphi\circ S$.

\end{dfn}

\begin{dfn}
Let $\mathcal{H}$ be a connected bialgebra and $A$ be a commutative algebra that can be written as a direct sum of two vector spaces. A \textit{Birkhoff decomposition} of a character $\phi$ is a pair of characters $\phi_+,\phi_-\in G_A^\mathcal{H}$ such that \[\phi=\phi_-^{\ast -1}\ast\phi_+ \;\text{\;and\;}\; \phi_{\pm}(\mathrm{ker}\hat{\mathbb{I}})\subseteq A_{\pm}.\] 
\end{dfn}

In \cite{conneskreigeom} it was shown that dimensional regularization (viewing the integral over dimension $D-2\epsilon$ and expanding in $\epsilon$) can be studied in terms of characters into the algebra of meromorphic functions in $\epsilon$.

\begin{thm}[\cite{manchonhopf}]\label{birkhoff}

Let $\mathcal{H}$  be a connected filtered Hopf algebra, and let $G_A^\mathcal{H}$ be the group of characters with the convolution product. Then any character $\varphi\in G_A^\mathcal{H}$ has a unique Birkhoff decomposition 
\[\varphi=\varphi^{\ast -1}_-\ast\varphi_+,\] where $\varphi_- ,\varphi_+\in G_A^\mathcal{H}$, with $\varphi_-$ mapping the augmentation ideal into $A_-$, and  with $\varphi_+$ mapping $\mathcal{H}$ into $A_+$. The characters naturally satisfy $\varphi_-(\mathbb{I})=1_A=\varphi_+(\mathbb{I})$ and are defined recursively over the augmentation ideal as 

\begin{align*}
    \varphi_-(x)&=-\pi\big(\varphi(x)+\sum_x\varphi_-(x')\varphi(x'')\big), \;\text{and}\\
    \varphi_+(x)&=(\mathrm{id}-\pi)\big(\varphi(x)+\sum_x\varphi_-(x')\varphi(x'')\big),\end{align*}
    where $\pi$ is the projection of $A$ onto $A_-$, and the sum is making use of Sweedler's notation for the coproduct.

\end{thm}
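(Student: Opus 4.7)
The plan is to construct $\varphi_\pm$ by induction on the filtration degree of elements in $\mathcal{H}$, then verify in turn the factorization, the image conditions, multiplicativity, and uniqueness. The foundation is the following consequence of connectedness: if $x \in \mathcal{H}^n \cap \mathrm{ker}\,\hat{\mathbb{I}}$, then the reduced coproduct $\tilde{\Delta}(x) := \Delta(x) - \mathbb{I} \otimes x - x \otimes \mathbb{I}$ sits in $\sum_{k=1}^{n-1} \mathcal{H}^k \otimes \mathcal{H}^{n-k}$. This makes the right-hand sides of the given recursive formulas for $\varphi_\pm(x)$ depend only on values of $\varphi_-$ on strictly smaller filtration pieces, so after setting $\varphi_\pm(\mathbb{I}) = 1_A$ the recursion is well-founded.

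With the recursion taken as the definition, the image conditions $\varphi_-(\mathrm{ker}\,\hat{\mathbb{I}}) \subseteq A_-$ and $\varphi_+(\mathcal{H}) \subseteq A_+$ are built in, since the right-hand sides are literally the images of $-\pi$ and $\mathrm{id} - \pi$. Writing $y := \varphi(x) + \sum_x \varphi_-(x') \varphi(x'')$, the recursion gives $\varphi_-(x) = -\pi(y)$ and $\varphi_+(x) = y - \pi(y)$, so $\varphi_-(x) + y = \varphi_+(x)$. Unfolding the convolution $(\varphi_- \ast \varphi)(x) = \varphi_-(x) + \varphi(x) + \sum_x \varphi_-(x')\varphi(x'') = \varphi_-(x) + y$, this shows $\varphi_- \ast \varphi = \varphi_+$, equivalently $\varphi = \varphi_-^{\ast-1} \ast \varphi_+$, provided $\varphi_-$ admits a $\ast$-inverse (established in the next step).

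The main obstacle I anticipate is proving that $\varphi_\pm$ are algebra morphisms, which is what upgrades them to genuine elements of $G_A^\mathcal{H}$. This requires the tacit assumption on the splitting that $A_-$ and $A_+$ are subalgebras, equivalently that $\pi$ satisfies the Rota-Baxter identity of weight $-1$, namely $\pi(a)\pi(b) + \pi(ab) = \pi(\pi(a)b + a\pi(b))$. My approach is a simultaneous induction on $|x| + |y|$: expand $\Delta(xy) = \Delta(x)\Delta(y)$ using the bialgebra axiom, apply the inductive hypothesis to all strictly smaller Sweedler pieces appearing inside the projection, and finally invoke the Rota-Baxter identity to collapse the projected expression for $\varphi_-(xy)$ into $\varphi_-(x)\varphi_-(y)$. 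The argument for $\varphi_+$ is parallel, and also follows from $\varphi_+ = \varphi_- \ast \varphi$ together with commutativity of $A$. Multiplicativity of $\varphi_-$ immediately yields existence of its convolution inverse $\varphi_-^{\ast-1} = \varphi_- \circ S$.

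Finally, for uniqueness, any competing decomposition $\varphi = \psi_-^{\ast-1} \ast \psi_+$ satisfying the image conditions yields a character $\theta := \psi_- \ast \varphi_-^{\ast-1} = \psi_+ \ast \varphi_+^{\ast-1}$ whose values on $\mathrm{ker}\,\hat{\mathbb{I}}$ lie simultaneously in $A_-$ and in $A_+$; since $A_- \cap A_+ = 0$, an induction on filtration degree forces $\theta = \mathbb{I} \circ \hat{\mathbb{I}}$, the unit of $G_A^\mathcal{H}$, and hence $\psi_\pm = \varphi_\pm$.
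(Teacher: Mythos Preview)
The paper does not give its own proof of this theorem; it is stated with a citation to Manchon's survey \cite{manchonhopf} and used as background for the Hopf-algebraic renormalization machinery. Your proposal follows the standard Connes--Kreimer/Manchon argument essentially verbatim: well-foundedness of the recursion from connectedness of the filtration, the factorization identity $\varphi_-\ast\varphi=\varphi_+$ from adding the two recursive formulas, multiplicativity via the Rota--Baxter identity for $\pi$, and uniqueness from $A_-\cap A_+=0$. This is precisely the proof one finds in the cited reference, so there is nothing to contrast.

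One small remark: in your uniqueness step you should make explicit that $\varphi_\pm^{\ast-1}=\varphi_\pm\circ S$ again map the augmentation ideal into $A_\pm$ respectively (using that $S$ preserves $\ker\hat{\mathbb{I}}$ and that $A_\pm$ are subalgebras), since this is what guarantees $\theta(x)\in A_-\cap A_+$ rather than merely being a sum of such elements.
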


\begin{dfn}[Bogoliubov map]\label{hopf bog}
The \textit{Bogoliubov map} is  the map $b:G\longrightarrow \text{Hom}(\mathcal{H},A)$ defined recursively by

\[b(\varphi)(x)=\varphi(x)+\sum_x\varphi_-(x')\varphi(x'').\]

In particular, the decomposition in Theorem \ref{birkhoff} is  now seen via the Bogoliubov map as 
\[\varphi_-=-\pi\circ b(\varphi)\;,\qquad\text{and}\qquad\varphi_+=(\mathrm{id}-\pi)\circ b(\varphi).\]
\end{dfn}

 Before starting the next part, it must be noted that this section does not give a full account of the Hopf-algebraic treatment of renormalization. We are only interested in defining the expressions that we will encounter in our problems. The reader can refer to \cite{sweedler} for an in depth account on Hopf algebras. The Hopf algebra of Feynman graphs is also surveyed in the review article of D. Manchon \cite{manchonhopf}.

\subsection{The Hopf Algebra of divergent 1PI Diagrams}\label{hopfalg1PIsection}

Let $T$ be a fixed combinatorial physical theory in the sense of the previous section, and consider the $\mathbb{Q}$-vector space $\mathcal{H}$ generated by the set of disjoint unions of divergent $1PI$ Feynman graphs in the theory $T$, including the empty graph which we denote by $\mathbb{I}$.

We can define a multiplication $m$ on $\mathcal{H}$ to be taking the disjoint union, and the unit is the empty graph $\mathbb{I}$, this makes $(\mathcal{H},m,\mathbb{I})$ a commutative associative algebra.

Now we need to define a compatible coalgebra structure for $\mathcal{H}$.

Recall Definition \ref{contr} of residues and the contraction $\Gamma/\gamma$ of the subgraph $\gamma$ in $\Gamma$. For the sake of a precise general definition in the new terms we have

\begin{dfn}[Contraction of a Subgraph]\label{contractions}
Let $\Gamma$ be a Feynman graph in a theory $T$, and let $\gamma\subseteq \Gamma$ be a subgraph each of whose connected components is $1PI$ and divergent. The \textit{contraction graph} $\Gamma/\gamma$ is constructed as follows:
\begin{enumerate}
    \item A component of $\gamma$ with a vertex residue (external leg structure)  is contracted in $\Gamma$ into a vertex of the same type as the residue.
    
    \item A component of $\gamma$ with an edge residue (external leg structure)  is contracted in $\Gamma$ into an edge of the same type as the residue.
\end{enumerate}
\end{dfn}

Recall Definition \ref{spdivdefinition} of the superficial degree of divergence $\omega(\Gamma)$ of a graph $\Gamma$, and that $\omega(\Gamma)=D\;\ell(\Gamma)-\sum_a w(a)$ where the sum is over all the power counting weights (Definition \ref{power counting weight})  of vertices and internal edges in $\Gamma$ determined by the  QFT theory considered, and where $\ell(\Gamma)$ is the number of loops in $\Gamma$.

\begin{dfn}[Subdivergence]\label{subdiv}
A subgraph $\gamma$ of $\Gamma$ with divergent $1PI$ connected components   is called a \textit{subdivergence}. 
\end{dfn}

Then we define the coproduct as

\begin{dfn}\label{coprod}
The coproduct $\Delta:\mathcal{H}\longrightarrow\mathcal{H}\otimes\mathcal{H}$ is defined for a connected Feynman graph $\Gamma$ to be 
\[\Delta(\Gamma)=\underset{1PI \;\text{subgraphs}}{\underset{\gamma\;\text{product of divergent }}{\underset{\gamma\subseteq\Gamma}{\sum}}}\gamma\otimes\Gamma/\gamma\]

and extended as an algebra morphism.
\end{dfn}

Note that since we are considering graphs that are themselves divergent, the coproduct sum for any element in $\mathcal{H}$ will  always start as 

\[\Delta(\Gamma)=\mathbb{I}\otimes\Gamma+\Gamma\otimes\mathbb{I}+\widetilde{\Delta}(\Gamma).\]
 
The part $\widetilde{\Delta}(\Gamma)$ of the coproduct is called the \textit{reduced coproduct}. 

\begin{dfn}[Primitive Elements]\label{primitivediags}
An element $\Gamma\in\mathcal{H}$ is said to be \textit{primitive } if 
$\widetilde{\Delta}(\Gamma)=0$. That is, ${\Delta}(\Gamma)=\mathbb{I}\otimes\Gamma+\Gamma\otimes\mathbb{I}$.
 In particular, a primitive $1PI$ graph $\Gamma$ is a $1PI$ graph that contains no subdivergences in the sense of Definition \ref{subdiv}.\end{dfn}

For example let us calculate the coproduct 

\begin{flalign*}
    &\Delta\bigg(\;\raisebox{-0.5cm}{\includegraphics[scale=0.3]{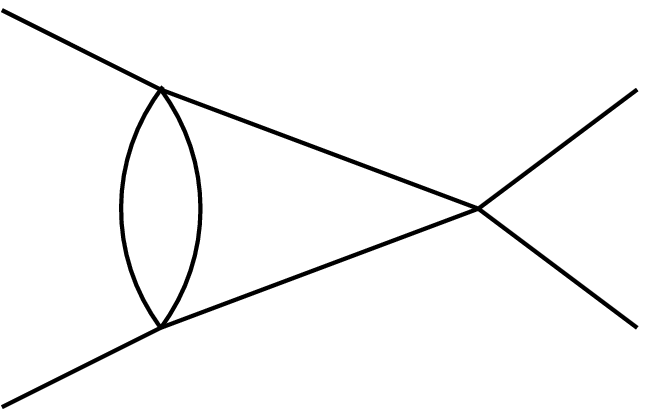}}-\raisebox{-0.30cm}{\includegraphics[scale=0.3]{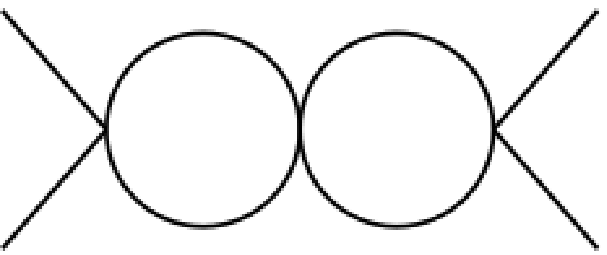}}\;\bigg)\\
    =
    &\;\mathbb{I}\otimes
    \bigg(\;\raisebox{-0.5cm}{\includegraphics[scale=0.3]{Figures/coprodsmall1.eps}}-\raisebox{-0.30cm}{\includegraphics[scale=0.3]{Figures/coproduct3.eps}}\;\bigg)+
  \bigg(\;\raisebox{-0.5cm}{\includegraphics[scale=0.3]{Figures/coprodsmall1.eps}}-\raisebox{-0.30cm}{\includegraphics[scale=0.3]{Figures/coproduct3.eps}}\;\bigg)\otimes\mathbb{I}\;+\qquad\qquad\qquad\qquad\\
    &\;+\;\raisebox{-0.3cm}{\includegraphics[scale=0.3]{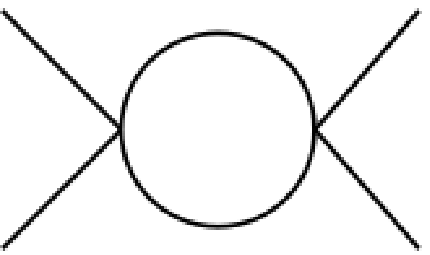}}\;\otimes\;\raisebox{-0.3cm}{\includegraphics[scale=0.3]{Figures/coproduct2.eps}}\;-\;2\;\raisebox{-0.3cm}{\includegraphics[scale=0.3]{Figures/coproduct2.eps}}\;\otimes\;\raisebox{-0.3cm}{\includegraphics[scale=0.3]{Figures/coproduct2.eps}}\\
    =
    &\;\mathbb{I}\otimes\bigg(\;\raisebox{-0.5cm}{\includegraphics[scale=0.3]{Figures/coprodsmall1.eps}}-\raisebox{-0.30cm}{\includegraphics[scale=0.3]{Figures/coproduct3.eps}}\;\bigg)+
  \bigg(\;\raisebox{-0.5cm}{\includegraphics[scale=0.3]{Figures/coprodsmall1.eps}}-\raisebox{-0.30cm}{\includegraphics[scale=0.3]{Figures/coproduct3.eps}}\;\bigg)\otimes\mathbb{I}-
\raisebox{-0.3cm}{\includegraphics[scale=0.3]{Figures/coproduct2.eps}}\otimes\raisebox{-0.3cm}{\includegraphics[scale=0.3]{Figures/coproduct2.eps}}.
\end{flalign*}
   
   Finally, let\;\; $\hat{\mathbb{I}}:\mathcal{H}\longrightarrow\mathbb{Q}$\;\; be the map defined on the empty graph by sending $q\mathbb{I}$ to $q\in\mathbb{Q}$ and sending every other element in $\mathcal{H}$ to zero.
Then it is not hard to prove the following proposition (see \cite{karenbook,cutcut,karenthesis} for a proof)

\begin{prop}
As per the above definitions, $(\mathcal{H},m,\mathbb{I},\Delta,\hat{\mathbb{I}})$ is a bialgebra. Further, if we define a map $S:\mathcal{H}\longrightarrow\mathcal{H}$ recursively by

\begin{align*}
    S(\mathbb{I})&=\mathbb{I}\;, \text{\;and}\\
    S(\Gamma)&=-\Gamma-\underset{1PI \;\text{subgraphs}}{\underset{\gamma\;\text{product of divergent }}{\underset{\mathbb{I}\neq\gamma\neq\Gamma}{\underset{\gamma\subseteq\Gamma}{\sum}}}}\;S(\gamma)\;\Gamma/\gamma,\end{align*}
then $(\mathcal{H},m,\mathbb{I},\Delta,\hat{\mathbb{I}},S)$ becomes a Hopf algebra woth antipode $S$. (Note that the product in the second term is the product $m$ abbreviated). Moreover, the Hopf algebra $\mathcal{H}$ is commutative and is graded by the loop number.
\end{prop}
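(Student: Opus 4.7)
The plan is to check the bialgebra axioms first, install the grading by loop number (from which connectedness follows), and then invoke the standard fact that a connected graded bialgebra carries a unique antipode defined by a recursion of exactly the form stated; commutativity is immediate from disjoint union being commutative.

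\textit{Algebra, coalgebra and compatibility.} The triple $(\mathcal{H},m,\mathbb{I})$ is an associative commutative unital algebra because disjoint union is associative, commutative, and has the empty graph as identity. For the coalgebra axioms, the counit identity $(\hat{\mathbb{I}}\otimes\mathrm{id})\circ\Delta=\mathrm{id}=(\mathrm{id}\otimes\hat{\mathbb{I}})\circ\Delta$ follows because in $\Delta(\Gamma)=\sum_\gamma \gamma\otimes\Gamma/\gamma$ the only terms on which $\hat{\mathbb{I}}$ does not vanish come from $\gamma=\mathbb{I}$ (respectively $\gamma=\Gamma$), for which $\Gamma/\mathbb{I}=\Gamma$ (respectively $\Gamma/\Gamma=\mathbb{I}$). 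The multiplicative compatibility $\Delta(\Gamma_1\Gamma_2)=\Delta(\Gamma_1)\Delta(\Gamma_2)$ is a direct combinatorial observation: a product of divergent $1PI$ subgraphs of the disjoint union $\Gamma_1\sqcup\Gamma_2$ is exactly a pair of such subgraphs chosen in $\Gamma_1$ and $\Gamma_2$ separately, and contraction commutes with disjoint union.

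\textit{Coassociativity (the main obstacle).} After unpacking the definitions, both $(\Delta\otimes\mathrm{id})\circ\Delta(\Gamma)$ and $(\mathrm{id}\otimes\Delta)\circ\Delta(\Gamma)$ should be shown to equal
\[
\sum_{\gamma_1\subseteq\gamma_2\subseteq\Gamma} \gamma_1\otimes(\gamma_2/\gamma_1)\otimes(\Gamma/\gamma_2),
\]
ranging over nested pairs in which each $\gamma_i$ is a product of divergent $1PI$ subgraphs of $\Gamma$ (with $\gamma_2/\gamma_1$ acquiring external legs as in Definition \ref{contractions}). For the left-hand side, expanding $\Delta$ on the first tensor factor uses that the divergent $1PI$ subdivergences of a subgraph $\gamma$ of $\Gamma$ are precisely the divergent $1PI$ subdivergences of $\Gamma$ contained in $\gamma$. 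For the right-hand side, expanding $\Delta$ on the second factor requires the bijection between divergent $1PI$ subgraphs of the cograph $\Gamma/\gamma_1$ and divergent $1PI$ subgraphs $\gamma_2$ of $\Gamma$ containing $\gamma_1$, together with the identity $(\Gamma/\gamma_1)/(\gamma_2/\gamma_1)=\Gamma/\gamma_2$. I expect the delicate bookkeeping here, in particular distinguishing vertex-type and edge-type contractions when a subdivergence sits astride another, to be the subtle point; checking the bijection at the level of half edges resolves the ambiguities.

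\textit{Grading, connectedness, antipode, commutativity.} Define $\mathcal{H}_n$ to be the span of graphs of loop number $n$. Disjoint union of graphs adds loop numbers, so $m(\mathcal{H}_n\otimes\mathcal{H}_m)\subseteq\mathcal{H}_{n+m}$. The identity $\ell(\gamma)+\ell(\Gamma/\gamma)=\ell(\Gamma)$, which yields $\Delta(\mathcal{H}_n)\subseteq\bigoplus_{k=0}^n\mathcal{H}_k\otimes\mathcal{H}_{n-k}$, follows by applying the Euler relation $\ell=|E|-|V|+c$ to $\Gamma$, $\gamma$ and $\Gamma/\gamma$ while tracking that contraction of a $1PI$ component removes its edges and identifies its vertices (with a single adjustment of one extra vertex or one extra edge depending on whether the residue is vertex-type or edge-type). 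Connectedness $\mathcal{H}_0\cong\mathbb{Q}\mathbb{I}$ then follows because any nonempty generator contains at least one divergent $1PI$ graph, and such a graph has at least one loop, since a tree has negative superficial degree of divergence in the theories under consideration. Standard theory for connected graded bialgebras, as surveyed in \cite{manchonhopf}, guarantees a unique antipode, and the recursion $S(\Gamma)=-\Gamma-\sum_\gamma S(\gamma)\,\Gamma/\gamma$ is exactly what one gets by solving $m\circ(S\otimes\mathrm{id})\circ\Delta=\mathbb{I}\circ\hat{\mathbb{I}}$ inductively on the loop number; the same induction yields $S(\mathcal{H}_n)\subseteq\mathcal{H}_n$. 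Commutativity of $\mathcal{H}$ is nothing more than commutativity of disjoint union.
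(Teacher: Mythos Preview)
Your sketch is correct and follows exactly the standard route. The paper itself does not supply a proof: it merely states that the proposition ``is not hard to prove'' and points to \cite{karenbook,cutcut,karenthesis}. Your outline---checking the algebra and coalgebra axioms directly, verifying compatibility via the observation that subdivergences of a disjoint union split, reducing coassociativity to the nested-subgraph bijection and the identity $(\Gamma/\gamma_1)/(\gamma_2/\gamma_1)=\Gamma/\gamma_2$, installing the loop-number grading via Euler's relation, and then appealing to the general fact that a connected graded bialgebra has a unique antipode given by the stated recursion---is precisely the argument one finds in those references (see in particular Manchon's survey \cite{manchonhopf}). There is nothing to correct.
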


\begin{rem}\label{core hopf}
In the next section we will broadly see how renormalization is represented in this algebraic context of Hopf algebras. Our job ends with learning the meaning of some of the expressions that will show up again in our problems. It should be noted however that, as expected, this is not the only meaningful appearance of Hopf algebras in quantum field theory. Namely, if the condition of divergence is dropped from the elements summed over in the definitions of the coproduct and the antipode,  we get the so-called  \textit{the core Hopf algebra}, denoted $\mathcal{H}_c$. It turns out that $\mathcal{H}_c$ interplays with Cutkosky cuts in graphs, this is related to the unitarity of the $S$-matrix \cite{cutcut}.
\end{rem}

\subsection{Renormalization in Hopf algebras}

\subsubsection{Feynman Rules and Characters of $\mathcal{H}$:}

Let a theory $T$ be fixed as before, and let $\mathcal{H}$ be the Hopf algebra generated by sets of divergent $1PI$ Feynman graphs in $T$. We start by thinking of Feynman rules as a map $\phi$ that assigns formal integrals to elements in $\mathcal{H}$, and we investigate what conditions should be imposed on $\phi$ to fully interpret the Feynman rules.

For the Feynman rules, we need to satisfy certain criteria:

\begin{enumerate}
    \item The map $\phi$ should be multiplicative on disjoint unions of graphs. Moreover, the map should also have a multiplicative property for bridges (remember the discussion at the beginning of Section \ref{section}). The latter requirement enables us to start defining $\phi$ over $1PI$ diagrams. The leap from all Feynman graphs to $1PI$ graphs is done through the Legendre transform, which has been redefined recently as a purely combinatorial map \cite{kjm2,kjm1}. 
    
    \item The map $\phi$, representing Feynman rules, has also to adapt with the combinatorial Dyson-Schwinger equations. Precisely, it has
    to interplay nicely with the process of \textit{insertion} which we discuss in the next section.
    \end{enumerate}

\textbf{(A)}\label{AAA} All of this was seen to suggest that the Feynman rules are to be represented by a character $\phi\in G^\mathcal{H}_A$, where $A$ is a suitably chosen commutative algebra. The target algebra $A$ is usually taken to be the algebra $\mathbb{C}[L][[z^{-1},z]]$ of Laurent series whose coefficients are polynomials in an energy scale $L$. For example, in \cite{renorm}, $L=\log (q^2/\mu^2)$ where $q$ and $\mu$ are the external momenta and the renormalization scale respectively.

\subsubsection{Rota-Baxter Operators:}

Let $A$ be an algebra as before. An operator (linear map on $A$) $R:A\longrightarrow A$ is said to be a \textit{Rota-Baxter operator} if it satisfies

\[R[ab]\;+\;R[a]\;R[b]\;=\;R\big[R[a]\;b\;+\;a\;R[b]\big],\]
for all $a,b\in A$.

It turns out that the truncated Taylor operator  $T^{\omega(\Gamma)}$ (see Section \ref{BP}) is a Rota-Baxter operator. This relation between renormalization and Rota-Baxter operators has been extensively studied in \cite{kurusch, kurusch2}.

\textbf{(B)} \label{BBB} In general, a  Rota-Baxter operator will be used to express a map which sends a formal integral to the evaluation of the integral at the subtraction point in the renormalization scheme. In other words, $R$ produces the counterterms as in equation \ref{counterterms}. If $\Gamma$ is a divergent graph with no subdivergences, $R\phi(\Gamma)$ will stand for the ill part of the integral $\phi(\Gamma)$.
\\

It remains to setup a technology for dealing  with subdivergences recursively.

Define a linear map $S^{\phi}_R:\mathcal{H}\longrightarrow A$
by $S^{\phi}_R(\mathbb{I})=1_A$ and 

\begin{equation}\label{S_R}
    S^{\phi}_R(\Gamma)=-R(\phi(\Gamma))-\underset{1PI \;\text{subgraphs}}{\underset{\gamma\;\text{product of divergent }}{\underset{\mathbb{I}\neq\gamma\subsetneq\Gamma}{\sum}}}S^{\phi}_R(\gamma)\; R(\phi(\Gamma/\gamma)),
\end{equation}
 and extended to all of $\mathcal{H}$ as a morphism of algebras. 

\textbf{(C)} Then the \textit{renormalized} Feynman rules are defined to be 
\begin{equation}
    \phi_R=S^{\phi}_R\ast\phi.
\end{equation}

It can be shown that $\phi_R(x)=(S^{\phi}_R\ast\phi)(x)=(\mathrm{id}_A-R)b(\phi)(x),$
where $b$ is the Bogoliubov map defined before (Definition \ref{hopf bog}) \cite{renorm}.
\\

By (A), (B), and (C), the conclusion is that the approach of renormalization is as follows:
(1) We express Feynman graphs in a graded Hopf algebra $\mathcal{H}$, and interpret the Feynman rules as characters from $\mathcal{H}$ to some commutative algebra $A$. (2) A renormalization scheme is determined via a Rota-Baxter operator on $A$, this also determines a Birkhoff decomposition $A=A_-\oplus A_+$ into two subalgebras. (3) The renormalized Feynman rules are obtained through the coproduct and the map $S^{\phi}_R$. For explicit examples and applications of this approach the reader can refer to \cite{renorm,karenbook,manchonhopf}.

\chapter{The Dyson-Schwinger Equations and The Enumeration of Chord Diagrams}\label{chchords1}

This chapter  presents some results about the combinatorial class of \textit{chord diagrams}  (also known as \textit{linked diagrams} \cite{steinandeveret}), which are basically matchings on finite ordered sets.

The interest in the combinatorial structure of chord diagrams is also motivated by questions arising from quantum field theory, more specifically, in the context of the Dyson-Schwinger equations \cite{yu, Karenmarkushihn}. As we will see in Section \ref{DSE}, chord diagrams are used in providing series solutions the Dyson-Schwinger equations. For the sake of completeness, the chapter will start with an overview of the combinatorial treatment of the Dyson-Schwinger equation, then in Section \ref{chorddiagrams section} we will  proceed with the basics of chord diagrams; Section \ref{factorially} is a necessary preparation for the approach applied by M. Borinsky \cite{michi, michi1} for studying the asymptotics of factorially divergent series in general. Finally, in Section \ref{asymptotics o connected} we will prove a result related to the asymptotics of the number of connected chord diagrams, namely we will show that sequence
\href{https://oeis.org/A088221}{A088221} of the OEIS counts pairs of rooted connected chord diagrams.

\section{Combinatorics of Dyson-Schwinger Equations}\label{DSE}

\subsection{Insertions }
Definition \ref{contractions} introduces the notion of contracting a subgraph within a bigger graph. One can think of a reverse operation in terms of \textit{inserting} a graph $\gamma$ into another graph $\Gamma$ as a subgraph, in one of the potential positions (\textit{insertion places}) in $\Gamma$ that can host $\gamma$. An insertion place has to be compatible with the external leg structure of the graph being inserted.

\begin{dfn}[Insertion]\label{insertion}
Let $\gamma$ be a Feynman graph with external leg structure $r=\mathrm{res}(\gamma)$. Let $\Gamma$ be a Feynman graph with a vertex or an internal edge of the same type as $r$. 
\begin{enumerate}
    \item If $r$ is of edge type, and $e$ is an internal edge in $\Gamma$ of the same type,  then we can \textit{insert} $\gamma$ into $\Gamma$ as follows:
    
    Break the edge $e$ into two half edges, each of which is identified with one of the two compatible external legs of $\gamma$. 
    
    \item If $r$ is of vertex type, and $v$ is a vertex of the same type in $\Gamma$, then we can \textit{insert} $\gamma$ into $\Gamma$ as follows:
    Break every edge incident to $v$, and, in a compatible way, which may not be unique, attach the external legs of $r$ to the resulting half edges in $\Gamma-v$.
\end{enumerate} 

The places $e$ or $v$ in the above scenarios are called \textit{insertion places}. Notice that the way to insert $\gamma$ into $\Gamma$ at a certain insertion place is not unique and depends on the symmetries of the graphs.
\end{dfn}

We wish now to define an operator $B_+^\Gamma$ that inserts graphs into a fixed graph $\Gamma$. 

\begin{dfn}[\cite{karenbook}]
For a connected $1PI$ Feynman graph $\Gamma$ we define
\[B_+^\Gamma(X)=\underset{G\;\text{1PI\;graph}}{\sum} \displaystyle\frac{\text{bij}(\Gamma,X,G)}{|X|_*}\frac{1}{\text{maxf}(G)}\frac{1}{(\Gamma|G)}\;G,\]

where 
\begin{enumerate}
    \item $\text{maxf}(G)$ is the number of insertion trees corresponding to $G$,
    \item $|X|_*$ is the number of distinct graphs obtained from permuting the external legs in $X$,
    \item $\text{bij}(\Gamma,X,G)$ is the number of bijections of the external legs of $X$ which have an insertion place in $\Gamma$ so that the insertion gives $G$.
    \item $(\Gamma|G)$ is the number of insertion places for $X$ in $\Gamma$.
\end{enumerate}
\end{dfn}

\begin{rem}\label{mercy}
See \cite{kreimerB} Theorem 4 for a justification of this definition. In the case of trees, the operation $B_+(T_1\cdots T_m)$ takes the rooted trees $T_1,\cdots,T_m$ and attach all of their roots as children of a new added root, getting a single rooted tree (the name \textit{grafting} operator makes sense in this case).  
\end{rem}

In the case of rooted trees described in the remark above, if $\mathcal{H}_C$ denotes the Connes-Kreimer Hopf algebra of rooted trees \cite{conneskreigeom,karenbook}, then the grafting operator $B_+$ is characterized as being a \textit{Hochschild  $1$-cocycle}, that is:  \[\Delta\circ B_+(t)=(\mathrm{id}\otimes B_+)\circ\Delta(t)+B_+(t)\otimes \mathbb{I}.\]

 This property will be highlighted in the next section as it is crucial to the algebraic reconstruction of renormalization in the approach pioneered by D. Kreimer and his collaborators. For more about this algebraic treatment and concepts see the original paper by D. Kreimer and A. Connes \cite{conneskreigeom} or \cite{karenbook}.

\begin{rem}\label{B+cocycle}
In order to get a $1$-cocycle from the operators $B_+^\gamma$ it turns out that we can not work with individual primitive graphs, rather, we should sum over all primitive graphs of a given loop number \cite{karenbook,kreimerB}
\end{rem}

The thing we used intuitively in drawing up the combinatorial Dyson-Schwinger equations in  section \ref{ddd} was exactly insertions of graphs. To express the Dyson-Schwinger equations in terms of the operators $B_+^\Gamma$ we need to know more about the number of insertion places in a given graph.

Let us assume that the combinatorial theory we are considering now has only one vertex type $v$, and let $d$ be the  degree of any such vertex. Also set $n(e)$ to be the number of half edges of type $e$ appearing in the external legs of vertex-type $v$. By definition we set $n(v)=1$.

\begin{prop}[\cite{karenbook}]
Let $\Gamma$ be a $1PI$ graph in a QFT theory of the type described above, that is, the theory has only one vertex type $v$ with $d$ being the degree of such a vertex.  Let $r=\mathrm{res}(\Gamma)$, and $\ell=\ell(\Gamma)$ (the loop number). Also let $n(e)$  be the number of half edges of type $e$ appearing in the external legs of vertex-type $v$. Besides, define $n(v)=1$. Then 

\begin{enumerate}
    \item $\Gamma$ has $\displaystyle\frac{2\ell n(s)}{d-2}$ insertion places for every type $s\neq r$;
    \item If $r$ is vertex-type, then $\Gamma$ has\; $1+\displaystyle\frac{2\ell n(r)}{d-2}$ insertion places for type $r$; and
    \item If $r$ is not vertex-type, then $\Gamma$ has\; $-1+\displaystyle\frac{2\ell n(r)}{d-2}$ insertion places for type $r$.
\end{enumerate}
\end{prop}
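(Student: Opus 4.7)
The plan is to derive the three formulas by elementary half-edge counting together with Euler's formula $\ell = I - V + 1$ for a connected graph. Let $V$ denote the number of vertices of $\Gamma$, let $I_s$ and $E_s$ denote the numbers of internal edges and external legs of type $s$, and set $I = \sum_s I_s$ and $E = \sum_s E_s$. Since each vertex carries $n(s)$ half-edges of type $s$, each internal $s$-edge absorbs two and each external $s$-leg absorbs one, we have $V n(s) = 2 I_s + E_s$; summing over $s$ gives $V d = 2 I + E$. Combining with $\ell = I - V + 1$ (valid because $\Gamma$ is $1PI$ and hence connected) produces
\[V(d-2) = 2\ell + E - 2,\qquad I_s = \tfrac{1}{2}(V n(s) - E_s).\]

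The external leg data is then read off from the residue $r$. If $r$ is vertex-type then $E = d$ and $E_s = n(s)$ for every edge type $s$, while if $r$ is edge-type then $E = 2$, $E_r = 2$, and $E_s = 0$ for $s \neq r$. The number of insertion places of an edge-type residue $s$ into $\Gamma$ is $2 I_s$ when $s \neq r$, where each internal $s$-edge contributes two, corresponding to the two matchings of the external legs of the inserted graph with the half-edges produced by bisecting that edge; when $s = r$ there is an additional $+1$ accounting for the external $r$-edge of $\Gamma$ as a further insertion site. The number of insertion places of a vertex-type residue is simply $V$. Substituting the expression for $V$ into each of these counts, and using the convention $n(v) = 1$ to subsume the vertex-type subcase into formula (1) when $r$ is edge-type, yields the three stated formulas by direct calculation.

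The arithmetic is essentially bookkeeping. The only conceptually delicate point is the asymmetry producing $2 I_r + 1$ (rather than $2 I_r$ or $2 I_r + 2$) in case (3): the two external legs of $\Gamma$ are to be viewed collectively as one additional $r$-edge passing through the $1PI$ core, available for bisection but contributing a single insertion place rather than two because both of its ends are pinned as external. Making this convention precise and consistent with the definition of $B_+^\Gamma$ to be used in the Dyson--Schwinger analysis of the rest of the chapter is the main hurdle; once it is in place, the formulas fall out immediately from the identities of the first paragraph.
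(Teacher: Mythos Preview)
The paper does not prove this proposition at all; it is quoted verbatim from \cite{karenbook} and left as a citation. So there is no proof in the paper for you to be compared against. Your half-edge bookkeeping combined with Euler's formula is exactly the standard route to such counting statements, and your identities $V(d-2)=2\ell+E-2$ and $2I_s=Vn(s)-E_s$ are the right ones; from them the three formulas do drop out by the case analysis you outline.

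There is, however, a genuine soft spot in your argument: the assertion that an internal edge of type $s$ contributes \emph{two} insertion places ``corresponding to the two matchings of the external legs of the inserted graph with the half-edges produced by bisecting that edge.'' For an oriented edge type such as the fermion line in QED there is only one compatible matching, not two, so this reasoning does not supply the factor you need. Moreover, the paper's own Definition~\ref{insertion} treats an internal edge as a single insertion place, and the QED Dyson--Schwinger equations (\ref{e1})--(\ref{e3}) have exponents $\ell$ and $2\ell$ for the photon and fermion, matching $I_s$ rather than $2I_s$. So the factor of $2$ in the stated formula is a matter of convention in \cite{karenbook} (or possibly a transcription issue), not something your matching argument establishes. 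You already sense this when you say that making the convention precise is ``the main hurdle''; I would replace the matching justification with an explicit statement that you are taking $2I_s$ (respectively $2I_r+1$) as the definition of the insertion-place count needed to reproduce the stated formula, and note the tension with Definition~\ref{insertion} and the QED example.
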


\begin{exm}

In QED (quantum electrodynamics) we have only one vertex type, namely \raisebox{-0.67cm}{\includegraphics[scale=0.25]{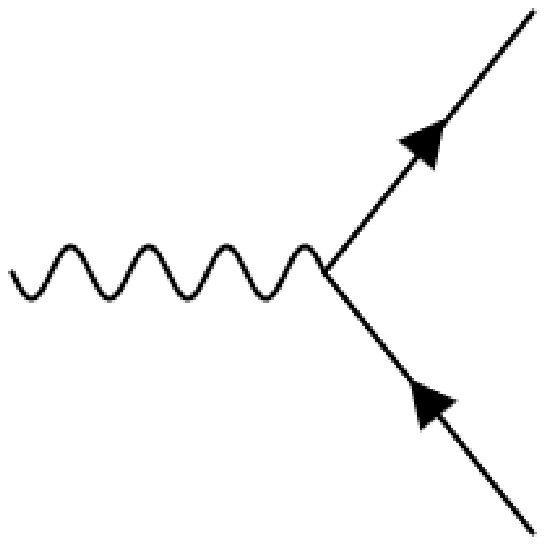}}, and two edge types: a photon edge  \raisebox{-0.1cm}{\includegraphics[scale=0.26]{Figures/photonedge.eps}}, and a fermion edge \;\raisebox{-0.0cm}{\includegraphics[scale=0.7]{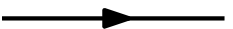}}.

We are going to follow the notation used in \cite{karenbook}, namely

\begin{itemize}
    \item $X^{\text{vertex}}$ is the \textit{vertex series}, whose $n$th coefficient is the sum of all $1PI$ QED diagrams with residue  \raisebox{-0.57cm}{\includegraphics[scale=0.2]{Figures/QEDvertex.eps}} and loop number $n$.
    
    \item $X^{\text{photon}}$ is the \textit{photon edge series}, whose $n$th coefficient is $(-1)\times$\{the sum of all $1PI$ QED diagrams with residue  \raisebox{-0.14cm}{\includegraphics[scale=0.2]{Figures/photonedge.eps}} and loop number $n$\}.
    
    \item $X^{\text{fermion}}$ is the \textit{fermion edge series}, whose $n$th coefficient is $(-1)\times$\{the sum of all $1PI$ QED diagrams with residue  \raisebox{0cm}{\includegraphics[scale=0.48]{Figures/ferm1.eps}} and loop number $n$\}.
\end{itemize}

\begin{rem}
Notice that the negative signs with the edge series arise as we will be actually interested in sequences of such diagrams, and so if $Y$ is the original generating function then we are to get a geometric series $\displaystyle\frac{1}{1-Y}$. Then we use $X=1-Y$.
\end{rem}

\setlength{\parindent}{0.5cm}
Then we have 
\begin{align}
    X^{\text{vertex}}&=\mathbb{I}+ \underset{\text{vertex residue}}{\underset{\gamma \text{\;primitive with}}{\sum}} x^{\ell(\gamma)}
    \;B_+^\gamma\left(\displaystyle\frac{\left(X^{\text{vertex}}\right)^{1+2\ell(\gamma)}}{\left(X^{\text{photon}}\right)^{\ell(\gamma)}\left(X^{\text{fermion}}\right)^{2\ell(\gamma)}}\right),\label{e1}\\
    &\nonumber\\
    X^{\text{photon}}&=\mathbb{I}-  x
    \;B_+^{\raisebox{-0.84cm}{\includegraphics[scale=0.182]{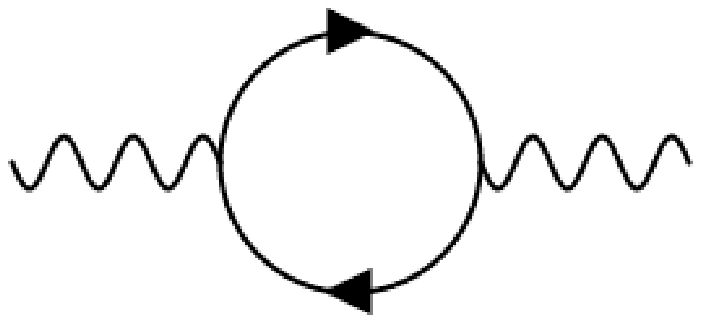}}}
    \left(\displaystyle\frac{\left(X^{\text{vertex}}\right)^2}{\left(X^{\text{fermion}}\right)^2}\right),\label{e2}\\
    &\nonumber\\
    X^{\text{fermion}}&=\mathbb{I}- x
    \;B_+^{\raisebox{-0.57cm}{\includegraphics[scale=0.18]{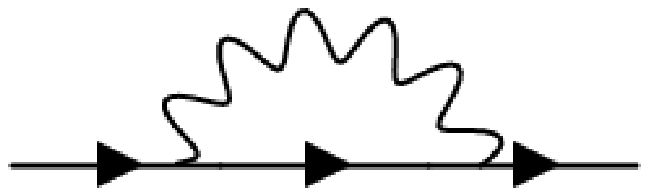}}}
   \left(\displaystyle\frac{\left(X^{\text{vertex}}\right)^2}{X^{\text{photon}}\;\;X^{\text{fermion}}}\right).\label{e3}
\end{align}

These equations are obtained by a direct counting argument. For example, the second equation can be illustrated through Figure{\ref{lloop}}.

\begin{figure}[h]
 \begin{center}

\raisebox{-0.84cm}{\includegraphics[scale=0.6]{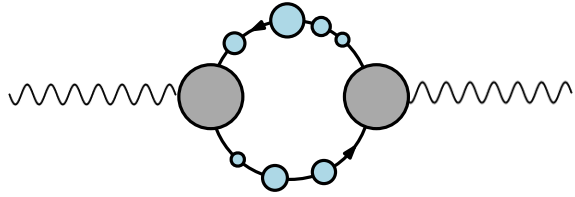}}
\end{center} \caption{Graphs with photon edge residue}\label{lloop}
\end{figure}

In Figure \ref{lloop} the blue bubbles represent the two sequences of fermion-type $1PI$ graphs that can be inserted along the original two fermion edges, hence the $1/\left(X^{\text{fermion}}\right)^2$; whereas the two larger grey bubbles represent insertion of a vertex-type $1PI$ graph and correspond to the $\left(X^{\text{vertex}}\right)^2$. The minus sign follows from the definition of $X^{\text{photon}}$.
\end{exm}

\subsection{The Invariant Charge}\label{invariantch}

If we set $Q=\displaystyle\frac{\left(X^{\text{vertex}}\right)^2}{
\left(X^{\text{photon}}\right)^{}\;\left(X^{\text{fermion}}\right)^2},$
then we can rewrite equations (\ref{e1}), (\ref{e2}), and (\ref{e3}) as

\begin{align}
    X^{\text{vertex}}&=\mathbb{I}+ \underset{\text{vertex residue}}{\underset{\gamma \text{\;primitive with}}{\sum}} x^{\ell(\gamma)}
    \;B_+^\gamma\left(X^{\text{vertex}} Q^{\ell(\gamma)}\right),\label{e11}\\
    X^{\text{photon}}&=\mathbb{I}-  x
    \;B_+^{\raisebox{-1cm}{\includegraphics[scale=0.18]{Figures/loop1.eps}}}
    \left(X^{\text{photon}} Q\right),\label{e22}\\
    X^{\text{fermion}}&=\mathbb{I}- x
    \;B_+^{\raisebox{-0.57cm}{\includegraphics[scale=0.18]{Figures/loop2.eps}}}
   \left(X^{\text{fermion}} Q\right).\label{e33}
\end{align}

The expression $Q$ is to be called the \textit{invariant charge}. In general, for a theory with only one vertex type $v$, the system of Dyson-Schwinger equations takes the form \begin{equation}\label{generaldys}
    X^r=1\pm \sum_k B_+^{\gamma_{r,k}}(X^rQ^k),
\end{equation}
where the sum is over $k$ and over all primitive 1PI diagrams $\gamma_{r,k}$ with loop number $k$ and residue $r$.

\begin{equation}\label{qgeneral} 
    Q=\left(\displaystyle\frac{X^v}{\sqrt{\prod_{e\in v}(X^e)}}\right)^{\frac{2}{d-2}},
\end{equation}
where $d$ is the degree of the vertex-type, and the product is over all half edges making up the vertex. Note that the orientation of a half edge is ignored in counting the types. In some references, the convention for the invariant charge is to be the square root of our definition \cite{michiq}. Also in \cite{michiq} the $X^r$ is defined to be the negative of ours in case $r$ is an edge type.
\section{From Combinatorial to Analytic Dyson-Schwinger Equations}

In Section \ref{DSE} we have seen how to use the insertion  to express the Dyson-Schwinger equations combinatorially as functional equations involving the operators $B_+^\gamma$ and generating functions indexed by the loop number. Applying the Feynman rules to these equations we should somehow get functional equations of the Green functions. The first unclear thing we need to discuss is how the Feynman rules interact with the operators $B_+^\gamma$.

The following theorem establishes a general universal property for Hopf algebras. We will use this theorem in translating the insertion operators.

\begin{thm}[Universal Property \cite{conneskreigeom}]
Let $\mathcal{H}_C$ be the Connes-Kreimer Hopf algebra of rooted trees, and $B_+$ be the grafting operator as in Remark \ref{mercy}. Let $A$ be a commutative algebra and $L:A\longrightarrow A$ be a map. Then there exists a unique algebra morphism $\rho_L:\mathcal{H}_C\longrightarrow A$ such that 
$$\rho_L\circ B=L\circ\rho_L,$$ or equivalently, such that the diagram 
\begin{center}
\begin{minipage}{0.5 \textwidth}
\xymatrixcolsep{5pc}\xymatrix{
H\ar[d]_B\ar[r]^{\rho_L}& A\ar[d]^L\\
     H \ar[r]_{\rho_L} &A
}
\end{minipage}

\end{center}
commutes. If further $A$ is a Hopf lgebra and $L$ is a $1$-cocycle then $\rho_L$ is a Hopf algebra morphism.
\end{thm}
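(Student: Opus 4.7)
The plan is to construct $\rho_L$ recursively on the number of vertices of rooted trees, exploiting the fact that every nonempty rooted tree $t$ is uniquely of the form $t = B_+(f)$ for some (possibly empty) forest $f = t_1 \cdots t_m$. First I would set $\rho_L(\mathbb{I}) = 1_A$ on the unit, and then for a forest extend multiplicatively by $\rho_L(t_1 \cdots t_m) = \rho_L(t_1) \cdots \rho_L(t_m)$, which makes sense because $A$ is commutative so the target is insensitive to the order of factors. For a single rooted tree, define
\[
\rho_L(t) \;=\; \rho_L(B_+(t_1 \cdots t_m)) \;:=\; L\bigl(\rho_L(t_1)\cdots\rho_L(t_m)\bigr).
\]
Since rooted trees with $n$ vertices are built by grafting forests of total size $n-1$, this induction terminates, and one checks by a short induction that the resulting $\rho_L$ is an algebra morphism satisfying $\rho_L \circ B_+ = L \circ \rho_L$ by construction.

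Uniqueness is essentially forced by the same recursion: if $\rho'$ is any algebra morphism with $\rho' \circ B_+ = L \circ \rho'$ and $\rho'(\mathbb{I}) = 1_A$, then on a tree $t = B_+(t_1\cdots t_m)$ we get $\rho'(t) = L(\rho'(t_1)\cdots\rho'(t_m))$, and by induction on the number of vertices this value agrees with $\rho_L(t)$. Extending multiplicatively to all of $\mathcal{H}_C$ (which is freely generated as a commutative algebra by rooted trees) shows $\rho' = \rho_L$.

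For the Hopf algebra statement, assume $A$ is a Hopf algebra and $L$ is a Hochschild $1$-cocycle, i.e.\ $\Delta_A \circ L = (\mathrm{id} \otimes L)\circ \Delta_A + L \otimes 1_A$, matching the corresponding identity for $B_+$ on $\mathcal{H}_C$. I would prove $\rho_L$ is a coalgebra morphism by induction on the number of vertices. The base case is the unit. For a tree $t = B_+(f)$, applying $\Delta_A \circ \rho_L$ and using $\rho_L \circ B_+ = L \circ \rho_L$ together with the cocycle identity for $L$ reduces the claim to the cocycle identity for $B_+$ and the inductive hypothesis on the forest $f$. Compatibility with the counit is immediate from the grading, and compatibility with the antipode then follows automatically because in a connected graded Hopf algebra the antipode is uniquely determined by the bialgebra structure via the convolution-inverse recursion.

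The main obstacle, and essentially the only nontrivial computation, is the coproduct-compatibility step: one must carefully match the two sides
\[
\Delta_A\bigl(L(\rho_L(f))\bigr) \;=\; (\rho_L \otimes \rho_L)\bigl(\Delta_C(B_+(f))\bigr),
\]
using the cocycle identities for $L$ and for $B_+$ and Sweedler's notation for $\Delta_C(f)$. Once this is set up cleanly, the two sides collapse into the same expression and the induction closes; the rest of the proof is bookkeeping about multiplicativity and the grading of $\mathcal{H}_C$.
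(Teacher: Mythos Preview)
The paper does not give its own proof of this theorem; it is stated as a cited result from \cite{conneskreigeom} and used as a black box in the discussion of how Feynman rules interact with the insertion operators. So there is nothing in the paper to compare your argument against directly.

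That said, your outline is the standard and correct proof. The recursive construction using the fact that $\mathcal{H}_C$ is the free commutative algebra on rooted trees, together with the unique factorization $t = B_+(t_1\cdots t_m)$, is exactly how one builds $\rho_L$, and your uniqueness argument is fine. The coalgebra-morphism step via matching the two cocycle identities is also the right computation; the only thing worth being explicit about when you write it up is that the inductive hypothesis applies to $f$ because $\Delta_C$ and $\rho_L$ are already known to be multiplicative, so $(\rho_L\otimes\rho_L)\Delta_C(f) = \Delta_A(\rho_L(f))$ holds for forests once it holds for their constituent trees.
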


Now, remember from Remark \ref{B+cocycle} that somehow we eventually get $1$-cocycles from the operators $B_+^\gamma$ of the Hopf algebra $\mathcal{H}$ of Feynman graphs, and thus, by the above universal property, the renormalized Feynman rules only replace $B_+^\gamma$ with a new operator.
The Feynman rules will also take the counting variable $x$ to become the coupling constant. Feynman rules bring scale variables $L_j$ coming from the external momenta $q_i$ and the fixed renormalization points $\mu_i$. A single scale variable is $L=\log(q^2/\mu^2)$ (this is not necessarily true in case there are more than one scale variable). By applying the renormalized Fenman rules we get  a relation in which Green functions appear  as functions in $x$ and $L$.

More precisely, every $X_0$ appearing in the combinatorial Dyson-Schwinger equation is replaced with the corresponding Green function $G_0$.  For a factor $(X_0)^s$ in an operator $B_+^\gamma$ in the combinatorial equation we take the $\gamma$ integrand and multiply it with $(G_0)^s$. Take the momenta of the edges where $X_0$ is inserted and use it as a scale argument for $G_0$. Finally subtract the resulting integral at the fixed external momenta $\mu_i$ as is done when renormalizing.

\begin{exm}\label{massless yukawa theory}
(See \cite{karenthesis} for the complete example).  In the Dyson-Schwinger equation for massless Yukawa theory we consider graphs that are obtained by inserting \raisebox{-0.3cm}{\includegraphics[scale=0.34]{Figures/loop2.eps}} into itself. The graphs obtained are generally nested graphs like the one below.

\begin{center}
    \raisebox{-0.2cm}{\includegraphics[scale=0.34]{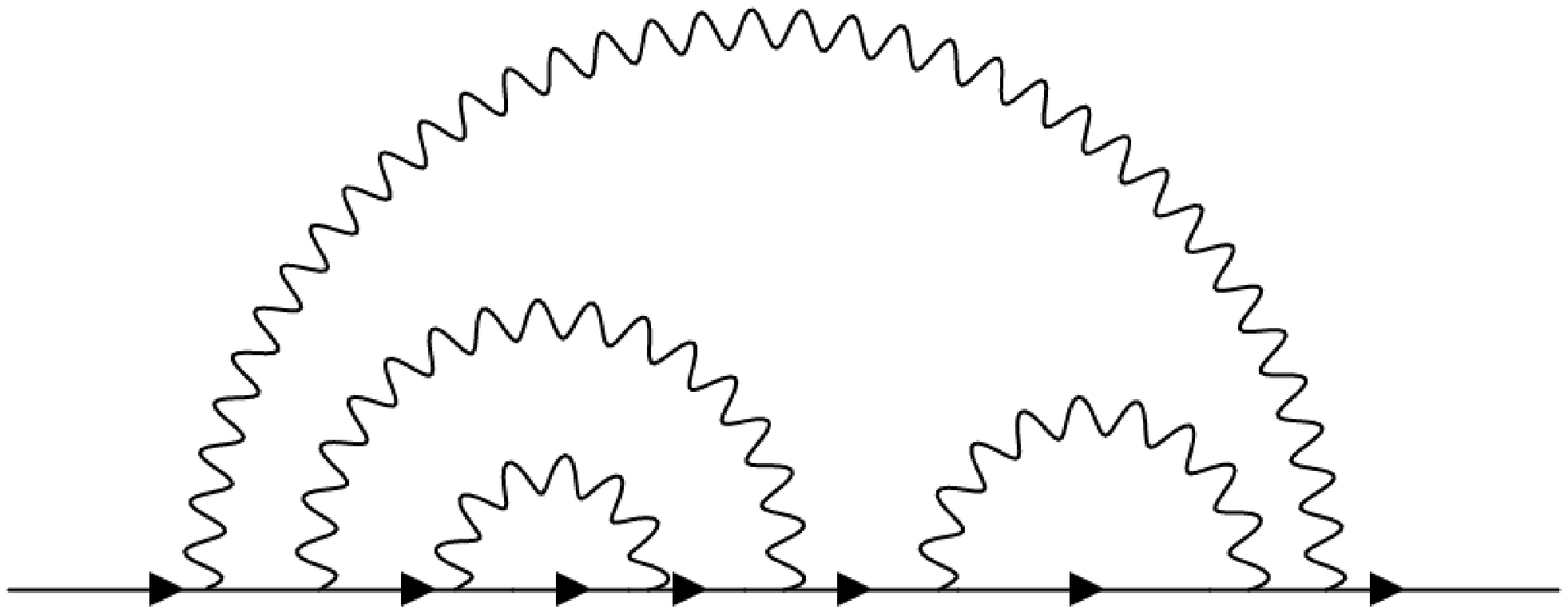}} 

\end{center}

 It follows that the combinatorial Dyson-Schwinger equation  is of the form
 
\[X(x)=\mathbb{I}-xB_+\left(\displaystyle\frac{1}{X(x)}\right).\]

Applying the Feynman rules and following the approach described above we get the equation in terms of Green functions as

\begin{equation}\label{G(x,L)}
    G(x,L)=1-\left(\left.\frac{x}{q^2}\int d^4k\displaystyle\frac{k\cdot q}{k^2\;
    G(x,L_1)(k+q)^2}\;-\cdots\right|_{q^2=\mu^2}\right),
\end{equation}

where $L=\log(q^2/\mu^2), L_1=\log(k^2/\mu^2)$ and the $\cdots$ stands for the same integrand evaluated at the renormalization point $q^2=\mu^2$ for some fixed $\mu$. 
\end{exm}

\subsection{Relation to Chord Diagrams}

Consider the recursive integral equation (\ref{G(x,L)}) from Example \ref{massless yukawa theory}. This equation can be transformed into a differential equation as follows (here we follow \cite{karenbook}):

Substituting the ansatz 
\[G(x,L)=1-\sum_{k\geq1}\gamma_k(x) L^k\] into the equation we get

\begin{align*}
    \sum_{k\geq1}\gamma_k(x) L^k&= \left.\frac{x}{q^2}\int d^4k\underset{\ell_1+\cdots+ \ell_s=\ell}{\sum}\displaystyle\frac{(k\cdot q) \gamma_{\ell_1}(x)\cdots\gamma_{\ell_s}(x) L_1^\ell}{k^2\;
    (k+q)^2}\;-\cdots\right|_{q^2=\mu^2},
\end{align*}

where as before $L=\log(q^2/\mu^2), L_1=\log(k^2/\mu^2)$, and the $\cdots$ stands for the same integrand evaluated at renormalization point $q^2=\mu^2$ (subtraction scheme). Now notice that we can turn the logarithms into differentail operators by adding a new variable, namely 
\[\left.\displaystyle\frac{d^ky^\rho}{d\rho^k}\right|_{\rho=0}=(\log (y))^k.\]

Thus, back to our equation,

\begin{align*}
    \sum_{k\geq1}\gamma_k(x) L^k&= \frac{x}{q^2}\underset{\ell_1+\cdots+ \ell_s=\ell}{\sum}\gamma_{\ell_1}(x)\cdots\gamma_{\ell_s}(x)\int d^4k\displaystyle\frac{(k\cdot q)  (-1)^\ell \left.\frac{d^\ell}{d\rho^\ell}(k^2/\mu^2)^{-\rho}\right|_{\rho=0}}
    {k^2(k+q)^2}\;-\cdots\Bigg|_{q^2=\mu^2}
    \\
    &=\frac{x}{q^2}\underset{\ell_1+\cdots+ \ell_s=\ell}{\sum}\gamma_{\ell_1}(x)\cdots\gamma_{\ell_s}(x)(-1)^\ell
    \times\\
    &\qquad\qquad\times
    \frac{d^\ell}{d\rho^\ell}
    \left\{(\mu^2)^\rho 
    \int d^4k
    \displaystyle\frac{(k\cdot q)}{(k^2)^{1+\rho}(k+q)^2}   \;-\cdots\Bigg|_{q^2=\mu^2}\right\}
    \Bigg|_{\rho=0}
    \end{align*}
    
    \begin{align*}
    &=x\left(1-\sum_{k\geq1}\bigg(\displaystyle\frac{d}{d(-\rho)}\bigg)^k\right)^{-1}
    \frac{(\mu^2)^\rho}{q^2}
    \int d^4k
    \displaystyle\frac{(k\cdot q)}{(k^2)^{1+\rho}(k+q)^2}   \;-\cdots\Bigg|_{q^2=\mu^2}
    \Bigg|_{\rho=0}
    \\
    &=x\left(1-\sum_{k\geq1}\bigg(\displaystyle\frac{d}{d(-\rho)}\bigg)^k\right)^{-1}
    \frac{(\mu^2)^\rho}{(q^2)^\rho}
    \int d^4k_0
    \displaystyle\frac{(k_0\cdot q_0)}{(k_0^2)^{1+\rho}(k_0+q_0)^2}   \;-\cdots\Bigg|_{q^2=\mu^2}
    \Bigg|_{\rho=0}\\
    &\qquad\text{(where we set $q=rq_0$ with $r\in\mathbb{R}, r^2=q^2, q^2=1$ and $k_0=k/r$)}
    \\
    &=x\left(1-\sum_{k\geq1}\bigg(\displaystyle\frac{d}{d(-\rho)}\bigg)^k\right)^{-1}
    (e^{-L\rho}-1)F(\rho)\Bigg|_{\rho=0},
\end{align*}
where \[F(\rho)=\frac{1}{q^2}
    \int d^4k
    \displaystyle\frac{(k\cdot q)}{(k^2)^{1+\rho}(k+q)^2}   \Bigg|_{q^2=1}.\]

Thus, equation (\ref{G(x,L)}) is equivalent to the following differential equation

\begin{equation}\label{G(x,L)diff}
    G(x,L)=1-xG\left(x,\;\displaystyle\frac{d}{d(-\rho)}\right)^{-1}
    (e^{-L\rho}-1)F(\rho)\Bigg|_{\rho=0}.
\end{equation}

In the literature, $F(\rho)$ is called the \textit{Mellin transform} \cite{karenbook}.

For our purposes, it will be enough to mention here that a series solution for equation (\ref{G(x,L)diff}) can be expressed in terms of rooted connected chord diagrams, which we shall study next. More specifically, in \cite{yu}, it is shown that the series solution for the equation can be written as

\[G(x,L)=1-\sum_{i\geq1}\displaystyle\frac{(-L)^i}{i!}\underset{C,\;b(C)\geq i}{\sum}x^{|C|} \;f_0^{|C|-k}f_{b(C)-i}\prod^k_{j=2} f_{t_j-t_{j-1}},\]

where the inner sum is over all rooted connected chord diagrams $C$ (which we properly define in the next section) in which, with respect to intersection order, the first terminal chord comes at least  at the $i$th place.  $k$ is the number of terminal chords, and the places for terminal chords with respect to intersection order are $b(C)=t_1<t_2<\cdots<t_k$.  This shows the significance of the combinatorial structure of chord diagrams, which we are about to study, in the context of the Dyson-Schwinger equations. 


\section{Chord Diagrams}\label{chorddiagrams section}

(Connected) chord diagrams stand as a rich structure that becomes handy and informative in a variety of contexts, including bioinformatics \cite{bioinfo}, quantum field theory \cite{karenbook, michi1, michi}, and data structures \cite{datast}. Our interest in chord diagrams comes from the context of quantum field theory, in particular, from the Dyson-Schwinger equations as we have seen above. The solutions to Dyson-Schwinger were recently shown to be described as series indexed by connected chord diagrams with extra conditions on the placing of terminal chords \cite{yu}. Our work on chord diagrams here shall henceforth be purely combinatorial.

\begin{dfn}[Chord diagrams]
A \textit{chord diagram} on $n$ chords (i.e. of size $n$) is geometrically perceived simply as a circle with $2n$ nodes that are matched into disjoint pairs, with each pair corresponding to a \textit{chords}. 
\end{dfn}

\begin{dfn}[Rooted chord diagrams]
A \textit{rooted} chord diagram is a chord diagram with a selected node. The selected node is called the \textit{root vertex}, and the chord with the root vertex is called the \textit{root chord}. In other words, a rooted chord diagram of size $n$ is a matching of the set $\{1,\ldots,2n\}$. For an algebraic definition, this is the same as a fixed-point free involution in $S_{2n}$. Then the generating series for rooted chord diagrams is 

\begin{equation}\label{rootedchorddiagsgen}
    D(x):=\sum_{n=0}(2n-1)!!\; x^n
\end{equation}

All chord diagrams considered here are going to be rooted and so, when we say a chord diagram we tacitly mean a rooted one.
\end{dfn}

Now, a rooted chord diagram can be represented in a linear order, by numbering the nodes in counterclockwise order, starting from the root which receives the label `$1$'. A chord in the diagram may be referred to as $c=\{a<b\}$, where $a$ and $b$ are the nodes in the linear order.

\begin{dfn}[Intervals]
In the linear representation of a rooted chord diagram, an $interval$ is the space to the right of one of the nodes in the linear representation. Thus, a rooted diagram on $n$ chords has $2n$ intervals.
\end{dfn}
 
For example, this includes the space to the right of the last node in the linear order). 

\begin{figure}[!htb]
    \centering
    \includegraphics[scale=0.8]{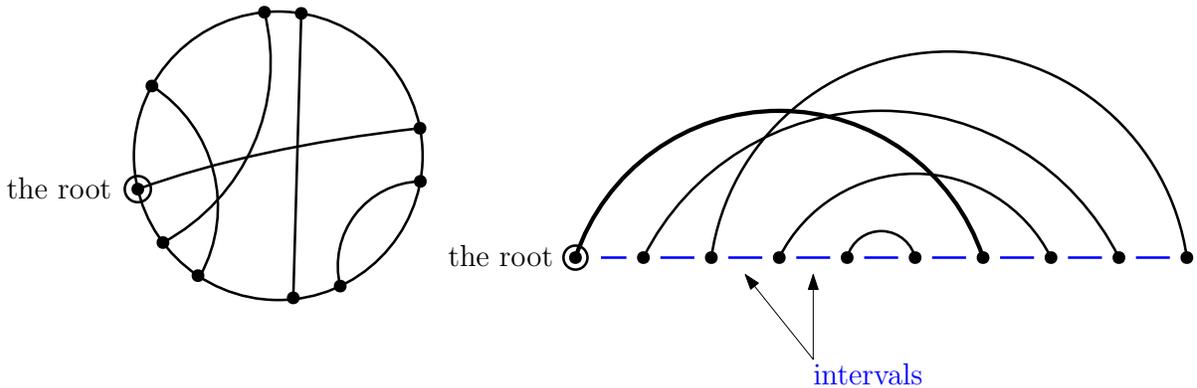}
    \caption{A rooted chord diagram and its linear representation}
\end{figure}

As may be expected by now, the crossings in a chord diagram encode much of the structure and so we ought to give proper notation for them. Namely, in the linear order, two chords $c_1=\{v_1<v_2\}$ and $c_2=\{w_1<w_2\}$ are said to \textit{cross} if $v_1<w_1<v_2<w_2$ or $w_1<v_1<w_2<v_2$.
Tracing all the crossings in the diagram leads to the following definition:

\begin{dfn}[The Intersection Graph] 
Given a (rooted) chord diagram $D$ on $n$ chords, consider the following graph $\mathcal{G}_D$: the chords of the diagram will serve as vertices for the new graph, and there is an edge between the two vertices $c_1=\{v_1<v_2\}$ and $c_2=\{w_1<w_2\}$ if $v_1<w_1<v_2<w_2$ or $w_1<v_1<w_2<v_2$, i.e. if the chords \textit{cross} each other. The graph so constructed is called the \textit{intersection graph} of the given chord diagram.  
\end{dfn}

\begin{rem}
A labelling for the intersection graph can be obtained as follows: give the label $1$ to the root chord;  order the components obtained if the root is removed according to the order of the first vertex of each of them in the linear representation, say the components are $C_1,\ldots,C_n$; and then recursively label each of the components. It is easily verified that a rooted chord diagram can be uniquely recovered from its labelled intersection graph.\\
\end{rem}

\begin{dfn}[Connected Chord Diagrams]\label{c}
A (rooted) chord diagram is said to be \textit{connected} if its  intersection graph is connected (in the graph-theoretic sense). A \textit{connected component} of a diagram is a subset of chords which itself forms a connected chord diagram. The term \textit{root component} will refer to the connected component containing the root chord.
\end{dfn}

\begin{exm}
The diagram $D$ below is a connected chord diagram in linear representation, where the root node is drawn in black.

\begin{center}
\includegraphics[scale=0.5]{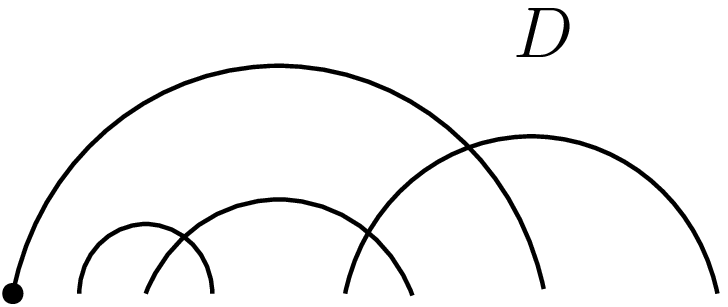}\end{center}
\end{exm}

The generating function for connected chord diagrams (in the number of chords) is denoted by $C(x)$. Thus $C(x)=\sum_{n=0}C_n x^n$, where $C_n$ is the number of connected chord diagrams on $n$ chords. The first terms of $C(x)$ are found to be 
\[C(x)=x+x^2+4x^3+27\;x^4+248\;x^5+\cdots\;;\]
the reader may refer to OEIS sequence \href{https://oeis.org/A000699}{A000699}  for more coefficients. 
The next lemma lists some classic decompositions for chord diagrams (see \cite{flajoletchords} for example).

\begin{lem}\label{cd}
  If $D(x), C(x)$ are the generating series for chord diagrams and connected chord diagrams respectively, then 
  \begin{enumerate}
      \item[$\mathrm{(i)}$]  $D(x)=1+C(xD(x)^2)$, \label{i1}
      \item[$\mathrm{(ii)}$] $D(x)=1+xD(x)+2x^2D'(x)$, and \label{i2}
      \item[$\mathrm{(iii)}$] $2xC(x)C'(x)=C(x)(1+C(x))-x$.\label{i3}
  \end{enumerate}

  \end{lem}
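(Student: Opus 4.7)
My plan is to prove the three identities separately: (i) via a direct combinatorial root-component decomposition, (ii) by checking coefficients against the double-factorial recurrence, and (iii) by differentiating (i) and substituting (ii) to obtain a purely algebraic identity.

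For (i), I would decompose a non-empty rooted chord diagram according to its \emph{root component} $R$, namely the connected component of the intersection graph containing the root chord. If $R$ has $k$ chords, its $2k$ endpoints occupy $2k$ positions in the linear representation, with the root node (position $1$) being the first. The remaining $2n-2k$ positions lie in $2k$ \emph{gaps} immediately to the right of each $R$-endpoint; there is no gap before position $1$ since the root sits there. Crucially, any chord not in $R$ must have both endpoints in the \emph{same} gap, because otherwise it would cross an $R$-chord and would therefore belong to $R$ by Definition \ref{c}. Conversely, placing an arbitrary rooted chord diagram (possibly empty) in each of the $2k$ gaps reconstructs a chord diagram whose root component is exactly $R$. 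Summing the weight $x^k D(x)^{2k}$ over all connected $R$, plus the empty-diagram term, yields $D(x) = 1 + C(xD(x)^2)$. The delicate bookkeeping point — and the main obstacle of the proof — is verifying that the gap count is exactly $2k$ rather than $2k\pm 1$; once this is established, everything else is immediate.

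For (ii), I would expand $xD(x) + 2x^2 D'(x)$ and apply the identity $(2n-1)!! = \bigl(1 + 2(n-1)\bigr)(2n-3)!!$: for $n\geq 1$, the coefficient of $x^n$ on the right is $(2n-3)!! + 2(n-1)(2n-3)!! = (2n-1)!!$, matching the coefficient in $D(x)-1$. A more combinatorial route splits non-empty diagrams by whether the root chord is paired with node $2$ (contributing $xD(x)$) or with a later node (contributing $2x^2 D'(x)$ through a bijection with pairs consisting of a rooted chord diagram on $n-1$ chords together with one of $2(n-1)$ marked non-root endpoints).

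For (iii), I would differentiate (i). Writing $u = xD(x)^2$, (i) reads $C(u) = D - 1$, and $u' = D^2 + 2xDD' = D(D + 2xD')$. Rearranging (ii) as $D + 2xD' = (D-1)/x = C(u)/x$ gives $C'(u) = xD'/\bigl(D\,C(u)\bigr)$, so that $2uC(u)C'(u) = 2x^2 DD'$. On the other hand, $C(u)\bigl(1+C(u)\bigr) - u = D(D-1-xD) = 2x^2 DD'$ by a second appeal to (ii), since (ii) is equivalent to $D - 1 - xD = 2x^2 D'$. Hence the identity $2yC(y)C'(y) = C(y)(1+C(y)) - y$ holds after the substitution $y \mapsto u$, and since $u = x + O(x^2)$ induces an injective ring homomorphism $\mathbb{Q}[[y]] \hookrightarrow \mathbb{Q}[[x]]$ by composition, the identity is inherited as an identity of formal power series in $y$, which is (iii).
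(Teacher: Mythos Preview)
Your proof is correct. Part (i) matches the paper's root-component decomposition essentially verbatim, with your gap-count discussion filling in detail the paper leaves implicit.

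The genuine divergence is in (ii) and (iii). For (ii) the paper gives a direct combinatorial three-case split on the root chord (absent; paired with node $2$; right end lands in one of the $2(n-1)$ intervals of a diagram on $n-1$ chords), which is exactly the combinatorial route you sketch as an alternative; your primary argument is the coefficient check against the double-factorial recurrence, which is shorter but less structural. For (iii) the paper remarks that it \emph{can} be derived from (i) and (ii) --- and you actually carry this out cleanly --- but then gives an independent bijective proof instead: removing the root chord of a connected diagram leaves a sequence of connected components each carrying a marked non-last interval (where the root used to pass), yielding the equivalent form $C(x)=x/\bigl(1-(2xC'(x)-C(x))\bigr)$. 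Your algebraic derivation is self-contained and arguably tidier for this lemma in isolation; the paper's decomposition, however, is reused later (e.g.\ in the tadpole recurrence of Theorem~\ref{myresult1inYukawa} and in equation~(\ref{tobeusd})), so its bijective content is not wasted effort in context.
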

  
  \begin{proof} We sketch the underlying decompositions as follows: 
  \begin{enumerate}
    \item[$\mathrm{(i)}$] The `one' term is for the empty chord diagram. Now, given a nonempty chord diagram, we see that for every chord in the root component there live two chord diagrams to the right of its two ends. This gives the desired decomposition.
     
     \item[$\mathrm{(ii)}$] There are three situations for a root chord: it is either non-existent (empty diagram); or it is concatenated with a following diagram; or the root chord has its right end landing in one of the intervals of a diagram. These situations correspond respectively with the terms in (ii).
     
   \item[$\mathrm{(iii)}$] Can be derived from (i) and (ii). Nevertheless, it can be also shown as follows: if we remove the root chord what is left is a sequence of connected components, with each component having a special interval (through wich the root used to pass) which cannot be the last interval (see the figure below). Thus each of these components is counted according to the generating function $2xC'(x)-C(x)$.
   \begin{center}
   \includegraphics[scale=0.65]{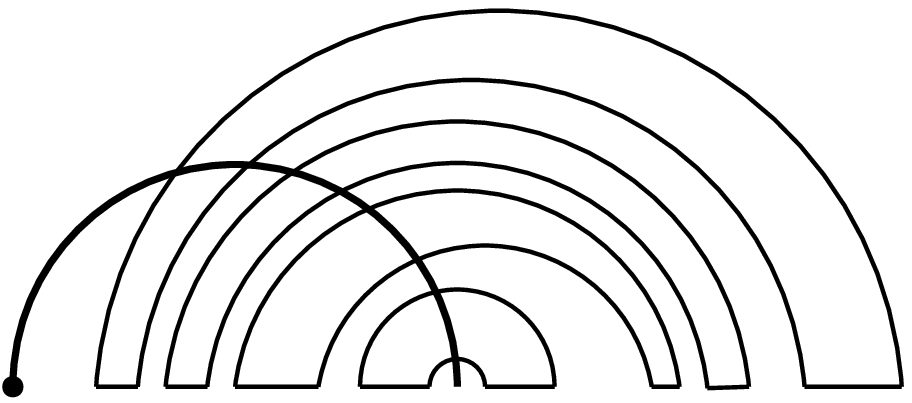}   
   \end{center}
   
   This decomposition gives that
   \[C(x)=\displaystyle\frac{x}{1-(2xC'(x)-C(x))},\]
   and the result follows.
     \end{enumerate}
  \end{proof}

  \setlength{\parindent}{0cm}
  
    We end this section with the definition of an \textit{indecomposable} chord diagram, these diagrams will become a key ingredient later on. 
    \begin{dfn}\label{indecompo}
    A chord diagram is said to be \textit{indecomposable} if, when represented linearly, it is not the concatenation of disjoint chord diagrams. The empty diagram is vacuously indecomposable by definition. The generating function for indecomposable chord diagrams is denoted here by $I(x)$. We shall also use $I_0(x)$ to denote the generating function for nonempty indecomposable chord diagrams (that is $I(x)=1+I_0(x)$).
    \end{dfn}
    
 \begin{exm}

 Consider the following two diagrams:
 
$D_1=\;$ \raisebox{-0.1cm}{\includegraphics[scale=0.56]{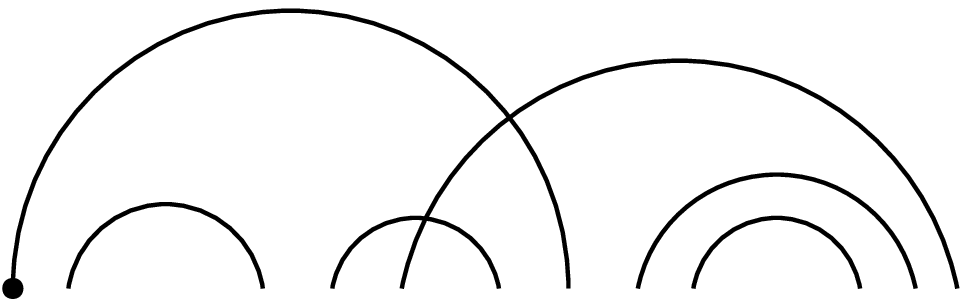}}\;, \;and \;$D_2=\;$ \raisebox{-0.1cm}{\includegraphics[scale=0.54]{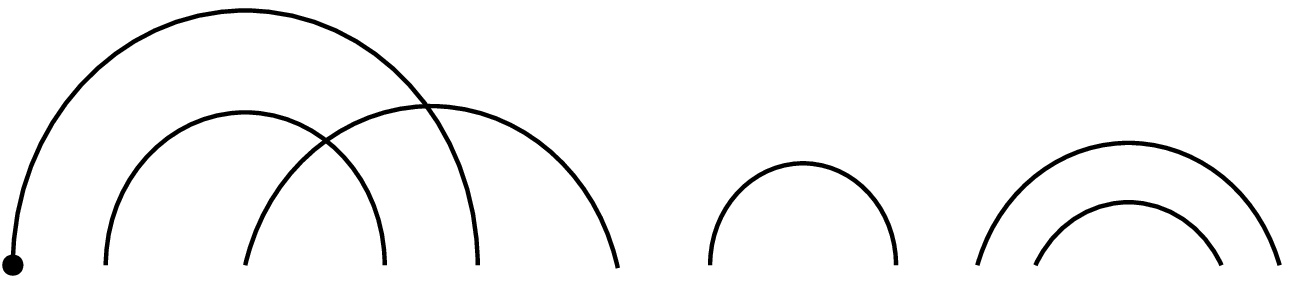}}

\quad Then $D_1$ is indecomposable, whereas $D_2$ is not since it is the concatenation of three (indecomposable chord diagrams). Notice that an indecomposable chord diagram is not necessarily connected, but the converse is clearly true, namely, any connected diagram is indecomposable. 
 
 \end{exm}
Sequence \href{https://oeis.org/A000698}{A000698} of the OEIS counts indecomposable chord diagrams, the first terms start as 
$$I(x)=1+x+ 2x^2+ 10x^3+74 x^4+ 706x^5+\cdots,$$ where $I(x)$ is the generating power series for indecomposable chord diagrams.

\section{Factorially Divergent Power Series}\label{factorially}

 This section aims to provide the necessary background for \textit{factorially divergent power series}, as introduced in Chapter 4 in \cite{michi}. In \cite{michi, michi1}, M. Borinsky  studied sequences $a_n$ whose asymptotic behaviour for large $n$ follows a relation like

\begin{equation}
    a_n=\alpha^{n+\beta} \Gamma(n+\beta)\bigg(c_0+\displaystyle\frac{c_1}{\alpha(n+\beta-1)}+\displaystyle\frac{c_2}{\alpha^2(n+\beta-1)(n+\beta-2)}+\cdots\bigg),
\end{equation}

where $\alpha\in\mathbb{R}_{>0}$, and $\beta, c_k \in \mathbb{R}$. We will need to use the usual big and small o-notation for asymptotic analysis: Given a sequence $a_n$, $\mathcal{O}(a_n)$ will denote the class of sequences $b_n$ satisfying $\limsup_{n\rightarrow\infty}|\frac{b_n}{a_n}|<\infty$; whereas $o(a_n)$ shall denote the sequences $b_n$ such that $\lim_{n\rightarrow\infty}\frac{b_n}{a_n}=0$. Moreover, $a_n=b_n+\mathcal{O}(c_n) $ should mean that $a_n-b_n\in\mathcal{O}(c_n)$. Following \cite{michi}, we adopt the notation $\Gamma_\beta^\alpha(n):=\alpha^{n+\beta}\Gamma(n+\beta)$, where $\Gamma(z)=\int_0^\infty x^{z-1}e^{-x}dx$ for $\text{Re}(z)>0$ is the gamma function.

\begin{dfn}[Factorially Divergent Power Series]\label{fdps} For real numbers $\alpha$ and $\beta$, with $\alpha>0$, the subset $\mathbb{R}[[x]]_\beta^\alpha$ of $\mathbb{R}[[x]]$ will denote the set of all formal power series $f$ for which there exists a sequence $(c_k^f)_{k\in \mathbb{N}}$ of real numbers such that

\begin{equation}
    f_n=\overset{R-1}{\underset{k=0}{\sum}}c_k^f \Gamma_\beta^\alpha(n-k)+\mathcal{O}(\Gamma_\beta^\alpha(n-R)),\;\;\text{for all} \;R\in\mathbb{N}_0 \;\;\; \text{(positive integers).}\label{asy}\end{equation}
 \end{dfn}

\begin{rem}
From the definition it follows that $\mathbb{R}[[x]]_\beta^\alpha$ is a linear subspace of $\mathbb{R}[[x]]$. 
\end{rem}

\begin{rem}\label{notinj}
Also, by the above definition all real power series with a non-vanishing radius of convergence belong to $\mathbb{R}[[x]]_\beta^\alpha$, with $c_k^f=0$ for all $k$ since in this case $f_n=o(\Gamma^\alpha_\beta(n-R))$ for all $R\in \mathbb{N}_0$.
\end{rem}

The following proposition also follows directly from the definition.

\begin{prop}[\cite{michi}, Ch.4]
 The sequence   $(c_k^f)_{k\in \mathbb{N}}$ is unique for every $f\in\mathbb{R}[[x]]_\beta^\alpha$; actually $c_N^f=\lim_{n\rightarrow\infty}\frac{f_n-\sum^{N-1}_{k=0}c_k^f \Gamma_\beta^\alpha(n-k)}{(\Gamma_\beta^\alpha(n-N))}$ for $N\in\mathbb{N}_0$.
\end{prop}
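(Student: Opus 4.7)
The plan is to establish the limit formula for $c_N^f$ directly from Definition \ref{fdps}; uniqueness then follows as an immediate corollary since the limit of a convergent sequence is unique. The proof is by induction on $N$, with the inductive step being essentially the same calculation as the base case.

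First I would observe the elementary but crucial ratio computation
\[
\frac{\Gamma_\beta^\alpha(n-N-1)}{\Gamma_\beta^\alpha(n-N)}
= \frac{\alpha^{n-N-1+\beta}\,\Gamma(n-N-1+\beta)}{\alpha^{n-N+\beta}\,\Gamma(n-N+\beta)}
= \frac{1}{\alpha(n-N-1+\beta)} \;\longrightarrow\; 0
\]
as $n\to\infty$. This tells us that the scale $\Gamma_\beta^\alpha(n-N-1)$ is genuinely smaller than $\Gamma_\beta^\alpha(n-N)$ for large $n$, which is exactly the feature that lets us peel off one coefficient at a time.

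Next, I would carry out the base case $N=0$ by specializing Equation \eqref{asy} to $R=1$:
\[
f_n = c_0^f\,\Gamma_\beta^\alpha(n) + \mathcal{O}\!\left(\Gamma_\beta^\alpha(n-1)\right).
\]
Dividing by $\Gamma_\beta^\alpha(n)$ and using the ratio computation above, the error term becomes $o(1)$, so
\[
c_0^f \;=\; \lim_{n\to\infty}\frac{f_n}{\Gamma_\beta^\alpha(n)}.
\]
For the inductive step, assume $c_0^f,\dots,c_{N-1}^f$ are given; apply Equation \eqref{asy} with $R = N+1$ and rearrange to obtain
\[
f_n - \sum_{k=0}^{N-1} c_k^f\,\Gamma_\beta^\alpha(n-k) \;=\; c_N^f\,\Gamma_\beta^\alpha(n-N) + \mathcal{O}\!\left(\Gamma_\beta^\alpha(n-N-1)\right).
\]
Dividing both sides by $\Gamma_\beta^\alpha(n-N)$ and invoking the ratio computation once more (with the shift $n \mapsto n-N$) gives the announced formula for $c_N^f$.

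Finally, for uniqueness, suppose $(c_k^f)$ and $(d_k^f)$ both realize the asymptotic expansion \eqref{asy}. Proceeding by induction: the $N=0$ case gives $c_0^f = d_0^f$ as both equal $\lim f_n/\Gamma_\beta^\alpha(n)$; assuming agreement through index $N-1$, the same limit formula forces $c_N^f = d_N^f$. I do not anticipate a genuine obstacle here — the entire argument is a straightforward unpacking of the defining relation, with the only nontrivial input being the monotonic decay of the ratio $\Gamma_\beta^\alpha(n-N-1)/\Gamma_\beta^\alpha(n-N)$. The one point worth flagging is consistency with Remark \ref{notinj}: power series with nonzero radius of convergence have $f_n = o(\Gamma_\beta^\alpha(n-R))$ for every $R$, so the limit formula correctly returns $c_N^f = 0$ for all $N$, confirming that the assignment $f \mapsto (c_k^f)$ is well-defined but, as noted, not injective.
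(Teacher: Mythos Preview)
Your proof is correct and follows exactly the approach the paper indicates: the paper simply states that the proposition ``follows directly from the definition'' without giving details, and your argument is precisely the natural unpacking of Definition~\ref{fdps} via the ratio $\Gamma_\beta^\alpha(n-N-1)/\Gamma_\beta^\alpha(n-N)\to 0$.
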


\begin{prop}[\cite{michi}, Prop 4.3.1]\label{subring}
Given $\alpha,\beta\in \mathbb{R}$, with $\alpha>0$, the set $\mathbb{R}[[x]]_\beta^\alpha$ is a subring of $\mathbb{R}[[x]]$.
\end{prop}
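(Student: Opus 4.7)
The additive closure is already established in the preceding remark, so the task is to show that $\mathbb{R}[[x]]_\beta^\alpha$ is closed under multiplication. Given $f, g \in \mathbb{R}[[x]]_\beta^\alpha$ and setting $h := fg$, the coefficients are $h_n = \sum_{j=0}^n f_j g_{n-j}$. Fix $R \in \mathbb{N}_0$; the goal is to exhibit an expansion
$$h_n \;=\; \sum_{k=0}^{R-1} c_k^h\, \Gamma_\beta^\alpha(n-k) + \mathcal{O}\bigl(\Gamma_\beta^\alpha(n-R)\bigr)$$
for constants $c_k^h$ independent of $R$. The plan is to choose a cutoff $M$ (depending on $R$) and split the convolution into two boundary pieces $S_1(n) = \sum_{j=0}^{M} f_j g_{n-j}$ and $S_2(n) = \sum_{j=n-M}^{n} f_j g_{n-j}$, together with a middle piece $S_{\mathrm{mid}}(n) = \sum_{M < j < n-M} f_j g_{n-j}$ (valid once $n > 2M+1$).

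In each boundary sum the factorial growth of one factor is supplied by its asymptotic expansion while the other factor is merely a bounded coefficient. Substituting the expansion~\eqref{asy} for $g_{n-j}$ inside $S_1(n)$ yields
$$S_1(n) \;=\; \sum_{j=0}^{M} f_j \sum_{k=0}^{R-1} c_k^g\, \Gamma_\beta^\alpha(n-j-k) \;+\; \mathcal{O}\bigl(\Gamma_\beta^\alpha(n-R)\bigr),$$
which, after collecting by $\ell := j+k$ and absorbing all contributions with $\ell \ge R$ into the $\mathcal{O}$-term (using $\Gamma_\beta^\alpha(n-\ell) = o(\Gamma_\beta^\alpha(n-R))$ whenever $\ell > R$), is a finite linear combination of $\Gamma_\beta^\alpha(n-\ell)$ for $0 \le \ell < R$. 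A symmetric analysis of $S_2(n)$ using the expansion of $f$ (with bounded index $i := n-j$) produces candidate coefficients $c_\ell^h = \sum_{i+k=\ell}\bigl(f_i\, c_k^g + c_k^f\, g_i\bigr)$, which I would sanity-check on the explicit pure case $f = g = \sum_n \Gamma_\beta^\alpha(n)x^n$ via the identity $\Gamma(a)\Gamma(b) = B(a,b)\Gamma(a+b)$.

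The main obstacle is to show that $S_{\mathrm{mid}}(n) \in \mathcal{O}(\Gamma_\beta^\alpha(n-R))$ once $M$ is chosen large enough in terms of $R$. Specializing~\eqref{asy} to $R = 0$ supplies a uniform bound $|f_j|, |g_j| \le K\, \Gamma_\beta^\alpha(j)$ for all $j$ past some fixed threshold, and taking $M$ above that threshold yields
$$|S_{\mathrm{mid}}(n)| \;\le\; K^2 \alpha^{n+2\beta} \sum_{j=M+1}^{n-M-1} \Gamma(j+\beta)\,\Gamma(n-j+\beta).$$
Since $\log \Gamma$ is convex on $\mathbb{R}_{>0}$, the summand is maximal at the endpoints of the summation range, giving
$$|S_{\mathrm{mid}}(n)| \;\le\; K^2 \alpha^{n+2\beta}\,(n - 2M - 1)\, \Gamma(M+1+\beta)\, \Gamma(n-M-1+\beta).$$
Comparing to $\Gamma_\beta^\alpha(n-R) = \alpha^{n-R+\beta}\Gamma(n-R+\beta)$ and using $\Gamma(n-M-1+\beta)/\Gamma(n-R+\beta) = O(n^{R-M-1})$ as $n \to \infty$, the full ratio is $O(n^{R-M})$, so choosing $M > R$ forces $S_{\mathrm{mid}}(n) = o(\Gamma_\beta^\alpha(n-R))$. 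Assembling the three pieces delivers the required expansion of $h_n$ to order $R$; since $R$ is arbitrary and the produced coefficients $c_\ell^h$ do not depend on $R$, this shows $h \in \mathbb{R}[[x]]_\beta^\alpha$ and completes the verification that $\mathbb{R}[[x]]_\beta^\alpha$ is a subring of $\mathbb{R}[[x]]$.
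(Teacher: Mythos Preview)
The paper does not supply its own proof of this proposition: it is quoted verbatim from \cite{michi} (Borinsky, Prop.~4.3.1) without argument, and the surrounding text explicitly defers all proofs of these analytic facts to that reference. So there is nothing to compare against on the paper's side.

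Your argument is correct and is essentially the standard one. Splitting the Cauchy convolution into two boundary sums (where one index is bounded and the other carries the factorial growth, so that the known expansion of $f$ or $g$ applies) and a middle sum controlled by the log-convexity of~$\Gamma$ is exactly how closure under products is established for such classes, and indeed your proof simultaneously yields the Leibniz-type formula $c_\ell^h=\sum_{i+k=\ell}(f_i c_k^g+g_i c_k^f)$, which is precisely the derivation property stated (also without proof) as Theorem~\ref{derivation}. One small point worth making explicit: to ensure $\Gamma(j+\beta)$ and $\Gamma(n-j+\beta)$ are positive in the convexity step, you need $M>-\beta$; alternatively you can first invoke Corollary~\ref{corplusm1} to reduce to the case $\beta>0$, as the paper notes immediately after that corollary. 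With that caveat the proof is complete.
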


Note that  the identity in (\ref{asy}) stands for an asymptotic expansion with asymptotic scale $\alpha^{n+\beta}\Gamma(n+\beta)$ (refer to \cite{asymptotic} for a detailed literature on the topic). The ring $\mathbb{R}[[x]]_\beta^\alpha$ is referred to as \textit{a ring of factorially divergent power series}. Now, given $f\in\mathbb{R}[[x]]_\beta^\alpha$, we can associate the coefficients $(c_k^f)_{k\in \mathbb{N}}$ of the asymptotic expansion  with a new ordinary power series:

\begin{dfn}[\cite{michi1}]\label{map}
For $\alpha,\beta\in \mathbb{R}$, with $\alpha>0$, let $ \mathcal{A}_\beta^\alpha:\mathbb{R}[[x]]_\beta^\alpha\rightarrow\mathbb{R}[[x]]$ be the map that has the following action for every $f\in\mathbb{R}[[x]]_\beta^\alpha$
\[(\mathcal{A}_\beta^\alpha f)(x)=\overset{\infty}{\underset{k=0}{\sum}}c_k^fx^k.\]
\end{dfn}

This interpretation allows us to use the coefficients $c_k^f$ to get various results as we will see later. In \cite{michi} M. Borinsky provides an extensive analysis for the map $\mathcal{A}_\beta^\alpha $, we will include some of the properties without proof and will try to include proofs only as much as needed.

\begin{rem}

A map of this type is called an \textit{alien derivative (operator)}  in the context of resurgence theory \cite{resurgence}. We will use this terminology occasionally.
\end{rem}

\begin{rem}

Form the definition we see that $\mathcal{A}_\beta^\alpha$ is linear. $\mathcal{A}_\beta^\alpha$ is not injective since it vanishes for power series with nonzero radius of convergence as mentioned above in Remark \ref{notinj}.
\end{rem}

\begin{prop}[\cite{michi}, Prop 4.1.1]\label{plusm1} For $m\in\mathbb{N}_0$,
$f\in \mathbb{R}[[x]]_\beta^\alpha$ if and only if $f\in\mathbb{R}[[x]]_{\beta+m}^\alpha$ and $\mathcal{A}_{\beta+m}f\in x^m\mathbb{R}[[x]]$. In this case $x^m\big(\mathcal{A}_\beta^\alpha f\big)(x)=\big(\mathcal{A}_{\beta+m}^\alpha f\big)(x)$.
\end{prop}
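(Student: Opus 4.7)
The plan is to exploit the elementary identity
\[\Gamma_{\beta+m}^\alpha(n-k) = \alpha^{n-k+\beta+m}\Gamma(n-k+\beta+m) = \Gamma_\beta^\alpha(n-k+m),\]
which follows immediately from the definition $\Gamma_\beta^\alpha(j)=\alpha^{j+\beta}\Gamma(j+\beta)$. This says that the asymptotic scales used to define $\mathbb{R}[[x]]_\beta^\alpha$ and $\mathbb{R}[[x]]_{\beta+m}^\alpha$ differ only by an index shift of size $m$, so the entire proposition should reduce to bookkeeping on the expansion (\ref{asy}).

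For the forward implication, suppose $f \in \mathbb{R}[[x]]_\beta^\alpha$ with coefficients $c_k^f$. Given any $R \in \mathbb{N}_0$, I would split into two cases. If $R \geq m$, I would invoke the defining expansion at truncation $R-m$, rewrite each $\Gamma_\beta^\alpha(n-j)$ as $\Gamma_{\beta+m}^\alpha(n-(j+m))$ using the identity above, and re-index $k = j+m$ to obtain
\[f_n = \sum_{k=m}^{R-1} c_{k-m}^f\, \Gamma_{\beta+m}^\alpha(n-k) + \mathcal{O}\bigl(\Gamma_{\beta+m}^\alpha(n-R)\bigr),\]
which is exactly the expansion witnessing $f \in \mathbb{R}[[x]]_{\beta+m}^\alpha$ with the first $m$ coefficients set to zero. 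If $R < m$ the sum above is empty and one must check separately that $f_n = \mathcal{O}(\Gamma_{\beta+m}^\alpha(n-R))$; but the leading term of $f_n$ is $c_0^f \Gamma_\beta^\alpha(n) = c_0^f\,\Gamma_{\beta+m}^\alpha(n-m)$, and since $\Gamma_{\beta+m}^\alpha(n-m) = o(\Gamma_{\beta+m}^\alpha(n-R))$ for $R < m$, the bound is automatic. Reading off generating series then yields $(\mathcal{A}_{\beta+m}^\alpha f)(x) = x^m (\mathcal{A}_\beta^\alpha f)(x) \in x^m\mathbb{R}[[x]]$, which simultaneously verifies the identity in the last sentence of the proposition.

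For the converse, suppose $f \in \mathbb{R}[[x]]_{\beta+m}^\alpha$ with coefficients $\tilde c_k$, and assume that $\mathcal{A}_{\beta+m}^\alpha f \in x^m\mathbb{R}[[x]]$, so $\tilde c_0 = \cdots = \tilde c_{m-1} = 0$. For any $R'' \in \mathbb{N}_0$, I would apply the defining expansion at truncation $R''+m$, drop the vanishing initial terms, and substitute $j = k-m$ using the identity in reverse to recover the expansion witnessing $f \in \mathbb{R}[[x]]_\beta^\alpha$ with $c_j^f = \tilde c_{j+m}$; this also matches the relation $x^m(\mathcal{A}_\beta^\alpha f)(x) = (\mathcal{A}_{\beta+m}^\alpha f)(x)$. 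The only subtle point in the whole argument is the sub-leading case $R<m$ of the forward direction, where one must ensure that no spurious lower-order scale $\Gamma_{\beta+m}^\alpha(n-k)$ with $k<m$ is being silently introduced into the expansion; uniqueness of the asymptotic coefficients (the preceding proposition) handles this, and in the converse direction the hypothesis $\mathcal{A}_{\beta+m}^\alpha f \in x^m\mathbb{R}[[x]]$ is exactly what is needed to rule it out. Everything else is a pure re-indexing driven by the single identity $\Gamma_{\beta+m}^\alpha(n-k) = \Gamma_\beta^\alpha(n-k+m)$.
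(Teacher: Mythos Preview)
Your proposal is correct and follows essentially the same approach as the paper: both hinge on the identity $\Gamma_{\beta+m}^\alpha(n-k)=\Gamma_\beta^\alpha(n-k+m)$ and reduce everything to re-indexing the defining asymptotic expansion. The paper presents the two directions as a single chain of equivalences and is terser about the sub-case $R<m$, which you spell out explicitly, but this is only a difference in level of detail.
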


\begin{proof}
First note that \[\Gamma^\alpha_\beta(n)=\alpha^{n-m+\beta+m}\Gamma(n-m+\beta+m)=\Gamma_{\beta+m}^\alpha(n-m).\]

Now, 
$f_n=\overset{R-1}{\underset{k=0}{\sum}}c_k^f \Gamma_{\beta}^\alpha(n-k)+\mathcal{O}(\Gamma_{\beta}^\alpha(n-R)),\;\;\text{for all} \;R\in\mathbb{N}_0 ,$ can be re-indexed as 
\[f_n=\overset{R'-1}{\underset{k=m}{\sum}}c_{k-m}^f \Gamma_{\beta}^\alpha(n-k+m)+\mathcal{O}(\Gamma_{\beta+m}^\alpha(n-R')),\;\;\text{for all} \;R'\geq m.\]

By the observation at the beginning the latter is equivalent to
\[f_n=\overset{R'-1}{\underset{k=m}{\sum}}c_{k-m}^f \Gamma_{\beta+m}^\alpha(n-k)+\mathcal{O}(\Gamma_{\beta+m}^\alpha(n-R')),\;\;\text{for all} \;R'\geq m.\]

Which proves the first part of the statement. Also, in that case

$\big(\mathcal{A}_{\beta+m}^\alpha f\big)(x)=\sum_{k=m}^\infty c_{k-m}^fx^k=x^m\big(\mathcal{A}_{\beta}^\alpha f\big)(x)\in x^m\mathbb{R}[[x]]$.
\end{proof}

The following corollary now follows.
\begin{cor}\label{corplusm1}
For all $m\in\mathbb{N}_0$,
$\mathbb{R}[[x]]_\beta^\alpha \subset \mathbb{R}[[x]]_{\beta+m}^\alpha$. 
\end{cor}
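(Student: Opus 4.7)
The plan is to read this off immediately from Proposition \ref{plusm1}. That proposition gives a biconditional: membership in $\mathbb{R}[[x]]_\beta^\alpha$ is equivalent to the conjunction of (a) membership in $\mathbb{R}[[x]]_{\beta+m}^\alpha$ and (b) the additional divisibility condition $\mathcal{A}_{\beta+m}^\alpha f \in x^m\mathbb{R}[[x]]$. The corollary only asks for the one-sided containment, which is the ``only if'' direction stripped of the supplementary condition (b).

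Concretely, the argument I would write is: fix $m\in\mathbb{N}_0$ and take an arbitrary $f\in\mathbb{R}[[x]]_\beta^\alpha$. By the forward implication in Proposition \ref{plusm1}, $f\in\mathbb{R}[[x]]_{\beta+m}^\alpha$ (and moreover $\mathcal{A}_{\beta+m}^\alpha f \in x^m\mathbb{R}[[x]]$, though this last fact is not needed for the containment). Since $f$ was arbitrary, $\mathbb{R}[[x]]_\beta^\alpha\subseteq \mathbb{R}[[x]]_{\beta+m}^\alpha$. The inclusion is strict in general precisely because the condition $\mathcal{A}_{\beta+m}^\alpha f\in x^m\mathbb{R}[[x]]$ will fail for generic $f\in\mathbb{R}[[x]]_{\beta+m}^\alpha$, so writing ``$\subset$'' as in the statement is consistent with the setup.

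There is essentially no obstacle here; the work was done inside Proposition \ref{plusm1} via the identity $\Gamma^\alpha_\beta(n)=\Gamma^\alpha_{\beta+m}(n-m)$ and a reindexing of the asymptotic expansion. In the writeup I would therefore simply cite Proposition \ref{plusm1} in a one-sentence proof, perhaps noting in passing (for the reader's orientation) that the corresponding alien derivatives are linked by $x^m(\mathcal{A}_\beta^\alpha f)(x)=(\mathcal{A}_{\beta+m}^\alpha f)(x)$, which explains both why the containment holds and why it fails to be an equality.
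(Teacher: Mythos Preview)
Your proposal is correct and matches the paper's approach exactly: the paper simply states ``The following corollary now follows'' immediately after Proposition~\ref{plusm1}, deferring entirely to the forward direction of that biconditional. Your write-up is, if anything, more explicit than the paper's one-line dismissal.
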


Thus we can always assume that $\beta>0$, which is very convenient in deriving many results for the ring $\mathbb{R}[[x]]_\beta^\alpha$ \cite{michi}.

\begin{prop}[\cite{michi}, Prop 4.1.2]\label{plusm2} For $m\in\mathbb{N}_0$,
$f\in \mathbb{R}[[x]]_\beta^\alpha \cap x^m\mathbb{R}[[x]]$ if and only if $\displaystyle\frac{f(x)}{x^m}\in\mathbb{R}[[x]]_{\beta+m}^\alpha$. In this case $\big(\mathcal{A}_\beta^\alpha f\big)(x)=\big(\mathcal{A}_{\beta+m}^\alpha \displaystyle\frac{f(x)}{x^m}\big)(x)$.
\end{prop}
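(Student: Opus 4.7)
The plan is to reduce this to a direct index-shift computation, in the same spirit as Proposition \ref{plusm1} just above. The central algebraic identity that makes everything work is
\[
\Gamma^\alpha_\beta(n) \;=\; \alpha^{n+\beta}\Gamma(n+\beta) \;=\; \alpha^{(n-m)+(\beta+m)}\Gamma((n-m)+(\beta+m)) \;=\; \Gamma^\alpha_{\beta+m}(n-m),
\]
so shifting $\beta$ by $m$ is the same as shifting the index of the asymptotic scale by $m$.

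First I would unwrap both sides at the level of coefficients. The hypothesis $f\in x^m\mathbb{R}[[x]]$ is exactly what is needed to guarantee that $g(x):=f(x)/x^m$ is a genuine element of $\mathbb{R}[[x]]$ (rather than a formal Laurent series), with coefficients $g_n=f_{n+m}$ for all $n\ge 0$. Assuming $f\in\mathbb{R}[[x]]_\beta^\alpha$, the defining relation
\[
f_n \;=\; \sum_{k=0}^{R-1} c_k^f \,\Gamma^\alpha_\beta(n-k) \;+\; \mathcal{O}\!\left(\Gamma^\alpha_\beta(n-R)\right)
\]
holds for every $R\in\mathbb{N}_0$. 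Substituting $n\mapsto n+m$ and applying the identity above term by term turns this into
\[
g_n \;=\; f_{n+m} \;=\; \sum_{k=0}^{R-1} c_k^f \,\Gamma^\alpha_{\beta+m}(n-k) \;+\; \mathcal{O}\!\left(\Gamma^\alpha_{\beta+m}(n-R)\right),
\]
which is exactly the defining relation placing $g\in\mathbb{R}[[x]]_{\beta+m}^\alpha$, and uniqueness of the asymptotic coefficients (stated earlier in the excerpt) forces $c_k^g=c_k^f$ for all $k$. The reverse direction is completely symmetric: starting from $g\in\mathbb{R}[[x]]_{\beta+m}^\alpha$, perform the substitution $n\mapsto n-m$ (valid for all $n\ge m$, and the finitely many lower indices do not affect large-$n$ asymptotics) to recover the asymptotic expansion for $f$ with the same coefficients. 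The equality of the alien derivatives then drops out for free:
\[
(\mathcal{A}^\alpha_\beta f)(x) \;=\; \sum_{k\ge 0} c_k^f\, x^k \;=\; \sum_{k\ge 0} c_k^g\, x^k \;=\; \left(\mathcal{A}^\alpha_{\beta+m} \tfrac{f(x)}{x^m}\right)(x).
\]

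There is essentially no substantive obstacle here beyond careful bookkeeping; the only point worth flagging is that one must verify the $\mathcal{O}$-term genuinely transfers between the two asymptotic scales, i.e.\ that a sequence $h_n$ satisfying $h_n=\mathcal{O}(\Gamma^\alpha_\beta(n))$ is equivalent, after the index shift $n\mapsto n+m$, to satisfying $h_{n+m}=\mathcal{O}(\Gamma^\alpha_{\beta+m}(n))$. This is immediate from the equality $\Gamma^\alpha_\beta(n+m)=\Gamma^\alpha_{\beta+m}(n)$, but it is the one step that conceptually justifies treating the two asymptotic scales as interchangeable under the reparametrization, and therefore deserves to be made explicit in the written proof.
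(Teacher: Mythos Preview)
Your proposal is correct and follows essentially the same approach as the paper: both proofs rest on the identity $\Gamma^\alpha_\beta(n+m)=\Gamma^\alpha_{\beta+m}(n)$ and a direct index shift in the defining asymptotic relation. The only cosmetic difference is that the paper dispatches the forward implication by citing Proposition~\ref{plusm1}, whereas you carry out the substitution $n\mapsto n+m$ explicitly; the underlying computation is the same.
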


\begin{proof}
Again, since \[\Gamma^\alpha_\beta(n+m)=\alpha^{n+\beta+m}\Gamma(n+\beta+m)=\Gamma_{\beta+m}^\alpha(n),\]
 we can argue as follows:
 
 The `only if part' follows by Proposition \ref{plusm1}. For the `if' part, assume that $g(x)=\displaystyle\frac{f(x)}{x^m}\in\mathbb{R}[[x]]_{\beta+m}^\alpha$. This gives that 

\[f_{n+m}=g_n=\overset{R-1}{\underset{k=0}{\sum}}c_k^g \Gamma_{\beta+m}^\alpha(n-k)+\mathcal{O}(\Gamma_{\beta+m}^\alpha(n-R)),\;\;\text{for all} \;R\in\mathbb{N}_0.\] 

By the observation above this is equivalent to
\[f_{n+m}=\overset{R-1}{\underset{k=0}{\sum}}c_k^g \Gamma_{\beta}^\alpha(n+m-k)+\mathcal{O}(\Gamma_{\beta}^\alpha(n+m-R)),\;\;\text{for all} \;R\in\mathbb{N}_0.\] 

Thus, for all $n\geq m$,
\[f_n=\overset{R-1}{\underset{k=0}{\sum}}c_k^g \Gamma_{\beta}^\alpha(n-k)+\mathcal{O}(\Gamma_{\beta}^\alpha(n-R)),\;\;\text{for all} \;R\in\mathbb{N}_0,\] 

which gives the desired result. Also, from the equations above we see that $\big(\mathcal{A}_\beta^\alpha f\big)(x)=\big(\mathcal{A}_{\beta+m}^\alpha \displaystyle\frac{f(x)}{x^m}\big)(x)$.

\end{proof}

The next two theorems will be used later in the thesis, the proofs however are lengthy and require many lemmas, and shall thereby be omitted. The reader can refer to \cite{michi} for the complete treatment.

\begin{thm}[\cite{michi}, Prop 4.3.1] \label{derivation}
Let $\alpha,\beta\in \mathbb{R}$, with $\alpha>0$. The linear map $\mathcal{A}_\beta^\alpha$ is a derivation over the ring $\mathbb{R}[[x]]_{\beta}^\alpha$, that is 
\[(\mathcal{A}_\beta^\alpha(f\cdot g))(x)=f(x)(\mathcal{A}_\beta^\alpha g)(x)+g(x)(\mathcal{A}_\beta^\alpha f)(x), \] for all $f,g\in \mathbb{R}[[x]]_{\beta}^\alpha $.
\end{thm}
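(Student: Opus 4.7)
The plan is to prove this by direct manipulation of the convolution defining the product, splitting the sum into a ``middle'' region and two ``tail'' regions and showing that the middle region is asymptotically negligible while each tail reproduces one of the two terms of the Leibniz rule.

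First I would reduce to a convenient range of $\beta$. By Corollary \ref{corplusm1}, $\mathbb{R}[[x]]_\beta^\alpha \subseteq \mathbb{R}[[x]]_{\beta+m}^\alpha$ for every $m \in \mathbb{N}_0$, and Proposition \ref{plusm1} tells us how $\mathcal{A}_\beta^\alpha$ transforms. So without loss of generality I may assume $\beta$ is as large as I like, which is comfortable because it keeps all the $\Gamma$-values positive and lets me freely use Stirling bounds of the form $\Gamma(a+\beta)\Gamma(n-a+\beta) = o\!\bigl(\Gamma(n+\beta-R)\bigr)$ uniformly for $a$ in a middle range and for any fixed $R$. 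This bound (an instance of the standard Beta-function estimate) is the key inequality I will need.

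Next, fix $f, g \in \mathbb{R}[[x]]_\beta^\alpha$ and let $h = f \cdot g$, so $h_n = \sum_{j=0}^{n} f_j\, g_{n-j}$. Fix a cutoff $M = \lfloor n/2 \rfloor$ and split
\[
h_n \;=\; \underbrace{\sum_{j=0}^{M}\! f_j g_{n-j}}_{S_1} \;+\; \underbrace{\sum_{j=M+1}^{n}\! f_j g_{n-j}}_{S_2}.
\]
In $S_1$, replace $g_{n-j}$ by its asymptotic expansion from Definition \ref{fdps} up to order $R$, giving
\[
g_{n-j} \;=\; \sum_{k=0}^{R-1} c_k^g\, \Gamma_\beta^\alpha(n-j-k) + \mathcal{O}\!\bigl(\Gamma_\beta^\alpha(n-j-R)\bigr),
\]
and re-index $i = j+k$ so that the leading part of $S_1$ becomes
\[
\sum_{i=0}^{R-1} \Gamma_\beta^\alpha(n-i) \sum_{k=0}^{i} f_{i-k}\, c_k^g \;+\; \text{(error)}.
\]
By symmetry, $S_2$ similarly contributes $\sum_{i=0}^{R-1}\Gamma_\beta^\alpha(n-i)\sum_{k=0}^{i} g_{i-k}\, c_k^f$ plus an error. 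The ``error'' coming from the asymptotic remainders in $S_1, S_2$ is $\mathcal{O}(\Gamma_\beta^\alpha(n-R))$ because $\sum_j |f_j|\,\Gamma_\beta^\alpha(n-j-R)$ is dominated by $\Gamma_\beta^\alpha(n-R)$ times a convergent tail (here I use that $\Gamma_\beta^\alpha(n-j-R)/\Gamma_\beta^\alpha(n-R) \sim \alpha^{-j}(n-R)^{-j}$, summable against any polynomially bounded $f_j$).

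The subtle step — and the main obstacle I foresee — is showing that the ``overlap'' $\sum_{j=0}^{M}\! f_j g_{n-j}$ and $\sum_{j=M+1}^{n}\! f_j g_{n-j}$ do not double-count the tails, and that their union is genuinely $\sum_j f_j g_{n-j}$ without a missing middle band. Concretely, the decomposition of $S_1$ introduced terms like $f_j g_{n-j}$ with $j$ close to $n/2$, and one has to check these are absorbed into the $\mathcal{O}(\Gamma_\beta^\alpha(n-R))$ error. This is where the Beta-function bound enters: for $j$ bounded away from both $0$ and $n$, both $f_j$ and $g_{n-j}$ are of order $\Gamma_\beta^\alpha(\cdot)$, and the product $\Gamma_\beta^\alpha(j)\Gamma_\beta^\alpha(n-j)$ is dwarfed by $\Gamma_\beta^\alpha(n-R)$ for any fixed $R$. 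Taking $\beta$ large enough (via the reduction above) makes this comparison clean.

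Finally, reading off the coefficient of $\Gamma_\beta^\alpha(n-k)$ in $h_n$ yields
\[
c_k^{h} \;=\; \sum_{i=0}^{k} f_{k-i}\, c_i^g \;+\; \sum_{i=0}^{k} g_{k-i}\, c_i^f,
\]
which, summed against $x^k$, is exactly the coefficient of $x^k$ in $f(x)\,(\mathcal{A}_\beta^\alpha g)(x) + g(x)\,(\mathcal{A}_\beta^\alpha f)(x)$. This simultaneously proves $h \in \mathbb{R}[[x]]_\beta^\alpha$ (so $\mathbb{R}[[x]]_\beta^\alpha$ is a ring, recovering Proposition \ref{subring}) and verifies the Leibniz rule, establishing that $\mathcal{A}_\beta^\alpha$ is a derivation.
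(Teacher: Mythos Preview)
The paper does not actually give a proof of this theorem: immediately before the statement it says explicitly that ``the proofs however are lengthy and require many lemmas, and shall thereby be omitted. The reader can refer to \cite{michi} for the complete treatment.'' So there is no in-paper argument to compare against; your outline is in fact close in spirit to the proof in \cite{michi}, which also splits the Cauchy product at $\lfloor n/2\rfloor$ and controls the middle via a Beta-function estimate.

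That said, there is a genuine gap in your error analysis. When you bound the remainder of $S_1$ you write that $\sum_{j} |f_j|\,\Gamma_\beta^\alpha(n-j-R)$ is $O(\Gamma_\beta^\alpha(n-R))$ because $\Gamma_\beta^\alpha(n-j-R)/\Gamma_\beta^\alpha(n-R)\sim \alpha^{-j}(n-R)^{-j}$ is ``summable against any polynomially bounded $f_j$.'' But $f$ is a \emph{factorially divergent} series: by hypothesis $f_j$ itself is of order $\Gamma_\beta^\alpha(j)$, not polynomial, and the sum runs up to $j=M\approx n/2$, where $|f_j|$ is enormous. The decay of the ratio for each fixed $j$ is not enough; you need a uniform bound of the type
\[
\sum_{j=0}^{M}\Gamma_\beta^\alpha(j)\,\Gamma_\beta^\alpha(n-j-R)\;=\;O\!\bigl(\Gamma_\beta^\alpha(n-R)\bigr),
\]
which is precisely the Beta-function inequality you invoke later for the ``middle'' terms, not a separate and easier fact. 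The same issue reappears when you discard the terms with $j+k\ge R$ after re-indexing: those terms also have $j$ ranging up to $M$ and need the same uniform estimate. In \cite{michi} this is handled by first proving a lemma of the shape $\sum_{m=R}^{n-R}\Gamma_\beta^\alpha(m)\Gamma_\beta^\alpha(n-m)=O(\Gamma_\beta^\alpha(n-R))$ (which is where choosing $\beta$ large genuinely helps) and then using it in both places. Your sketch has the right architecture, but as written the remainder control is circular: it assumes the very growth bound it needs to establish.
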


More interestingly, the next theorem serves as a powerful tool for our purposes. For notation, we set $\mathrm{Diff_{id}}(\mathbb{R},0)=(\{g\in\mathbb{R}[[x]]:g_0=0, g_1=1\},\circ)$, the group of formal diffeomorphisms tangent to the identity, under composition of maps. Similarly, we set  $\mathrm{Diff_{id}}(\mathbb{R},0)_\beta^\alpha=(\{g\in\mathbb{R}[[x]]_\beta^\alpha:g_0=0, g_1=1\},\circ)$ (easily checked to be a monoid).

\begin{thm}[\cite{michi}, Th. 4.4.2]\label{chaintheorem}
Let $\alpha,\beta\in \mathbb{R}$, with $\alpha>0$. Then $\mathrm{Diff_{id}}(\mathbb{R},0)_\beta^\alpha$ is a subgroup of $\mathrm{Diff_{id}}(\mathbb{R},0)$; moreover, for any $f\in\mathbb{R}[[x]]_\beta^\alpha$ and $g\in\mathrm{Diff_{id}}(\mathbb{R},0)_\beta^\alpha$ the following statements  hold:
\begin{enumerate}
    \item $f\circ g$ and $g^{-1}$ are again elements in $\mathbb{R}[[x]]_\beta^\alpha$.
    \item The derivation $ \mathcal{A}_\beta^\alpha$ satisfies a chain rule, namely 
    \begin{equation}\label{chain}
        (\mathcal{A}_\beta^\alpha(f\circ g))(x)=f'(g(x))(\mathcal{A}_\beta^\alpha g)(x)+\bigg(\displaystyle\frac{x}{g(x)}\bigg)^\beta e^{\frac{g(x)-x}{\alpha x g(x)}} (\mathcal{A}_\beta^\alpha f)(g(x)), and
    \end{equation}
     \begin{equation}\label{chain1}
        (\mathcal{A}_\beta^\alpha g^{-1})(x)=-(g^{-1})'(x)\bigg(\displaystyle\frac{x}{g^{-1}(x)}\bigg)^\beta e^{\frac{g^{-1}(x)-x}{\alpha x g^{-1}(x)}}(\mathcal{A}_\beta^\alpha g)(g^{-1}(x)).
    \end{equation}
\end{enumerate}
\end{thm}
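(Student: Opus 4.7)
The plan is to reduce the full statement to a careful asymptotic analysis of the coefficients of $f \circ g$, and then derive the inverse formula by a routine application of the chain rule to the identity $g \circ g^{-1} = x$. First, using Corollary \ref{corplusm1}, I would shift $\beta$ upward (say to $\beta > 1$) so that the asymptotic scale $\Gamma_\beta^\alpha(n)$ is well-defined for all $n \geq 0$ and no boundary corrections are needed in the expansions; by Proposition \ref{plusm2} this does not affect membership in $\mathbb{R}[[x]]_\beta^\alpha$ nor the values of $\mathcal{A}_\beta^\alpha$.

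The heart of the proof is the composition formula. Writing
\[ (f\circ g)_n = \sum_{k=1}^n f_k\,[x^n]\,g(x)^k, \]
I would substitute the full expansion $f_k = \sum_{j<R} c_j^f\,\Gamma_\beta^\alpha(k-j) + O(\Gamma_\beta^\alpha(k-R))$ and, after interchanging summations, rewrite the main contribution as
\[ \sum_{j<R} c_j^f \sum_{k}\Gamma_\beta^\alpha(k-j)\,[x^n]\,g(x)^k. \]
Because $g(x)=x+O(x^2)$, the quantity $[x^n]g(x)^k$ vanishes for $k>n$ and is only polynomially large when $n-k$ stays bounded, so the factorial growth of $f_k$ near $k=n$ dominates, yielding both the closure claim $f\circ g \in \mathbb{R}[[x]]_\beta^\alpha$ and a formula for $\mathcal{A}_\beta^\alpha(f\circ g)$ in terms of the data of $f$ and $g$. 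The second term $f'(g(x))(\mathcal{A}_\beta^\alpha g)(x)$ of \eqref{chain} arises naturally by combining Fa\`a di Bruno's formula with the derivation property of $\mathcal{A}_\beta^\alpha$ established in Theorem \ref{derivation}, which captures how the singular part of $g$ feeds into each power $g(x)^k$.

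For the inverse formula \eqref{chain1}, I would first establish closure of $g^{-1}$ in $\mathbb{R}[[x]]_\beta^\alpha$ by combining the composition closure with a Lagrange-inversion estimate showing that inverses of tangent-to-identity series with factorially divergent coefficients remain in the class. Then, applying \eqref{chain} to the identity $g\circ g^{-1}=x$ and using that $\mathcal{A}_\beta^\alpha(\mathrm{id})=0$ (the identity is polynomial, hence trivially in the class with zero alien derivative) produces
\[ 0 \;=\; g'(g^{-1}(x))\,(\mathcal{A}_\beta^\alpha g^{-1})(x) \;+\; \bigl(x/g^{-1}(x)\bigr)^\beta e^{(g^{-1}(x)-x)/(\alpha x g^{-1}(x))}\,(\mathcal{A}_\beta^\alpha g)(g^{-1}(x)), \]
which rearranges to \eqref{chain1} upon using $(g^{-1})'(x)=1/g'(g^{-1}(x))$. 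The subgroup structure of $\mathrm{Diff_{id}}(\mathbb{R},0)_\beta^\alpha$ then follows immediately from closure under composition and inversion.

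The hard part will be extracting the precise closed-form factor $(x/g(x))^\beta\,e^{(g(x)-x)/(\alpha x g(x))}$ from the double sum above. The natural tool is a Borel--Laplace resummation: using $\Gamma(k+\beta)=\int_0^\infty t^{k+\beta-1}e^{-t}\,dt$, the inner sum $\sum_k \Gamma_\beta^\alpha(k-j)\,g(x)^k$ can be recognized (at the level of formal Laplace integrals) as $\int_0^\infty t^{\beta-1} e^{-t/\alpha}(1-tg(x))^{-1}\,dt$-type expressions, and a change of variables $t\mapsto t\cdot x/g(x)$ produces exactly the asserted Jacobian $(x/g(x))^\beta$ together with the exponential factor. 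The delicate point is controlling the error terms from Definition \ref{fdps} uniformly enough to guarantee that the asymptotic coefficients of $(f\circ g)_n$ match, order by order, the Taylor coefficients of the right-hand side of \eqref{chain}, rather than merely matching up to subleading corrections.
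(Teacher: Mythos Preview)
The paper does not prove this theorem. Just before stating Theorem~\ref{derivation} and Theorem~\ref{chaintheorem} the author writes explicitly: ``the proofs however are lengthy and require many lemmas, and shall thereby be omitted. The reader can refer to \cite{michi} for the complete treatment.'' So there is no proof in the paper to compare your proposal against.

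That said, your outline is a reasonable high-level sketch of how such a result is typically established. The preliminary shift to large $\beta$, the splitting of $(f\circ g)_n=\sum_k f_k\,[x^n]g(x)^k$ into a near-diagonal piece (where $k$ is close to $n$ and the factorial growth of $f_k$ dominates, producing the exponential prefactor) versus a bulk piece (captured by the derivation property and giving $f'(g(x))(\mathcal{A}_\beta^\alpha g)(x)$), and the derivation of \eqref{chain1} from \eqref{chain} via $g\circ g^{-1}=x$ are all correct in spirit. Your Borel--Laplace heuristic is a good way to \emph{guess} the factor $(x/g(x))^\beta e^{(g(x)-x)/(\alpha x g(x))}$, but it is not itself a proof: the formal Laplace integral you write down diverges, and turning the heuristic into a rigorous coefficient-by-coefficient identity is exactly the ``lengthy'' part the thesis declines to reproduce. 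One point to watch is the order of logic for the inverse: you need $g^{-1}\in\mathbb{R}[[x]]_\beta^\alpha$ \emph{before} you can apply \eqref{chain} to $g\circ g^{-1}$, so the Lagrange-inversion estimate you mention must be done independently rather than bootstrapped from the chain rule.
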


It is worth mentioning here that this theorem offers more flexibility than the result by E. Bender in \cite{benderalone}.


\section{Asymptotics of $C_n$}\label{asymptotics o connected}

We pursue a combinatorial interpretation for expressions that appear in the asymptotic expansion of $C_n$, the number of connected chord diagrams on $n$ chords. When we present M. Borinski's calculation of $\mathcal{A}^2_{\frac{1}{2}}C(x)$ in this section, we will find that it is expressed as a rational function of $C(x)$ times an exponential function in $C(x)$. Then it was conjectured that the exponent is following sequence \href{https://oeis.org/A088221}{A088221}. The main result presented in Section \ref{mainhere}  proves this and gives a new combinatorial interpretation for entry  \href{https://oeis.org/A088221}{A088221} of the OEIS.  We will show that \href{https://oeis.org/A088221}{A088221} surprisingly counts pairs of connected chord diagrams (allowing empty diagrams).

\subsection{Overview}

In \cite{michi1}, M. Borinsky studied the asymptotic behaviour of $C_n$, the number of connected chord diagrams on $n$ chords, as an instance of the work on factorially divergent power series. Namely, if we start with an asymptotic expansion that consists of a sum where terms are scalar multiples of (modified) gamma functions, we can then associate the sequence of scalar coefficients to an ordinary power series instead of the gamma functions and study the algebraic advantage of this process. We discussed this in detail in Section \ref{factorially}. So, roughly speaking, it is shown in \cite{michi1} that, after factoring out the factorial divergencies from the coefficients of the generating function $C(x)$ of   connected chord diagrams, we obtain the following power series in terms of $C(x)$:
\begin{equation}\label{exxx}
    \frac{x}{C(x)} \exp\Big(-\frac{1}{2x}(2C(x)+C(x)^2)\Big).
\end{equation}
By (iii) in Lemma \ref{cd}  we can rewrite the expression inside the exponential as $-1$ times
\[1+\frac{1}{2x} C(x) (4x\frac{d}{dx}-1)C(x).\]
Ignoring the 1 and the 1/2, this can be interpreted as the generating function for rooted chord diagrams with at most two connected components, counted by one less than the number of chords. Indeed, a $2x\displaystyle\frac{d}{dx}$ means distinguishing an interval; now, except for the last one, there are two ways of using an interval: we can just place the other $C(x)$ in an interval, or we can place it and pull its root chord to the very front to become the new root. The last interval can only be used in the first way. The coefficients of the expression in \ref{exxx} with after ignoring the 1 and the 1/2 start as $3,10,63,558,6226,82836,\ldots$, which coincide with those of the sequence \href{https://oeis.org/A088221}{A088221} of the OEIS: $1,2,3,10,63,558,6226,82836,\ldots$. The only definition available for the latter is in terms of another sequence: \href{https://oeis.org/A000698}{A000698} which interestingly counts indecomposable chord diagrams. Namely, the definition tells that (in abuse of notation!) $[x^n](\href{https://oeis.org/A088221}{A088221})^n=[x^{n+1}] I(x)$.
  So, there has to be some bridge between chord diagrams with at most two connected components and indecomposable chord diagrams  as we shall here prove. 
  
The problem of finding a better combinatorial interpretation for \href{https://oeis.org/A088221}{A088221} lies as a piece in a more general context. The ultimate goal is to interpret, on combinatorial  basis, the action described by the map $\mathcal{A}_\beta^\alpha:\mathbb{R}[[x]]_\beta^\alpha\rightarrow\mathbb{R}[[x]]$, acting over the subring of $(\alpha,\beta)$-factorially divergent power series (see Capter \ref{chfurther}). 

Throughout the paper we shall stick to the following notation:
\begin{enumerate}
    \item $\mathcal{D}$ is the class of chord diagrams,
    \item $\mathcal{C}$ is the class of connected chord diagrams,
    \item $\mathcal{C}^*$ is the class of connected chord diagrams excluding the one chord diagram,
    \item $\mathcal{D}_{\leq2}$ is the class of chord diagrams with at most two connected components,
    \item $\mathcal{I}$ is the class indecomposable chord diagrams, and finally
    \item $\mathcal{I}_2$  will stand for indecomposable chord diagrams with exactly two components.\\
\end{enumerate}


    
    \subsection{Asymptotic Analysis for Connected Chord Diagrams}
  \setlength{\parindent}{0.5cm}  
  
   Here we follow \cite{michi1} in applying the results discussed in Section \ref{factorially} to chord diagrams. The full study of this approach, as well as more combinatorial applications, can be found in \cite{michi1}. We have $(2n-1)!!=\displaystyle\frac{2^{n+\frac{1}{2}}}{\sqrt{2\pi}}
   \Gamma(n+\frac{1}{2})=
   \displaystyle\frac{1}{\sqrt{2\pi}}\Gamma^2_{\frac{1}{2}}(n)$, and so $D(x)\in \mathbb{R}[[x]]_{\frac{1}{2}}^2$. According to Definition \ref{map} we have   $(\mathcal{A}_{\frac{1}{2}}^2D)(x)=1/\sqrt{2\pi}$. From the recursion (i) in Lemma \ref{cd} we see that $C(xD(x)^2)\in\mathbb{R}[[x]]_{\frac{1}{2}}^2$. Also, $xD(x)^2\in\mathbb{R}[[x]]_{\frac{1}{2}}^2$ since  $\mathbb{R}[[x]]_{\frac{1}{2}}^2$ is a ring by Proposition \ref{subring}. Now, applying Theorem \ref{chaintheorem} to dissolve the composition, we get that $C(x)\in\mathbb{R}[[x]]_{\frac{1}{2}}^2$.\\
   \setlength{\parindent}{0cm}  
   
   From the other way round, we can apply the chain rule of $\mathcal{A}_{\frac{1}{2}}^2$ (Theorem \ref{chaintheorem}), doing so we obtain the equation
   \begin{align*}
       \frac{1}{\sqrt{2\pi}}&=(\mathcal{A}_{\frac{1}{2}}^2D)(x)
       =\Big(\mathcal{A}_{\frac{1}{2}}^2\big(1+C(xD(x)^2)\big)\Big)(x)=\Big(\mathcal{A}_{\frac{1}{2}}^2C(xD(x)^2)\Big)(x)\\&=
       2xD(x)C'(xD(x)^2)(\mathcal{A}_{\frac{1}{2}}^2D)(x)+\bigg(\displaystyle\frac{x}{xD(x)^2}\bigg)^{\frac{1}{2}} e^{\displaystyle\frac{xD(x)^2-x}{2x^2D(x)^2}} \Big(\mathcal{A}_{\frac{1}{2}}^2C\Big)(xD(x)^2)\\&=
       \frac{2}{\sqrt{2\pi}}xD(x)C'(xD(x)^2)+\displaystyle\frac{1}{D(x)} e^{\frac{D(x)^2-1}{2xD(x)^2}} \Big(\mathcal{A}_{\frac{1}{2}}^2C\Big)(xD(x)^2).
   \end{align*}
    This tells us that \[\Big(\mathcal{A}_{\frac{1}{2}}^2C\Big)(xD(x)^2)=\displaystyle
    \frac{D(x)-2xD(x)^2C'(xD(x)^2)}{\sqrt{2\pi}}\;e^{\frac{1-D(x)^2}{2xD(x)^2}}. \]
    
Now notice that $xD(x)^2$ has a compositional inverse, $h(x)$, say. That is, $h(x)D(h(x))^2=x$, and hence, if we replace $x$ by $h(x)$ in the last equation we get 
\[\Big(\mathcal{A}_{\frac{1}{2}}^2C\Big)(x)=\displaystyle
    \frac{D(h(x))-2xC'(x)}{\sqrt{2\pi}}\;e^{\frac{1-D(h(x))^2}{2x}}. \]
But $D(h(x))=1+C(x)$ by (i) in Lemma \ref{cd}, so we get  
\begin{align}\label{A}
    \Big(\mathcal{A}_{\frac{1}{2}}^2C\Big)(x)&=\displaystyle
    \frac{1+C(x)-2xC'(x)}{\sqrt{2\pi}}\;e^{-\frac{1}{2x}(2C(x)+C(x)^2)}\\
    &=\frac{x}{\sqrt{2\pi}C(x)}\;e^{-\frac{1}{2x}(2C(x)+C(x)^2)}\;, \tag{$\dagger$}
\end{align}
where the second equality is achieved by appealing to (iii) in Lemma \ref{cd}.
Obtaining such a computable formula for $\mathcal{A}_{\frac{1}{2}}^2C$ means that we have all the coefficients for the asymptotic expansion of $C(x)$, in the sense of Definition \ref{fdps}. As provided in \cite{michi1}, the first coefficients are 

\begin{equation}\label{calc}
   \Big(\mathcal{A}_{\frac{1}{2}}^2C\Big)(x)=\frac{1}{e\;\sqrt{2\pi}}
   \Bigg(1-\frac{5}{2}x-\frac{43}{8}x^2-\frac{579}{16}x^3-\frac{44477}{128}x^4-\frac{5326191}{1280}x^5\cdots\Bigg).
\end{equation}

Accordingly, by Definition \ref{fdps}, and since $(2n-1)!!=
   \displaystyle\frac{1}{\sqrt{2\pi}}\Gamma^2_{\frac{1}{2}}(n)$, we have for all $\;R\in\mathbb{N}_0$:
\begin{align*}
    C_n&= \overset{R-1}{\underset{k=0}{\sum}} \Gamma^2_{\frac{1}{2}}(n-k)\; [x^k]\Big(\mathcal{A}_{\frac{1}{2}}^2C\Big)(x)+
    \mathcal{O}\Big(\Gamma^2_{\frac{1}{2}}(n-R)\Big)\\
    &= \sqrt{2\pi}\overset{R-1}{\underset{k=0}{\sum}} (2(n-k)-1)!!\; [x^k]\Big(\mathcal{A}_{\frac{1}{2}}^2C\Big)(x)+
    \mathcal{O}\big((2(n-R)-1)!!\big).
\end{align*}

Consequently, for large $n$ we have 
\[C_n= e^{-1}\Big((2n-1)!!-\frac{5}{2}(2n-3)!!-\frac{43}{8}(2n-5)!!-\frac{579}{16}(2n-7)!!
\cdots\Big).\]

This result by M. Borinsky provides a full generalization for the computations in the work of Kleitman  \cite{kleit}, Stein and Everett \cite{steinandeveret} and Bender and Richmond \cite{benderandrichmond}, where only the first term in the expansion has been known. Finally, this also tells us that the probability for a diagram on $n$ chords to be connected is 
$e^{-1}(1-\frac{5}{4n})+\mathcal{O}(1/n^2)$.



\section{The Main Bijection}\label{mainhere}
\setlength{\parindent}{0.6cm}

In this section we derive a bijection described by a reversible algorithm to move
 between the class of two lists of indecomposable chord diagrams (allowing empty lists) and the class of rooted trees, in which vertices are of special type, and where a $\mathcal{D}_{\leq2}$-structure is set over the children of every vertex. Our goal by proving this is to eventually prove that \href{https://oeis.org/A088221}{A088221} counts pairs of connected chord diagrams.
 By the results in \cite{con}, it might be nice to pass this problem into the context of maps on oriented surfaces, but we shall not discuss this here.

First we recall the decomposition of a chord diagram by means of extracting the root component (Lemma \ref{cd}-(i))\;:
\[D(x)=1+C(xD(x)^2).\]
This decomposition will be of great help in the construction presented here, hence it may be wise to accompany it with a suitable notation.\\
  
 \begin{nota}\label{notationdangling} The two diagrams that correspond to each chord in the root component will be referred to as the \textit{right dangling} and the \textit{left dangling} diagrams. Given a chord diagram $D$, the root component will be denoted as $C_\bullet(D)$, while the dangling diagrams will then be $d_r$ and $d_l$. The symbols $d_r,d_l$ and $C_\bullet$ are to be often used as operators.\end{nota}
   
  \begin{exm}\label{exampledangling}
   Consider the following chord diagram.
   \begin{center}
   \raisebox{0cm}{\includegraphics[scale=0.68]{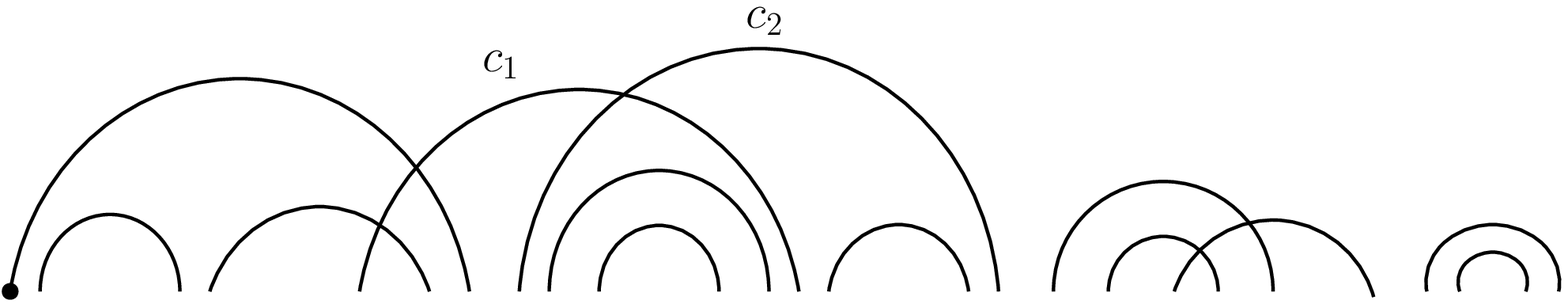}}
   \end{center}

  Yet, this can be also seen as
    \begin{center}
   \raisebox{0cm}{\includegraphics[scale=0.68]{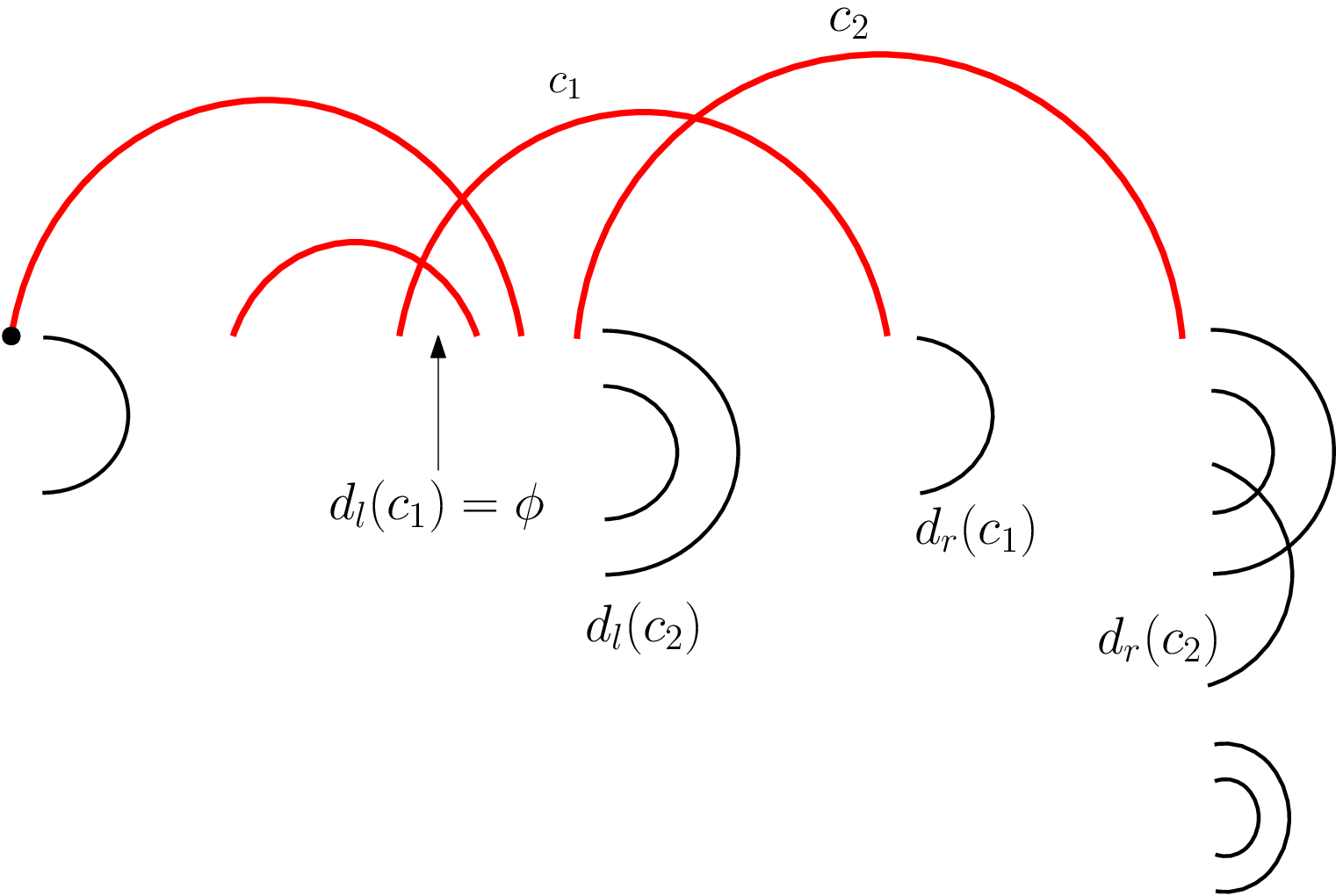}}    
   \end{center}

    which clarifies the decomposition of the lemma. Note that the thick red diagram is the root component $C_\bullet$ of the original diagram. Also, notice that, for example, $d_l(c_1)=\varnothing$. The reason for the nomenclature is now hopefully justified.\\
  \end{exm}

\begin{lem}\label{bij}There is a bijection $\Phi$ between    the class $\mathcal{C}^*$ of rooted connected chord diagrams excluding the one chord diagram, and the class $\mathcal{I}_2$ of indecomposable chord diagrams with exactly two components. Thus, in terms of generating functions
    $\mathcal{I}_2(x)=C(x)-x$.
    \end{lem}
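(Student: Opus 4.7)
The plan is to exhibit $\Phi$ by matching two combinatorial decompositions. The first is Lemma \ref{cd}(iii), which I rewrite as
\[
C(x)-x \;=\; C(x)\cdot\bigl(2xC'(x)-C(x)\bigr).
\]
In the proof of that lemma, $2xC'-C$ is interpreted as the generating series for connected diagrams with a marked non-last interval: removing the root chord of a connected $C\in\mathcal{C}^*$ leaves a nonempty sequence $(E_1,E_2,\ldots,E_s)$ of connected components, each equipped with a distinguished non-last interval (the one through which the removed root used to pass). Thus $\mathcal{C}^*$ is put in bijection with pairs $(E_1,C')$, where $E_1$ is a connected diagram with a marked non-last interval and $C'$ is the connected diagram obtained by re-attaching the root chord to the tail $(E_2,\ldots,E_s)$ via the inverse of the same decomposition.

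The second step is to put $\mathcal{I}_2$ into the same pair form. Given $D\in\mathcal{I}_2$, let $A$ be the connected component containing position $1$ and $B$ be the other one. I plan to show that the positions of $B$ in $D$ occupy a single non-last interval of $A$'s linear representation, so that $D$ corresponds bijectively to a pair (connected diagram $A$ with a marked non-last interval, connected diagram $B$). Contiguity of $B$'s positions inside $A$ will follow from a case analysis: if an $A$-position $r$ were strictly between two $B$-positions $p<q$, then the no-crossing condition between $A$ and $B$ classifies every $B$-chord as ``left'', ``right'', ``inside'' or ``surrounding'' relative to the $A$-chord through $r$, and the $B$-chords incident to $p$ and $q$ are forced into mutually inaccessible categories, contradicting connectedness of $B$---unless a $B$-chord surrounds the $A$-chord via the outer region of $A$'s arrangement, in which case all of $B$'s positions lie past (or before) all of $A$'s positions and $D$ admits a cut, contradicting indecomposability. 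The requirement that the interval be non-last is forced by the same indecomposability argument.

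Combining the two decompositions, the bijection is $\Phi(C)=D$, where the pair $(E_1,C')$ extracted from $C$ is reassembled as $D$ by setting the root component $A=E_1$ (with its marked interval retained) and nesting the other component $B=C'$ inside that marked interval. The inverse $\Phi^{-1}$ reads $(A,B)$ off $D$, takes $E_1=A$, applies Lemma \ref{cd}(iii) to $B$ to recover its root chord and tail $(E_2,\ldots,E_s)$, and recombines them to reconstruct $C$. The generating function identity $\mathcal{I}_2(x)=C(x)-x$ follows at once.

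The principal obstacle I foresee is the structural claim that $B$'s positions occupy a single interval of $A$. The case analysis has several sub-cases depending on where the $A$-chord through the interfering $A$-position $r$ lands, and it requires careful use of indecomposability of $D$ to rule out the ``surrounding'' configuration; once this is settled, the remainder of the argument reduces to reading off and reassembling the decomposition data on both sides.
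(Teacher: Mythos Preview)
Your map $\Phi$ coincides with the paper's: both take $C\in\mathcal{C}^*$, peel off the first component $E_1=C_2$ met by the root (together with the interval that the root marks in it), and nest $C'=C_1$ (the root plus the remaining components) into that interval of $C_2$. The paper calls this the root-share construction and simply asserts the inverse by saying ``$C_2$ is the outer component and $C_1$ the inner one'', so your attempt to actually prove the underlying structural claim---that in any $D\in\mathcal{I}_2$ the non-root component $B$ occupies a single non-last interval of the root component $A$---is a genuine addition rather than a different bijection.

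Your case analysis for that claim, however, does not close as written. From $p<r<q$ with $p,q\in B$ and $r\in A$, the four-way classification relative to the single $A$-chord $\{a,b\}$ through $r$ does force the existence of a ``surrounding'' $B$-chord $\{x,y\}$ (say $x<a<b<y$) whenever the inaccessible-category argument fails. But the conclusion you draw---that then all of $B$ lies entirely past (or before) all of $A$---is false: the endpoints $x$ and $y$ of that very $B$-chord already sit on opposite sides of $\{a,b\}$. The clean fix is to flip the roles and classify the $A$-chords relative to $\{x,y\}$. The chord $\{a,b\}$ is ``inside'' $\{x,y\}$; since $A$ is connected and an inside chord cannot cross a left/right/surrounding one, every $A$-chord must be inside $\{x,y\}$; hence position~$1$ lies in $(x,y)$, contradicting $1<x$ (position~$1$ belongs to $A$, so $x\neq 1$). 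This rules out the surrounding case outright, without invoking indecomposability; indecomposability is then only needed for the easy final step of excluding $B$ from the last interval of $A$.
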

\begin{proof}
The bijection $\Phi$ defined here works almost the same as what is known as the {\it{root share composition}}: Let $C$ be a rooted connected chord diagram. Removing the root chord shall generally leave us with a list of rooted connected components ordered in terms of intersections with the original root. The first  of these components is denoted as $C_2$, while $C_1$ is obtained by removing $C_2$ from the original diagram $C$. Then $(k,C_1,C_2)$ where $1\leq k\leq 2|C_2|-1$, is the {\it{root share decomposition}}  of $C$ (root share decomposition is defined in \cite{yu} by Karen Yeats and N. Marie). $\Phi$ then places the whole $C_1$ at the interval of $C_2$ where the last end of the original root of $C$ would be if the non-root chords of $C_1$ are removed. As seen, the image $\Phi(C)$ in this case is an indecomposable  chord diagram with exactly two components. This definition is reversible. Indeed, given an indecomposable chord diagram with exactly two connected components, $C_2$ will be the outer component and $C_1$ should be the inner one, and $C$ is obtained by just pulling out the first end of the root of $C_1$ to the leftmost position.
\end{proof}

\begin{exm}
Under the map $\Phi$, the chord diagram
\begin{center}
 \raisebox{-0.1cm}{\includegraphics[scale=0.5]{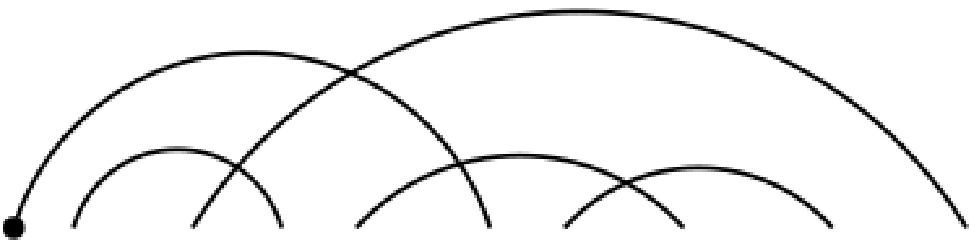}}
 \;\;\text{is mapped to}\;\;
 \raisebox{-0.1cm}{\includegraphics[scale=0.5]{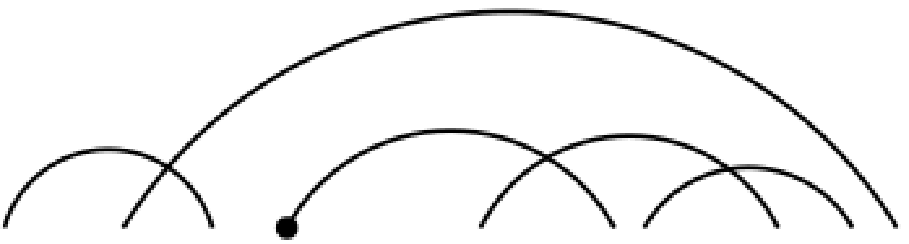}}\;,
\end{center}
\end{exm}

\setlength{\parindent}{0cm}
where, of course, the original root (black) is no longer the root for the resulting diagram.\\

\begin{nota}
In the next theorem, given a finite set $S$ and a class $\mathcal{G}$ of combinatorial objects, the term $\mathcal{G}$-structure on $S$ will simply mean an arrangement of the elements of $S$ into an object from $\mathcal{G}$. The operation $\ast$ stands for the usual ordered product for combinatorial classes. For example, if $\mathcal{T}$ is the class of trees and $\mathcal{K}_{or}$ is the class of oriented complete graphs, then an element from the class $\mathcal{T}\ast\mathcal{K}_{or}$ will be an ordered pair $(T,K)$ where $T\in\mathcal{T}$ and $K\in\mathcal{K}_{or}$. Notice that, for such a product structure to be applied on a finite set there has to be a partition of the set. The reader unfamiliar with this notation from enumerative combinatorics can refer to Appendix \ref{Appendix1}.

\end{nota}

    \begin{thm}\label{mainth}Let $\mathcal{Z}$ be the class of rooted trees where vertices are nonempty ordered sets (paths) and where there is a $\mathcal{D}_{\leq2}$-structure over the  children of every vertex. Then there is a bijection $\Theta$ between $\mathcal{Z}$ and the class $\mathcal{X\ast(D\ast D)}$, where $\mathcal{D}$ is the class of chord diagrams. Consequently, if $Z(x)$ is the generating series for $\mathcal{Z}$, then 
    \begin{equation}
        Z=x\big(\displaystyle\frac{1}{1-I_0}\big)^2,
    \end{equation} where $I_0$ is the generating series for nonempty indecomposable chord diagrams.\end{thm}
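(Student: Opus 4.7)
The plan is to prove the generating function identity $Z(x) = x/(1 - I_0(x))^2$ algebraically, from which the existence of the bijection $\Theta$ will follow by unfolding the recursive decompositions on each side. By the very definition of $\mathcal{Z}$, a tree decomposes uniquely as a nonempty path of atoms at the root together with a $\mathcal{D}_{\leq 2}$-structure on the children, where each child is itself an element of $\mathcal{Z}$. Since a nonempty path of atoms has generating function $x/(1-x)$, and since placing a $\mathcal{Z}$-structure on each chord of a chord diagram amounts to the substitution $D_{\leq 2}(Z)$, this bijective decomposition yields the functional equation
\[ Z(x) = \frac{x}{1-x} \cdot D_{\leq 2}(Z(x)). \]

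The key technical step will be computing $D_{\leq 2}$. A chord diagram has a unique decomposition as a concatenation of nonempty indecomposables, so one with exactly two connected components is either the concatenation of two connected indecomposables (contributing $C(y)^2$) or a single indecomposable containing two interlocking components (the class $\mathcal{I}_2$). Lemma \ref{bij} supplies $\mathcal{I}_2(y) = C(y) - y$, so summing the $0$-, $1$-, and $2$-component strata gives
\[ D_{\leq 2}(y) = 1 + C(y) + C(y)^2 + (C(y) - y) = 1 + 2 C(y) - y + C(y)^2. \]

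Substituting into the functional equation and multiplying through by $1-x$ gives $Z - xZ = x(1 + 2C(Z) + C(Z)^2) - xZ$, which telescopes to $Z = x(1 + C(Z))^2$. To identify this with $x D(x)^2$, I will invoke Lemma \ref{cd}(i): setting $W(x) = x D(x)^2$, the identity $D(x) = 1 + C(xD(x)^2)$ becomes $D(x) = 1 + C(W(x))$, whence $W(x) = x(1 + C(W(x)))^2$. Both $Z$ and $W$ solve $Y = x(1 + C(Y))^2$ with $Y \in x\mathbb{R}[[x]]$, and this equation admits a unique formal power series solution by recursive coefficient extraction, so $Z(x) = x D(x)^2 = x/(1 - I_0(x))^2$, using $D = 1/(1 - I_0)$.

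The bijection $\Theta$ itself will be obtained explicitly by composing these steps: the decomposition of $\mathcal{Z}$ into (path, $\mathcal{D}_{\leq 2}$-structure on children) is bijective by definition, the case analysis splitting $\mathcal{D}_{\leq 2}$ into its three component strata is bijective, and Lemma \ref{bij} supplies the bijection $\mathcal{I}_2 \leftrightarrow \mathcal{C}^*$. The principal obstacle is the clean interpretation of the $\mathcal{I}_2$ stratum through the root-share decomposition of Lemma \ref{bij}; once that is in hand, the remainder is the algebraic manipulation that matches the equation for $\mathcal{Z}$ with the defining recursion for $xD^2$ from Lemma \ref{cd}(i), with uniqueness of formal power series solutions closing the argument.
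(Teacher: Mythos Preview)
Your proof is correct, and in fact when the recursive unfolding you describe is carried out explicitly it recovers precisely the paper's bijection --- but your route to it differs meaningfully from the paper's. The paper constructs an explicit algorithm (Algorithm~1) that takes $P=(\,\text{chord},d_l,d_r)\in\mathcal{X}\ast(\mathcal{D}\ast\mathcal{D})$ and builds a $\mathcal{Z}$-tree by repeatedly extracting root components and their dangling diagrams, with a five-way case split on the pair $(d_l,d_r)$; reversibility is argued case by case, and the generating function identity $Z=x/(1-I_0)^2$ is then read off as a corollary of the bijection together with $D=1/(1-I_0)$. You instead start from the defining recursion $Z=\frac{x}{1-x}D_{\leq2}(Z)$, compute $D_{\leq2}(y)=(1+C(y))^2-y$ via the stratification and Lemma~\ref{bij}, telescope to $Z=x(1+C(Z))^2$, and identify this with the functional equation satisfied by $xD(x)^2$ from Lemma~\ref{cd}(i), invoking uniqueness of formal power series solutions. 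Your approach is cleaner for the generating function claim, which is really what the subsequent Corollaries~\ref{coro} and~\ref{endcoro} consume; the paper's approach delivers the explicit bijection up front, which has independent value (e.g.\ Example~\ref{exampleofZbijection}). The two are dual presentations of the same combinatorial content: the paper's five cases in Algorithm~1 are exactly the five summands in your bijective identity $\mathcal{D}_{\leq2}(\mathcal{Z})\uplus\mathcal{Z}\cong(1+\mathcal{C}(\mathcal{Z}))^2$, once one notes that the ``absorb into stack'' case is the single-chord contribution $\mathcal{Z}$ and the $\mathcal{I}_2$ case is handled by $\Phi$ on both sides. One small terminological slip: in an element of $\mathcal{I}_2$ the two components are \emph{nested}, not ``interlocking'' (if they crossed they would form a single component).
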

    
    \begin{proof}
    Well, for simplicity, we will assume that the objects are given a fixed labelling. let's start with a labelled object $P$ from the class $\mathcal{X\ast(D\ast D)}$. In abuse of notation we shall write $d_r(P)$ and $d_l(P)$ for the two diagrams involved. Also \textit{the chord of} $P$ will mean the unique chord  of which $P$ starts (represented by $\mathcal{X}$ in the decomposition). Then the corresponding $\mathcal{Z}$-tree is obtained through the following algorithm: \\
    \noindent\rule{\textwidth}{0.4pt}
    \textbf{Algorithm 1\label{alg}: Make $\mathbf{\mathcal{Z}}$-Tree}\\
     \noindent\rule{\textwidth}{0.4pt}
   
   \texttt{Input:} $P\;$=\;\raisebox{0cm}{\includegraphics[scale=0.3]{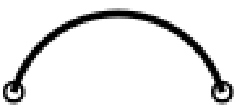}}
   $(d_l,d_r) \;\;$ 
   
  \texttt{initially} $Q_1=P$; \setlength{\parindent}{1.3cm} 
  
  \texttt{queue} $Q=(Q_1)$;
  
  \texttt{integer} $L=\text{length}(Q)$   (automatically modified by any alteration of $Q$);
  
  \texttt{vertex} $v=\odot$\;;
  
  \texttt{label}($v$)\;=\;label given to the chord of $Q_1$; 
  
  \texttt{diagrams} $D_l=D_r=\varnothing$;
  
  \texttt{tree} $Z=v$;\\

   \setlength{\parindent}{0cm} 
 Set $v$ as the root vertex of $Z$\\
  While $Q\neq\varnothing$ \{\\\setlength{\parindent}{1.3cm} 
  \begin{enumerate}
      \item Set $D_l=d_l(Q_1)$ and $D_r=d_r(Q_1)$;\\
  
      \item If $D_l=\varnothing= D_r$ then:

      - push $Q_1$ out of $Q$ (i.e. for all $1\leq k<L, Q_k\leftarrow Q_{k+1}$
      );    
      
      - Go to step (1) again.\\
      
      \item If $D_l= \varnothing$ and $D_r\neq\varnothing$  then:
      
      - Create $|C_\bullet(D_r)|$ children attached to $v$,
      and set their labels to be the same
      
      as the chords in $C_\bullet(D_r)$. Namely, let $\{w_1,\ldots,w_{|C_\bullet(D_r)|}\}$ be the children
      
      and set
      label($w_i$)=label($i^{\text{th}}$ \text{chord})) in the obvious meaning;
      
      - For each $i\in\{1,\ldots,|C_\bullet(D_r)|\}$ add $Q_{L+i}$ to the queue $Q$, where
      
      $Q_{L+i}$:=\;
  \raisebox{0cm}{\includegraphics[scale=0.3]{Figures/onechord.eps}} $\big(d_l(i^{\text{th}} \text{chord}),d_r(i^{\text{th}} \text{chord})\big)$, where the single chord 
   
   is standing for the $i^\text{th}$ chord in $C_\bullet(D_r)$ and the $d_l,d_r$ are the dangling
   
   diagrams of this chord in $D_r$;
   
   - Set $C_\bullet(D_r)$ as the $\mathcal{D}_{\leq2}$-structure over the children of $v$;
   
   - Push $Q_1$ out of $Q$; 
   
   - Set $v=$ vertex for the chord of $Q_1$ (where $Q_1$ has been updated);
   
   - Go to step (1);\\

   \item If $D_l\neq \varnothing$ and $D_r\neq\varnothing$  then:
   
  - Create $|C_\bullet(D_l)|+|C_\bullet(D_r)|$ children attached to $v$,
      and set their labels to
      
      be the same
      as the corresponding chords, as before;
      
      - For each $i\in\{1,\ldots,|C_\bullet(D_l)|+|C_\bullet(D_r)|\}$  add $Q_{L+i}$ to the queue $Q$, where
      
      $Q_{L+i}$:=\;
  \raisebox{0cm}{\includegraphics[scale=0.3]{Figures/onechord.eps}} $\big(d_l(i^{\text{th}} \text{chord}),d_r(i^{\text{th}} \text{chord})\big)$, where the single chord 
   
   is standing for the $i^\text{th}$ chord in $C_\bullet(D_l)$ if $1\leq i\leq|C_\bullet(D_l)|$ and for the
   
   $(i-|C_\bullet(D_l)|)^\text{th}$ chord in $C_\bullet(D_r)$ otherwise;
   
   - Set the concatenation $C_\bullet(D_l) C_\bullet(D_r)$ as the $\mathcal{D}_{\leq2}$-structure over the children 
   
   of $v$;
   
   - Push $Q_1$ out of $Q$; 
   
   - Set $v=$ vertex for the chord of $Q_1$;
   
   - Go to step (1);\\
   \item If $D_l\neq \varnothing$ and $D_r=\varnothing$  then:
   \begin{enumerate}
       \item[(a)] In case $C_\bullet(D_l)$ is a \textbf{single chord} $c$ then:
       
       - vertex $v$ \textbf{absorbs} another node with the label given to $c$. It is appropriate to think of a vertex here as some sort of stack comprising labelled nodes:\\
       
       \begin{center}
           \includegraphics[scale=0.8]{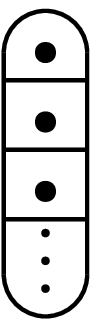}
      \end{center}
       - Add $Q_{L+1}$ to $Q$, where $Q_{L+1}$ consists of $c$ and its dangling diagrams, i.e. $d_l(Q_{L+1})=d_l(c)$ and $d_r(Q_{L+1})=d_r(c)$;  
       
       - Push $Q_1$ out of $Q$; 
   
       - Set $v=$ vertex for the chord of $Q_1$;
   
       - Go to step (1);\\
       
       \item[(b)] Otherwise if $C_\bullet(D_l)$ is \textbf{not a single chord} then:
       
       - Create $|C_\bullet(D_l)|$ children attached to $v$,
       and set their labels to
       be the same
      as the corresponding chords, as before;
      
      - For each $i\in\{1,\ldots,|C_\bullet(D_l)|\}$  add $Q_{L+i}$ to the queue $Q$, where
      
      $Q_{L+i}$:=\;
  \raisebox{0cm}{\includegraphics[scale=0.3]{Figures/onechord.eps}} $\big(d_l(i^{\text{th}} \text{chord}),d_r(i^{\text{th}} \text{chord})\big)$, where the single chord 
   is standing for the $i^\text{th}$ chord in $C_\bullet(D_l)$;

   - Set  $\Phi(C_\bullet(D_l))$ as the $\mathcal{D}_{\leq2}$-structure over the children 
   of $v$;
   
   - Push $Q_1$ out of $Q$; 
   
   - Set $v=$ vertex for the chord of $Q_1$;
   
   - Go to step (1);\;\;\; \}\\
    \end{enumerate}

  \end{enumerate}
   
 \texttt{Output:} $\Theta(P)=Z$.\\
 \noindent\rule{\textwidth}{0.4pt}\\
 
\setlength{\parindent}{0cm}
This algorithm uniquely generates the corresponding tree. Indeed, to see this it shall be enough to see that every branching from a vertex is uniquely translated into chord diagrams:

\begin{enumerate}
    \item If the $\mathcal{D}_{\leq2}$-structure over the children is the concatenation of two connected components, then we know simply that there were nonempty right and left dangling diagrams for the chord corresponding to the vertex. Further, the two connected components are, respectively, the root components of  the dangling diagrams. The order of components in the (rooted) $\mathcal{D}_{\leq2}$-structure dictates which component is for the left or right dangling diagram. 
    
    \item If the $\mathcal{D}_{\leq2}$-structure is just a connected chord diagram, then for the chord corresponding to the vertex only the right dangling diagram existed (nonempty). In particular, this connected structure is the root component for the right dangling diagram.
    The next two cases are actually the most crucial.
    \item If the $\mathcal{D}_{\leq2}$-structure is an indecomposable chord diagram with exactly two connected components, then we learn that for the corresponding chord only the left dangling diagram existed. The root component of which is determined by applying $\Phi^{-1}$ to the $\mathcal{D}_{\leq2}$-structure. This process is well-defined by virtue of $\Phi$ being a bijection.
    
    \item The only remaining case is when the vertex itself is a stack. This marks that, as in the previous case, only the left dangling diagram existed for the chord corresponding to the vertex, and that, further, the root component for this dangling diagram was a single chord with the label given next in the stack. Uniqueness in this case is clear as the information is encoded into the tree in a way that does not interfere with the previous cases, hence no ambiguity arises.\\
\end{enumerate}

This outlines by which means the above algorithm is reversible, and hence establishes the desired bijection. To prove the second part of the theorem just notice that any rooted chord diagram can be viewed as a (possibly empty) list of nonempty indecomposable chord diagrams. 
    \end{proof}
    
    \begin{exm}\label{exampleofZbijection}
    Let $P\in\mathcal{X\ast(D\ast D)}$ be given by $P=\;$ \raisebox{0cm}{\includegraphics[scale=0.3]{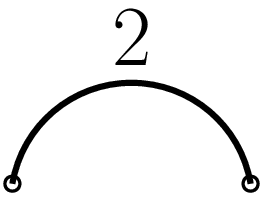}} $\big(d_l(P),d_r(P)\big)$, where

 $d_r(P)=\;$ \raisebox{-0.12cm}{\includegraphics[scale=0.7]{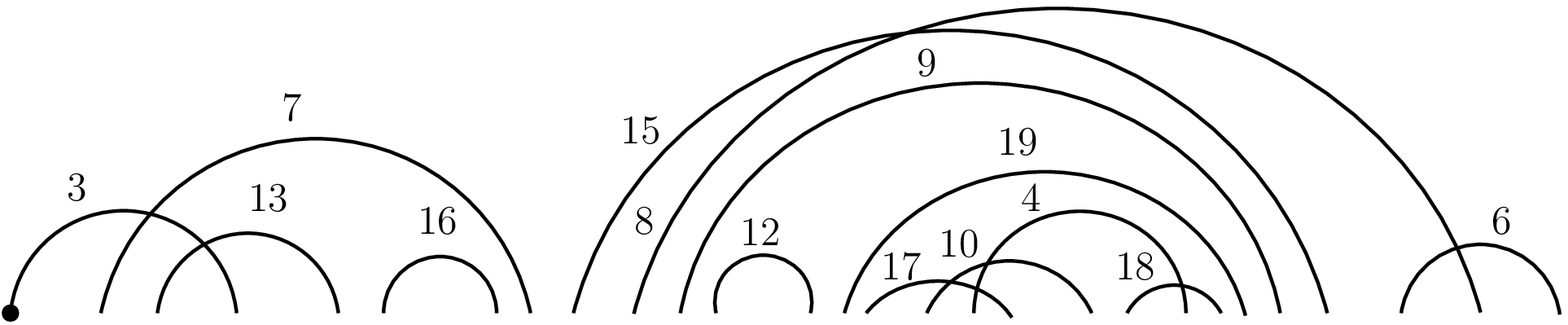}}

  and
    
  $d_l(P)=\;$ \raisebox{-0.12cm}{\includegraphics[scale=0.7]{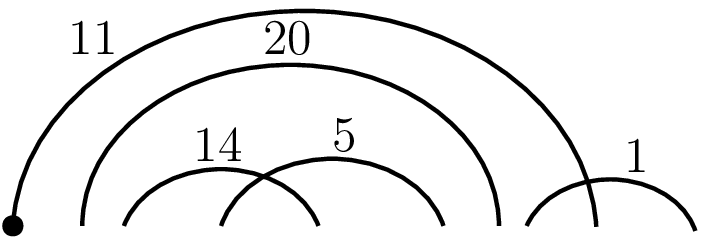}}\;.\\ 

The first iterations in the algorithm are going to be as follows:\\

[1] Initially:

$Q_1=P$,

$Q=(Q_1)$,

$L=1$,

$Z=$ \raisebox{-0.12cm}{\includegraphics[scale=0.7]{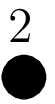}}\;.
 
\noindent\rule{\textwidth}{0.4pt}\\

[2]
$D_l\neq\varnothing$ and $D_r\neq\varnothing$, so we attach the children as of

$Z=\;$ \raisebox{-0cm}{\includegraphics[scale=0.65]{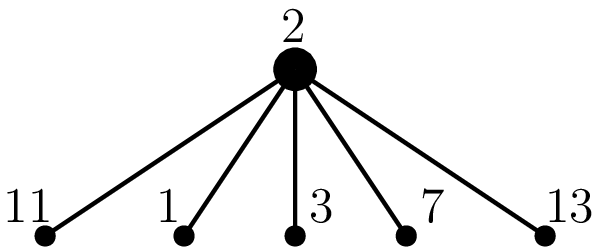}}
   \;with the $\mathcal{D}_{\leq2}$-structure
      \raisebox{-0cm}{\includegraphics[scale=0.6687]{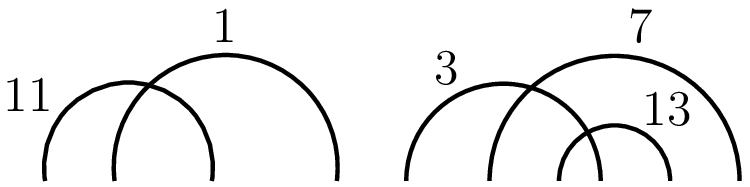}}.
 The updated queue becomes (after including the new entries and pushing the old $Q_1$ out of $Q$):

\[ Q=\bigg(Q_1= \raisebox{-2.6cm}{\includegraphics[scale=0.75]{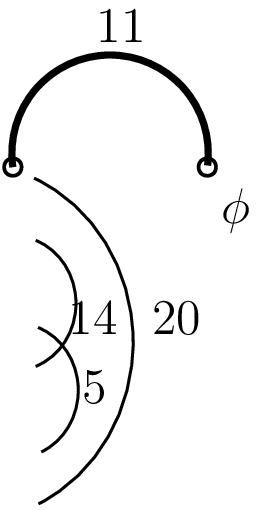}}\;,\;
 \raisebox{-0.5cm}{\includegraphics[scale=0.75]{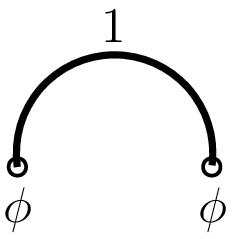}}\;,\;
 \raisebox{-0.5cm}{\includegraphics[scale=0.75]{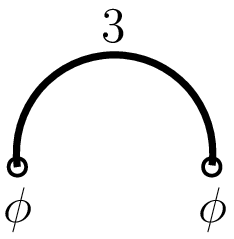}}\;,\;
 \raisebox{-0.5cm}{\includegraphics[scale=0.75]{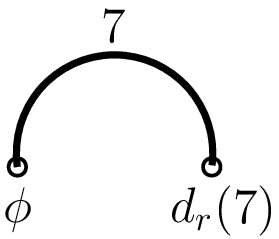}}\;,\;
 \raisebox{-1.65cm}{\includegraphics[scale=0.75]{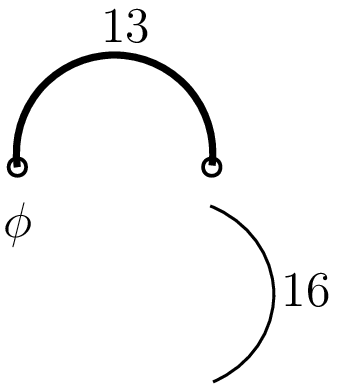}} \bigg)\]

and then the vertex $v$ is set to be $11$.

\noindent\rule{\textwidth}{0.4pt}\\

[3] In this iteration we find that $D_l\neq\varnothing$, whereas $D_r=\varnothing$, moreover, $C_{\bullet}(D_l)$ is the single chord labelled $`20$'. Thus, following the algorithm, one more node is appended to the vertex $v$ which, before this moment, only contained the node labelled $11$. Thus, vertex $v$ is now given by 
\raisebox{-0.12cm}{\includegraphics[scale=0.5]{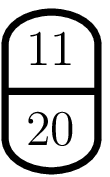}}. Then we add the entry 
$Q_6=\;\raisebox{-0.1cm}{\includegraphics[scale=0.42]{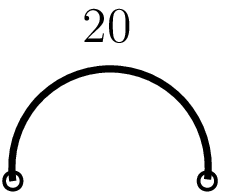}}\;\big(\raisebox{-0.1cm}{\includegraphics[scale=0.7]{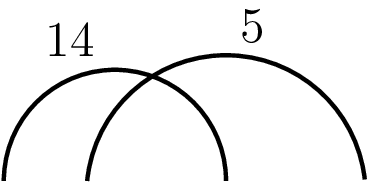}}\;,\varnothing\big)$
to the queue $Q$ (from the end); update $Q$ by pushing out $Q_1$; and set the vertex $v$ to be at chord `1', since it is the chord of the new $Q_1$.\\


Following the algorithm to the end we generate the tree $\Theta(P)$ to be as in Figure \ref{thetaofP} below, where the right column displays the $\mathcal{D}_{\leq2}$-structures pertinent to the children of each vertex (recall that vertices here are generally stacks of nodes). For clarity, structures are displayed level-wise.

 \begin{figure}[h]
     \centering
    \raisebox{0cm}{\includegraphics[scale=0.8]{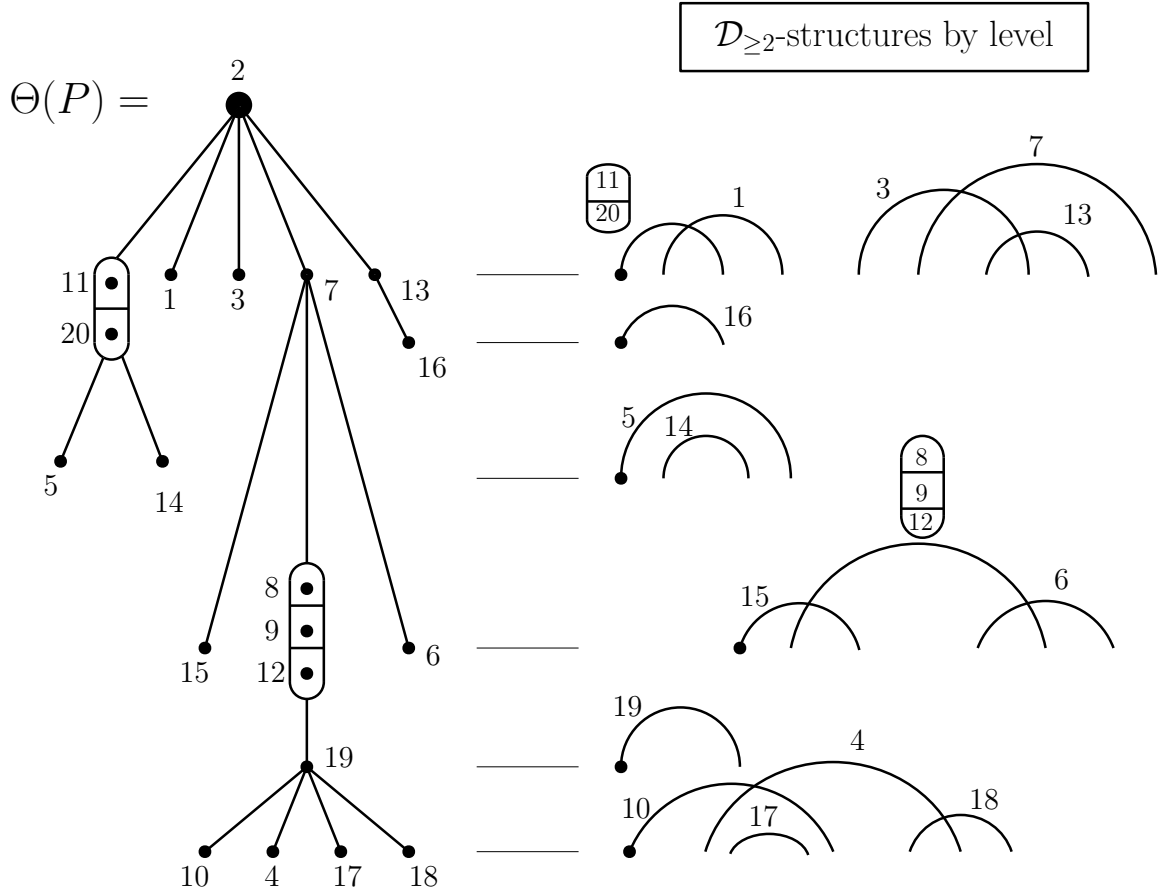}}
     \caption{$\Theta(P)$, with $\mathcal{D}_{\leq2}$-structures displayed by level on the right.}
     \label{thetaofP}
 \end{figure}

 \end{exm} 
 
 \begin{cor}\label{coro} Let $I_0(x)$ be the generating series for nonempty indecomposable chord diagrams as before, and set $B(x)=D_{\leq2}(x)+x$, where $D_{\leq2}(x)$ is the generating function for the class $\mathcal{D}_{\leq2}$. Then $$I_0(x)=\displaystyle\frac{x}{1-xB'(Z)}\;,$$ where $Z$ is the generating series for the class $\mathcal{Z}$ as before.
 
    \end{cor}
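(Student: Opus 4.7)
The plan is to show that both sides simplify to $(D-1)/D$, using the structural identities for $\mathcal{D}_{\leq 2}$, the bijection of Theorem \ref{mainth} giving $Z=xD^2$, and the identity $D=1/(1-I_0)$ coming from the fact that every chord diagram is a (possibly empty) concatenation of nonempty indecomposable chord diagrams.

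My first step is to rewrite $B(x)=D_{\leq 2}(x)+x$ in a more workable form. A diagram with at most two connected components is either empty, a single connected diagram, a concatenation of two connected diagrams, or an indecomposable diagram with exactly two components. By Lemma \ref{bij}, the last class is counted by $C(x)-x$, so
\[
D_{\leq 2}(x) \;=\; 1 + C(x) + C(x)^2 + (C(x)-x) \;=\; (1+C(x))^2 - x,
\]
and therefore $B(x)=(1+C(x))^2$. This is the key simplification: the apparently ad hoc combination $D_{\leq 2}+x$ is really just $(1+C)^2$, and the extra $x$ exactly cancels the $-x$ coming from Lemma \ref{bij}.

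Next, I evaluate $B'(Z) = 2(1+C(Z))\,C'(Z)$ using $Z = xD^2$ and the decomposition (i) of Lemma \ref{cd}, which gives $C(Z)=D-1$ so that $1+C(Z)=D$. To eliminate $C'(Z)$, I differentiate $D = 1+C(xD^2)$ in $x$ to obtain $D' = C'(Z)\,(D^2 + 2xDD')$, and then invoke part (ii) of Lemma \ref{cd} in the form $D+2xD' = (D-1)/x$ to get $C'(Z) = xD'/\bigl(D(D-1)\bigr)$. Combining these yields $xB'(Z) = 2x^2D'/(D-1)$.

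Finally, I plug into the right-hand side:
\[
\frac{x}{1-xB'(Z)} \;=\; \frac{x(D-1)}{D-1-2x^2D'},
\]
and apply Lemma \ref{cd}(ii) once more, in the form $D-1-2x^2D' = xD$, to reduce this to $(D-1)/D = 1 - 1/D$. Since $D = 1/(1-I_0)$, this equals $I_0$, which is the desired identity. The only real obstacle is keeping track of which decomposition (the sequence decomposition for $D$ in terms of $I_0$, the root-component decomposition (i), and the standard recursion (ii)) is being used where; once $B$ is rewritten as $(1+C)^2$, the rest is essentially forced.
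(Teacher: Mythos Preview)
Your proof is correct, but it follows a genuinely different route from the paper's.

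The paper never computes $B$ explicitly. Instead it uses the \emph{tree recursion} $Z=xB(Z)$, which comes straight from the definition of $\mathcal{Z}$, to obtain $xZ'/Z = 1/(1-xB'(Z))$; it then combines this with the logarithmic derivative of $Z=x/(1-I_0)^2$ from Theorem~\ref{mainth} and finishes by invoking Lemma~\ref{inde}, the root-removal recursion $I_0 = x + 2x^2 I_0'/(1-I_0)$ for indecomposable diagrams.

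You, by contrast, first simplify $B$ to $(1+C)^2$ (anticipating what the paper only establishes later in Corollary~\ref{endcoro}), recognize $Z=xD^2$, and then compute $B'(Z)$ directly using the classical chord-diagram identities of Lemma~\ref{cd}(i)--(ii). Your argument never touches the tree recursion $Z=xB(Z)$ nor Lemma~\ref{inde}; everything is reduced to the standard recursions for $D$ and $C$. This is more elementary in the sense that it uses one fewer auxiliary lemma, and it makes the role of the identity $B=(1+C)^2$ transparent from the outset. The paper's approach, on the other hand, is more modular: it would go through verbatim for any $B$ satisfying the tree equation $Z=xB(Z)$, without needing to know what $B$ actually is.
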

 Before proving Corollary \ref{coro} we will prove a decomposition of indecomposable diagrams:
 
 \begin{lem}\label{inde}
   The generating series $I_0$ for nonempty indecomposable chord diagrams satisfies the relation \[I_0(x)=x+\displaystyle\frac{2x^2I_0'(x)}{1-I_0(x)}\;.\label{ind rel}\]
 \end{lem}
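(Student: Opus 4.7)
The plan is to derive this recursion algebraically from Lemma \ref{cd}(ii) together with the classical decomposition of every rooted chord diagram as an ordered sequence of nonempty indecomposable chord diagrams. The latter decomposition (cf.\ Definition \ref{indecompo}) gives the identity $D(x) = 1/(1 - I_0(x))$, and Lemma \ref{cd}(ii) gives $D = 1 + xD + 2x^2 D'$. Combining these two facts should yield the required recursion for $I_0$.

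First I would differentiate $D = 1/(1-I_0)$ to obtain $D' = I_0'/(1-I_0)^2 = I_0' D^2$, and substitute into Lemma \ref{cd}(ii) to get
\[
D \;=\; 1 + xD + 2x^2 I_0' D^2.
\]
Next I would replace $D$ everywhere by $1/(1-I_0)$ and clear denominators by multiplying through by $(1-I_0)^2$, producing the polynomial identity
\[
(1-x)(1-I_0) \;=\; (1-I_0)^2 + 2x^2 I_0'.
\]
Expanding and regrouping, the non-derivative part on the two sides rearranges as $(1-I_0)(I_0 - x)$, so the equation reduces to $(1-I_0)(I_0-x) = 2x^2 I_0'$, which is equivalent to the stated recursion $I_0 = x + \tfrac{2x^2 I_0'}{1 - I_0}$.

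There is no real obstacle here; the only thing to be careful about is the bookkeeping when expanding $(1-x)(1-I_0)$ and $(1-I_0)^2$ and confirming the factorization $I_0 + xI_0 - I_0^2 - x = (1-I_0)(I_0 - x)$. One could alternatively attempt a direct bijective proof mirroring the three-case root-chord decomposition that yields Lemma \ref{cd}(ii), but the indecomposability constraint interacts awkwardly with the position of the right endpoint of the root chord (requiring a chord from inside the root chord to outside it whenever the root chord does not cover the last position), so the algebraic route above is cleaner.
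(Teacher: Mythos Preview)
Your algebraic derivation is correct. Substituting $D=1/(1-I_0)$ and $D'=I_0'D^2$ into Lemma~\ref{cd}(ii) and clearing denominators gives $(1-x)(1-I_0)=(1-I_0)^2+2x^2I_0'$, and the factorization $(1-I_0)(I_0-x)=2x^2I_0'$ follows exactly as you wrote.

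The paper, however, does \emph{not} go the algebraic route; it gives a direct bijective decomposition. Remove the root chord from a nonempty indecomposable diagram that is not the single chord: what remains is a (possibly empty) sequence of nonempty indecomposable diagrams $D_1,\ldots,D_{k-1}$ followed by a last indecomposable piece $D_k$, and the right endpoint of the removed root must land in one of the $2|D_k|$ intervals of $D_k$ (otherwise the original diagram would split after some $D_j$ with $j<k$). This yields
\[
I_0 \;=\; x \;+\; x\cdot\frac{1}{1-I_0}\cdot 2xI_0',
\]
which is the stated identity. So the bijective argument you dismissed as ``awkward'' is in fact quite clean once one observes that indecomposability forces the root's far endpoint into the \emph{last} indecomposable block; that single observation dissolves the interaction you were worried about. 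Your algebraic proof has the virtue of reusing Lemma~\ref{cd}(ii) with no new combinatorics, while the paper's bijection gives an explicit structural picture of where the root chord sits and is self-contained (it does not rely on Lemma~\ref{cd}(ii)).
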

 \begin{proof}
     Given a nonempty indecomposable chord diagram we can argue as follows. If the diagram is not a single chord, then removing the root chord generally leaves us with a list of nonempty indecomposable chord diagrams. Moreover, the last diagram in this list carries all the information about the removed root chord encoded as a marked interval that carried the right end of the root chord. Recall that the intervals are meant to be the spacings to the right of every end-node (this includes the last space to the right of the diagram), thus we have $2m$ intervals in a size-$m$ chord diagram. The relation in the lemma is exactly the translation of this decomposition into the world of generating series. 
 \end{proof}

 \begin{exm}
 In the following diagram, the diagram is decomposed into: the root, $D_1$, $D_2$, and ($D_3$, interval 4), where, among the 8 intervals in $D_3$, the root originally landed in interval 4 (marked by a red dotted line).  
 
\includegraphics[scale=0.85]{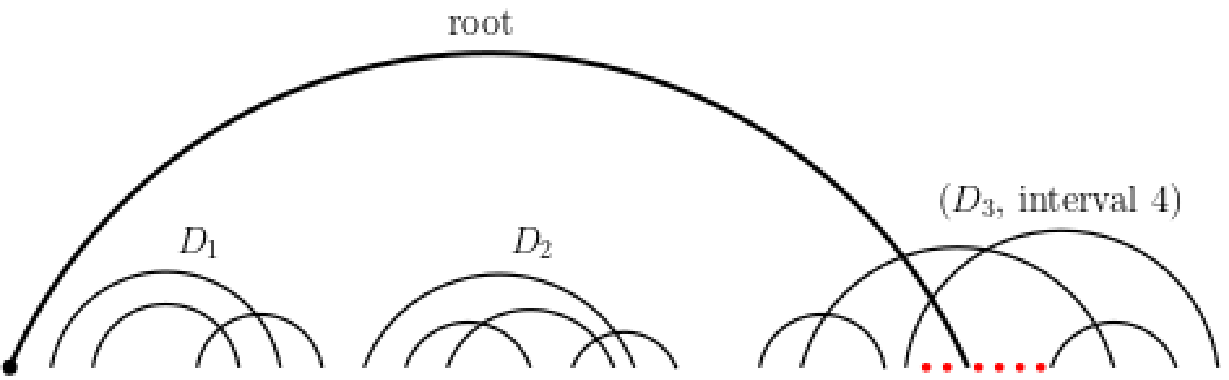}\\
  
 \end{exm}

    \begin{proof}[Proof of Corollary \ref{coro}:]
    First of all, notice that by the definition of the class $\mathcal{Z}$, the generating series $Z$ satisfies the recursion \[Z(x)=xD_{\leq2}(Z)+xZ=x B(Z),\] where $B(t)=D_{\leq2}(t)+t$. Thus $Z'(x)=B(Z)+xB'(Z)Z'=\displaystyle\frac{Z}{x}+xB'(Z)Z',$ and hence \[Z=xZ'(1-xB'(Z)).\]By Theorem \ref{mainth}, we have that $Z=x\Big(\displaystyle\frac{1}{1-I_0}\Big)^2$. Taking the logarithmic derivative of both sides and making use of the above identities we get
    \begin{align*}
        1+2x\frac{d}{dx}\log\Big(\displaystyle\frac{1}{1-I_0}\Big)&=x\displaystyle\frac{d}{dx}\log Z= x\displaystyle\frac{Z'}{Z}
        =\displaystyle\frac{1}{1-xB'(Z)},
    \end{align*}
    
and hence $$\frac{1}{1-xB'(Z)}=1+\displaystyle\frac{2xI_0'}{(1-I_0)}\;.$$

Multiplying by $x$ we get that, by Lemma \ref{ind rel}, $\displaystyle\frac{x}{1-xB'(Z)}=x+\displaystyle\frac{2x^2I_0'}{(1-I_0)}=I_0\;,$  which completes the proof.
    \end{proof}

    \begin{cor}\label{endcoro} Let $A(X)$ be the generating series for the sequence \href{https://oeis.org/A088221}{A088221}. Then $$A(x)=D_{\leq2}(x)+x.$$\end{cor}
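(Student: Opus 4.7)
My plan is to verify that $B(x) := D_{\leq 2}(x) + x$ satisfies the Lagrange-type defining relation of A088221 quoted earlier in the text, namely $[x^n]B(x)^n = [x^{n+1}]I(x)$ for every $n \geq 0$. Since this recursion determines the power series uniquely (the coefficient of $x^n$ in $A(x)^n$ equals $n a_n$ plus lower-order data), establishing the identity for every $n$ will force $A(x) = B(x)$. The case $n=0$ is immediate because $[x^0]B^0 = 1 = [x^1]I$, so the task reduces to $[x^n]B(x)^n = [x^{n+1}]I_0(x)$ for $n \geq 1$.

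The first step is to pass from the formula of Corollary \ref{coro} to a more tractable expression for $I_0$. Recall from the proof of that corollary that $Z = xB(Z)$. Differentiating this relation gives $Z' = B(Z) + xB'(Z)Z' = Z/x + xB'(Z)Z'$, whence
\[
1 - xB'(Z) \;=\; \frac{Z}{xZ'}.
\]
Substituting into $I_0 = x/(1-xB'(Z))$ collapses the expression to
\[
I_0(x) \;=\; \frac{x^2 Z'(x)}{Z(x)}.
\]

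Next, I would rewrite this logarithmically. Since $(\log Z)' = 1/x + (\log(Z/x))'$ and $Z/x = B(Z)$, we get
\[
I_0(x) \;=\; x + x^2\bigl(\log B(Z(x))\bigr)'.
\]
Extracting coefficients, for $n \geq 1$ this yields $[x^{n+1}]I_0 = [x^{n-1}](\log B(Z))' = n\,[x^n]\log B(Z)$.

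The final step applies Lagrange inversion to the equation $Z = xB(Z)$ (valid since $B(0) = D_{\leq 2}(0) = 1 \neq 0$). With $H(t) = \log B(t)$ one gets
\[
[x^n]\log B(Z) \;=\; \frac{1}{n}[t^{n-1}]\frac{B'(t)}{B(t)}B(t)^n \;=\; \frac{1}{n}[t^{n-1}]B'(t)B(t)^{n-1}.
\]
Since $B'B^{n-1} = \tfrac{1}{n}(B^n)'$ and $[t^{n-1}](B^n)' = n[t^n]B^n$, this simplifies to $[x^n]\log B(Z) = \tfrac{1}{n}[t^n]B(t)^n$. Combining with the previous paragraph gives $[x^{n+1}]I_0 = [t^n]B(t)^n$ for $n \geq 1$, which is exactly the defining relation of A088221, completing the proof that $A(x) = B(x) = D_{\leq 2}(x) + x$. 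The only real obstacle is the algebraic manipulation turning Corollary \ref{coro} into $I_0 = x^2 Z'/Z$; once that compact form is in hand, Lagrange inversion on the already-established functional equation $Z = xB(Z)$ does all the work.
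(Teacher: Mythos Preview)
Your proof is correct and follows essentially the same approach as the paper: both start from Corollary~\ref{coro}, apply Lagrange inversion to the functional equation $Z = xB(Z)$ to show $[x^{n+1}]I_0 = [x^n]B^n$, and then invoke the defining relation of A088221 to conclude $A=B$. The only difference is cosmetic: the paper quotes the ready-made diagonal form of Lagrange inversion (Corollary~\ref{corolift} with $H\equiv 1$) to read off $[x^n]\frac{1}{1-xB'(Z)} = [x^n]B^n$ in one line, whereas you first massage $I_0$ into $x + x^2(\log B(Z))'$ and then apply LIFT with $F=\log B$, which amounts to rederiving that same corollary by hand.
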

\begin{proof}
From Corollary \ref{corolift} (Lagrange inversion), we know that 
\[[x^n]\displaystyle\frac{1}{1-xB'(Z)}=[x^n]B^n(x).\]
 Now, by the definition of the sequence \href{https://oeis.org/A088221}{A088221}, we know that it is the sequence for which $[x^n]A^n(x)=[x^{n+1}]I_0(x)$, where $A(x)$ is assumed to be the generating series for the sequence \href{https://oeis.org/A088221}{A088221}. This gives that $[x^n]A^n(x)=[x^{n+1}]I_0(x)=
 [x^{n+1}]\displaystyle\frac{x}{1-xB'(Z)}=[x^n]\displaystyle\frac{1}{1-xB'(Z)}=[x^n]B^n(x)$, and so $A(x)=B(x)=C_{\leq2}(x)+x.$
\end{proof}

\begin{prop}
The $n^\text{th}$entry in the sequence \href{https://oeis.org/A088221}{A088221} counts the number of pairs $(C_1,C_2)$ of connected chord diagrams (allowing empty diagrams) with total number of chords being $n$.\end{prop}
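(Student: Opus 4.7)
The plan is to combine Corollary \ref{endcoro} with a direct decomposition of $D_{\leq 2}(x)$ in terms of $C(x)$, so that the identity $A(x)=(1+C(x))^2$ drops out. Since $(1+C(x))^2$ is manifestly the ordinary generating series for ordered pairs of connected chord diagrams in which each component is allowed to be empty (the ``$+1$'' accounting for the empty diagram), this immediately yields the combinatorial interpretation.

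First, I would split the class $\mathcal{D}_{\leq 2}$ according to the exact number of connected components:
\[
D_{\leq 2}(x) \;=\; 1 \;+\; C(x) \;+\; D_{=2}(x),
\]
where $D_{=2}(x)$ enumerates chord diagrams with exactly two connected components. Then I would decompose $D_{=2}$ further according to whether the underlying diagram is decomposable or indecomposable in the sense of Definition \ref{indecompo}. A decomposable diagram with exactly two connected components is forced to be the concatenation of two (nonempty) connected chord diagrams, giving the contribution $C(x)^2$. An indecomposable diagram with exactly two connected components contributes $I_2(x)$, which by Lemma \ref{bij} equals $C(x)-x$. Summing,
\[
D_{\leq 2}(x) \;=\; 1 + C(x) + C(x)^2 + \bigl(C(x)-x\bigr) \;=\; \bigl(1+C(x)\bigr)^2 - x.
\]

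Invoking Corollary \ref{endcoro}, namely $A(x)=D_{\leq 2}(x)+x$, the $-x$ cancels and I obtain
\[
A(x) \;=\; \bigl(1+C(x)\bigr)^2,
\]
from which the stated interpretation follows, since the coefficient of $x^n$ in $(1+C(x))^2$ counts ordered pairs $(C_1,C_2)$ of connected chord diagrams (either of which may be empty) with $|C_1|+|C_2|=n$.

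The only step that needs any genuine care is the case analysis for $D_{=2}$: one must check that a chord diagram with exactly two connected components is either a clean left-to-right concatenation of two connected pieces (contributing $C(x)^2$, with the ordered pair being exactly the two components in linear order) or is indecomposable. This is where one uses that two non-crossing connected components in a single indecomposable block must be arranged in the nested pattern captured by Lemma \ref{bij}, so that no other configurations are missed or double-counted. Everything else is straightforward generating-function bookkeeping, and the numerical sanity check against $1,2,3,10,63,558,\ldots$ from \href{https://oeis.org/A088221}{A088221} confirms the identity.
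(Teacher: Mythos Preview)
Your proof is correct and follows essentially the same approach as the paper: both invoke Corollary \ref{endcoro} to get $A(x)=D_{\leq2}(x)+x$, then split $\mathcal{D}_{\leq2}$ into the empty diagram, connected diagrams, concatenations of two connected diagrams, and indecomposable two-component diagrams (handled via Lemma \ref{bij}), arriving at $A(x)=(1+C(x))^2$. The paper's version is simply more terse.
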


\begin{proof}
Indeed, any chord diagram with at most two connected components is either: 
(1) empty, (2) connected, (3) concatenation of two connected diagrams, (4) or is indecomposable with exactly two connected components.
    By using Lemma \ref{bij} for the last case we thus get \[A(x)=D_{\leq2}(x)+x=1+C(x)+C^2(x)+C(x)-x+x=(C(x)+1)^2,\] and the result is established.
\end{proof}

\chapter{k-connected Chord Diagrams and Quenched QED}\label{chapterchords2}

In this chapter we study the asymptotic behaviour of the number of $2$-connected chord diagrams, informally speaking these are chord diagrams which require the removal of at least two chords to get them disconnected. Here we obtain an asymptotic expansion for $(C_{\geq2})_n$, the number of $2$-connected chord diagrams on $n$ chords.  As we have mentioned earlier, in \cite{steinandeveret}, it is shown that the proportion of connected chord diagrams approaches $e^{-1}$ as the number of chords goes to infinity. The work of Stein and Everett in \cite{steinandeveret} addresses a special case of a more general result by Kleitman in \cite{kleit}, where the argument was less detailed. Kleitman argues that the proportion of $k$-connected chord diagrams goes to $e^{-k}$. In \cite{michi}, M. Borinsky showed that the asymptotic behaviour of connected chord diagrams is approximated by a series expansion, in which the first term corresponds to the $e^{-1}$ obtained by Stein and Everett, and earlier by Kleitman; whereas the extra terms provide higher precision as needed. Thus, our result here will extend Kleitman's result in very much the same way, this time for the case of $2$-connected chord diagrams. Namely, we obtain an asymptotic expansion for $2$-connected chord diagrams, in which the first term corresponds to the $e^{-2}$ in Kleitman's argument. To be able to extract such information about this class of chord diagrams, we will need to work first on producing a recursion that relates $2$-connected chord diagrams with connected chord diagrams. Finally we will see that this computation surprisingly matches with the asymptotic number of primitive quenched QED vertex diagrams.
It turns out however that the general case of $k$-connected diagrams does not follow in a similar way.

\section{$k$-Connected Chord Diagrams}

The main object we use throughout this chapter is chord diagrams with certain degrees (strengths) of connectivity.
\begin{dfn}[$k$-Connected Chord Diagrams]
A chord diagram on $n$ chords is said to be $k$-\textit{connected} if there is no set $S$ of consecutive endpoints, with $|S|< 2n-k$, $S$ is paired with less than $k$ endpoints not in $S$ (here we assume the endpoints are consecutive in the sense of the linear representation). In other words, the diagram requires the deletion of at least $k$ chords to become disconnected. A $k$-\textit{connected} diagram which is not $k+1$-\textit{connected} will be said to have \textit{connectivity} $k$.
\end{dfn}

\begin{exm} The diagram in Figure \ref{3not4} is 3-connected since it can not be disconnected with the removal of fewer than 3 chords, but it is not 4-connected.

\begin{figure}
    \centering
    \includegraphics[scale=0.65]{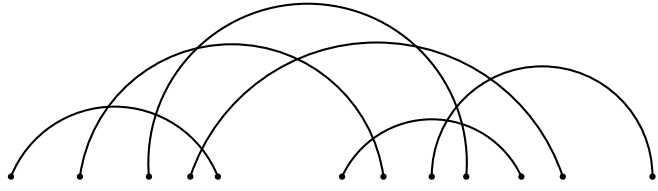}
    \caption{A diagram that is 3-connected but not 4-connected}
    \label{3not4}
\end{figure}
\end{exm}

\begin{dfn}[Cuts and Reasons for Connectivity-$k$]\label{connectv1dfn}
Given a connectivity-$k$ diagram, a set of size $k$ of chords is called a $cut$ if its removal disconnects the diagram. Equivalently, a set $T$ of $k$ chords in a connectivity-$k$ diagram is a cut if there exists a sequence $S$ of consecutive end points such that $|S|< 2n-k$ and all the end points in $S$ are paired together except for $k$ endpoints of the $k$ chords in $T$. Such a sequence $S$ will be called a \textit{reason for connectivity-$k$}.  See Figure \ref{reason} below for illustration.
\end{dfn}

\begin{figure}[h]
    \centering
    \includegraphics[scale=0.85]{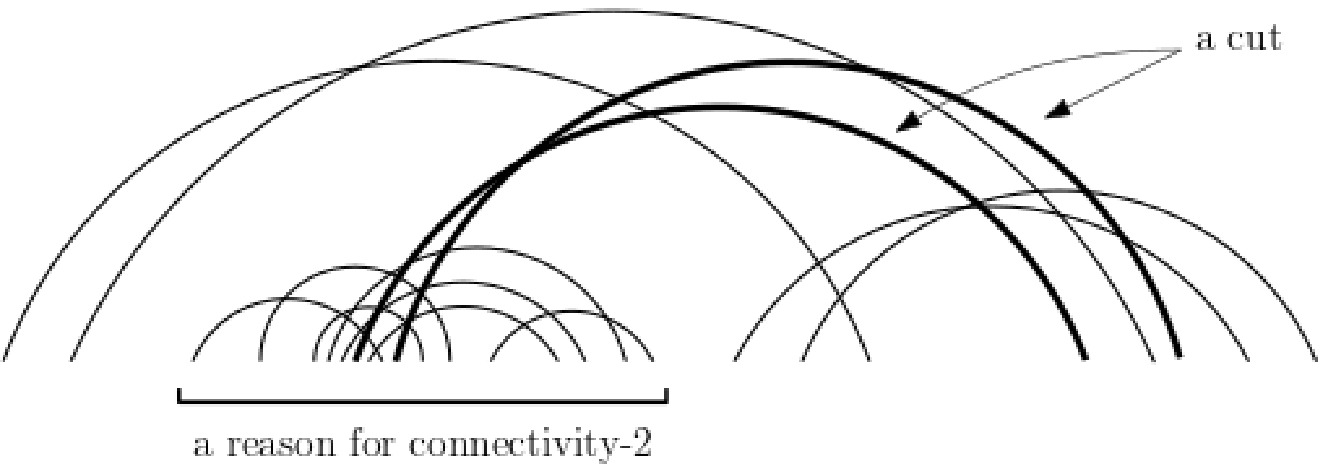}
    \caption{}
    \label{reason}
\end{figure}

\begin{notation}

 For generating functions we shall use the following notation: $C_{\geq k}(x)$ (or $C_{\geq k}$) will denote $k$-connected diagrams whereas $C_k(x)$ (or $C_k$) denotes diagrams with connectivity $k$. So for example $C(x)=C_1(x)+C_{\geq 2}(x)$. 
 \end{notation}

A computation of the first coefficients gives 
\begin{equation}
    \begin{split}
        C(x)&=x+x^2+4x^3+27x^4+248x^5+2830x^6+\cdots\\
        C_1(x)&=x+3x^3+20x^4+185x^5+2101x^6+\cdots\\
        C_{\geq2}(x)&=x^2+x^3+7 x^4+63 x^5+729 x^6+\cdots
    \end{split}
\end{equation}

\section{Root Decomposition for Connectivity-1 Diagrams}
The next results will focus on $2$-connected chord diagrams. Specifically, we will see how a connectivity-$1$ diagram decomposes upon the removal of its root chord. Recall that the defining differential equation for connected chord diagrams 
\begin{equation}
    C(x)=\displaystyle\frac{x}{1-(2xC'(x)-C(x))} \;,\end{equation}
    is obtained through the same root removal process.

\begin{lem}\label{B}
Let $\mathcal{C}$ be the class of connected chord diagrams as before, and let $\mathcal{B}$ be the set of pairs in $\mathbb{N}\times\mathcal{C}$ corresponding to connectivity-1 diagrams, with a special interval $I$ for which the diagram becomes $2$-connected in case a root chord is inserted to the diagram with its far end placed in $I$. The generating function for such pairs is then given by
\begin{equation}
    B(x)=x+\displaystyle\frac{4(xC'_{\geq2}-C_{\geq2})^2}{x-(2xC'_{\geq2}-C_{\geq2})}. \label{B(x)}
\end{equation}
\end{lem}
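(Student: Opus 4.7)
My plan is to prove the formula by a combinatorial decomposition of pairs $(D, I) \in \mathcal{B}$ that matches the rational expression on the right term by term. First, the $x$ summand corresponds to the single chord diagram (the unique connectivity-1 diagram of size $1$), which has two intervals; only the inner interval produces a $2$-connected (the two-chord crossing) diagram upon inserting a root, while the outer interval yields the disconnected nested $2$-chord diagram. This handles the $|D| = 1$ case directly, and contributes exactly $x$ to $B(x)$.

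For pairs with $|D| \geq 2$, I would analyse the joint structure of the cut chords of $D$ together with the set of chords of $D$ crossed by the inserted root $r$ (this set being determined by the interval $I$). The requirement that $D' = D \cup r$ be $2$-connected translates into the condition that, for every cut chord $c^*$ of $D$, the root $r$ must cross chords in every component of $D - c^*$ so as to bridge them back together. I plan to argue that this rigidity allows one to encode a pair $(D,I) \in \mathcal{B}$ as a chain of $2$-connected blocks of $D$, joined at shared cut chords. Each of the two end blocks will contribute a factor of $2(xC'_{\geq 2} - C_{\geq 2})$ to the generating function (the factor of $2$ recording a binary orientation choice for the unique cut chord attaching the block to the chain), while each internal block will contribute $(2xC'_{\geq 2} - C_{\geq 2})$, interpreted as a $2$-connected block carrying a distinguished non-last interval that jointly encodes the positioning of its two cut chords. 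Adjacent pieces share exactly one chord (the shared cut), so a chain of $k+2$ blocks has $k+1$ shared chords, contributing a $1/x^{k+1}$ normalization.

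Summing the contributions over all possible numbers $k \geq 0$ of internal blocks yields a geometric series,
\[B(x) - x \;=\; \sum_{k \geq 0} \frac{4(xC'_{\geq 2} - C_{\geq 2})^2 \, (2xC'_{\geq 2} - C_{\geq 2})^k}{x^{k+1}} \;=\; \frac{4(xC'_{\geq 2} - C_{\geq 2})^2}{x - (2xC'_{\geq 2} - C_{\geq 2})},\]
which is the claimed formula.

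The principal obstacle will be rigorously justifying the chain-of-blocks decomposition and pinning down the combinatorial meanings of the factors $Q = xC'_{\geq 2} - C_{\geq 2}$ for end blocks and $P = 2xC'_{\geq 2} - C_{\geq 2}$ for internal blocks, together with the overall factor of $4$ arising from orientation choices at the two ends. This requires careful geometric analysis of how the single inserted chord $r$ can simultaneously bridge the components produced by every cut of $D$; in particular, one must handle the case where a cut chord of $D$ disconnects it into three or more components (which does occur in $\mathcal{B}$, for instance for ``starlike'' configurations where a single cut chord is the pin of a bouquet of nested bridging chords), and verify that such pairs are nonetheless correctly enumerated by the chain encoding when the appropriate axis through the cut structure is chosen.
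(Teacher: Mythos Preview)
Your overall plan --- decompose $(D,I)\in\mathcal{B}$ as a chain of $2$-connected pieces and recognise a geometric series --- is the same strategy the paper uses, and your algebraic target is correct. But the combinatorial interpretations you assign to the factors do not match what is actually happening, and the plan as stated would not close.

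The paper's proof begins with a structural fact you do not state: for $D$ appearing in $\mathcal{B}$, the reasons for connectivity-$1$ must form a chain under inclusion (otherwise no single inserted root can cure them all), and the marked interval $I$ is forced to lie inside the \emph{minimal} reason $S$. This pins down one end of the chain as the ``core'' $c_1*S$, which is $2$-connected, root-fixed or end-fixed, and carries the marked interval with two exclusions; this gives the $4(xC'_{\geq 2}-C_{\geq 2})$. The other end (the ``lock'') plays a different role --- it carries no marked interval, only a marked chord with a constraint --- and contributes $(xC'_{\geq 2}-C_{\geq 2})/x$. Your symmetric reading of the two ends as each contributing $2Q$ happens to multiply to the right thing but does not correspond to the geometry.

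More seriously, your reading of the middle factor $P=2xC'_{\geq 2}-C_{\geq 2}$ as ``a $2$-connected block with a marked interval encoding its two cut chords'' is wrong. A single marked interval cannot encode two chord positions. What actually happens is that between consecutive cuts $c_i,c_{i+1}$ there can sit an \emph{arbitrary sequence of $2$-connected layers} all sharing the same cut chord $c_i$ (these are exactly your ``bouquet'' configurations), each contributing $C_{\geq 2}/x$, followed by one genuine cut-piece contributing $2(xC'_{\geq 2}-C_{\geq 2})/x$. The identity $P/x = C_{\geq 2}/x + 2(xC'_{\geq 2}-C_{\geq 2})/x$ is why your formula and the paper's agree after summing the geometric series, but the middle block is \emph{either} a layer \emph{or} a cut-piece, not a block with two cuts. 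Without separating these two kinds of pieces you cannot actually carry out the bijection, which is precisely the obstacle you flag in your last paragraph and which the paper resolves.
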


\begin{proof} 
First notice that in a connectivity-1 diagram the reasons for connectivity-1 must be either disjoint or contained inside each other. By the definition of $\mathcal{B}$, a connectivity-1 diagram $\mathbf{C}$ appearing amongst the considered pairs must be either the one-chord-diagram or must have a nonempty intersection $S$ of all the reasons of connectivity-1, for otherwise there would be a reason of connectivity-1 unaffected by the insertion of the new root. Thus, in that case, and by the observation at the beginning, the set of all reasons of connectivity-1 form a chain totally ordered by inclusion, and $S$ is the minimum reason for connectivity-1 in the diagram. Moreover, the pair $(i,\mathbf{C})$ belongs to $\mathcal{B}$ if and only if the $i$th interval is within $S$.

Let us denote the cut chord of $S$ by $c_1$, and denote by $c_1*S$ the diagram  induced by $c_1$ and the chords of $S$. Then $c_1*S$ is $2$-connected by the minimality of $S$. \\

There are only two possible forms for the diagram $c_1*S$:
\begin{enumerate}
    \item $c_1$ is the root chord (first chord) for $c_1*S$. We say such formation is \textit{root-fixed}, or simply that $S$ is \textit{root-fixed}.
    \begin{figure}[h]
        \centering
       \includegraphics[scale=0.66]{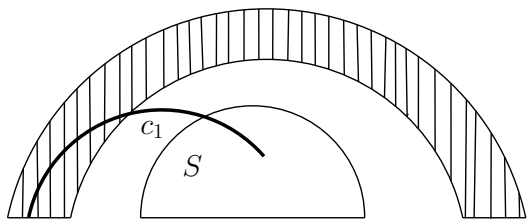}
        \caption{$S$ is root-fixed}
       
    \end{figure}

       \item $c_1$ is the last chord for $c_1*S$. In this case $S$ is said to be 
       \textit{end-fixed}.
       
       \begin{figure}[h]
        \centering
       \includegraphics[scale=0.66]{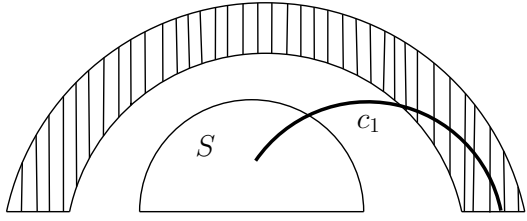}
        \caption{$S$ is end-fixed}
       
    \end{figure}
   \end{enumerate} 
    Now for the sake of clarity, let's decompose the diagram in the following steps. Remember that a pair $(i,\mathbf{C})$ in $\mathcal{B}$ stands for a connectivity-$1$ diagram $\mathbf{C}$ together with an interval with the extra property that we get a $2$-connected diagram upon inserting a new root to land in the chosen interval. 
    
    \textbf{Step 1:} We have seen above that, unless $\mathbf{C}$ is the one-chord-diagram, it should have a minimal reason $S$ for connectivity-$1$; the intervals within $S$ are all the valid intervals for the first entry $i$ so that $(i,\mathbf{C})\in \mathcal{B}$; $S$ and its cut $c_1$ form a $2$-connected diagram and are either root-fixed or end-fixed. In other words, we are counting $2$-conected chord diagrams with a special interval, where 
    
    \begin{enumerate}
        \item For root-fixed pieces, all intervals within $S$ are allowed, this means all intervals in $c_1*S$ are allowed except the first and last intervals.
        \item For end-fixed pieces, all intervals in $c_1*S$ are allowed except the last and the second-last intervals.
        
        \begin{figure}[h]
            
        \centering
       \includegraphics[scale=0.65]{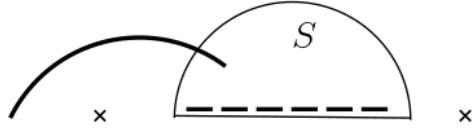}
        \caption{Two intervals are excluded for root-fixed pieces}
       
    \end{figure}
    \end{enumerate} Thus the generating function for this core part in our diagram is 
    
    \begin{equation}
        2(2xC'_{\geq2}-2C_{\geq2}). \label{step1}
    \end{equation}
        
\textbf{Step 2:} We then move outwards, out of the core part. Before the appearance of a next cut chord $c_2$ (if any), there may appear a sequence of components connected to the diagram only through $c_1$. Note that we are making use of the fact that all the reasons for connectivity-$1$ are totally ordered by inclusion, and hence the cuts also appear in a linear order (determined by their reasons) as we move out of the core $S$. This means that $S$ is generally followed by covering layers connected to the diagram only through $c_1$, and then eventually there appears a piece $S_1$ playing the same role of $S$ with the next cut $c_2$. Moreover, with a copy of $c_1$, each of these covering layers (before $c_2$) forms a $2$-connected diagram since they are assumed to precede the next cut (Figure \ref{illustration}). The generating function for each layer alone with $c_1$ is just $C_{\geq2}$. The generating function contribution for sequences of these layers is \begin{figure}[h]
    \centering
    \includegraphics[scale=0.85]{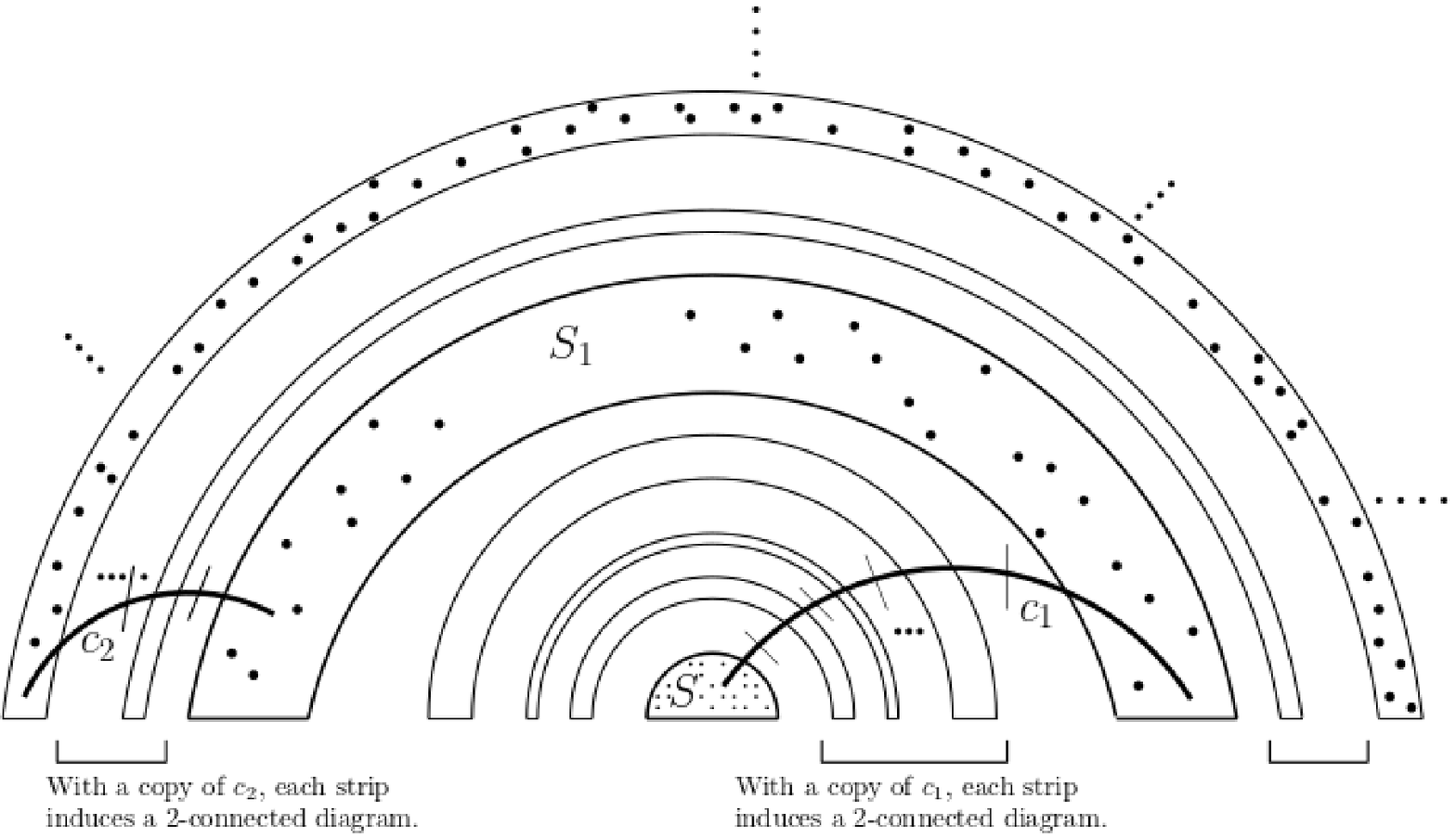}
    \caption{}
    \label{illustration}
\end{figure}

\begin{equation}
    \displaystyle\frac{1}{1-\displaystyle\frac{C_{\geq2}}{x}}\;,\label{step2}
\end{equation}

where we divide by $x$ to avoid overcounting $c_1$, which has already been counted in Step 1. We may think about this as if $c_1$ is divided into many pieces as in Figure \ref{illustration}, and counted only once.

\textbf{Step 3:} In a general situation, there may exist more cuts. Let $c_2$ be the next cut. Then we reach a piece $S_1$ that carries the new cut $c_2$, and, as before, it is either root-fixed or end-fixed. Please note that the previous cut $c_1$ lands within $S_1$ and not further. The chord $c_1$ can be any of the chords in $S_1$ and clearly it can not be $c_2$. One may falsely think that if $c_1$ is end-fixed (root-fixed) in $S$, and $c_2$ is also end-fixed (root-fixed) in $S_1$, then $c_1$ cannot be identified with the first (last) chord in $S_1$ lest we get two disjoint reasons for connectivity-$1$. This is not true however, because even this case can be remedied by the next chord covers: see Figure \ref{remedied} for example. 

\begin{figure}[h]
   
    \includegraphics[scale=0.7]{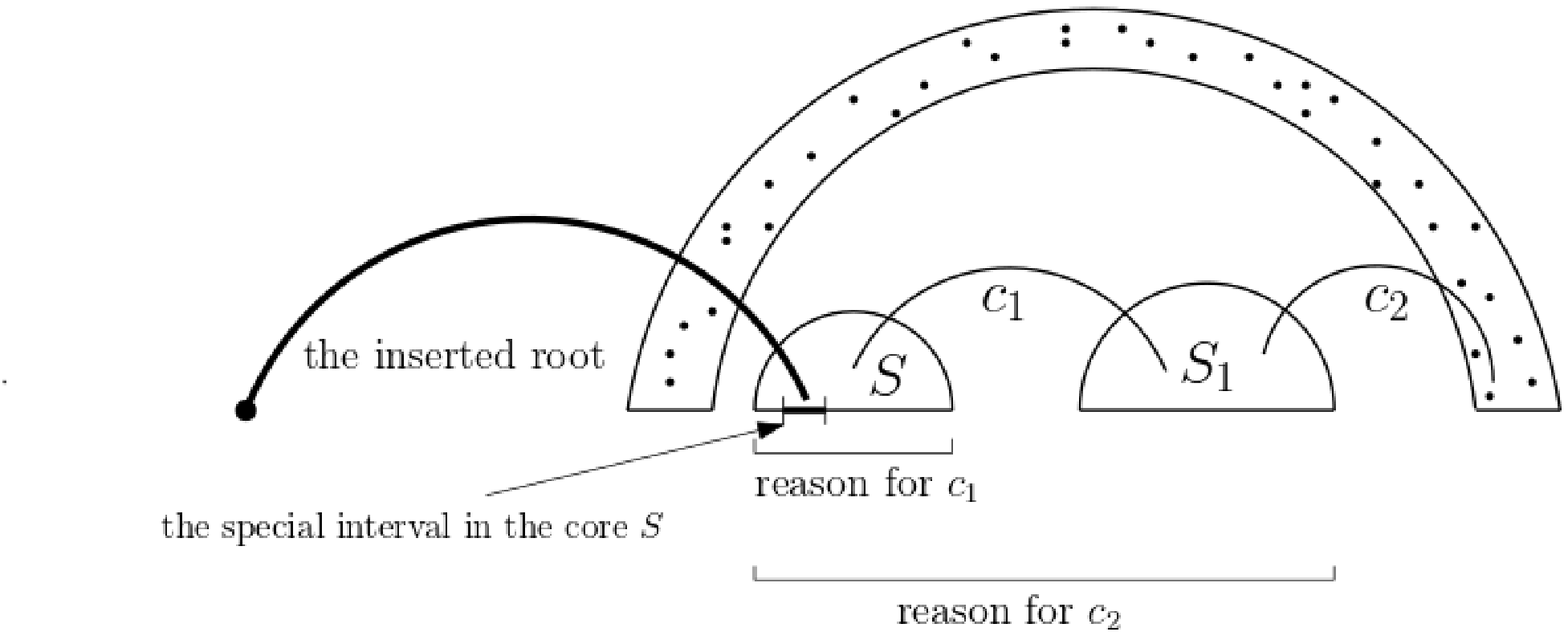}
    \caption{An example where $c_1$ merged with the root of $S_1$ even though $S_1$ is end-fixed.}
    \label{remedied}
\end{figure}

To be more precise, $S_1$ is obtained as follows: (1) remove the maximal reason for connectivity-$1$ that corresponds to $c_1$, without removing $c_1$. (2) in the modified diagram $S_1$ is the minimal reason for connectivity-$1$ corresponding to $c_2$ (note that in this case $S_1$ contains $c_1$). 

By this definition, and since $c_2$ is the first cut after $c_1$, it must be that the diagram $c_2*S_1$ induced by $c_2$ and $S_1$ is $2$-connected. Thus, $c_2*S_1$ is: (1) $2$-connected; (2) either root-fixed or end-fixed; (3) has a special chord (this is $C_1$); (4) the special chord must be different from the root (first chord) in case $c_2*S_1$ is root fixed, or different from the last chord in case $c_2*S_1$ is end-fixed.

Such a structure is counted by  
\begin{equation}
        \displaystyle\frac{2(xC'_{\geq2}-C_{\geq2})}{x},\label{step3}
    \end{equation}
where we divide by $x$ in order not to overcount the chord $c_1$ which has already been accounted for.

As in Step 2, $c_2$ may carry a list of components connected to the diagram only through $c_2$. Each of these components is $2$-connected when taken with $c_2$ (Figure \ref{illustration}). Thus we have a factor of \begin{equation}
    \displaystyle\frac{1}{1-\displaystyle\frac{C_{\geq2}}{x}}\;\label{step33}
\end{equation}

\textbf{Step 4:} 
Now note that Step 3 can be repeated an arbitrary number of times, or not occur at all. More precisely, Step 3 is repeated upon the appearance of every cut after $c_1$. In light of the calculations in Step 3, this amounts to the following part of the generating function 

\begin{equation}
    \displaystyle\frac{1}{1-\left[\displaystyle\frac{2(xC'_{\geq2}-C_{\geq2})}{x} \cdot 
    \displaystyle\frac{1}{\left(1-\displaystyle\frac{C_{\geq2}}{x}\right)}\right]}
    \;.\label{step4}
\end{equation}

\textbf{Step 5:} Let $c_m$ be the last cut, and $S_{m-1}$ be as in Step 2, and let $S_m$ be the last component, that carries $c_m$, which we refer to as the $lock$. We will see now how the lock $S_m$ needs to be considered in a special way. First note that this piece must exist and can not be empty, for otherwise $c_m$ is no longer a cut since it doesn't appear before $S_{m-1}$. Besides, $c_m*S_m$ is $2$-connected by construction. See Figure \ref{general case} for clarity.

So we may exhibit $c_m*S_m$ as a $2$-connected diagram with a special chord to be identified with $c_m$ ($c_m$ has already been determined). The choice of the special chord however is not arbitrary:
\begin{enumerate}
    \item it can be any of the middle chords (i.e. excluding the first and last chords);
    \item if $S_{m-1}$ is end-fixed, the special chord can be the last chord, but it cannot be the root (first) chord;
    
    \item if if $S_{m-1}$ is root-fixed, the special chord can be the root chord, but it cannot be the last chord.
\end{enumerate} 

In other words, assume we are given a $2$-connected diagram to play the role of $S_m$ and we would like to attach $c_m$ to it. If $c_m$ is intended to be identified with one of the middle chords then we should have distinguished one of the middle chords, this is counted by $\displaystyle\frac{(xC'_{\geq2}-2C_{\geq2})}{x}$. 
If $c_m$ is intended to be identified with  either the first or the last chord in $S_m$ the solution is unique: $c_m$ can only be identified with the first chord in $S_m$ if $S_{m-1}$ is root-fixed; and can only be identified with the last chord in $S_m$ if $S_{m-1}$ is end-fixed. Hence the possibilities are simply counted by $C_{\geq2}/x$.

So, in total, the generating function contribution for the lock is

\begin{equation}
   \displaystyle\frac{ C_{\geq2}+(xC'_{\geq2}-2C_{\geq2})}{x}
    \;.\label{step5}
\end{equation}

Finally, by combining the calculations from equations \ref{step1}, \ref{step2}, \ref{step4}, \ref{step5} we get

\begin{equation}
\begin{split}
       B(x)&=x+\displaystyle\frac{2(2xC'_{\geq2}-2C_{\geq2})}{\left(1-\displaystyle\frac{C_{\geq2}}{x}\right)}\cdot\displaystyle\frac{\left(\displaystyle\frac{C_{\geq2}+(xC'_{\geq2}-2C_{\geq2})}{x}\right)}{1-\left[\displaystyle\frac{2(xC'_{\geq2}-C_{\geq2})}{x} \cdot \displaystyle\frac{1}{\left(1-\displaystyle\frac{C_{\geq2}}{x}\right)}\right]}
       \\
          &= x+\displaystyle\frac{4(xC'_{\geq2}-C_{\geq2})^2}{x-(2xC'_{\geq2}-C_{\geq2})}.
    \end{split}
\end{equation}

\begin{figure}[h]
   \center
    \includegraphics[scale=0.8]{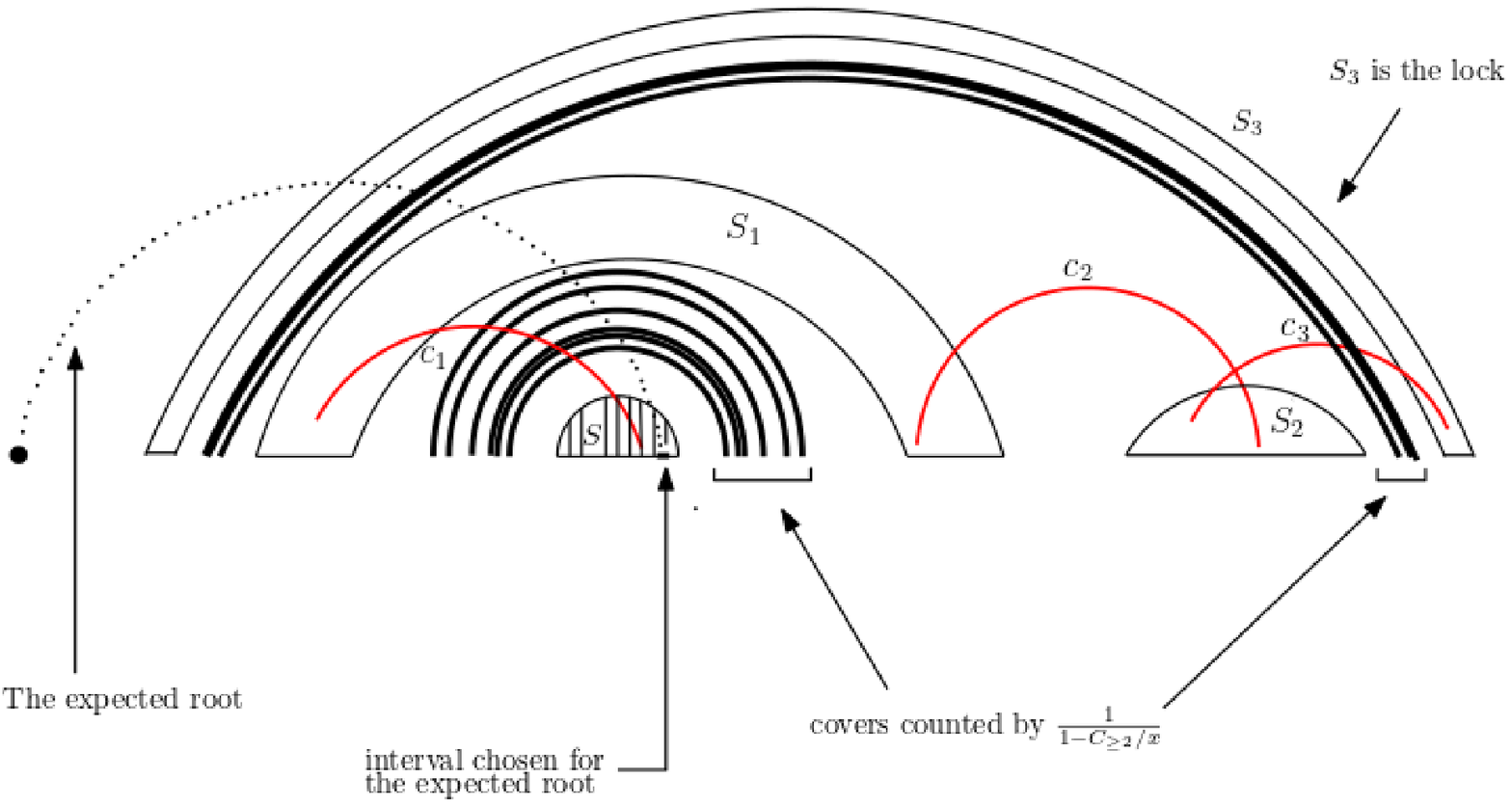}
    \caption{}
    \label{general case}
\end{figure}

\end{proof}

The first terms of $B(x)$ are computed to be

\begin{equation}
    B(x)=x+4x^3+28x^4+288x^5+3552x^6+50692x^7+\cdots. \label{B(x)firstentry}
\end{equation}

\begin{prop}
The following identity holds for connectivity-1 diagrams 
\begin{equation}
    C_1=x\left[1+\displaystyle\frac{(2xC'-C)^2}{1-(2xC'-C)}+2C_{\geq2}+(2xC'_1-C_1)-x-\displaystyle\frac{4(xC'_{\geq2}-C_{\geq2})^2}{x-(2xC'_{\geq2}-C_{\geq2})}\right]\label{C11eq}
\end{equation}
\end{prop}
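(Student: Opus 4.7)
The plan is to decompose a connectivity-$1$ chord diagram via root chord removal, in the spirit of the proof of Lemma \ref{cd}(iii). Set $M := 2xC' - C$, so that $C = x/(1-M) = x + xM + xM^2 + \cdots$ counts a root chord followed by a sequence of $k \geq 0$ connected components, each carrying a marked interval that is not the last. I would split connectivity-$1$ diagrams into three pieces: (a) the single chord, contributing $x$; (b) diagrams for which root removal leaves $k \geq 2$ components, so the root itself is a cut and the diagram is automatically connectivity-$1$, contributing $\sum_{k \geq 2} xM^k = xM^2/(1-M)$; and (c) diagrams for which root removal leaves exactly one component $D_1$ together with a marked interval $i$, in which case the root is not a cut and the diagram is connectivity-$1$ iff some non-root chord is a cut.

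Case (c) I would count by complement: the total series for such pairs $(i,D_1)$ is $xM$, and I subtract those producing a $2$-connected diagram after root insertion. I split the $2$-connected outcomes according to the connectivity of $D_1$. When $D_1$ is connectivity-$1$, Lemma \ref{B} asserts that the $2$-connected outcomes are generated by $xB$. When $D_1$ is $2$-connected (so $|D_1|\geq 2$), a brief analysis shows that the new root $r$ is never a cut, while a non-root chord $c$ is a cut of the new diagram iff the only chord of $D_1$ crossed by $r$ is $c$. Hence the new diagram is $2$-connected precisely when the marked interval $i$ has \emph{depth} at least $2$, where the depth of the interval after position $k$ is the number of chords $\{a,b\}$ with $a \leq k < b$.

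The heart of the proof is then the following combinatorial lemma: for every $2$-connected $D_1$, exactly two intervals have depth $1$, namely those right after positions $1$ and $2|D_1|-1$. These two intervals have depth exactly $1$ in any connected diagram, since they are contained only in the chord starting at position $1$, respectively the chord ending at position $2|D_1|$. Conversely, if some interval after position $k$ with $2 \leq k \leq 2|D_1|-2$ had depth $1$ with unique witness chord $c$, then every other chord would have both endpoints $\leq k$ or both endpoints $\geq k+1$, so $D_1 - c$ would split into two nonempty pieces, contradicting $2$-connectivity. Consequently the depth-$\geq 2$ intervals in $2$-connected diagrams are generated by $M_{\geq 2} - 2C_{\geq 2}$, where $M_{\geq 2} := 2xC'_{\geq 2} - C_{\geq 2}$, and the full $2$-connected contribution in case (c) is $xB + x(M_{\geq 2} - 2C_{\geq 2})$.

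Assembling the three cases and using $M - M_{\geq 2} = M_1 := 2xC'_1 - C_1$ gives
\[
C_1 \;=\; x \;+\; \frac{xM^2}{1-M} \;+\; xM_1 \;+\; 2xC_{\geq 2} \;-\; xB.
\]
Substituting the explicit expression for $B$ from Lemma \ref{B} and factoring $x$ out of every term recovers the claimed identity. The main obstacle in this plan is the depth-$1$ lemma above; once that is in hand, everything else is a clean root-decomposition bookkeeping. As a byproduct, this argument also encodes the equivalent one-line identity $xB = (1+2x)C_{\geq 2} - xM_{\geq 2}$, obtained by running the same decomposition directly on $C_{\geq 2}$, which can serve as a slicker alternative target.
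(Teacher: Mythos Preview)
Your proof is correct and follows essentially the same root-removal decomposition as the paper: your cases (a)$+$(b) are the paper's Case~1 (root is a cut), your case (c) with $D_1$ $2$-connected is the paper's Case~2, and your case (c) with $D_1$ connectivity-$1$ is the paper's Case~3. The one place you add value is the depth-$1$ lemma, which is exactly the paper's Case~2 claim that only the ``third'' and ``second-last'' positions work; the paper asserts this with a figure, while you supply the short argument.
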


\begin{proof}

Given a connectivity-1 chord diagram $\mathbf{C}$, the deletion of the root chord breaks the diagram in different ways:

\textbf{Case 1:} The root chord is itself a cut. In this case, upon removal of the root chord, we are left either with the empty diagram (and hence the 1 on the RHS), or we are left with a sequence of two or more connected diagrams nested into each other (Figure \ref{case1}). In the latter, this is the same as a sequence of  connected diagrams each having a special interval (excluding the last interval). The generating function for  connected diagrams with a special interval is $2xC'$, and if we were to exclude the last interval this amounts to $(2xC'-C)$. Thus, the generating function for this part is \[x+x\;\displaystyle\frac{(2xC'-C)^2}{1-(2xC'-C)}.\]

\begin{figure}[h]
    \centering
    \includegraphics[scale=0.6]{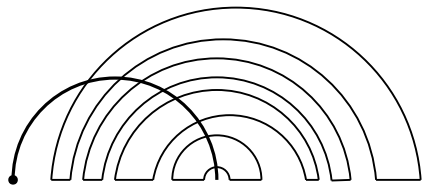}
    \caption{Connected components nested around the root chord.}
    \label{case1}
\end{figure}

\textbf{Case 2:} Removal of the root chord leaves a $2$-connected diagram. This can happen in only two situations, when the end point of the root chord comes third or  second last in the linear order of the diagram (Figure \ref{fig} below). Conversely, given any $2$-connected chord diagram,  adding a root chord with its far end positioned third or pre-last in the linear order results in a connectivity-1 diagram, hence the term $2C_{\geq2}$.

\begin{figure*}[h]\centering
\begin{subfigure}[t]{0.4\textwidth}
  \centering
  \includegraphics[scale=0.75]{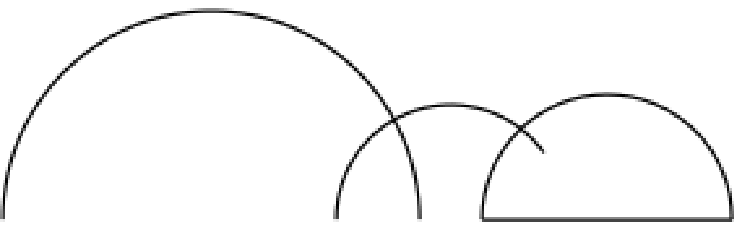}  
  \caption{}
  \label{fi1}
\end{subfigure}~
\begin{subfigure}[t]{0.4\textwidth}
\centering
  \includegraphics[scale=0.75]{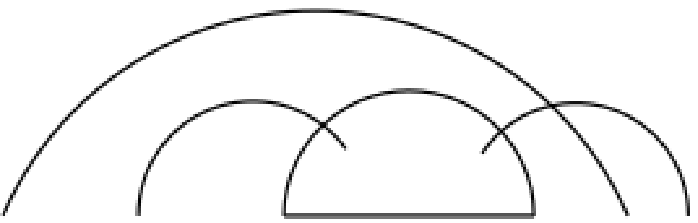}  
  \caption{}
  \label{fi2}
\end{subfigure}
\caption{ }
\label{fig}
\end{figure*}

\textbf{Case 3:} Removal of the root chord does not affect being connectivity-1. In this case we have to be careful: if removing the root chord leaves us again with a connectivity-1 diagram (with a special interval for the far end of the removed root chord), then we may add the term $2xC'_1-C_1$. However, by doing so we count some undesired diagrams. To see this notice that in reversing the process we add a root chord to a given connectivity-1 diagram with some special interval, the interval however is not arbitrary and must be chosen so that the addition of the root is not causing the diagram to become $2$-connected. So we need exactly to exclude the pairs in the class $\mathcal{B}$ of Lemma \ref{B}. Thus the generating function contribution is 

\begin{equation}
    x\left[(2xC'_1-C_1)-x-\displaystyle\frac{4(xC'_{\geq2}-C_{\geq2})^2}{x-(2xC'_{\geq2}-C_{\geq2})}\right].
\end{equation}
The proposition is now proved by combining the above contributions.

\end{proof}

In Section \ref{quenchedsec} we will see that the coefficients of the series $C_{\geq2}$ coincide with those of a renormalization quantity in \textit{quenched QED} \cite{michi, michiq}, namely the sequence counts the number of skeleton quenched QED vertex diagrams (also it is entry \href{https://oeis.org/A049464}{A049464} of the OEIS \cite{OEIS}). This relation between renormalized quantities in quenched QED and combinatorial $2$-connected chord diagrams is not obvious, and seemed not to be known to the authors in \cite{broadhurst, michiq}. Nevertheless, this relation maybe one of a family of similar relations and may lead to further results and observations in both perturbation theory and the underlying combinatorics.

It is therefore important to collect as much as possible of these evaluations of the first terms for the sequences we encounter here. This will make it easier for researchers to anticipate connections. Table \ref{table1} below displays the terms of the series used so far, up to the sixth term, 

As a sanity check we can use table \ref{table1} to calculate the first terms of the series on the right hand side of equation \ref{C11eq}:
\begin{align*}
RHS(\ref{C11eq})  &= x+ (1-1) x^2+ (1+2) x^3+ (7+2+15-4) x^4+(59+14+140-28)x^5+ \\
                  &                           \\
                  &                       +(598+126+1665-288)x^6+(7102+1458+23111-3552)x^7\cdots\\
                  &                           \\
                  &= x+ 3x^3+20 x^4+185 x^5+2101 x^6+28119 x^7+\cdots,
\end{align*}

which indeed coincides with $C_1$.

\begin{table}[h]
\center
\begin{tabular}{|c|c||c|c|c|c|c|c|c|c|c|}\hline
&           &$x^0$ & $x$   & $x^2$  & $x^3$ & $x^4$ & $x^5$ & $x^6$ & $x^7$ & $x^8$       \\\hline\hline
1&$C$         &   0  & 1     & 1      & 4     & 27    & 248   & 2830  & 38232      &  593859   \\\hline
2&$C_1$       &   0  & 1     & 0      & 3     & 20    & 185   & 2101  & 28119      &   431924 \\\hline
3&$C_{\geq2}$ &   0  & 0     & 1      & 1     & 7     & 63    & 729   &   10113    & 161935
                  \\\hline\hline
4&$xC'$       &   0  & 1     & 2      & 12    & 108   & 1240  & 16980 &       &
\\\hline
5&$xC'_1$     &   0  & 1     & 0      & 9     & 80    & 925   & 12606 &       &
\\\hline
6&$xC'_{\geq2}$
              &   0  & 0     & 2      & 3     & 28    & 315   & 4374  &       &
                  \\\hline\hline
7&$(2xC'-C)$  &   0  & 1     & 3      & 20    & 189   & 2232  & 31130 &       &
\\\hline 
8&$x\frac{(2xC'-C)^2}{1-(2xC'-C)}$
              &   0  & 0     & 0      & 1     & 7     & 59    & 598   &  7102 &
                  \\\hline\hline
9&$2xC_{\geq2}$
              &   0  & 0     & 0      & 2     & 2     & 14    & 126   & 1458  & 
                  \\\hline\hline

10&$x(2xC'_1-C_1)$
              &   0  & 0     & 1      & 0     & 15    & 140   & 1665  & 23111 &
                  \\\hline\hline
      
11&$xB$       &   0  & 0     & 1      & 0     & 4     & 28    & 288   & 3552  & 50692  \\\hline
 \end{tabular}\caption{The first terms of some of the series used in this section.}
\label{table1}
\end{table}

\section{Functional Recurrence for $2$-Connected Diagrams}\label{functionalrecurrence2connected}

In \cite{michi}, the functional relation $D(x)=1+C(xD^2)$ provided the suitable grounds for deriving information about the asymptotic behaviour of $C_n$, the number of connected chord diagrams on $n$ chords. The composition of maps in the second term in this relation transforms nicely into a product when taking the alien derivative $\mathcal{A}_{1/2}^2$. In the aftermath of our meeting in the Canadian Mathematical Society session about chord diagrams (Dec. 2019), M. Borinsky suggested to the author that it may be possible to obtain similar functional relations for the higher connectivity diagrams. This was motivated by the asymptotic pattern shown in Kleitman's results \cite{kleit}. In this section we derive such a functional relation for $2$-connected chord diagrams, and will use it later to study the asymptotic behaviour of the number of $2$-connected chord diagrams. However, just as the case for general graphs, it is not clear whether $3$-connected diagrams and $k$- connected diagrams in general do follow similar relations. 
\begin{prop}\label{myproposition2connected}
The following functional relation between connected and $2$-connected diagrams holds:

\begin{equation}
    C=\displaystyle\frac{C^2}{x}-C_{\geq2}\left(\displaystyle\frac{C^2}{x}\right). \label{c2con}
\end{equation}
\end{prop}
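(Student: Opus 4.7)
The plan is to prove the identity by rewriting it as
\[\frac{C^2}{x} \;=\; C \;+\; C_{\geq 2}\!\left(\frac{C^2}{x}\right),\]
and recognizing this as a block-decomposition identity. Setting $y := C^2/x$, the target reads $y = C + C_{\geq 2}(y)$, which suggests a recursively defined class $\mathcal{Y}$ whose elements are either a connected chord diagram, or a $2$-connected chord diagram $S$ on $n \geq 2$ chords together with a $\mathcal{Y}$-element attached at each chord of $S$. By construction, the generating series $y_{\mathcal{Y}}$ of $\mathcal{Y}$ satisfies $y_{\mathcal{Y}} = C + C_{\geq 2}(y_{\mathcal{Y}})$, so proving the proposition reduces to showing that $y_{\mathcal{Y}} = C^2/x$, equivalently, that $\mathcal{Y}$ is in bijection with the class $\mathcal{P}$ of ordered pairs $(A,B)$ of nonempty connected chord diagrams, where a pair carries weight $x^{|A|+|B|-1}$.

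I would first give the natural combinatorial reading of $\mathcal{P}$ as a root-gluing: a pair $(A,B)$ corresponds to the connected chord diagram $G$ obtained by identifying the root chord of $A$ with the root chord of $B$, together with the partition of the remaining chords of $G$ into those inherited from $A$ and those from $B$. The size of $G$ is $|A|+|B|-1$, which justifies the factor $1/x$ in $C^2/x$. A preliminary split of $\mathcal{P}$ already identifies the $C$-summand: pairs with $B$ a single chord give GF $C\cdot x/x = C$, while pairs with $|B|\geq 2$ give GF $C(C-x)/x = C^2/x - C$. The proposition therefore reduces to a bijection between pairs $(A,B)$ with $|A|\geq 1$ and $|B|\geq 2$ and the structures enumerated by $C_{\geq 2}(y)$.

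The bijection is obtained by extracting a root $2$-block. Given $(A,B)$ with $|B|\geq 2$, every chord of $B$ crosses another chord, so the root block $S$ of $B$ at its root $c_B$ is a $2$-connected sub-diagram of size $n \geq 2$. This $S$ plays the role of the $2$-connected skeleton. The $\mathcal{Y}$-element attached at $c_B$ is built recursively from $A$ together with the sub-structure of $B$ hanging off $c_B$; the $\mathcal{Y}$-element attached at each other chord $c$ of $S$ is read off from the sub-structure of $B$ hanging off $c$ in the block-cut tree of $B$ rooted at $S$. The weights match by construction: each chord of $S$ is ``shared'' between $S$ and the pair attached to it, which is exactly the $1/x$ per chord in $y = C^2/x$.

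The main obstacle is making the block decomposition at $c^{\star}$ and the induced pairing at the non-root chords of $S$ combinatorially precise, especially in handling the case where one side of a pair degenerates into the single-chord diagram. The block-cut tree of a connected chord diagram (via the block-cut tree of its intersection graph, where cut chords correspond to cut vertices) and the detailed cut analysis in the proof of Lemma~\ref{B} (where reasons for connectivity-$1$ around a cut are shown to be totally ordered by inclusion) supply the canonical structure needed to turn the informal extraction into an invertible procedure. As an algebraic fallback, one may instead derive an independent functional equation for $C_{\geq 2}$ from a direct root-chord decomposition of $2$-connected diagrams and verify, in conjunction with Lemma~\ref{cd}(iii) in the form $2xCC' = C(1+C) - x$, that $C^2/x - C$ and $C_{\geq 2}(C^2/x)$ coincide as formal power series.
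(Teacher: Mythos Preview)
Your rearrangement $y=C+C_{\geq 2}(y)$ with $y=C^2/x$ is correct, and viewing $C^2/x$ as pairs of connected diagrams sharing a root chord is a reasonable starting point. But the bijection you sketch does not actually work as described, for two concrete reasons. First, ``the root block $S$ of $B$ at its root $c_B$'' is only well-defined when $c_B$ is \emph{not} a cut vertex of the intersection graph of $B$; when $c_B$ is a cut (e.g.\ $B=(1,4)(2,6)(3,5)$, where the root crosses both other chords but they do not cross each other), $c_B$ lies in several blocks and you have no canonical $S$. Second, at the non-root chords of $S$, the block--cut tree of $B$ hands you a \emph{single} connected sub-diagram hanging off $c$, not a pair; if you encode it as $(\{c\},B_c)$ you only ever hit pairs with first component a single chord, so your map to $C_{\geq 2}(\mathcal{Y})$ is not surjective. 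Your appeal to Lemma~\ref{B} does not fix this: the total ordering of reasons there is specific to the subclass where adding a new root makes the diagram $2$-connected --- in general, reasons for connectivity-$1$ form only a laminar family (disjoint or nested), not a chain. The ``algebraic fallback'' is not a proof either, since deriving an independent functional equation for $C_{\geq 2}$ is precisely the content of the proposition.

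The paper's argument is genuinely different in kind: it works with the \emph{linear endpoint structure} rather than the intersection graph's block--cut tree. Starting from a single connected diagram $\mathbf{C}$, it extracts a $2$-connected skeleton $\mathbf{C}^\times$ by a left-to-right procedure that removes \emph{maximal reasons for connectivity-$1$} (consecutive endpoint intervals), keeping the cut chords. Connected sub-diagrams are then attached at \emph{endpoints} of the skeleton --- one at the left endpoint and one at the right endpoint of each middle chord, with the chord itself counted once, which is exactly the $C\cdot C/x$ per chord that produces the substitution $t=C^2/x$. The root chord and the chord carrying the last endpoint get only one attached diagram each (contributing the overall $C^2$ factor, divided by $t^2$), and Case~3 handles the situation where the root endpoint itself sits inside a reason. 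The point is that the canonical choices you need are supplied by the linear order of endpoints, not by the abstract block structure of the intersection graph; your proposal does not use this and that is where it breaks.
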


\begin{proof}
Assume that a connected chord diagram $\mathbf{C}$ is given. We can determine the maximal sequences of consecutive end points that are reasons for connectivity-$1$. A sequence $S=s_1s_2\ldots s_m$ of consecutive end points is of this type if and only if \begin{enumerate}
    \item $S$ is a reason of connectivity-$1$ corresponding to a cut chord $c$ that has exactly one end point inside $S$, say this is $s_i\;$ where $1<i < m$.
    \item $S$ is not contained in any other reason for connectivity-$1$.

\end{enumerate}

However, these reasons for connectivity-$1$ may overlap (see Figure \ref{reasonprob}), and so we will need to devise a canonical way for partitioning our diagram in terms of these maximal sequences.  

\begin{figure}[h]
   \center
    \includegraphics[scale=0.8]{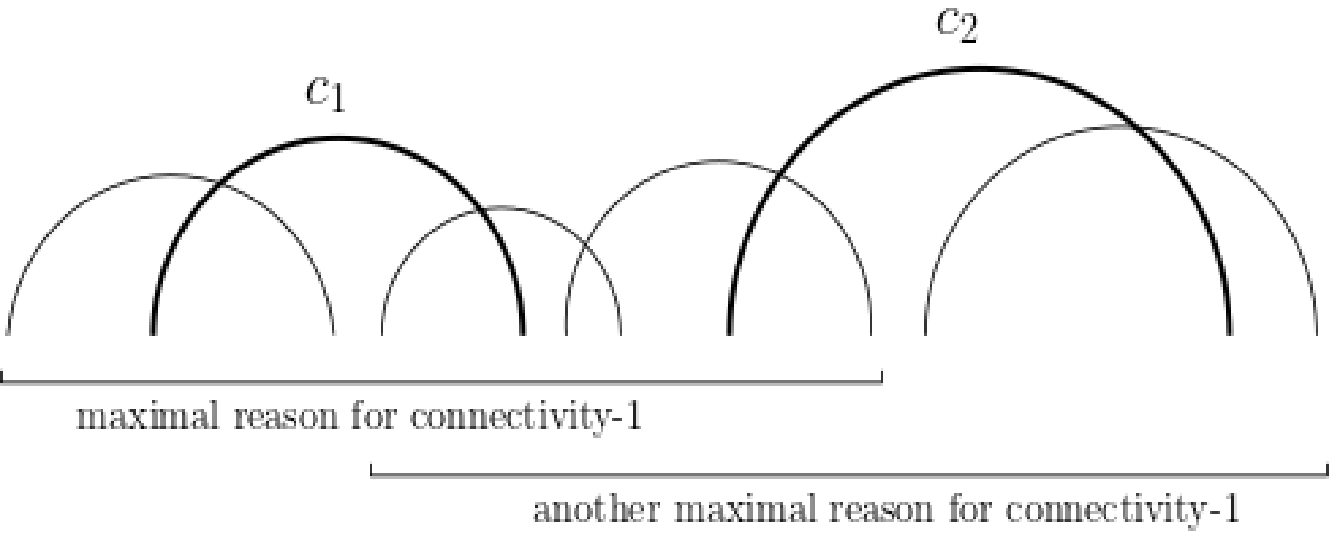}
    \caption{}
    \label{reasonprob}
\end{figure}

\textbf{Case 1:} $\mathbf{C}$ is the \textbf{single chord diagram}. In this case we do nothing, and the contribution to the generating function is just $x$.

In the next cases we generally assume $\mathbf{C}$ is not the single chord diagram.

\textbf{Case 2:} The root endpoint $r_0$ (left endpoint of the root chord of $\mathbf{C}$) is \textbf{not} contained in any reason for connectivity-$1$. In this case we determine the maximal reasons for connectivity-$1$ that are obtained through the next procedure by moving from left to right. Such a diagram generally looks like the example in  Figure \ref{looklike} below.

\begin{figure}[h]
   \center
    \includegraphics[scale=0.43]{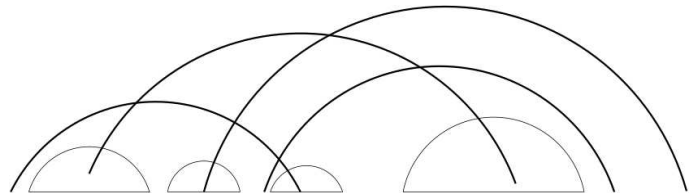}
    \caption{An example for Case 2}
    \label{looklike}
\end{figure}

Consider the diagram $\mathbf{C^\times}$ obtained from $\mathbf{C}$ as follows: 
\begin{enumerate}
    \item Starting from the left, determine the first endpoint that is included in some reason for connectivity-$1$. Let's denote it temporarily by $s_1$. Move to step 5 if the diagram is $2$-connected and no such endpoint exists. 
    
    \item Determine the maximal reason $S=s_1s_2\ldots s_m$ for connectivity-$1$ that contains $s_1$ by consecutively trying to include the next endpoints to the right. Assume $S$ corresponds to a cut chord $c$ that has the end point $s_i$, say.
    
    \item Let $\mathbf{C^\times}$ be the diagram obtained by removing the sub-diagram induced by $S-{s_i}$, i.e. we remove $S$ without removing $c$. 
    \item Update by setting $\mathbf{C}=\mathbf{C^\times}$, and go back to step 1.\\
    
    \item Output $\mathbf{C^\times}$.

\end{enumerate}

\textbf{Observation 1:} 
Notice that in the process of extracting $\mathbf{C^\times}$ the diagram remains connected, this is because any of the removed sub-diagrams has been connected to the rest of $\mathbf{C}$ through a single cut chord which is not removed.

Clearly, $\mathbf{C^\times}$ will not preserve any original reason for connectivity-1 in $\mathbf{C}$. Moreover, notice that again since each sub-diagram removed has only been connected to the rest of $\mathbf{C}$ through a single cut chord (which is kept), the process should not affect the connectivity of the rest of $\mathbf{C}$ neither will create new cuts.

\textbf{Observation 2:} 
Also, step 1 is exclusive throughout the procedure. Indeed, if there is no such endpoint in a connected diagram (Observation 1) then the diagram is either $2$-connected or is the single chord diagram (it can't be empty). The latter however never occurs: Initially the diagram is not the single chord diagram by our assumption. Further, $\mathbf{C}$ is not reduced to a single chord diagram at any iteration since this should imply that the root endpoint $r_0$ is contained in a reason for connectivity-1. Therefore the procedure eventually halts and the output is $2$-connected. 

\textbf{Observation 3:} It is important to note that also the last endpoint in $\mathbf{C}$ is not included in any reason for connectivity-1, for this will imply the same for $r_0$.
   
To summarize the procedure above, we are removing maximal reasons of connectivity-$1$ that appear in a certain order when moving from left to right, without removing their corresponding cuts. This is illustrated in Figures  \ref{con11} and \ref{con1removed}.

This gives a reversible decomposition into a $2$-connected where each endpoint, except the first and last endpoints, is assigned to a connected chord diagram counted by one less chord. In other words, we will count each middle chord (i.e. whose endpoints are not the root nor the last endpoint) in $\mathbf{C}^\times$ when counting the connected diagram for its right endpoint by keeping it as a root for this diagram, while on the other hand, the diagram for the left endpoint will be counted by one less chord to avoid overcounting.

This can also be viewed as follows:

Given a connected chord diagram $\mathbf{C}$ (which is not the single chord) we undergo the described procedure to get 

\begin{enumerate}
    \item a $2$-connected chord diagram $\mathbf{C}^\times$,
    \item the root chord $c_{r}$ corresponds to a connected chord diagram that consists of $c_r$ and the diagram attached to the right endpoint of the root, in which we will keep the root. 
    \item the chord $c_l$ carrying the last endpoint of $\mathbf{C}$ corresponds to a connected chord diagram that consists of $c_l$ kept as a root for whatever the diagram attached to the left endpoint.
    
    \item Every middle chord $c$ can be replaced with a pair of diagrams corresponding to right and left endpoints. The diagram for the left endpoint has its root a copy of $c$ that is not going to be counted and is connected; while the diagram for the right endpoint keeps $c$ and is connected as well.
\end{enumerate}

In terms of generating functions  the contribution of Case 2 is seen now to be:

\begin{equation}
    C(x)^2 \;\left[\left. \displaystyle\frac{C_{\geq2}(t)}{t^2}\right|_{t=C(x)^2/x}\right],\label{case2}
\end{equation}
where we divide by $t^2$ to account for the fact that two of the chords are treated differently (namely $c_r$ and $c_l$). Each of these two chords contributes with $C(x)$ as shown above.

\begin{figure}[h]
   \center
    \includegraphics[scale=0.7]{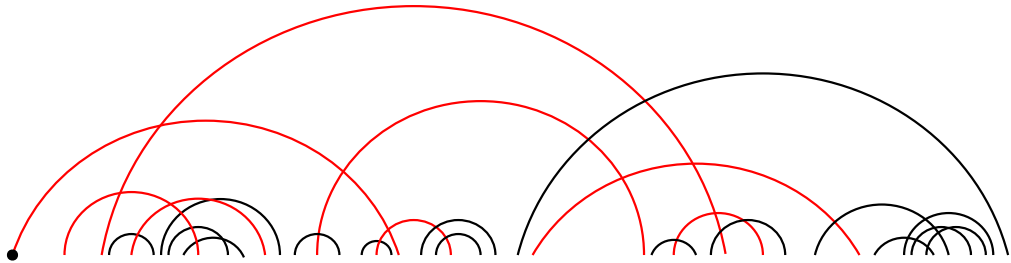}
    \caption{A diagram $\mathbf{C}$ with cuts highlighted (red).}
    \label{con11}
\end{figure}

\begin{figure}[h]
   \center
    \includegraphics[scale=0.56]{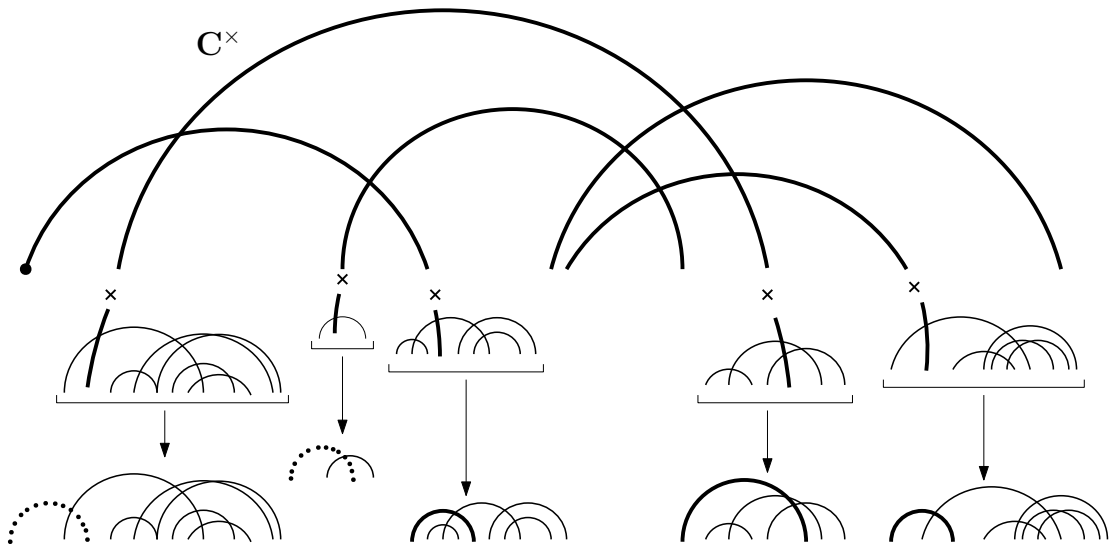}
    \caption{Maximal reasons obtained through the procedure (underlined), $\mathbf{C^\times}$ (bold).}
    \label{con1removed}
\end{figure}

\textbf{Case 3:} The root endpoint $r_0$ (left endpoint of the root chord of $\mathbf{C}$) is \textbf{contained} in a reason for connectivity-$1$. In this case we determine the maximal reason for connectivity-1 containing $r_0$, donted by $S$, by consecutively checking every endpoint to the right of $r_0$. Let $c^*$ be the corresponding cut for $S$. Now, by the maximality of $S$ it must be that none of the reasons for connectivity-1 that come later could be extended to contain $S$.  This means that the diagram obtained by removing $S$ (without removing $c^*$) is of the type considered in Case 2 above. The diagram will generally be structured as  in Figure \ref{struc}. Then the contribution to the generating function is

\begin{equation}
   \displaystyle\frac{C(x)-x}{x}\;. \;C(x)^2 \;\left[\left. \displaystyle\frac{C_{\geq2}(t)}{t^2}\right|_{t=C(x)^2/x}\right], \label{case3}
\end{equation}

where the factor of $\displaystyle\frac{C(x)-x}{x}$ corresponds to the sub-diagram induced by $S$ together with $c_r$: the $(C-x)$ since $S$ is always nonempty in this case,  and we divide by $x$ since $c_r$ is counted with the rest of the diagram. 
 \begin{figure}[h]
   \center
    \includegraphics[scale=0.67]{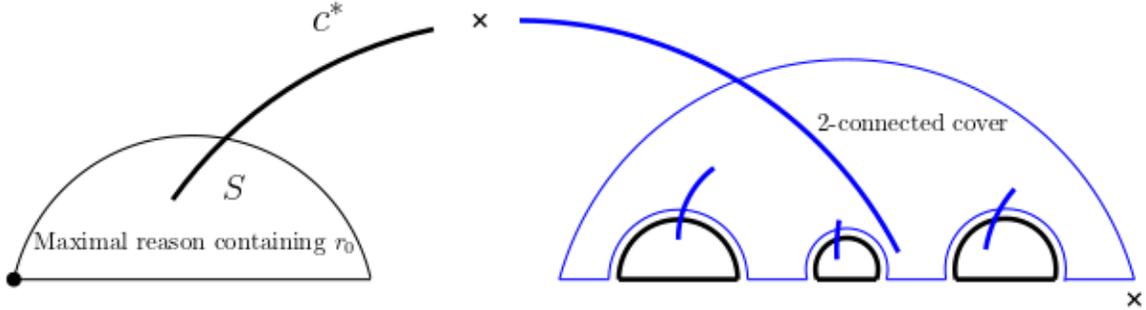}
    \caption{$S$ is the maximal reason containing $r_0$, and so the rest of the diagram should be covered with a $2$-connected sub-diagram (blue).}
    \label{struc}
\end{figure}

Thus, by combining the findings of the three cases we have 

\begin{align*}
   C(x)&=x+C(x)^2 \;\left[\left. \displaystyle\frac{C_{\geq2}(t)}{t^2}\right|_{t=C(x)^2/x}\right]+
   \displaystyle\frac{C(x)-x}{x}\;. \;C(x)^2 \;\left[\left. \displaystyle\frac{C_{\geq2}(t)}{t^2}\right|_{t=C(x)^2/x}\right]\\
   &\\
  &=x+\displaystyle\frac{C(x)^3}{x}\;.  \;\left[\left. \displaystyle\frac{C_{\geq2}(t)}{t^2}\right|_{t=C(x)^2/x}\right]\\
  &\\
  &=x+\displaystyle\frac{x}{C(x)}\;.\; \left(\displaystyle\frac{C(x)^2}{x}\right)^2\;. \;\left[\left. \displaystyle\frac{C_{\geq2}(t)}{t^2}\right|_{t=C(x)^2/x}\right]\\
  &\\
  &=x+\displaystyle\frac{x}{C(x)}\;.\;C_{\geq2}\left(\displaystyle\frac{C(x)^2}{x}\right),
\end{align*}

and the result now follows.

\end{proof}

For future reference, we include the first terms of the expressions involved in the previous decomposition. The reader can check that the sum of $x$ plus  lines 3 and 4 in the next table gives the first terms of $C(x)$.

\begin{table}[h]
\center
\begin{tabular}{|c|c||c|c|c|c|c|c|c|}\hline
&             &$x^0$ & $x$   & $x^2$  & $x^3$ & $x^4$ & $x^5$ & $x^6$ \\\hline\hline
1&$C^2/x$     &   0  & 1     & 2      & 9     & 62    & 566   & 6372      \\\hline
2&$\left. \displaystyle\frac{C_{\geq2}(t)}{t^2}\right|_{t=C(x)^2/x}$      
              &   1  & 1     & 9      & 100   & 1323  & 20088 & 342430         \\\hline
3&$C(x)^2 \;\big[\left. \displaystyle\frac{C_{\geq2}(t)}{t^2}\right|_{t=C(x)^2/x}\big]$      
              &   0  & 0     & 1      & 3     & 20    & 189   & 2232      \\\hline
4&$\displaystyle\frac{C(x)-x}{x}\;. \;C(x)^2 \;\big[\left. \displaystyle\frac{C_{\geq2}(t)}{t^2}\right|_{t=C(x)^2/x}\big]$ 
              &   0  & 0     & 0      & 1     & 7     & 59    & 598  
                  \\\hline

 \end{tabular}\caption{The first coefficients of the series involved in the terms of the decomposition of $C_{\geq2}$. }
\label{table2}
\end{table}

\section{Asymptotics of the number of 2-connected chord diagrams}

In this section we will see how to successfully estimate the number $(C_{\geq2})_n$ of $2$-connected diagrams when $n$ is large. The asymptotic behaviour obtained here will extend Kleitman's result \cite{kleit} and will shed light on an unexplained pattern for the images of the alien derivative. It turns out that $\mathcal{A}_{\frac{1}{2}}^2C_{\geq2}$ takes the form of a rational function in $C_{\geq2}$ times the exponential of a quadratic expression in the reciprocal of that rational function. This was exactly the same case for $\mathcal{A}_{\frac{1}{2}}^2C$ (as well as monolithic diagrams and simple permutations). We will proceed now by applying a suitable alien derivative as was done before for connected chord diagrams.

In the previous section we have seen that \begin{equation*}
    C=\displaystyle\frac{C^2}{x}-C_{\geq2}\left(\displaystyle\frac{C^2}{x}\right). 
\end{equation*}

We will start by applying the alien derivative $\mathcal{A}_{\frac{3}{2}}^2$, which is allowed since $C(x)\in\mathbb{R}[[x]]_{\frac{1}{2}}^2\subset \mathbb{R}[[x]]_{\frac{3}{2}}^2 $ by Corollary \ref{corplusm1}.

\begin{align*}
    \big(\mathcal{A}_{\frac{3}{2}}^2C\big)(x)
    &=\mathcal{A}_{\frac{3}{2}}^2\left(\displaystyle\frac{C(x)^2}{x}\right)-\mathcal{A}_{\frac{3}{2}}^2\left(C_{\geq2}\left(\displaystyle\frac{C(x)^2}{x}\right)\right)(x)\\
    &=\mathcal{A}_{\frac{1}{2}}^2\big(C(x)^2\big)-\mathcal{A}_{\frac{3}{2}}^2\left(C_{\geq2}\left(\displaystyle\frac{C(x)^2}{x}\right)\right)(x)\\
    &=2 C \big(\mathcal{A}_{\frac{1}{2}}^2C\big)(x)-\mathcal{A}_{\frac{3}{2}}^2\left(C_{\geq2}\left(\displaystyle\frac{C(x)^2}{x}\right)\right)(x),
\end{align*}

by the linearity of $\mathcal{A}_{\frac{3}{2}}^2$ and by Proposition \ref{plusm2}. Rearrange and use  Proposition \ref{plusm1} to get

\begin{align*}
   (2C-x)\big(\mathcal{A}_{\frac{1}{2}}^2C\big)(x)= \mathcal{A}_{\frac{3}{2}}^2\left(C_{\geq2}\left(\displaystyle\frac{C(x)^2}{x}\right)\right)(x).
\end{align*}

Now to get rid of the decomposition on the right we appeal to Theorem \ref{chaintheorem}:

\begin{align*}
   (2C-x)\big(\mathcal{A}_{\frac{1}{2}}^2C\big)(x) 
   &= C'_{\geq2}\bigg(\displaystyle\frac{C^2}{x}\bigg)  \mathcal{A}_{\frac{3}{2}}^2\bigg(\displaystyle\frac{C^2}{x}\bigg)
   +\bigg(\displaystyle\frac{x^2}{C^2}\bigg)^\frac{3}{2} e^{\frac{C^2/x-x}{2x C^2/x}} \left(\mathcal{A}_{\frac{3}{2}}^2C_{\geq2}\right)\left(\displaystyle\frac{C^2}{x}\right)\\
   &\\
   &=2C\big(\mathcal{A}_{\frac{1}{2}}^2C\big)(x)\; C'_{\geq2}\bigg(\displaystyle\frac{C^2}{x}\bigg)+\displaystyle\frac{x^3}{C^3}\; e^{\frac{C^2/x-x}{2x C^2/x}} \left(\mathcal{A}_{\frac{3}{2}}^2C_{\geq2}\right)\left(\displaystyle\frac{C^2}{x}\right).
\end{align*}

Equation (\ref{c2con}) and Lemma \ref{cd} give that 

\begin{align*}
C'&=\displaystyle\frac{2xCC'-C^2}{x^2}\left[1- C'_{\geq2}\bigg(\displaystyle\frac{C^2}{x}\bigg)\right]\\
&\\
&=\displaystyle\frac{C^2+C-x-C^2}{x^2}\left[1- C'_{\geq2}\bigg(\displaystyle\frac{C^2}{x}\bigg)\right]\\
&\\
&=\displaystyle\frac{C-x}{x^2}\left[1- C'_{\geq2}\bigg(\displaystyle\frac{C^2}{x}\bigg)\right].
\end{align*}

Substituting into our equation we get 

\begin{align*}
   \left(\displaystyle\frac{2x^2CC'}{C-x}-x\right)\big(\mathcal{A}_{\frac{1}{2}}^2C\big)(x)
   &=\displaystyle\frac{x^3}{C^3}\; e^{\frac{C^2/x-x}{2x C^2/x}} \left(\mathcal{A}_{\frac{3}{2}}^2C_{\geq2}\right)\left(\displaystyle\frac{C^2}{x}\right).
\end{align*}
   
Now recall that $\big(\mathcal{A}_{\frac{1}{2}}^2C\big)(x)=\displaystyle\frac{1}{\sqrt{2\pi}}\frac{x}{C(x)}\;e^{\frac{-1}{2x}(C^2+2C)}$ by equation (\ref{A}), and 
hence 
\begin{align*}
    \left(\mathcal{A}_{\frac{3}{2}}^2C_{\geq2}\right)\left(\displaystyle\frac{C^2}{x}\right)
   &=\displaystyle\frac{C^3}{x^3}\left(\displaystyle\frac{2x^2CC'}{C-x}-x\right)\big(\mathcal{A}_{\frac{1}{2}}^2C\big)(x)\; e^{\frac{x-C^2/x}{2x C^2/x}}\\
   &\\
   &=\displaystyle\frac{C^3}{x^3}\cdot\displaystyle\frac{x(C^2+C-x)-xC+x^2}{C-x}\cdot\big(\mathcal{A}_{\frac{1}{2}}^2C\big)(x)\;\cdot e^{\frac{x-C^2/x}{2x C^2/x}}\\
   &\\
   &=\displaystyle\frac{C^3}{x^3}\cdot\displaystyle\frac{xC^2}{C-x}\cdot\displaystyle\frac{1}{\sqrt{2\pi}}\frac{x}{C(x)}\;e^{\frac{-1}{2x}(C^2+2C)}\;\cdot e^{\frac{x-C^2/x}{2x C^2/x}}.
\end{align*}

Since the $LHS$ is a function in $\displaystyle\frac{C^2}{x}$, applying Proposition \ref{plusm1} gives that 

\[\left(\mathcal{A}_{\frac{3}{2}}^2C_{\geq2}\right)\left(\displaystyle\frac{C^2}{x}\right)=\displaystyle\frac{C^2}{x}\cdot \left(\mathcal{A}_{\frac{1}{2}}^2C_{\geq2}\right)\left(\displaystyle\frac{C^2}{x}\right) .\]

Back to our equation, we thus have 
\begin{equation}\label{pre}
    \left(\mathcal{A}_{\frac{1}{2}}^2C_{\geq2}\right)\left(\displaystyle\frac{C(x)^2}{x}\right)
   =\displaystyle\frac{1}{\sqrt{2\pi}}\cdot\displaystyle\frac{C^2}{C-x}\cdot e^{\frac{-1}{2x}(C^2+2C)}\;\cdot e^{\frac{x-C^2/x}{2x C^2/x}}
   =\displaystyle\frac{1}{\sqrt{2\pi}}\cdot\displaystyle\frac{C^2}{C-x}\cdot e^{\frac{-1}{2x}[C^2+2C+1-\frac{x^2}{ C^2}]}.
\end{equation}

Since the power series $\displaystyle\frac{C(x)^2}{x}$ is invertible, we let $y(x)$ be such that $\displaystyle\frac{C(y)^2}{y}=x$. In that case equation (\ref{c2con}) gives
\[C(y)=x-C_{\geq2}(x),\]

and hence \[y(x)=\displaystyle\frac{(x-C_{\geq2}(x))^2}{x}.\]
Substituting $y(x)$ for $x$ in equation (\ref{pre}) we get

\begin{align*}\label{done}
    \left(\mathcal{A}_{\frac{1}{2}}^2C_{\geq2}\right)(x)
   &=\displaystyle\frac{1}{\sqrt{2\pi}}\cdot\displaystyle\frac{(x-C_{\geq2})^2}
   {\left((x-C_{\geq2})-\displaystyle\frac{(x-C_{\geq2})^2}{x}\right)}\cdot  e^{\frac{-1}{2}\left[x+\frac{2x}{(x-C_{\geq2})}+\frac{x}{(x-C_{\geq2})^2}-\frac{1}{x}\right]}\\
   &\\
   &=\displaystyle\frac{1}{\sqrt{2\pi}}\cdot\displaystyle\frac{x\;(x-C_{\geq2})}
   {\left(x-x+C_{\geq2}\right)}\cdot  e^{\frac{-1}{2x}\left[x^2+\frac{2x}{(1-C_{\geq2}/x)}+\frac{1}{(1-C_{\geq2}/x)^2}-1\right]}\\
   &\\
   &=\displaystyle\frac{1}{\sqrt{2\pi}}\cdot\displaystyle\frac{x^2}
   {\left(\displaystyle\frac{C_{\geq2}}{(1-C_{\geq2}/x)}\right)}\cdot  e^{\frac{-1}{2x}\left[\left(\frac{1}{(1-C_{\geq2}/x)}+x\right)^2-1\right]}.
\end{align*}

In other words,

\begin{equation}\label{alien2con}
    \left(\mathcal{A}_{\frac{1}{2}}^2C_{\geq2}\right)(x)=
    \displaystyle\frac{1}{\sqrt{2\pi}}\cdot\displaystyle\frac{x^2}
   {C_{\geq2}S}\cdot  e^{\frac{-1}{2x}\left[\left(S+x\right)^2-1\right]},
\end{equation}

where $S(x)=\displaystyle\frac{1}{\Big(1-\displaystyle\frac{C_{\geq2}(x)}{x}\Big)}$ is the generating series for sequences of $2$-connected chord diagrams counted by one less chord.

Finally it is noteworthy to see that the image $\mathcal{A}_{\frac{1}{2}}^2C_{\geq2}$ of $C_{\geq2}(x)$ under the alien derivative is of the form of a rational function of $C_{\geq2}$ times the exponential of a quadratic expression in the rational function. The same pattern has been observed in the case of connected chord diagrams. From another point of view, one can see that $xS(x)$ also counts connectivity-1 diagrams in which only the root chord is a cut. 

The evaluation of $\mathcal{A}_{\frac{1}{2}}^2C_{\geq2}$ will enable us to derive information about the asymptotic behaviour which strongly extend the result by Kleitman in \cite{kleit}.  First let us list the first few terms of the functions involved.

\begin{table}[h]
\center
\begin{tabular}{|c|c||c|c|c|c|c|c|c|c|}\hline
 &            &$x^0$ & $x$   & $x^2$  & $x^3$ & $x^4$ & $x^5$ & $x^6$ & $x^7$                           \\\hline\hline
1&$S(x)=1/{\Big(1-\displaystyle\frac{C_{\geq2}(x)}{x}\Big)}$     
              &   1  & 1     & 2      & 10    & 82    & 898   & 12018 & 
              \\\hline
2&$(S+x)^2$      
              &   1  & 4     & 8      & 28    & 208   & 2164  & 28056 & 
              \\\hline
3&$\displaystyle\frac{1}{2x}\left[(S+x)^2-1\right]$      
              &   2  & 4     & 14     & 104   & 1082  & 14028 &       & 
              \\\hline
4&${C_{\geq2}\cdot S}$ 
              &   0  & 0     & 1      & 2     & 10    & 82    & 898   & 12018
              \\\hline
5&$\displaystyle\frac{x^2}{C_{\geq2}\cdot S}$
              &   1  & -2    &-6      & -50   & -574  & -8082 &       &  
              \\\hline
6&$e^2\;\cdot\;\exp\big\{\displaystyle\frac{-1}{2x}\left[(S+x)^2-1\right]\big\}$      
              &   1  & -4    & -6     & $\frac{-176}{3}$   & $\frac{-2008}{3}$  & $\frac{-46636}{5}$  &       &   
              \\\hline
 \end{tabular}\caption{The first terms of the series involved in calculating $\mathcal{A}^2_{\frac{1}{2}}C_{\geq2}(x)$.  }
\label{table3}
\end{table}

Note  that we are willing to display the factor of $e^{-2}$ that comes from $\;\exp\big\{\displaystyle\frac{-1}{2x}\left[(S+x)^2-1\right]\big\}$ and that is why the last row in  Table \ref{table3} is multiplied by $e^2$.

The computation then gives
\begin{align}\label{asymptoticsC2}
    \left(\mathcal{A}_{\frac{1}{2}}^2C_{\geq2}\right)(x)
    &=\displaystyle\frac{1}{\sqrt{2\pi}}\cdot\displaystyle\frac{x^2}
   {C_{\geq2}S}\cdot  e^{\frac{-1}{2x}\left[\left(S+x\right)^2-1\right]} \nonumber \\
    &\nonumber\\
    &=\displaystyle\frac{e^{-2}}{\sqrt{2\pi}}\big[1-6x-4x^2-\displaystyle\frac{218}{3}x^3-
    890x^4-\displaystyle\frac{196838}{15}x^5-\cdots\big]
\end{align}

Now, by Definition \ref{fdps} of factorially divergent power series and Definition \ref{map} of the alien derivative $\mathcal{A}_{\frac{1}{2}}^{2}$, and since $\Gamma^2_{\frac{1}{2}}(n)=\sqrt{2\pi} (2n-1)!!$, we obtain that, for all $R\in\mathbb{N}_0$, the number $(C_{\geq2})_n$
of $2$-connected diagrams on $n$ chords satisfies
\begin{align}
    (C_{\geq2})_n
    &=\overset{R-1}{\underset{k=0}{\sum}} [x^k] \left(\mathcal{A}_{\frac{1}{2}}^2C_{\geq2}\right)(x) \cdot \Gamma^2_{\frac{1}{2}}(n-k)+\mathcal{O}(\Gamma^2_{\frac{1}{2}}(n-R))\nonumber\\
    &=\sqrt{2\pi}\;\overset{R-1}{\underset{k=0}{\sum}} [x^k] \left(\mathcal{A}_{\frac{1}{2}}^2C_{\geq2}\right)(x) \cdot (2(n-k)-1)!! +\mathcal{O}((2(n-R)-1)!!),\nonumber
\end{align}

and hence the first few terms in this asymptotic expansion are given by 

\begin{align}
    (C_{\geq2})_n 
    & = e^{-2}  \bigg((2n-1)!!-6(2n-3)!!-4(2n-5)!!-
    \displaystyle\frac{218}{3}(2n-7)!!- \nonumber\\
    &\; \qquad       -890(2n-9)!!-\displaystyle\frac{196838}{15}(2n-11)!!-\cdots\bigg)
    \nonumber\\
    &\;        \nonumber\\
    & = e^{-2} (2n-1)!!\bigg(1-\displaystyle\frac{6}{2n-1}-\displaystyle\frac{4}{(2n-3)(2n-1)}-\displaystyle\frac{218}{3}\displaystyle\frac{1}{(2n-5)(2n-3)(2n-1)}- \nonumber\\
    &\;   \qquad     -\displaystyle\frac{890}{(2n-7)(2n-5)(2n-3)(2n-1)}-\displaystyle\frac{196838}{15}\displaystyle\frac{1}{(2n-9)\cdots(2n-1)}-\cdots\bigg).\nonumber\\
    &\label{computC2asympt}
\end{align}

The result by Kleitman \cite{kleit} corresponds to the first term in this expansion. By the above approach, any precision can be achieved and an arbitrary number of terms can be produced. 

Equation \ref{computC2asympt} also shows that a randomly chosen chord diagram on $n$ chords is $2$-connected with a probability of \[\displaystyle\frac{1}{e^2}\Big(1-\displaystyle\frac{3}{n}\Big)+\mathcal{O}(1/{n^2}).\]

In the next section we will see that this expansion also corresponds to the asymptotics of the number of skeleton quenched QED vertex diagrams \cite{michiq}. In that context the first five terms of the above expansion were conjectured  by D. J. Broadhurst on a numerical evidence (see page 38 in \cite{michiq}) in studying zero-dimensional field theory.


\section{Connection with Zero-Dimensional QFT}\label{quenchedsec}

In the next two sections we will see that some of the integer sequences produced in studying $2$-connected chord diagrams appear in the context of zero-dimensional quantum field theory. Note that in this situation the path integral transforms into a series of graphs since no Feynman integral shall remain. On the level of Feynman rules, they will be represented as a character from $\mathcal{H}$ to the algebra $\mathbb{R}[[\hbar]]$ \cite{michi}. We managed to establish the relation between $2$-connected chord diagrams and some of the observables in \textit{quenched QED} and in Yukawa theory. In \cite{michiq} the asymptotics for these sequences are obtained through a \textit{singularity analysis} approach. We will be able to get the same asymptotics through an enumerative approach. Factorially divergent power series are, as expected, used in both approaches, and hence we will regularly appeal to theorems from Section \ref{factorially}. First we will briefly set-up the context in perturbation theory. In most parts we follow the notation in \cite{michiq}.

Recall the basic path integral formulation of QFT and notice that for   zero-dimensional QFT the path integral for the partition function becomes an ordinary integral given for example by 

\[Z(\hbar):=\int_\mathbb{R} \displaystyle\frac{1}{\sqrt{2\pi\hbar}} e^{\frac{1}{\hbar}\left(-\frac{x^2}{2a}+V(x)\right)}dx,\]
where $V(x)\in x^3\mathbb{R}[[x]]$ is the \textit{potential} and the exponent $-\frac{x^2}{2a}+V(x)$ is the \textit{action} and denoted by $\mathcal{S}$. As known, this integral generally has singularities, and even as a series expansion it generally have a singularity at zero. In \cite{michiq}, the expansion is treated as a formal power series and the focus is on studying the asymptotics of the coefficients.

Recall that Gaussian integrals satisfy \[\int_\mathbb{R} \displaystyle\frac{1}{\sqrt{2\pi\hbar}} e^{-\frac{x^2}{2a\hbar}} x^{2n}dx=\sqrt{a}(a\hbar)^n (2n-1)!!, \] were only the even powers are considered since the integral vanishes for odd powers. This enables us to work with a well-defined power series instead of the path integral (actually this is the path integral in dimension 0):

\begin{dfn}
For a general formal action $\mathcal{S}{(x)}=-\frac{x^2}{2a}+V(x)\in x^2\mathbb{R}[[x]]$ we define the corresponding perturbative partition function to be the power series in $\hbar$ given by

\[\mathcal{F}[\mathcal{S}(x)](\hbar)=\sqrt{a}\sum_{n=0}^\infty (a\hbar)^n(2n-1)!![x^{2n}] e^{\frac{1}{\hbar}V(x)}.\]
\end{dfn}

This is a well-defined power series in $\hbar$ since the coefficient $[x^{2n}] e^{\frac{1}{\hbar}V(x)}$ is a polynomial in $\hbar^{-1}$ of degree less than $n$ because $V\in x^3\mathbb{R}[[x]]$. Just as the path integral, this map also has a diagrammatic meaning in terms of Feynman diagrams \cite{cvi}:

\begin{prop}
If $\mathcal{S}(x)=-\frac{x^2}{2a}+\sum_{k=3}^\infty \frac{\lambda_k}{k!}x^k$ with $a>0$, then 
\[\mathcal{F}[\mathcal{S}(x)](\hbar)=\sqrt{a} \sum_\Gamma \hbar^{|E(\Gamma)|-|V(\Gamma)|}\;\; \displaystyle\frac{a^{|E(\Gamma)|}\Pi_{v\in V(\Gamma)} \lambda_{n_v}}{|\mathrm{Aut}\;\Gamma|},\]

where the sum runs over all multigraphs $\Gamma$ in which the valency $n_v$ of every vertex $v$ is at least $3$, and where $|E(\Gamma)|$, $|V(\Gamma)|$, and $|\mathrm{Aut}\;\Gamma|$ are the sizes of the edge set, the vertex set, and the automorphism group of $\Gamma$ respectively. 
\end{prop}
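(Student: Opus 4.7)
The plan is to expand both sides combinatorially and match them term by term. First, expand the exponential appearing in the definition of $\mathcal{F}$:
\begin{equation*}
e^{V(x)/\hbar} = \prod_{k\geq 3}\exp\!\left(\frac{\lambda_k}{k!\,\hbar}x^k\right) = \sum_{(m_k)_{k\geq 3}}\prod_{k\geq 3}\frac{1}{m_k!}\left(\frac{\lambda_k}{k!\,\hbar}\right)^{m_k}x^{k m_k},
\end{equation*}
where the sum runs over tuples $(m_k)$ of nonnegative integers with only finitely many nonzero terms. Extracting $[x^{2n}]$ restricts the sum to those tuples with $\sum_k k m_k = 2n$, so that
\begin{equation*}
(a\hbar)^n(2n-1)!!\,[x^{2n}]e^{V/\hbar} = (2n-1)!!\sum_{\sum k m_k = 2n}\prod_k\frac{1}{m_k!}\left(\frac{\lambda_k}{k!}\right)^{m_k}a^n\hbar^{n-\sum_k m_k}.
\end{equation*}

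Next, I would reinterpret $(2n-1)!!$ as the number of perfect matchings on a set of $2n$ labelled points (this is Wick's theorem in disguise). Given a tuple $(m_k)$, picture $m_k$ distinguishable ``vertex-slots'' of valency $k$ for each $k$; attach $k$ distinguishable half-edges to each, yielding $2n$ labelled half-edges in total. Every perfect matching of these half-edges then produces a labelled multigraph whose vertex- and edge- structure conforms to the tuple $(m_k)$. Thus
\begin{equation*}
(2n-1)!!\prod_k\frac{1}{m_k!(k!)^{m_k}} = \sum_{\tilde\Gamma}\frac{1}{m_k!(k!)^{m_k}}
\end{equation*}
where the sum runs over all labelled multigraphs $\tilde\Gamma$ with prescribed valency profile $(m_k)$, each obtained as (vertex-slot assignment, half-edge labelling, matching).

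The remaining step is an orbit-stabilizer argument. For a fixed isomorphism class $\Gamma$ with $m_k$ vertices of valency $k$, the group $\prod_k(S_k\wr S_{m_k})$ acts on the set of labellings of $\Gamma$ (permuting the $m_k$ vertices within each valency class and the $k$ half-edges at each vertex), and the stabilizer of any labelling is precisely $\mathrm{Aut}(\Gamma)$. Hence the number of labelled representatives of $\Gamma$ equals
\begin{equation*}
\frac{\prod_k m_k!\,(k!)^{m_k}}{|\mathrm{Aut}(\Gamma)|}.
\end{equation*}
Substituting this back and using $n=|E(\Gamma)|$ together with $\sum_k m_k = |V(\Gamma)|$, the factors of $m_k!$ and $(k!)^{m_k}$ cancel cleanly, leaving
\begin{equation*}
(a\hbar)^n(2n-1)!!\,[x^{2n}]e^{V/\hbar} = \sum_{\substack{\Gamma\\ |E(\Gamma)|=n}}\frac{a^{|E(\Gamma)|}\hbar^{|E(\Gamma)|-|V(\Gamma)|}\prod_{v}\lambda_{n_v}}{|\mathrm{Aut}(\Gamma)|},
\end{equation*}
and summing over $n$ together with the prefactor $\sqrt{a}$ yields the claimed expression.

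The routine part is expanding the exponential and extracting the coefficient; the real content — and the step I expect to require the most care — is the orbit-stabilizer bookkeeping that converts the sum over labelled half-edge matchings into a sum over isomorphism classes of multigraphs, correctly producing the $1/|\mathrm{Aut}(\Gamma)|$ factor by absorbing exactly the $\prod_k m_k!(k!)^{m_k}$ denominators introduced earlier. Once the symmetries of vertex-slots and half-edges are identified with the natural action whose stabilizer is $\mathrm{Aut}(\Gamma)$, the identity falls out automatically.
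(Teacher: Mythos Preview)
Your argument is correct and is the standard Wick-contraction/orbit-stabilizer derivation of the Feynman diagram expansion. Note, however, that the paper itself does not supply a proof of this proposition: it is stated with a reference to \cite{cvi} and then immediately used. So there is no ``paper's own proof'' to compare against; what you have written is exactly the classical proof one would find in the cited source, and the bookkeeping (identifying $n=|E(\Gamma)|$, $\sum_k m_k=|V(\Gamma)|$, and the stabilizer of a half-edge labelling with $\mathrm{Aut}(\Gamma)$) is handled correctly.
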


So, in terms of Feynman diagrams, to compute the $n^\text{th}$ coefficient of $\mathcal{F}[\mathcal{S}(x)]$ we do the following 

\begin{enumerate}
    \item Draw all multigraphs $\Gamma$ with $|E(\Gamma)|-|V(\Gamma)|=n$. Note that this is one less than the loop number, it is the number of independent cycles in the graph (remember that independent cycles can be obtained by starting with a spanning tree and adding one edge at a time). The loop number is also known as the \textit{Betti number} of the graph. The number $|E(\Gamma)|-|V(\Gamma)|$ will be referred to as the \textit{excess} of $\Gamma$.
    
    \item Each vertex contributes with a factor that corresponds to its valency, that's how we get $\Pi_{v\in V(\Gamma)} \lambda_{n_v}$. Then we multiply with the factor $a^{|E(\Gamma)|}$. This process simply corresponds to the Feynman rules. The map that applies the Feynman rules will be denoted by $\phi_\mathcal{S}$.
    
    \item Divide by the size of the automorphism group of the graph. 
    \item Finally sum up all the contributions and multiply by $\sqrt{a}$.
\end{enumerate}

\begin{exm}
As an example, the action for $\varphi^3$-theory takes the form $\mathcal{S}(x)=-\frac{x^2}{2}+\frac{x^3}{3!}$. In that case 

\[Z^{\varphi^3}(\hbar)=\sum_{n=0}^\infty \hbar^n(2n-1)!! [x^{2n}]e^{\frac{x^3}{3!\hbar}}=\sum_{n=0}^\infty\displaystyle\frac{(6n-1)!!}{(3!)^{2n}(2n)!} \hbar^n.\]

In terms of Feynman diagrams only $3$-regular graphs will show up in $\varphi^3$-theory, hence we have 
\begin{align*}
Z^{\varphi^3}(\hbar)=\phi_{\mathcal{S}}&\Big(1+
\displaystyle\frac{1}{8}\;\raisebox{-0.2cm}{\includegraphics[scale=0.3]{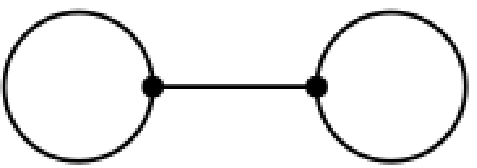}}+
\displaystyle\frac{1}{12}\;\raisebox{-0.2cm}{\includegraphics[scale=0.3]{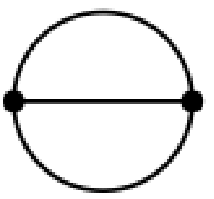}}+
\displaystyle\frac{1}{128}\;\raisebox{-0.2cm}{\includegraphics[scale=0.3]{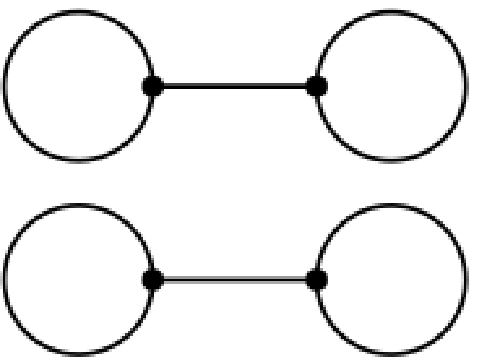}}+
\displaystyle\frac{1}{288}\;\raisebox{-0.2cm}{\includegraphics[scale=0.3]{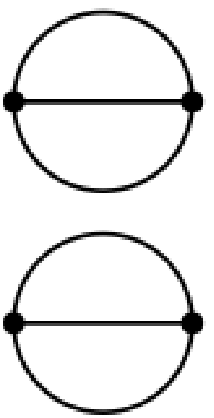}}+
\displaystyle\frac{1}{96}\;\raisebox{-0.2cm}{\includegraphics[scale=0.3]{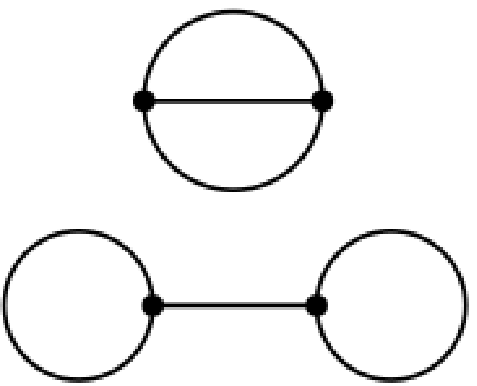}}+\\
&+\displaystyle\frac{1}{48}\;\raisebox{-0.2cm}{\includegraphics[scale=0.32]{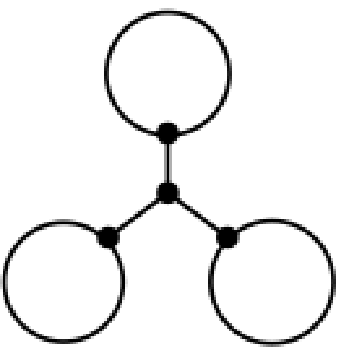}}+
\displaystyle\frac{1}{16}\;\raisebox{-0.2cm}{\includegraphics[scale=0.3]{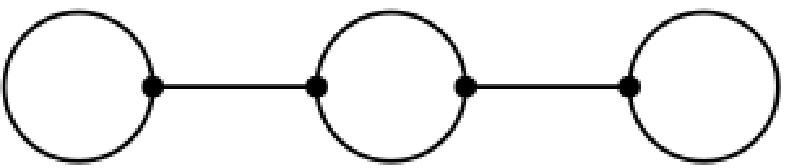}}+
\displaystyle\frac{1}{16}\;\raisebox{-0.2cm}{\includegraphics[scale=0.32]{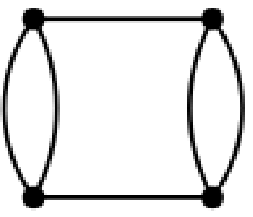}}+
\displaystyle\frac{1}{8}\;\raisebox{-0.2cm}{\includegraphics[scale=0.32]{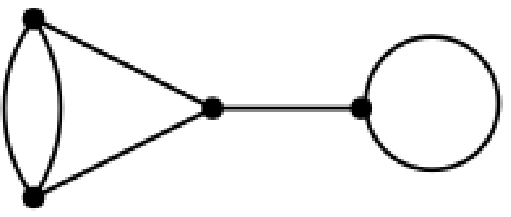}}+\\
&\\
&\displaystyle\frac{1}{24}\;\raisebox{-0.2cm}{\includegraphics[scale=0.32]{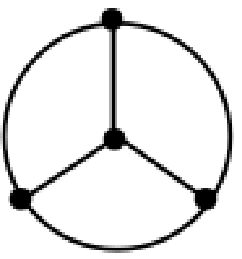}}+
\cdots\Big). \end{align*}

Then applying the Feynman rules  $\phi_\mathcal{S}$ does not change the coefficients in the sum since the contribution of any vertex is $1$ according to the described potential. Adding up the terms with the same loop number then gives 

\[Z^{\varphi^3}(\hbar)=1+\frac{5}{24}\hbar+\frac{385}{1152}\hbar^2+\cdots, \] which agrees with the first algebraic calculation.
\end{exm}

\subsection{Zero-Dimensional Scalar Theories with Interaction}

We will study such expansions that arise in QED theories, namely we shall consider quenched QED and Yukawa theory. These are examples of theories with interaction. It is impossible to completely cover the underlying physics, nevertheless we should be able to understand as much as needed for our purposes by anticipating the interrelations between the different entities defined. 

In the presence of interaction in the theory, the partition function takes the form 

\[Z(\hbar,j):=\int_\mathbb{R} \displaystyle\frac{1}{\sqrt{2\pi\hbar}} e^{\frac{1}{\hbar}\left(-\frac{x^2}{2}+V(x)+xj\right)}dx,\] 
where an additional term is added to the potential, namely $xj$, $j$ is called the \textit{source}.

With this extra term we can not directly expand the integral as we did before, but we can still achieve the same essence after a change of variables. Shift $x$ to $x+x_o$ where $x_o(j)$ is the unique power series solution to $x_o(j)=V'(x_o(j))+j$.  Then we get 

\begin{align*}
    Z(\hbar,j)&=\int_\mathbb{R} \displaystyle\frac{1}{\sqrt{2\pi\hbar}} e^{\frac{1}{\hbar}\left(-\frac{(x+x_o)^2}{2}+V(x+x_o)+(x+x_o)j\right)}dx\\
              &\\
              &=e^{\frac{1}{\hbar}\left(-\frac{x_o^2}{2}+V(x_o)+x_oj\right)}
              \int_\mathbb{R} \displaystyle\frac{1}{\sqrt{2\pi\hbar}} e^{\frac{1}{\hbar}\left(-\frac{x^2}{2}+V(x+x_o)-V(x_o)-xV'(x_o)\right)}dx\\
              &\\
              &=e^{\frac{1}{\hbar}\left(-\frac{x_o^2}{2}+V(x_o)+x_oj\right)}
              \mathcal{F}\left[-\displaystyle\frac{x^2}{2}+V(x+x_o)-V(x_o)-xV'(x_o)\right](\hbar).
\end{align*}

The exponential factor enumerates forests (collections of trees) with the corresponding conditions on vertices, these diagrams are referred to as the \textit{tree-level} diagrams. Tree-level diagrams contribute with negative powers of $\hbar$, and therefore we are going to isolate them so that the treatment for the main expansion remains clear. Remember that  Feynman diagrams are labeled, and so in order to restrict ourselves to connected diagrams we have to take the logarithm of the partition function:

\begin{align*}
    W(\hbar,j):&=\hbar \log 
    Z(\hbar,j)\\
    &=-\frac{x_o^2}{2}+V(x_o)+x_oj+\hbar \log\mathcal{F}\left[-\displaystyle\frac{x^2}{2}+
    V(x+x_o)-V(x_o)-xV'(x_o)\right](\hbar)
\end{align*}

generates all connected diagrams and is called the \textit{free energy}. Note that the extra $\hbar$ factor causes the powers to express the number of loops instead of the excess. Again we are using the notation in \cite{michiq} since we are eventually going to compare to parts of the work.

As customary in QFT, to move to the \textit{quantum effective action} $G$, which generates \textit{1PI} diagrams, one takes the Legendre transform of $W$:

\begin{align} \label{propergreenfnmichi}
    G(\hbar,\varphi_c):=W-j \varphi_c, 
\end{align}
where $\varphi_c:=\partial_jW$. The coefficients $[\varphi_c^n]G$ are called the \textit{(proper) Green functions} of the theory. 
Recall that from a graph theoretic point-of-view, being \textit{1PI} (1-particle irreducible)  is merely another way of saying $2$-connected. Thus, combinatorially, the Legendre transform, as in \cite{kjm2}, is seen to be the transportation from connected diagrams to $2$-connected or \textit{1PI} diagrams. In that sense, the order of the derivative  $\partial^n_{\varphi_c}G|_{\varphi_c=0}=[\varphi_c^n]G$ determines the number of external legs.

In the next part of the discussion we shall need the following physical jargon and terminology:

\begin{enumerate}
    \item The Green function $[\varphi_c^1]G=\partial_{\varphi_c}G|_{\varphi_c=0}$ generates all \textit{1PI} diagrams with exactly one external leg, which are called the \textit{tadpoles} of the theory (Figure \ref{tp}).
    
    \begin{figure}[h]
    \centering
    \includegraphics[scale=0.56]{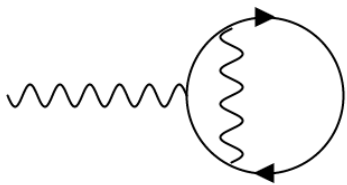}
    \caption{A tadpole diagram in QED}
    \label{tp}\end{figure}
    
    \item The Green function $[\varphi^2_c]G=\partial^2_{\varphi_c}G|_{\varphi_c=0}$ generates all \textit{1PI} diagrams with two external legs. Such a diagram is called a \textit{1PI propagator} (can replace an edge in the theory). 

    \begin{figure}[h]
    \centering
    \includegraphics[scale=0.56]{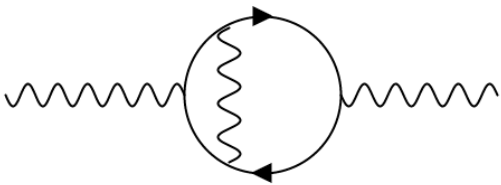}
    \caption{A propagator diagram}
    \label{propagator}
    \end{figure}
 
    \item For $n>2$, $\partial^n_{\varphi_c}G|_{\varphi_c=0}=[\varphi_c^n]G$ is called the \textit{$n$-point function}.
\end{enumerate}

In quenched QED, some of the quantities that we are going to compare their expansions with the generating series of $2$-connected chord diagrams are the \textit{renormalized} Green functions with respect to a chosen \textit{residue}. We shall therefore recall from Section \ref{hopfalg1PIsection} the basics of the Hopf-algebraic treatment of renormalization in the next section before proceeding into the real calculations. For more about this topic the reader can consult \cite{manchonhopf}, or the original paper by D. Kreimer and A. Connes \cite{conneskreimer}.


\subsection{Hopf-algebraic Renormalization Revisited}

 Recall from Section \ref{hopfalg1PIsection} that, for a given QFT, the superficially divergent 1PI Feynman graphs form a Hopf algebra $\mathcal{H}$. The product was defined to be the disjoint union, and the coproduct of a connected Feynman graph $\Gamma$ was defined according to Definition \ref{coprod}:
 
\[\Delta(\Gamma)=\underset{1PI \;\text{subgraphs}}{\underset{\gamma\;\text{product of divergent }}{\underset{\gamma\subseteq\Gamma}{\sum}}}\gamma\otimes\Gamma/\gamma,\]

and extended as an algebra morphism. The unit, counit, and antipode were denoted by $\mathbb{I}$, $\hat{\mathbb{I}}$, and $S$.

In Section \ref{DSE} we saw how to write the Dyson-Schwinger equations in terms of the elements $X^r$, where $X^r$ was defined as 

\[X^r=1\pm \underset{\text{with residue}\;r }{\underset{\text{1PI graphs}\;\Gamma}{\sum}}\displaystyle\frac{1}{\text{Sym}\; \Gamma}\;\Gamma,\]

where the negative sign is assumed only when $r$ is edge-type. Recall also that if we use insertions in case of a theory with a single vertex type we have equation (\ref{generaldys})
\[ X^r=1\pm \sum_k B_+^{\gamma_{r,k}}(X^rQ^k),\]

where the sum is over all primitive 1PI diagrams with loop number $k$ and residue $r$, and where $Q$ is the invariant charge as defined in Section \ref{invariantch}.

The identity 

\begin{equation}\label{most}
    \Delta X^r=\sum_{L=0}^\infty\left. X^r Q^L\otimes X^r\right|_L,
\end{equation}

is of most importance in the context of renormalization \cite{kreimerB}. The
 $|_L$, as used in \cite{michiq}, is the restriction of the sum to graphs with loop number $L$.

We have seen in Section \ref{hopfalg1PIsection} that the Feynman rules are simply characters from $\mathcal{H}$ to a commutative algebra $A$. For zero-dimensional field theories the Feynman rules will be $\phi:\mathcal{H}\longrightarrow\mathbb{R}[[\hbar]]$:
\begin{equation}\label{FFFF}
    \phi\{\Gamma\}(\hbar)=\hbar^{\ell(\Gamma)},
\end{equation}
where we follow the notation in \cite{michiq} for putting the arguments from $\mathcal{H}$ in curly brackets. 

In that case, the Green functions, or the generating function of $1PI$ Feynman graphs with residue $r$ are defined as 

\begin{equation}\label{g^r}
    g^r(\hbar):=\phi\{X^r\}(\hbar)=1\pm \underset{\text{with residue}\;r }{\underset{\text{1PI graphs}\;\Gamma}{\sum}}\displaystyle\frac{\hbar^{\ell(\Gamma)}}{\text{Sym}\; \Gamma}, 
\end{equation}
where $\ell(\Gamma)$ is the loop number as before. If residue $r$ is the $k$ external legs residue, then $g^r=\partial^k_{\varphi_c}G|_{\varphi_c=0}$ , the $k$th derivative of the quantum effective action. 

In our case of zero-dimensional QFT, the fact that the target algebra for the Feynman rules is $\mathbb{R}[[\hbar]]$  limits the choice for a Rota-Baxter operator\footnote{Remember that a Rota-Baxter operator $R$  is used in the renormalization scheme to extract (in terms of an induced Birkhoff decomposition) the divergent part of the integral. See Section \ref{hopfalg1PIsection}.}
$R:\mathbb{R}[[\hbar]]\longrightarrow\mathbb{R}[[\hbar]]$ that respects the grading of $\mathcal{H}$. The only choice for a meaningful renormalization scheme in this case is $R=\text{id}$ (see \cite{michiq}).

Thus, by our definitions in Section \ref{hopfalg1PIsection} (equation \ref{S_R}), the counterterm map for the renormalization scheme $R=\text{id}$ is given by

\[S^\phi=R\circ\phi\circ S=\phi\circ S.\] Then the renormalized Feynman rules is 

\begin{align}\label{tobeusednow}
\phi_{\text{ren}}:=S_R^\phi\ast\phi=S^\phi\ast\phi=(\phi\circ S)\ast\phi,   
\end{align} 
where $\ast$ is the convolution product (Definition \ref{convo}).
However, the action of the last expression on an arbitrary element of $\mathcal{H}$ is:

\begin{align*}\label{tobeusednow}
((\phi\circ S)\ast\phi)(\Gamma)
&=m\circ((\phi\circ S)\otimes\phi)\circ\Delta(\Gamma)&\\
&=\underset{1PI \;\text{subgraphs}}{\underset{\gamma\;\text{product of divergent }}{\underset{\gamma\subseteq\Gamma}{\sum}}} \phi(S(\gamma))\phi(\Gamma/\gamma)&\\
&&\\
&=\phi\Big(\sum S(\gamma)(\Gamma/\gamma)\Big)&
 \text{(since $\phi$ is a character)}\\
&=\phi(\mathbb{I}(\hat{\mathbb{I}}(\Gamma))& \text{(by definition of the antipode $S$),}
\end{align*}
which is zero for all nonempty elements in $\mathcal{H}$ since $\mathbb{I}\circ\hat{\mathbb{I}}$ maps all elements in $\mathcal{H}$ to zero except for the empty graph, which is mapped to itself.

Thus, equation (\ref{tobeusednow}) becomes
\begin{equation}\label{renphi}
    \phi_{\text{ren}}=\mathbb{I}\circ\hat{\mathbb{I}},
\end{equation}
and whence the renormalized Green functions are
\begin{equation}\label{reng}
    g^r_{\text{ren}}=\phi_{\text{ren}}\{X^r\}(\hbar)=1\pm0=1.
\end{equation}

Note that in $\cite{michiq}$ the signs are different since, as mentioned earlier, $X^r$ in their convention is $-1$ times ours.
 Finally, what we will care for the most are the counterterms:
\begin{equation}\label{countertermsz_r}
    z_r:=S^\phi\{X^r\}.
\end{equation}
Notice that since $\mathcal{H}$ is commutative, the definition of the convolution product together with equation (\ref{most}) now yield
\begin{align}
    1
    &=\phi_{\text{ren}}\{X^r\}(\hbar)\\\nonumber
    &=(m\circ(S^\phi\otimes\phi)\circ \Delta X^r)(\hbar)\\\nonumber
    &=(m\circ(\phi\otimes S^\phi)\circ \Delta X^r)(\hbar)\\\nonumber
    &=(m\circ(\phi\otimes S^\phi)\circ (\sum_{L=0}^\infty\left. X^r Q^L\otimes X^r\right|_L))(\hbar)\\\nonumber
    &=\sum_{L=0}^\infty \phi\{X^r\}(\hbar)\;\;(\phi\{Q\}(\hbar))^L\;\;[\hbar^L] S^\phi\{X^r\}(\hbar)\\\nonumber
    &= \phi\{X^r\}(\hbar)\;\;\;S^\phi\{X^r\}(\hbar \phi\{Q\}(\hbar))\\\nonumber
    &=g^r(\hbar) \;z_r(\hbar \alpha(\hbar)).
\end{align}

Let $\hbar(y)$ be the unique power series solution of $y=\hbar(y)\alpha(\hbar(y))$. In $\cite{michiq}$, $y$ is called the \textit{renormalized expansion parameter} and is denoted by $\hbar_{\text{ren}}$. Then, substituting in equation  we get

\begin{equation}
    z_r(\hbar_{\text{ren}})=\displaystyle\frac{1}{g^r(\hbar(\hbar_{\text{ren}}))}.\end{equation}

The following result was proven by M. Borinsky in \cite{michilattice}, and we shall depend on it in the combinatorial treatment in the next section.

\begin{thm}[\cite{michilattice}]\label{theorem important zr}
In a theory with a cubic vertex-type, the numeric coefficients in $z_r(\hbar_{\text{ren}})$ count the number of primitive diagrams if $r$ is vertex-type.
\end{thm}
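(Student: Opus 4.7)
The plan is to apply the zero-dimensional Feynman rules $\phi$ directly to the combinatorial Dyson--Schwinger equation (\ref{generaldys}) and then invert the resulting scalar equation. Since the theory has a single cubic vertex type, the DSE can be written in the form $X^r = \mathbb{I} + \sum_{L \geq 1} B_+^{r,L}(X^r Q^L)$, where $B_+^{r,L}$ denotes the sum over all primitive 1PI graphs with residue $r$ and loop number $L$; by Remark \ref{B+cocycle} this summed operator is a Hochschild $1$-cocycle, which will be important for tracking symmetry factors cleanly. Applying $\phi$ to both sides, using (\ref{FFFF}) together with the additivity $\ell(B_+^\gamma(Y)) = \ell(\gamma) + \ell(Y)$, the aim is to derive the scalar identity
\[ g^r(\hbar) \;=\; 1 + g^r(\hbar)\, p_r\bigl(\hbar\, \alpha(\hbar)\bigr), \]
where $p_r(\hbar) = \sum_{\gamma \text{ primitive, res } r} \hbar^{\ell(\gamma)}/\mathrm{Sym}(\gamma)$ is the generating series of primitive 1PI diagrams with residue $r$, and $\alpha(\hbar) = \phi(Q)$.

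The step I expect to be the main obstacle is justifying the factorization $\phi\bigl(B_+^{r,L}(X^r Q^L)\bigr) = p_{r,L}\, \hbar^L\, g^r(\hbar)\, \alpha(\hbar)^L$. Combinatorially, this amounts to asserting that every 1PI vertex-type diagram admits a unique decomposition, with matching symmetry factors, as a primitive skeleton of loop number $L$ with smaller 1PI subdiagrams inserted at each of its vertex-type and edge-type insertion places. The cubic-vertex hypothesis enters exactly here: with a single $3$-valent vertex type, the invariant charge (\ref{qgeneral}) takes the clean form that pairs the count of insertion places at each half-edge type of a primitive with the corresponding factor of $X^r$ (or $1/X^{\text{edge}}$) appearing in $Q^L$, so that the product $X^r Q^L$ matches the insertion data of a loop-$L$ primitive term for term. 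I would make this precise by induction on the loop number, using the $1$-cocycle property of $B_+^{r,L}$ to peel off the outermost primitive layer of a given 1PI graph and then invoking the inductive hypothesis on the inserted subgraphs.

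Once the functional equation is in hand, the conclusion follows by pure algebra. Solving for $g^r$ yields $1/g^r(\hbar) = 1 - p_r(\hbar\, \alpha(\hbar)) = 1 - p_r(\hbar_{\mathrm{ren}})$, and substituting $\hbar = \hbar(\hbar_{\mathrm{ren}})$ as defined just before the theorem gives
\[ z_r(\hbar_{\mathrm{ren}}) \;=\; 1 - p_r(\hbar_{\mathrm{ren}}). \]
Reading off the coefficient of $\hbar_{\mathrm{ren}}^L$ for $L \geq 1$ recovers, up to the overall sign fixed by the $\pm$ convention of (\ref{g^r}), the weighted number of primitive 1PI diagrams with residue $r$ and loop number $L$, which is the content of the theorem.
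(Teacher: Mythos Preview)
The paper does not provide its own proof of this theorem; it is cited from \cite{michilattice} (Borinsky's work on the lattice structure of subdivergences) and used as a black box for the quenched QED and Yukawa applications. So there is no proof in the paper to compare against.

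Your overall strategy is sound and arrives at the correct functional equation $z_r(\hbar_{\mathrm{ren}}) = 1 - p_r(\hbar_{\mathrm{ren}})$. The step you flag as the obstacle is indeed the crux, but the tool you propose for closing it is not quite right: the Hochschild $1$-cocycle identity describes how $B_+$ interacts with the \emph{coproduct} $\Delta$, not with the character $\phi$, so inducting on the cocycle property does not directly give the factorization you need. What you actually need is more elementary. Since $\phi(\Gamma)=\hbar^{\ell(\Gamma)}$ depends only on loop number, and loop number is additive under insertion, $\phi\circ B_+^{\gamma}$ sends any loop-homogeneous element $Y$ to $\hbar^{\ell(\gamma)}$ times the total coefficient-sum of $B_+^{\gamma}(Y)$. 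The content of the factorization is then that these coefficient-sums, summed over primitives of fixed loop number, match $p_{r,L}$ times the coefficient-sum of $Y$; this is precisely the statement that the combinatorial DSE, evaluated at the zero-dimensional Feynman rules, reproduces $g^r$ --- which is how $B_+^\gamma$ is engineered in \cite{kreimerB} in the first place. Note also that your phrase ``unique decomposition'' is too strong: the factor $1/\mathrm{maxf}(G)$ in the definition of $B_+^\Gamma$ is there exactly because a $1$PI graph may sit over several primitive skeletons. What holds is the weighted version, with symmetry factors, and that is all you need.

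One small point at the end: your argument yields the $1/\mathrm{Sym}$-weighted count of primitives, whereas the theorem as stated says ``count the number.'' In the paper's applications (quenched QED, Yukawa) all vertex-type $1$PI diagrams have trivial automorphism group because of the directed fermion lines, so the two agree; for a general cubic theory such as $\varphi^3$ what your argument actually proves is the weighted version.
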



\subsection{QED Theories, Quenched QED, and Yukawa Theory}

The two theories that we are concerned with here are quenched QED and Yukawa theory, which are examples of QED-type theories. In these theories we have two particles: fermion and boson (wiggly and dashed edges) particles, and we have only three-valent vertices of the type fermion-fermion-boson. We will compute the asymptotics of $\;z_{\phi_c|\psi_c|^2}(\hbar_{\text{ren}})\;$ in quenched QED, as well as the asymptotics of the green functions $\left.\partial^i_{\phi_c}(\partial_{\psi_c}\partial_{\bar{\psi}_c})^j\;G^{\text{Yuk}}\right|_{\phi_c=\psi_c=0}$. Our approach is completely combinatorial and depends on establishing bijections between the diagrams in the combinatorial interpretation of the considered series and different classes of chord diagrams. Unlike the approach applied in \cite{michiq}, we do not need to refer to singularity analysis nor the representation of $\mathcal{S}(x)$ by affine hyperelliptic curves.

\subsection{The Partition Function}\label{partitionfunctionsection}
The partition function takes the form 
\begin{equation}
 Z(\hbar,j,\eta)=\int_\mathbb{R} \displaystyle\frac{1}{\sqrt{2\pi\hbar}}\; e^{\frac{1}{\hbar}\left(-\frac{x^2}{2}+jx+\frac{|\eta|^2}{1-x}+\hbar \log \frac{1}{1-x}\right)}dx .\label{Yu}\end{equation}

We are not going to discuss the physical reasoning behind the above expression, the reader may refer to QFT books or surveys for more details, e.g. see \cite{michi}. We only hint that, combinatorially, $\hbar \log \frac{1}{1-x}$ generates fermion loops, while $\frac{|\eta|^2}{1-x}$ generates a fermion propagator. The special examples of Yukawa theory and quenched QED will be as follows:

\begin{enumerate}
    \item Quenched QED is an approximation of QED where fermion loops are not present. So that the term $\hbar \log \frac{1}{1-x}$ does not appear in the partition function. Thus, the partition function for quenched QED is given by 

\[Z^{QQED}(\hbar,j,\eta)=\int_\mathbb{R} \displaystyle\frac{1}{\sqrt{2\pi\hbar}}\; e^{\frac{1}{\hbar}\left(-\frac{x^2}{2}+jx+\frac{|\eta|^2}{1-x}\right)}dx.\]
    
    \item For zero-dimensional Yukawa theory the partition function is just the integral in equation (\ref{Yu}). That is, the partition function for zero-dimensional Yukawa theory is given by 

\[Z^{Yuk}(\hbar,j,\eta)=\int_\mathbb{R} \displaystyle\frac{1}{\sqrt{2\pi\hbar}}\; e^{\frac{1}{\hbar}\left(-\frac{x^2}{2}+jx+\frac{|\eta|^2}{1-x}+\hbar \log\frac{1}{1-x}\right)}dx.\]
\end{enumerate}

\section{Quenched QED}

For this theory we are interested in the asymptotics of the counterterm $\;z_{\phi_c|\psi_c|^2}(\hbar_{\text{ren}})\;$ obtained in \cite{michiq} (page 38). By Theorem \ref{theorem important zr}, since QQED has only one type of vertices which is three-valent, this series enumerates the number of primitive quenched QED diagrams with vertex-type residue (see sequence \href{https://oeis.org/A049464}{A049464} of the OEIS for the first entries).

Thus, this is the same as counting the number of all diagrams $\gamma$ with the following specifications:
\begin{enumerate}
    \item two types of edges, fermion and boson (photon) edges, represented as \raisebox{-0cm}{\includegraphics[scale=0.68]{Figures/fermionedge.eps}} and \raisebox{-0.23cm}{\includegraphics[scale=0.3]{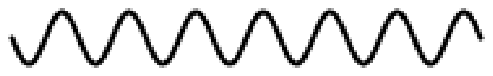}},  respectively;
    \item only three-valent vertices with the structure \raisebox{-0.64cm}{\includegraphics[scale=0.35]{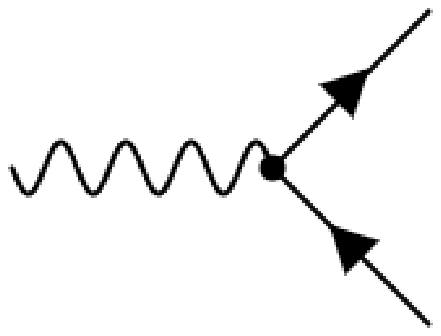}}, with one fermion in, one fermion out, and one photon;
    
    \item no fermion loops;
    \item the residue $\text{res} (\gamma)$ is vertex-type; and
    \item $\gamma$ is $1PI$ primitive, in other words it is edge-connected and contains no subdivergences (Definition \ref{primitivediags} and Definition \ref{subdiv}).
\end{enumerate}
We let $\mathcal{Q}$ be the class of all such diagrams.

\begin{exm}\label{exampleQQED diag}
The following diagram in Figure \ref{fig:my_label} is in $\mathcal{Q}$, whereas the diagrams in Figure \ref{notqqeddiags} are not in $\mathcal{Q}$.
\begin{figure}[h]
    \centering
    \includegraphics[scale=0.43]{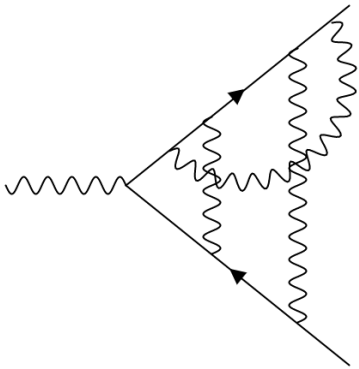}
    \caption{An example of a quenched QED diagram in $\mathcal{Q}$}
    \label{fig:my_label}
\end{figure}

\begin{figure}[h]
    \centering
    \includegraphics[scale=0.45]{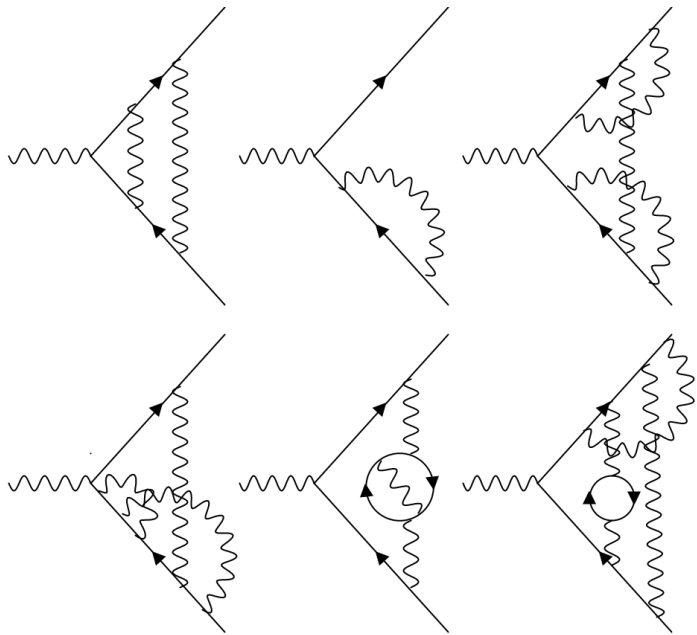}
    \caption{Diagrams not in $\mathcal{Q}$.}
    \label{notqqeddiags}
\end{figure}

In Figure \ref{notqqeddiags}, the second diagram is not 1PI, the rest of the first four diagrams are all 1PI, but they are not  primitive. The last two diagrams are not in $\mathcal{Q}$ and are not even quenched as they contain fermion loops. 

\end{exm}

\begin{thm}\label{myresultinquenched}
The generating series $\;z_{\phi_c|\psi_c|^2}(\hbar_{\text{ren}})\;$ and $\;z_{|\psi_c|^2}(\hbar_{\text{ren}})\;$ count $2$-connected chord diagrams. More precisely, $$[\hbar_{\text{ren}}^{n-1}]\;z_{\phi_c|\psi_c|^2}(\hbar_{\text{ren}})=
[\hbar_{\text{ren}}^{n}]\;z_{|\psi_c|^2}(\hbar_{\text{ren}})=[x^n]\;C_{\geq2}(x).$$
\end{thm}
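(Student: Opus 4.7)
The plan is to reduce the statement, via Borinsky's Theorem \ref{theorem important zr}, to a purely combinatorial bijection between primitive 1PI quenched-QED diagrams and the 2-connected rooted chord diagrams. First, since quenched QED has a single cubic vertex type (fermion--fermion--photon), Theorem \ref{theorem important zr} identifies the coefficient of $\hbar_{\text{ren}}^{n-1}$ in $z_{\phi_c|\psi_c|^2}(\hbar_{\text{ren}})$ with the number of primitive diagrams in the class $\mathcal{Q}$, graded by loop number through the invariant-charge substitution $\hbar = \hbar(\hbar_{\text{ren}})$; an analogous statement identifies the coefficients of $z_{|\psi_c|^2}$ with primitive fermion self-energy diagrams. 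Next I would analyse the skeletal structure of a primitive $\gamma\in\mathcal{Q}$: because fermion loops are forbidden and the residue has one incoming and one outgoing external fermion, the fermion half-edges of $\gamma$ form a single directed path from the incoming external leg through $2k+1$ internal vertices to the outgoing external leg, one of these vertices carrying the external photon while the other $2k$ are matched into $k$ internal photon edges, giving loop number $k$. A primitive fermion-self-energy diagram has the same anatomy but with $V=2k$ internal vertices, no external photon, and $k$ internal photon edges.

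The second step is to define an explicit map $\Psi$ from primitive QQED vertex diagrams with $k$ loops to rooted chord diagrams on $k+2$ chords. One lays out the $2k+1$ internal vertices along a line in the order in which they appear on the fermion path, and adjoins the two external fermion endpoints together with an auxiliary root endpoint (which carries the external photon's dangling half-edge) in a carefully chosen interleaving; the $k+2$ chords are then the $k$ internal photon pairs, a chord joining the external photon's vertex to the auxiliary root endpoint, and a chord joining the two external fermion endpoints. The interleaving has to be chosen so that the two ``spine'' chords (external photon and external fermion) cross each other and so that the resulting rooted diagram has a canonical root; I expect that placing the auxiliary root strictly between the two external fermion endpoints achieves both properties. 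For the fermion-self-energy residue the analogous construction omits the external photon chord and produces $k+1$ chords, which is precisely the source of the index shift $[\hbar_{\text{ren}}^{n-1}]z_{\phi_c|\psi_c|^2} = [\hbar_{\text{ren}}^{n}]z_{|\psi_c|^2}$ asserted in the theorem.

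The hardest part of the proof will be to verify that $\Psi$ converts the 1PI property of $\gamma$ into the connectedness of $\Psi(\gamma)$ and, more delicately, primitivity of $\gamma$ into \emph{2-connectedness} of $\Psi(\gamma)$. The first correspondence is routine: every internal fermion edge of $\gamma$ is a potential bridge, and its being straddled by a photon chord is exactly the condition that no gap in the linear order of $\Psi(\gamma)$ be empty, i.e., that the chord diagram be connected. The second correspondence requires classifying the possible proper divergent sub-1PI's of $\gamma$; because fermion loops are absent, the only possible sub-residue types in QQED are vertex and fermion self-energy, and each of these should translate on the chord-diagram side into one specific kind of single-chord cut. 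Setting up this dictionary in both directions, and checking that the two translations are inverse to each other, is the crux of the argument. Should a direct bijective proof prove too intricate, an alternative is to derive from the Dyson--Schwinger system of quenched QED a functional equation for $z_{\phi_c|\psi_c|^2}(\hbar_{\text{ren}})$ and to match it against equation~(\ref{c2con}) of Proposition~\ref{myproposition2connected} via the invariant-charge substitution, concluding the identification of the two series by uniqueness of the solution.
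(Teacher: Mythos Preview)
Your overall strategy is the paper's: linearise the fermion spine, turn photon edges into chords, and translate ``primitive'' into ``2-connected''. But the specific construction you propose is off in a way that breaks the index count. You introduce an extra ``spine'' chord joining the two external fermion endpoints, giving a diagram on $k+2$ chords when the loop number is $k$. The paper does something simpler and correct: the $2k+1$ internal vertices are laid out on a line, the $k$ internal photons are $k$ chords, and the single external photon leg is pulled to the front to serve as the root chord --- so you get exactly $k+1$ chords, matching $[\hbar_{\text{ren}}^{k}]\,z_{\phi_c|\psi_c|^2}=(C_{\geq2})_{k+1}$. For the $|\psi_c|^2$ residue there is no external photon, so the chord diagram has just the $k$ photon chords on $2k$ endpoints, matching $[\hbar_{\text{ren}}^{k}]\,z_{|\psi_c|^2}=(C_{\geq2})_{k}$. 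The external fermion half-edges never become endpoints of a chord; they are simply the two ends of the baseline.

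Your proposed dictionary ``1PI $\leftrightarrow$ connected'' is also not quite right. ``Every fermion edge is straddled by some photon'' is the condition for the chord diagram to be \emph{indecomposable}, not connected; nested chords can still leave the intersection graph disconnected. The paper's dictionary, working entirely within the class of 1PI graphs, is instead: a \emph{propagator-type subdivergence} in $\gamma$ corresponds to an isolated component in $C(\gamma)$ (hence to disconnection), and a \emph{vertex-type subdivergence} corresponds to a reason for connectivity-$1$ (the single cut chord being the photon that links the subdivergence to the rest). Put together, no subdivergences $\Leftrightarrow$ $C(\gamma)$ is $2$-connected. Your second paragraph already gestures at this when you say ``each of these should translate \ldots\ into one specific kind of single-chord cut''; that is the content of the paper's Claim 2, and once the construction is corrected the verification is short and case-by-case rather than delicate. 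The functional-equation fallback you mention is not needed.
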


\begin{proof}
As we mentioned above, the class of diagrams counted by $\;z_{\phi_c|\psi_c|^2}(\hbar_{\text{ren}})\;$ is to be denoted by $\mathcal{Q}$. Thus, $\mathcal{Q}$ consists of 1PI primitive quenched QED diagrams with two external fermion legs and one photon leg. By definition, every graph in $\mathcal{Q}$ has a (wiggly) photon external leg $r$, and two directed fermion external legs $f_1$ and $f_2$. 
We now start by proving the following claim:

\underline{\textbf{Claim 1:}} If $\Gamma$ is a graph in $\mathcal{Q}$, then there exists a unique fermion-only path $P$ from $f_1$ to $f_2$. Moreover, $P$ passes through the vertex at $r$ and every vertex in the graph is on $P$. In addition, the loop number in $\Gamma$ is equal to the number of internal photon edges, and so either is counted by the power of $\hbar_{\text{ren}}$ in the series.

\textbf{Proof:} Generally, if we remove all photon edges from a  graph with the 3-valent vertex residue we should  get a single directed path of fermion edges (because otherwise we will have more than 2 external fermion legs if the photon edges are restored) and a set fermion loops. Now, in our case, we can only get the path, which we denote by $P$ and which should then carry all the vertices in the original graph. In particular, the number of vertices in a graph $\Gamma\in\mathcal{Q}$ will be 1+the number of internal fermion edges. 

To see that the rest of the claim is indeed true first recall Euler's formula \[|V(\Gamma)|-|E(\Gamma)|+\ell(\Gamma)=1,\]
where as usual $|V(\Gamma)|$ is the number of vertices, $|E(\Gamma)|$ is the number of internal edges, and $\ell(\Gamma)$ is the number of loops or independent cycles in $\Gamma$.

Note that the external legs do not alter this relation. We have two types of edges, photons and fermions, so let us assume that $p$ is the number of internal photon edges and $f$ is the number of internal fermion edges, thus $|E(\Gamma)|=p+f$. Now, the most useful observation is that, in our case, we  should have $p=\ell(\Gamma)$. Indeed, we have seen that 
\[f=|V(\Gamma)|-1,\]
from which it follows that $p=\ell(\Gamma)$.
\textbf{This proves Claim 1.}

So, we can generally think of graphs in $\mathcal{Q}$ as in the figure below,
where $P$ is the unique path formed by all  directed fermion edges. $P$ goes from $f_1$ to $f_2$ and passes through the vertex at $r$. All vertices lie on $P$.
\begin{center}

\raisebox{0cm}{\includegraphics[scale=0.45]{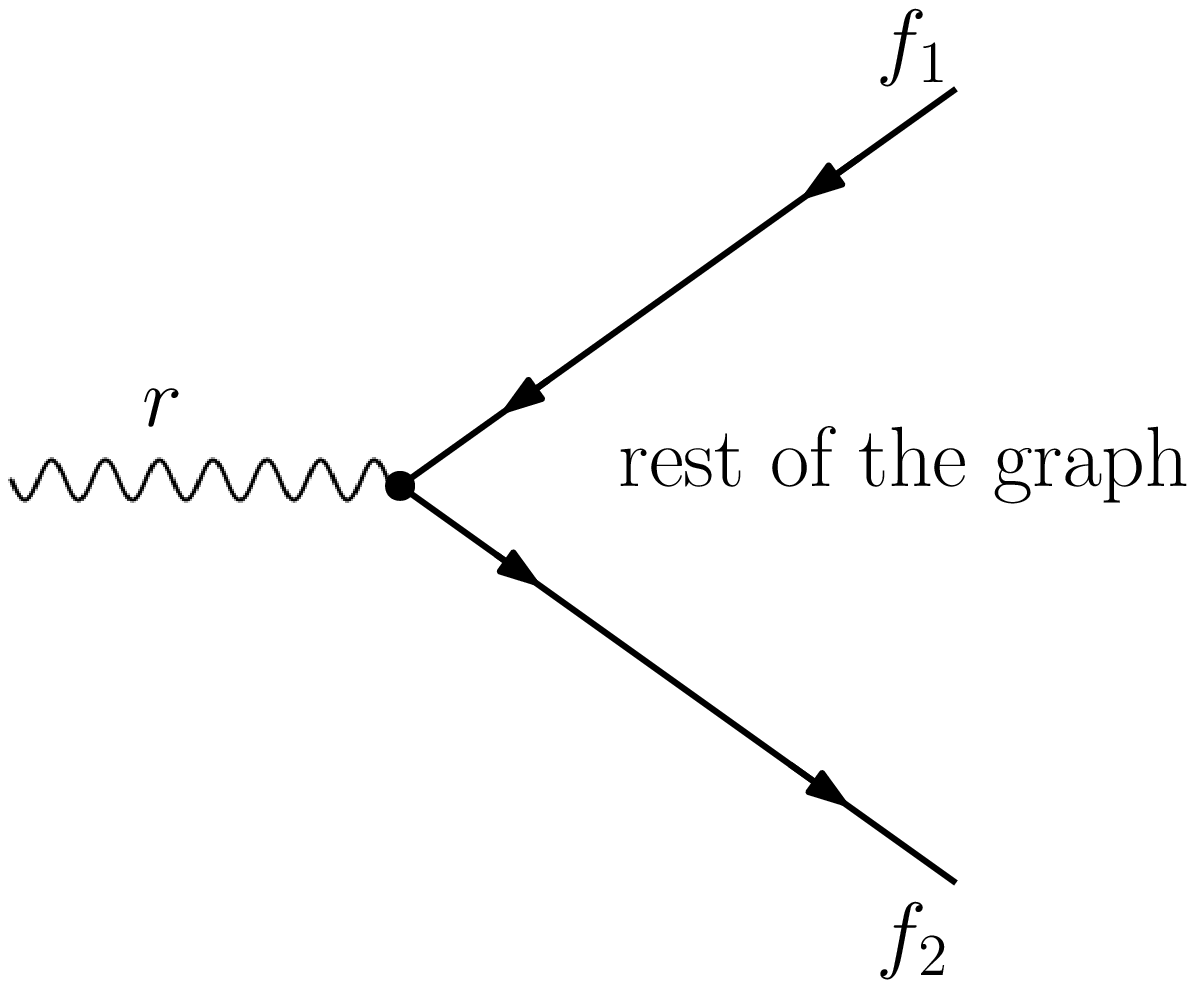}}
    
\end{center}

This means we can uniquely put any graph $\Gamma\in\mathcal{Q}$ in the form of a rooted chord diagram, namely by straightening $P$. See Figure \ref{intochords1} for an example. 
\begin{figure}[h]
    \centering
    \includegraphics[scale=0.42]{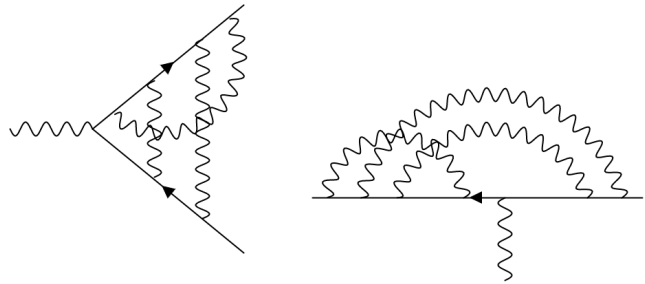}
    \caption{A primitive quenched QED graph and its representation as a chord diagram}
    \label{intochords1}
\end{figure}

For simplicity of drawing we shall now and forth in the proof use dashed or light lines for photons and drop the direction on the fermion edges on $P$. Also, let us agree that, in the chord diagram representation, we will bring $r$ to the front to play the role of a root, and still carry the information for the external leg position at its other end. Thus, for example, the graph in Figure \ref{intochords1} is now represented as follows:
\begin{center}

\raisebox{0cm}{\includegraphics[scale=0.42]{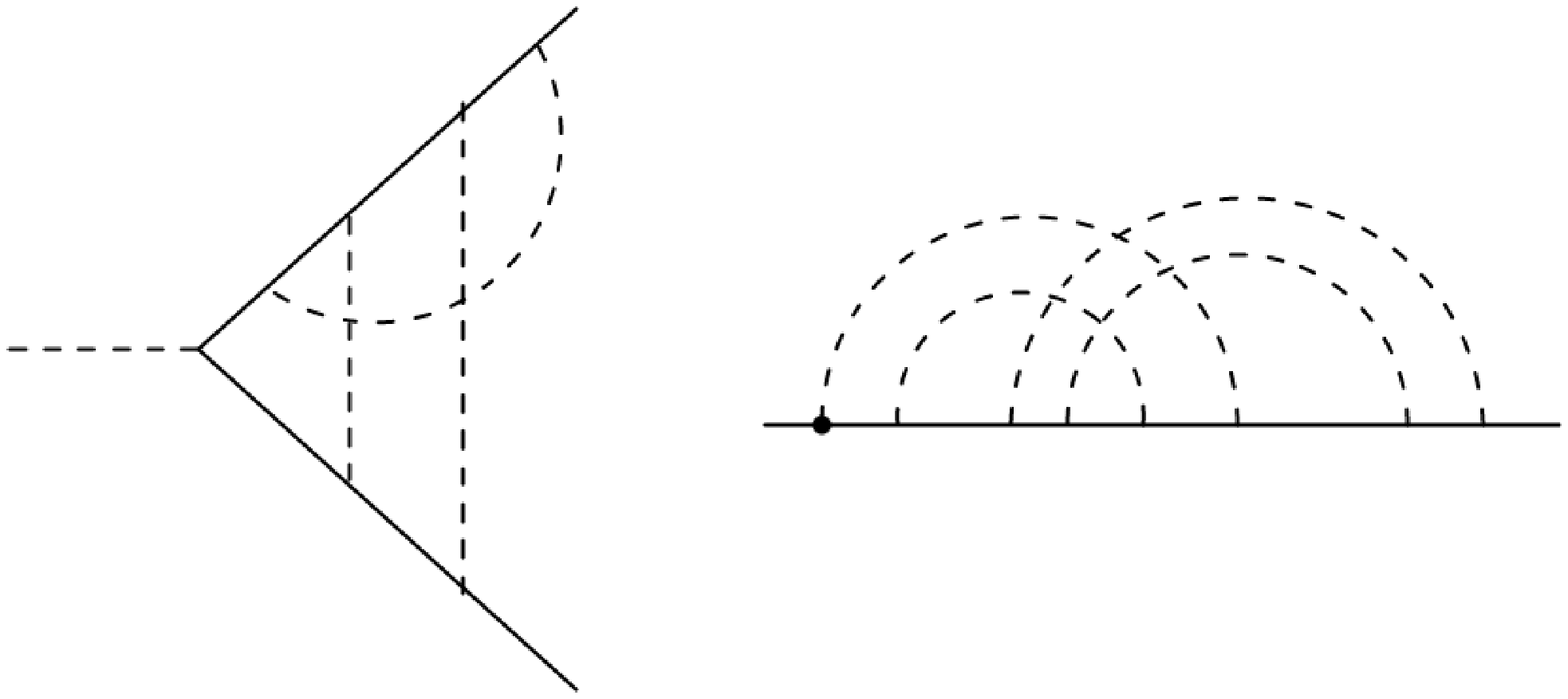}}
    
\end{center}
The chord diagram representation of $\Gamma\in\mathcal{Q}$ will be denoted by $C(\Gamma)$. Let us also denote the right end of the root $r$ by $v$. The only property of $\mathcal{Q}$ that we still haven't used is that a graph in $\mathcal{Q}$ is primitive.

\underline{\textbf{Claim 2:}} A 1PI quenched QED graph $\Gamma$ is primitive if and only if it is $2$-connected in the chord diagram representation. Subdivergences are translated into either a disconnection or a bridge.

\textbf{Case 1:} Assume that $C(\Gamma)$ is disconnected. This means that there exists an isolated component of chords to the right or left of $v$. On the original graph this is simply a propagator-type subdivergence inserted on one of the fermion edges. For an example see Figure \ref{disconnectandsubdiv} below.
\begin{figure}[H]
    \centering
\raisebox{0cm}{\includegraphics[scale=0.55]{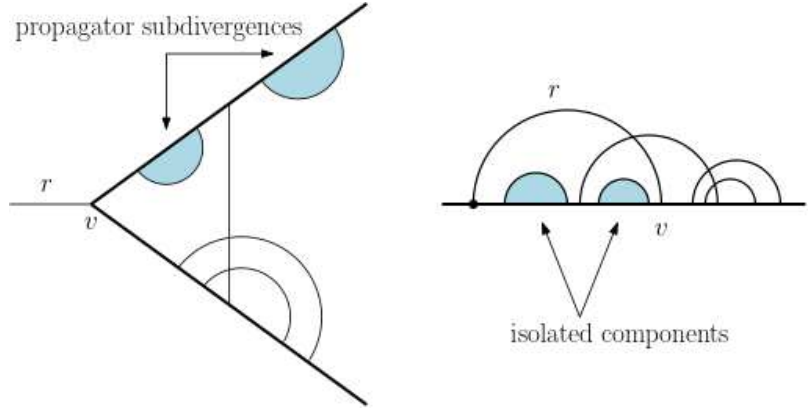}}
    \caption{Disconnections and propagator subdivergences.}\label{disconnectandsubdiv}
\end{figure}
The converse is also clearly true, a propagator subdivergence is translated into an isolated component in $C(\Gamma)$.

\textbf{Case 2: } Assume that $C(\Gamma)$ has a reason $S$ for connectivity-1, in the sense of Definition \ref{connectv1dfn}. Then the cut $c$ for $S$ is either the root chord $r$ or not.

(A) If $c=r$, then in $\Gamma$, $S$ together with $r$ correspond to a vertex-type subdivergence inserted at the vertex of the photon edge $r$.

(B) If $c\neq r$, then $S$ lies to the right or left of $v$ in $C(\Gamma)$. On $\Gamma$, this is a vertex-type subdivergence inserted an end of the photon edge $c$. 

Conversely, by the same means, every vertex-type subdivergence in $\Gamma$ gives a reason for connectivity-1 in $C(\Gamma)$. \textbf{This proves Claim 2}. Figure \ref{connect1andsubdiv} illustrates  situations  (A) and (B) on one and the same graph.
\begin{figure}[H]
    \centering
\raisebox{0cm}{\includegraphics[scale=0.67]{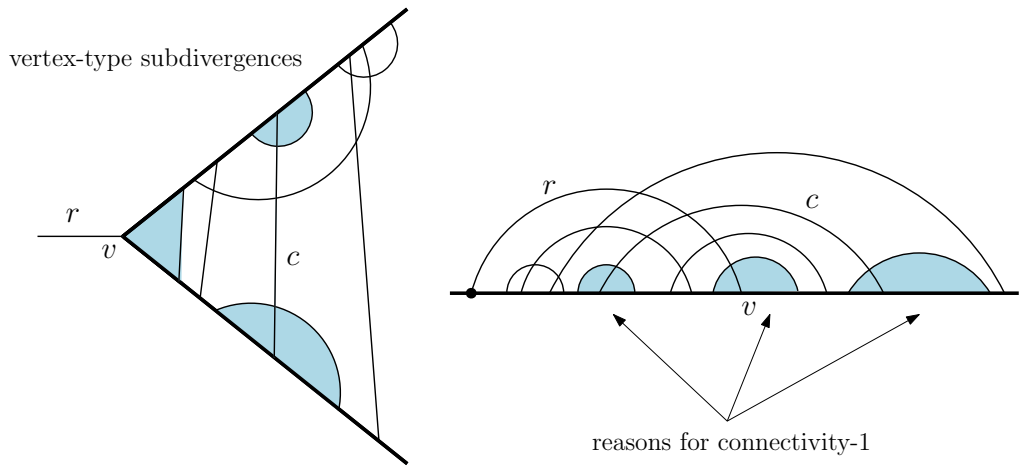}}
    \caption{Connectivity-1 and vertex-type subdivergences.}\label{connect1andsubdiv}
\end{figure}

Thus, every graph in $\mathcal{Q}$ is uniquely represented as a $2$-connected chord diagram with the number of chords equal to the number of internal photon edges and also equal to the loop number of the graph. The generating series $\;z_{|\psi_c|^2}(\hbar_{\text{ren}})\;$ counts the same diagrams, it only differs in not having the external photon leg $r$. The removal of the  external leg $r$ will not change the argument above: propagator-type subdivergences correspond to isolated components in the chord diagram representation and vertex-type subdivergences correspond to reasons for connectivity-1. This completes the proof.
\end{proof}
This gives 
$ \mathcal{A}^2_{\frac{1}{2}}z_{\phi_c|\psi_c|^2}(\hbar_{\text{ren}})=
\frac{1}{\hbar_{\text{ren}}}\mathcal{A}^2_{\frac{1}{2}}z_{|\psi_c|^2}(\hbar_{\text{ren}})=
\mathcal{A}^2_{\frac{1}{2}}C_{\geq2}(x),
$
and by equation (\ref{computC2asympt}):
\begin{align}
    [\hbar_{\text{ren}}^{n-1}]\;&z_{\phi_c|\psi_c|^2}(\hbar_{\text{ren}})=
[\hbar_{\text{ren}}^{n}]\;z_{|\psi_c|^2}(\hbar_{\text{ren}})=[x^n]\;C_{\geq2}(x)=\nonumber\\
    & = e^{-2}  \bigg((2n-1)!!-6(2n-3)!!-4(2n-5)!!-
    \displaystyle\frac{218}{3}(2n-7)!!- \nonumber\\
    &\; \qquad       -890(2n-9)!!-\displaystyle\frac{196838}{15}(2n-11)!!-\cdots\bigg)
    \nonumber\\
    & = e^{-2} (2n-1)!!\bigg(1-\displaystyle\frac{6}{2n-1}-\displaystyle\frac{4}{(2n-3)(2n-1)}-\displaystyle\frac{218}{3}\displaystyle\frac{1}{(2n-5)(2n-3)(2n-1)}- \nonumber\\
    &\;   \qquad     -\displaystyle\frac{890}{(2n-7)(2n-5)(2n-3)(2n-1)}-\displaystyle\frac{196838}{15}\displaystyle\frac{1}{(2n-9)\cdots(2n-1)}-\cdots\bigg),
\end{align}
which coincides with the result in \cite{michiq}.
\section{Yukawa Theory}\label{YYYYY}
In this section we will establish the connection between interacting Yukawa theory and chord diagrams. Unlike the case of quenched QED, the relation this time is well hidden. The problem was based on an observation, made by the author, upon seeing Table 18 (a) in \cite{michiq} while working on the quenched QED case. It can be seen that the rows in Table \ref{table4} below coincide with some of the expressions used in deriving the chord diagram identities earlier in this thesis, see Table \ref{table3} for example. For the sake of clarity, let us display Table 18 (a) of \cite{michiq} (with an extra column added to emphasize the relation to chord diagrams):

\begin{table}[H]
\center
\begin{tabular}{|c|c||c|c|c|c|c|c|c|}\hline
 &            &$\hbar^0$ & $\hbar^1$   & $\hbar^2$  & $\hbar^3$ & $\hbar^4$ & $\hbar^5$ & $\hbar^6$                           \\\hline\hline
1&$\partial_{\phi_c}^0(\partial_{\psi_c}\partial_{\bar{\psi}_c})^0
G^{\text{Yuk}}\big|_{\phi_c=\psi_c=0}$     
              &   0  & 0     & 1/2    & 1     & 9/2   & 31    & 283   
              \\\hline
2&$\partial_{\phi_c}^1(\partial_{\psi_c}\partial_{\bar{\psi}_c})^0
G^{\text{Yuk}}\big|_{\phi_c=\psi_c=0}$      
              &   0  & 1     & 1      & 4     & 27    & 248   & 2830       
              \\\hline
3&$\partial_{\phi_c}^2(\partial_{\psi_c}\partial_{\bar{\psi}_c})^0
G^{\text{Yuk}}\big|_{\phi_c=\psi_c=0}$      
              &  -1  & 1     & 3      & 20    & 189   & 2232  & 31130      
              \\\hline
4&$\partial_{\phi_c}^0(\partial_{\psi_c}\partial_{\bar{\psi}_c})^1
G^{\text{Yuk}}\big|_{\phi_c=\psi_c=0}$ 
              &  -1  & 1     & 3      & 20    & 189   & 2232  & 31130   
              \\\hline
5&$\partial_{\phi_c}^1(\partial_{\psi_c}\partial_{\bar{\psi}_c})^1
G^{\text{Yuk}}\big|_{\phi_c=\psi_c=0}$
              &   1  & 1     & 9      & 100   & 1323  & 20088 & 342430       
              \\\hline
 \end{tabular}\caption{The first coefficients of the proper Green functions $\left.\partial_{\phi_c}^i(\partial_{\psi_c}\partial_{\bar{\psi}_c})^jG^{\text{Yuk}}\right|_{\phi_c=\psi_c=0}$  of Yukawa theory such that $i+2j\in\{0,1,2,3\}$.  }
\label{table4}
\end{table}
As we have mentioned in Section \ref{partitionfunctionsection}, Yukawa theory is different from quenched QED in that  fermion loops are allowed in the diagrams. Thus, if $\mathcal{U}_{ij}$ denotes the class of 1PI Feynman graphs counted by the Green function $\left.\partial_{\phi_c}^i(\partial_{\psi_c}\partial_{\bar{\psi}_c})^jG^{\text{Yuk}}\right|_{\phi_c=\psi_c=0}$, then, to our combinatorial concern,   $\mathcal{U}_{ij}$ consists of all graphs $\gamma$ with the following specifications:
\begin{enumerate}
   \item two types of edges (as before), fermion and boson (meson) edges, represented as \raisebox{-0cm}{\includegraphics[scale=0.6]{Figures/fermionedge.eps}} \;and \raisebox{-0.2cm}{\includegraphics[scale=0.4]{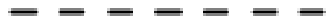}},  respectively;
    \item only three-valent vertices with the structure \raisebox{-0.64cm}{\includegraphics[scale=0.3]{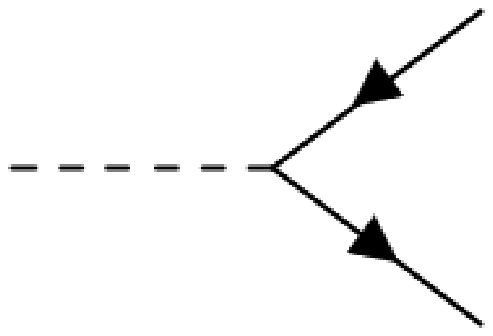}}, with one fermion in, one fermion out, and one boson;
    
    \item fermion loops are allowed;
    \item the residue $\text{res} (\gamma)$ has $i$ external boson legs,  $j$  external fermion-in legs, and $j$ external fermion-out legs; and 
    \item $\gamma$ is $1PI$, i.e. it has no bridges (2-edge-connected). This is implied by the definition of proper Green functions (see Section \ref{quenchedsec} or Section \ref{section}).
\end{enumerate}

Note that, unlike the quenched QED case, we are not restricting to primitive diagrams since we are not working with any expressions from renormalization in this part (yet).

\begin{lem}\label{property1Yukawa}
Let $\Gamma$ be a Yukawa theory 1PI graph with $2j$ external fermion legs. Then
\begin{equation}
    |V(\Gamma)|=f+j,
\end{equation}

where, as before, $|V(\Gamma)|$ is the number of vertices and $f$ is the number of internal fermion edges.
\end{lem}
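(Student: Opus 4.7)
The plan is to prove the identity by a double-counting argument applied to the fermion half-edges of $\Gamma$. The key structural fact is that, by the definition of the Yukawa-theory vertex, every vertex of $\Gamma$ is incident to exactly two fermion half-edges (one fermion-in and one fermion-out), together with a single boson half-edge. The 1PI hypothesis will actually play no role in this particular lemma; only the vertex type and the prescribed external leg structure matter.

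First I would count the number of fermion half-edges in $\Gamma$ by summing over vertices. Since each vertex contributes exactly two fermion half-edges, the total is $2|V(\Gamma)|$. Next I would count the same quantity by summing over edges and external legs: each internal fermion edge is a pair of fermion half-edges, and each external fermion leg is a single fermion half-edge. Since $\Gamma$ has $f$ internal fermion edges and, by hypothesis, exactly $2j$ external fermion legs (namely $j$ of fermion-in type and $j$ of fermion-out type), this count gives $2f + 2j$.

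Equating the two expressions yields $2|V(\Gamma)| = 2f + 2j$, and dividing by $2$ gives the required identity $|V(\Gamma)| = f + j$. I do not foresee any real obstacle; the only mild point of care is making sure that the directedness of the fermion edges is handled correctly (i.e. that ``fermion-in'' and ``fermion-out'' half-edges are both counted as fermion half-edges), and that the $2j$ external fermion legs really do decompose into $j$ of each orientation, which follows from the description of $\mathcal{U}_{ij}$ given just before the lemma. This is the same type of half-edge-counting argument that underlies the proposition relating insertion places to loop number in the preceding chapter, and it fits naturally into the forthcoming analysis of Yukawa diagrams in Section \ref{YYYYY}.
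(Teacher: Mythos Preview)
Your proof is correct, but it takes a different route from the paper's. The paper argues structurally: it removes all boson edges and half-edges from $\Gamma$, observes that what remains is a disjoint union of fermion loops and fermion paths, shows that there must be exactly $j$ paths (since each path contributes two external fermion legs and, by the vertex condition, no path can terminate at a vertex), and then counts vertices componentwise --- each loop has as many vertices as edges, each path one more vertex than internal edges --- to obtain $|V(\Gamma)| = f + j$.

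Your half-edge double count is more direct and arguably cleaner for the bare identity. The paper's approach, however, buys something extra: the structural decomposition into fermion loops and $j$ fermion paths is itself used downstream (e.g.\ in Claim~1 of Theorem~\ref{myresultinquenched} and in Lemma~\ref{property2yukawa}), so proving the lemma that way establishes that picture along the way rather than as a separate observation.
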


\begin{proof}
Consider the graph $\Gamma'$ obtained from $\Gamma$ by removing all boson edges and half edges. The resulting graph $\Gamma'$ is generally a collection of fermion loops and fermion paths. 
The number of these paths should be $j$. Indeed, on one hand every such path will give two external fermion legs when the boson edges are present. To see this notice that, under the vertex condition, such paths can not end with a vertex: the boson edges can not then complete the degree of such a vertex. On the other hand, it is clear that the external fermion legs can only be at the ends of such paths. 

As a consequence of the above argument, the number of vertices can be calculated as follows: every fermion loop has  as many vertices as fermion edges. Whereas in every fermion path the number of vertices is more by one the number of internal fermion edges. This proves the lemma.
\end{proof}

Next we investigate the combinatorial meaning of the Green functions in 
Table \ref{table4} and its relation to chord diagrams. This is done by first proving that the number of \textit{1PI} tadpole graphs (line 2 in Table \ref{table4}) in Yukawa theory with loop number $n$  is equal to the number of connected chord diagrams with $n$ chords. In consequence, we get the other interpretations, in terms of chord diagrams, for the other Green functions listed in Table \ref{table4}.

\subsection{Yukawa Tadpole Graphs:\quad $\left.\partial_{\phi_c}^1(\partial_{\psi_c}\partial_{\bar{\psi}_c})^0G^{\text{Yuk}}(\hbar,\phi_c,\psi_c)\right|_{\phi_c=\psi_c=0}$}

By definition (see Sections \ref{quenchedsec} and \ref{section}), $\left.\partial_{\phi_c}^1(\partial_{\psi_c}\partial_{\bar{\psi}_c})^0G^{\text{Yuk}}(\hbar,\phi_c,\psi_c)\right|_{\phi_c=\psi_c=0}$ is the generating series of Yukawa theory graphs with exactly one external leg, which is of boson type, graded by loop number. In other words, 
\[[\hbar^n]\left.\partial_{\phi_c}^1(\partial_{\psi_c}\partial_{\bar{\psi}_c})^0G^{\text{Yuk}}\right|_{\phi_c=\psi_c=0}\]
is the number of \textit{1PI} tadpole graphs with one boson leg and loop number $n$.

From Table \ref{table4}, we can conjecture that $[\hbar^n]\left.\partial_{\phi_c}^1(\partial_{\psi_c}\partial_{\bar{\psi}_c})^0G^{\text{Yuk}}\right|_{\phi_c=\psi_c=0}=C_n$, the number of connected chord diagrams on $n$ chords. Interestingly, unlike the quenched QED graphs, tadpoles do a great job hiding their chord diagrammatic structure. This however is to be unveiled Theorem \ref{myresult1inYukawa} below. In Figure \ref{27tadpoles} below, we display the tadpoles counted in $[\hbar^4]\left.\partial_{\phi_c}^1(\partial_{\psi_c}\partial_{\bar{\psi}_c})^0G^{\text{Yuk}}\right|_{\phi_c=\psi_c=0}$, which, by our claim, should be $27$ as $C_4$. 

\begin{rem} For the sake of simplicity of drawings, we will drop the direction of the fermion loops and assume it is always counter-clockwise. Besides, we will draw no more dashed boson lines, and shall instead use a light line for bosons and a heavier line for fermions. Note that the relative direction of loops matters in some cases and have to be compensated sometimes by a twist in the boson edges. For example,  the following two tadpoles in Figure \ref{two diff tadpoles1} are different, and shall be represented as in Figure \ref{two diff tadpoles11}:
\begin{figure}[H]
    \centering
   \raisebox{0cm}{\includegraphics[scale=0.41]{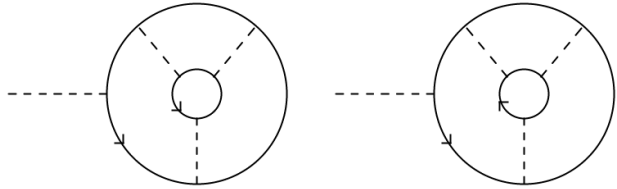}}
    \caption{Two tadpoles may differ due to the relative orientation of fermion loops.}
    \label{two diff tadpoles1}
\end{figure}
\begin{figure}[h]
    \centering
   \raisebox{0cm}{\includegraphics[scale=0.54]{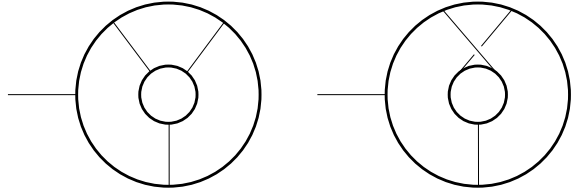}}
    \caption{All loops are now assumed oriented counter-clockwise and the attached boson edges have to be twisted accordingly; Fermion edges are drawn thicker than boson edges.}
    \label{two diff tadpoles11}
\end{figure}
\end{rem}

\newpage

\begin{figure}[h!]
    \centering
   \raisebox{0cm}{\includegraphics[scale=0.73]{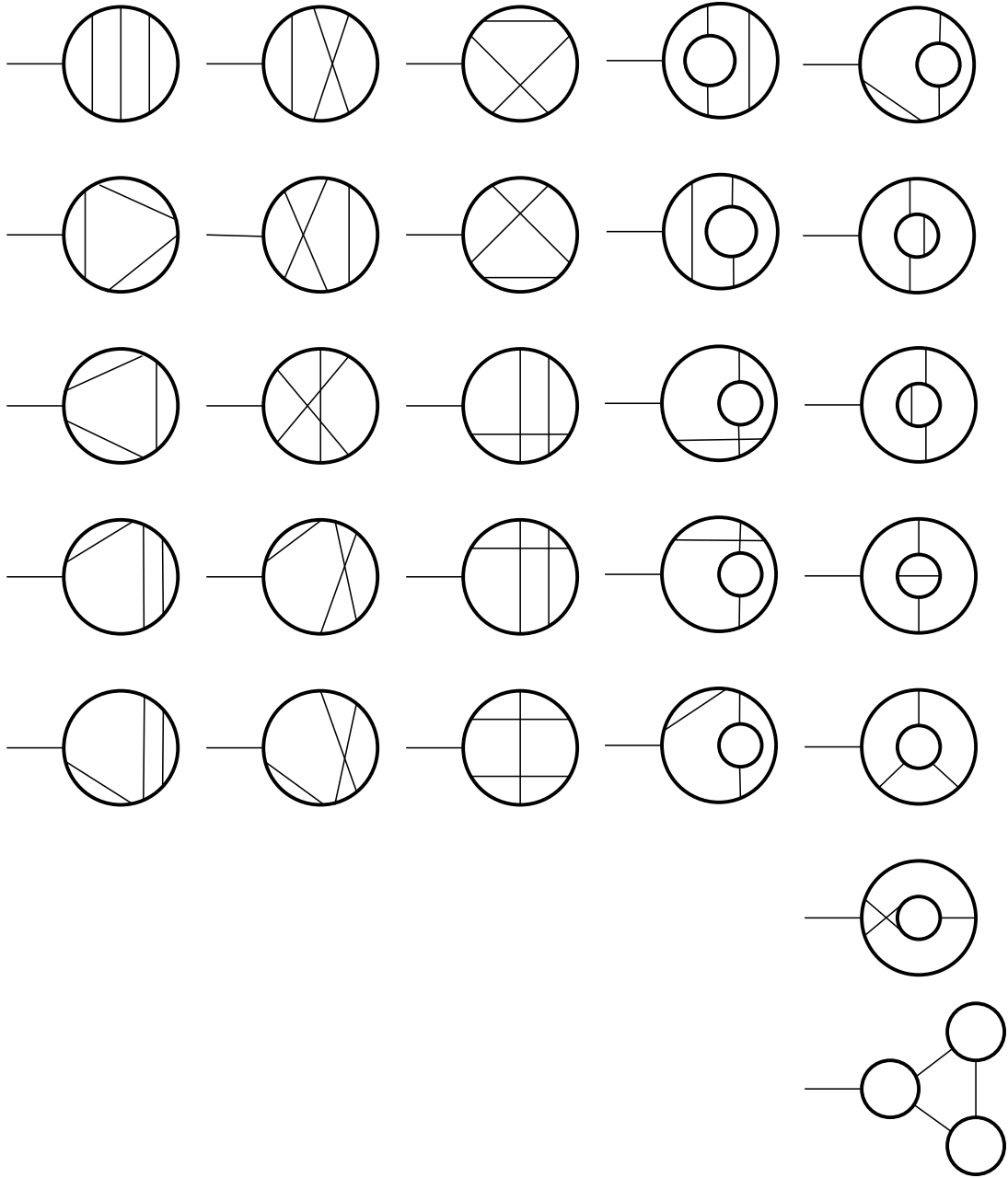}}
    \caption{The 27 \textit{1PI } tadpole graphs with loop number 4}
    \label{27tadpoles}
\end{figure}

The next lemma is a direct corollary to Lemma \ref{property1Yukawa} for the case of tadpoles.
\begin{lem}\label{property2yukawa}
For any Yukawa 1PI tadpole graph, the following are true for all $\Gamma\in\mathcal{U}_{10}$:

\begin{enumerate}
   
\item The number of vertices is equal to the number of fermion edges,
\[|V(\Gamma)|=f.\]

 \item The number of all boson edges (including the external boson leg) is equal to the loop number of the graph. That is,
\[p+1=\ell(\Gamma).\]

\item Fermion loops partition the set of vertices;
\end{enumerate}
 where, as before, $p$ is the number of internal boson edges (the $p$ is suitable in this case too as it stands for a \textit{pion}, pions  are often the bosons in a Yukawa interaction), and $f$ is the number of fermion edges (all fermion edges are internal in this case).

\end{lem}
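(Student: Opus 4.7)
The plan is to derive all three statements as direct consequences of Lemma \ref{property1Yukawa} together with Euler's formula, since a tadpole graph is a Yukawa 1PI graph with $2j = 0$ external fermion legs.

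First I would establish statement (1) by simply setting $j = 0$ in Lemma \ref{property1Yukawa}. Since a tadpole in $\mathcal{U}_{10}$ has one external boson leg and no external fermion legs, we get $|V(\Gamma)| = f + 0 = f$ immediately. No further work is needed here.

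Next I would prove statement (3) by revisiting the construction used in the proof of Lemma \ref{property1Yukawa}. Consider the graph $\Gamma'$ obtained from $\Gamma$ by deleting all boson edges and boson half-edges. As argued there, $\Gamma'$ is a disjoint union of fermion loops and fermion paths, and the number of fermion paths equals the number of pairs of external fermion legs, namely $j$. For a tadpole $j = 0$, so $\Gamma'$ contains no paths at all; it is a disjoint union of directed fermion cycles. Since every vertex of $\Gamma$ has exactly one fermion-in and one fermion-out half edge, every vertex of $\Gamma$ appears in $\Gamma'$ and therefore belongs to exactly one fermion loop. This gives the desired partition.

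Finally, for statement (2), I would invoke Euler's formula for a connected graph, which reads $|V(\Gamma)| - |E(\Gamma)| + \ell(\Gamma) = 1$. Since $\Gamma$ is 1PI, it is connected, so this applies. The set of internal edges splits by type into fermion and boson edges, giving $|E(\Gamma)| = f + p$. External legs (the one external boson half-edge) do not contribute to $|E(\Gamma)|$ but their incident vertex is already counted in $|V(\Gamma)|$. Substituting $|V(\Gamma)| = f$ from part (1), we obtain
\[
f - (f + p) + \ell(\Gamma) = 1,
\]
which simplifies to $p + 1 = \ell(\Gamma)$, as required.

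There is no real obstacle; the whole lemma is a bookkeeping consequence of Lemma \ref{property1Yukawa} and Euler's formula. The only mild subtlety is making sure one treats the external boson half-edge correctly: it contributes to neither $p$ (which counts internal bosons) nor to $|E(\Gamma)|$, while its endpoint is included in $|V(\Gamma)|$. With that accounting straight, the three statements fall out immediately.
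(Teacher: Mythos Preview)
Your proposal is correct and follows essentially the same approach as the paper: apply Lemma~\ref{property1Yukawa} with $j=0$ for part (1), plug into Euler's formula for part (2), and observe that with no external fermion legs the fermion subgraph consists solely of disjoint loops for part (3). The only difference is that you prove (3) before (2), which is immaterial.
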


\begin{proof}
By Lemma \ref{property1Yukawa}, we directly have $|V(\Gamma)|=f$.  Euler's formula now implies   \[1=|V(\Gamma)|-(p+f)+\ell(\Gamma)=f-(p+f)+\ell(\Gamma)=-p+\ell(\Gamma),\] for any $\Gamma\in\mathcal{U}_{10}$. Finally, since there are no external fermion edges, every fermion edge must be on a fermion loop. Thus all vertices are on fermion loops, and by the condition on the vertices in the theory, no vertex can lie on more than one fermion loop.
\end{proof}

Lemma \ref{property2yukawa} is useful in that we do not have to think about the loop number in proving the bijection to connected chord diagrams, and can instead focus on the more evident count of boson edges.

\begin{nota}\label{nota randv}
For a tadpole $T\in\mathcal{U}_{10}$, we will fix the notation that the external boson leg is denoted by $r_T$, and the vertex at the leg is denoted by $v_T$ (this is consistent with the notation used in the previous section). For a vertex $a \in V(T)$ we let Loop$(a)$ denote the unique fermion loop containing $a$, and we let Fermion$(a)$ be the fermion edge coming out of $a$ (i.e. the next on Loop$(a)$ counter-clockwise). Boson$(a)$ will denote the unique boson edge to which $a$ is incident.
\begin{center}

\raisebox{0cm}{\includegraphics[scale=0.67]{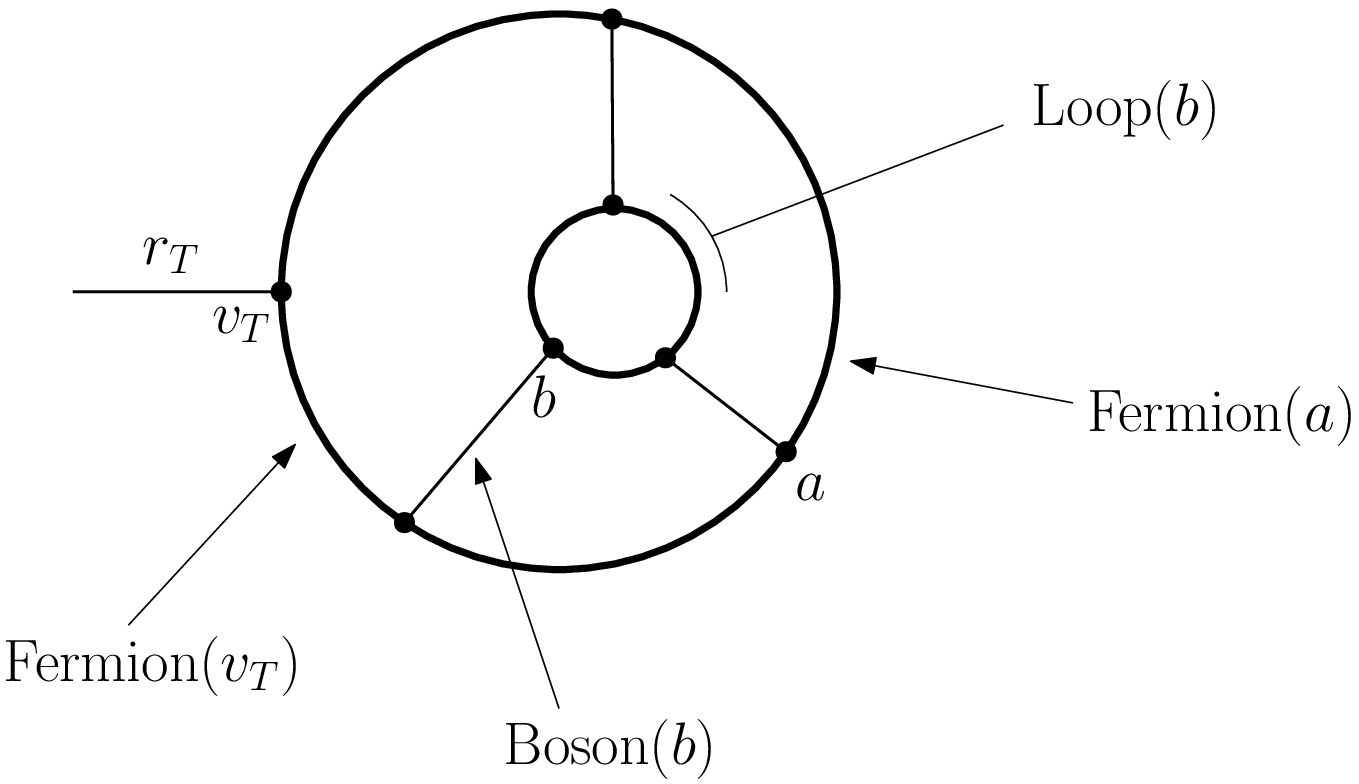}}
    
\end{center}

In the next proof we consider the free end of $r$ to be a vertex of degree 1. Then the number of vertices is twice the number of boson edges. 
\end{nota}

\begin{thm}\label{myresult1inYukawa}
The number of Yukawa 1PI tadpole graphs with loop number $n$ is equal to the number of connected chord diagrams on $n$ chords. In other words,
\[[\hbar^n]\left.\partial_{\phi_c}^1(\partial_{\psi_c}\partial_{\bar{\psi}_c})^0G^{\text{Yuk}}\right|_{\phi_c=\psi_c=0}=C_n.\]

\end{thm}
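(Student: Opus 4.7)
My plan is to establish Theorem \ref{myresult1inYukawa} by constructing an explicit bijection between 1PI Yukawa tadpoles with loop number $n$ and rooted connected chord diagrams on $n$ chords. The starting observation, which follows from Lemma \ref{property2yukawa} combined with the convention in Notation \ref{nota randv} of treating the free endpoint $v_T^*$ of the external boson leg as a degree-one vertex, is that a tadpole $T$ with loop number $n$ has exactly $2n$ vertices; its $n$ boson edges (the external one included) form a perfect matching on these vertices; and the remaining $2n-1$ vertices are partitioned into fermion cycles. Consequently, to convert $T$ into a rooted chord diagram on $n$ chords it suffices to linearly order the $2n$ vertices in a canonical invertible way that starts with $v_T^*$ and then read off the boson matching as the chord structure.

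The order I would use is produced by a depth-first traversal of $T$: the traversal initialises with position $1$ at $v_T^*$ and position $2$ at $v_T$ (crossing the external boson), then walks along the fermion loop of $v_T$ in the fermion direction. Each time the walk arrives at a vertex $a$ whose boson partner $b$ lies on a fermion loop not yet visited, the walk records $b$ as the next position, switches to the new fermion loop at $b$ and traverses it recursively; the walk resumes on the previous loop only after every fermion loop reachable through the detour has been fully explored. The resulting nesting of detours is what forces the boson matching to produce the crossing patterns characteristic of a connected chord diagram.

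To prove that the map is a bijection, I would describe the inverse. Given a rooted connected chord diagram $D$ on $n$ chords, the root chord $\{1,j\}$ identifies $v_T^*$ and $v_T$, and the remaining $n-1$ chords become internal boson edges. The fermion structure is then recovered by reversing the traversal: starting from $v_T$ and using the matching to identify which vertices belong to the same fermion loop and in which cyclic order, the fermion successor of each vertex being determined by the next position one returns to after completing all nested detours initiated at that vertex. I would then show that connectedness of the chord diagram is equivalent to 1PI-ness of the reconstructed tadpole, in a manner parallel to the argument for quenched QED in Theorem \ref{myresultinquenched}: a reason for connectivity-$1$ in the chord diagram — a maximal block of positions matched internally save for a single chord — corresponds either to a propagator-type or to a vertex-type subgraph attached to the rest of $T$ through that single edge, which would be a bridge and contradict 1PI-ness; conversely, every bridge in $T$ isolates such a block during the traversal.

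The hard part, and the main obstacle I anticipate, is establishing that the traversal is genuinely well defined and invertible. Unlike the quenched QED situation, where the unique fermion-only path from $f_1$ to $f_2$ provides an a priori linear order on the vertices, here there is no single distinguished path: the walk must consult the boson matching itself in order to decide when to switch fermion loops, and the fermion loops are themselves the data one is trying to reconstruct on the chord-diagram side. Showing that this apparent circularity is consistent — that the recursive detour structure produces a unique ordering both ways, and that 1PI-ness corresponds exactly to connectedness — is the technical core of the proof, and I would verify it carefully on small loop numbers (for instance the $27$ tadpoles of Figure \ref{27tadpoles}) before giving the general argument.
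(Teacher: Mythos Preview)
Your DFS proposal contains a concrete error that already shows up at loop number $2$. By initialising position $1$ at $v_T^*$ and position $2$ at $v_T$, you force the external boson leg to become the chord $\{1,2\}$. A chord $\{1,2\}$ can never be crossed by any other chord, so the resulting diagram is disconnected for every $n\ge 2$. Take the unique tadpole with $\ell=2$: a fermion triangle on $v_T,u,w$ with one internal boson $\{u,w\}$ and the external leg at $v_T$. Your traversal produces the order $v_T^*,v_T,u,w$ and the chord set $\{\{1,2\},\{3,4\}\}$, which is disconnected, whereas the unique connected chord diagram on two chords is $\{\{1,3\},\{2,4\}\}$. Your subsequent claim that a reason for connectivity-$1$ would correspond to a bridge is also off: in the quenched QED argument you are mirroring, reasons for connectivity-$1$ correspond to vertex-type \emph{subdivergences}, not to bridges, so the analogy does not transfer to the 1PI-vs-connected dictionary you want.

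The obstruction is structural rather than a tweakable detail. In quenched QED the unique fermion path gives a canonical linear order on \emph{all} vertices before one looks at the bosons; a tadpole has no such spine --- the fermion loops are disjoint cycles and the bosons are the only edges linking them --- so any linear order compatible with the root-share decomposition of connected chord diagrams must itself be built recursively from that decomposition. This is precisely what the paper does: it first proves $T(x)=C(x)$ by exhibiting a bijection
\[
\Psi:\mathcal{U}_{10}\times\mathcal{U}_{10}^{\bullet}\longrightarrow\bigl(\mathcal{U}_{10}\times\mathcal{U}_{10}\bigr)\cup\bigl(\mathcal{U}_{10}\setminus\{\mathcal{X}\}\bigr)
\]
which realises on the tadpole side the recurrence $2xTT'=T^{2}+T-x$ of Lemma~\ref{cd}(iii). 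Only afterwards (Definition~\ref{psiorder}, Corollary~\ref{myresult2inYukawa}) is a linear order on fermion edges --- the $\Psi$-order --- defined, and it is defined \emph{recursively via $\Psi$}, not by a one-pass walk. The paper itself remarks just before the theorem that ``tadpoles do a great job hiding their chord-diagrammatic structure''; your caveat that the ordering is the hard part was well placed, but the naive traversal does not merely need a careful consistency check --- it lands outside the connected diagrams from the start.
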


\begin{proof}
Let $T(x)$ be the generating series for tadpoles in $\mathcal{U}_{10}$, counted by the number of boson edges (including the external boson leg). We are taking advantage of Lemma \ref{property2yukawa} in order to use the number of bosons instead of the loop number. The theorem shall be proven through an algorithm that shows that  $T(x)$ obeys the same recurrence as $C(x)$ (see Lemma \ref{cd}), namely
\[2xT(x)T^{\prime}(x)=T(x)^2+T(x)-x.\]
First notice that the LHS stands for two tadpole diagrams, one of which has a distinguished end point of one of the boson edges (hence the 2 factor). For simplicity, we will treat the free end of an external boson leg as a vertex. Then let $\mathcal{U}_{10}^\bullet$ be the class of tadpoles with a distinguished vertex. Also let $\mathcal{U}_{10}-\{\mathcal{X}\}$ be the class of tadpoles excluding $\mathcal{X}$, the tadpole with one vertex ($\mathcal{X}$ has only the boson leg).

Let $T_1,T_2$ be tadpole graphs in $\mathcal{U}_{10}$, and assume that $T_2$ has a distinguished vertex $d$. By Notation \ref{nota randv}, we let $r_1$ and $r_2$ be the boson legs of  $T_1$ and $T_2$, and we let $v_1$ and $v_2$ be the 3-valent vertices incident to $r_1$ and $r_2$, respectively. Figure \ref{T1T2figure} illustrates the notation.

\begin{figure}[h!]
    \centering
    \raisebox{0cm}{\includegraphics[scale=0.67]{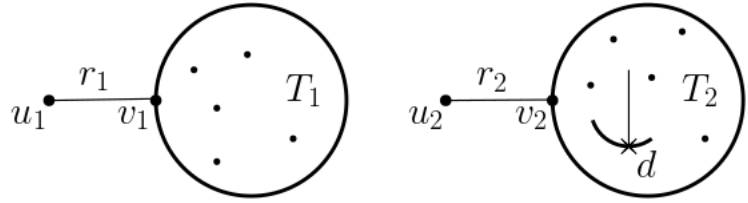}}
    \caption{Notation for $(T_1,(T_2,d))$}
    \label{T1T2figure}
\end{figure}

Now we can describe the reversible algorithm as follows.

\rule{\textwidth}{0.4pt}
\textbf{Algorithm $\Psi$:} \quad$(\mathcal{U}_{10}\times\mathcal{U}_{10}^\bullet)\longrightarrow (\mathcal{U}_{10}\times\mathcal{U}_{10})\;\bigcup\; (\mathcal{U}_{10}-\{\mathcal{X}\})$\\
\rule{\textwidth}{0.4pt}

\textbf{Input:} $(T_1,(T_2,d)) \in(\mathcal{U}_{10}\times\mathcal{U}_{10}^\bullet)$, with notation as described above.\\

 (a) If $d=u_2$ just \textbf{return} $(T_1,T_2)$.\\
 
 (b) If $d\neq u_2$, do the following: 
 
 Move (counter-clockwise) along Loop$(v_1)$ in $T_1$, determine Fermion$(v_1)$ and let $w$ be the first vertex met on the loop. Note that $w$ may be $v_1$ itself.
 
    \begin{enumerate}
  \item  If $w=v_1$, i.e. $T_1$ contains no internal boson edges, \textbf{return} the tadpole $T$ obtained as follows:
  
  \begin{enumerate}
      \item[(i)] Insert vertex $v_1$ together with the leg $r_1$ into Fermion$(d)$ in $T_2$ by making a subdivision of Fermion$(d)$.
      \item[(ii)] Insert $u_2$ into the new Fermion$(v_1)$ on Loop$(d)$.
  \end{enumerate}
  
  \item If $w\neq v_1$, \textbf{return} the tadpole $T$ obtained as follows:
  \begin{enumerate}
      \item[(i)] Insert $u_2$ into  Fermion$(v_1)$ in $T_1$. 
      \item[(ii)] Detach $w$ from Loop$(v_1)$ and insert it into Fermion$(d)$ in $T_2$.
  \end{enumerate}
 
        \end{enumerate}
     \rule{\textwidth}{0.4pt}       
     
Figures \ref{Psi1} and \ref{Psi2} illustrate the two cases for (b).

\begin{figure}[h!]
    \centering
    \includegraphics[scale=0.7]{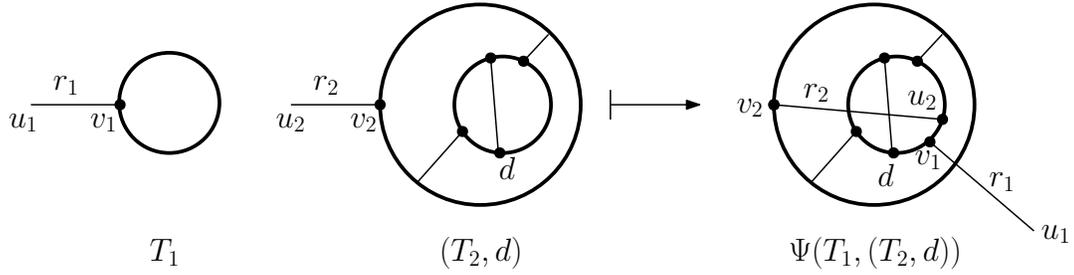}
    \caption{$\Psi(T_1,(T_2,d))$ when $T_1$ has exactly one vertex, the case $w=v_1$.}
    \label{Psi1}
\end{figure}

    \begin{figure}[h!]
    \centering
    \includegraphics[scale=0.7]{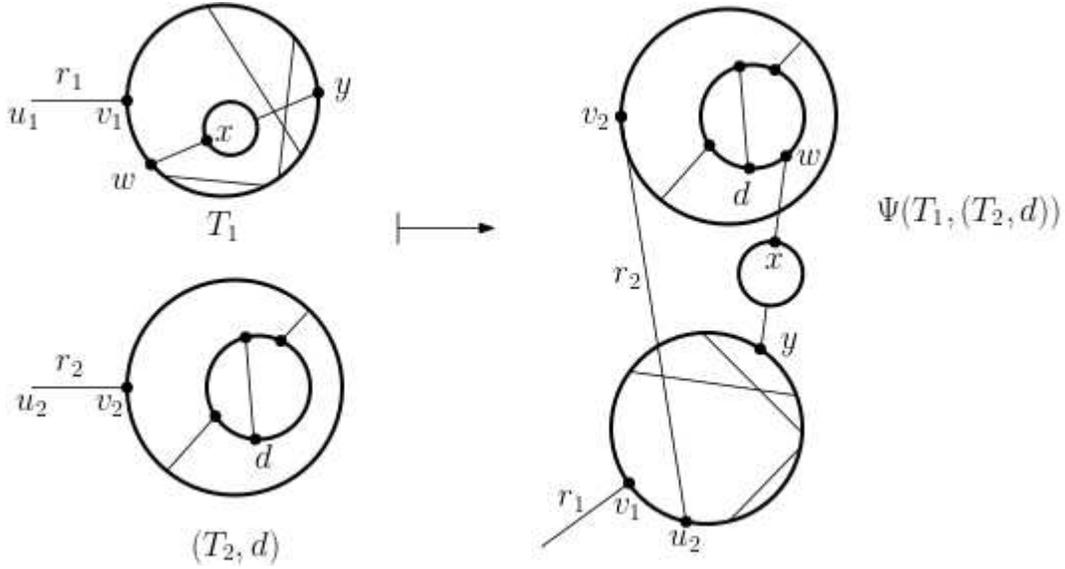}
    \caption{$\Psi(T_1,(T_2,d))$ for a general $T_1$, the case $w\neq v_1$.}
    \label{Psi2}
\end{figure}

For the reverse process we can devise the following algorithm.

\rule{\textwidth}{0.4pt}
\textbf{Algorithm $\Psi^{-1}$:} \quad$ (\mathcal{U}_{10}\times\mathcal{U}_{10})\;\bigcup\; (\mathcal{U}_{10}-\{\mathcal{X}\})\longrightarrow(\mathcal{U}_{10}\times\mathcal{U}_{10}^\bullet)$\\
\rule{\textwidth}{0.4pt}

\textbf{Input:} $(T_1,T_2)\in(\mathcal{U}_{10}\times\mathcal{U}_{10})$,\; or \;$ T \in \mathcal{U}_{10}-\{\mathcal{X}\}$\\

 (a) If the input is a pair $(T_1,T_2)$ then \textbf{return} $(T_1,(T_2,v_2))$.\\
 
 (b) If the input is a tadpole graph $T\in\mathcal{U}_{10}-\{\mathcal{X}\}$, do the following: 
 \begin{enumerate}
 \item Move (counter-clockwise) along Loop$(v_T)$, determine the first vertex $a$ met on the loop. Note that $a\neq v_T$  since we are excluding the tadpole with a single vertex.

  \item  Determine the other end vertex of Boson$(a)$ and denote it by $v_2$.
  
  \item Remove the vertex $a$ from $T$ and keep the resulting boson leg attached at $v_2$. Let the resulting graph be denoted by $\Gamma$.
  
  \item Check whether $\Gamma$ contains a bridge:
  
  \begin{enumerate}
      \item[Case 1:] If $\Gamma$ is 2-edge-connected (i.e. contains no bridges) do
      \begin{enumerate}
          \item[(1)] Determine the first vertex on Loop$(v_T)$ before $v_T$, denote it by $d$.
          \item[(2)] Remove $v_T$ and its boson leg $r_T$. Denote the remaining tadpole by $T_2$.
          
          \item[(3)] \textbf{Return} $(\mathcal{X}, (T_2,d))$.
      \end{enumerate}

\item[Case 2:] If $b_0$ is a bridge (must be a boson edge) in $\Gamma$,  undergo the following:
      \begin{enumerate}
        
    \item[(1)] Set $G=\Gamma$ and $b=b_0$. 
    \item[(2)] \{\textbf{while} $G$ has a bridge $g$ \textbf{do}
      \begin{enumerate}
      \item reset $b\longleftarrow g$;
      \item determine the component $\gamma$ that contains $v_2$ if $b$ is removed; 
      \item reset $G\longleftarrow \gamma$.\}
      \end{enumerate}
   \item[(3)] Let $w$ be the end vertex of $b$ that lies in $G$. Notice that, after the while-loop, $G$ contains no bridges. 
    \item[(4)] Determine the first vertex on Loop$(w)$ before $w$, denote it by $d$.
    \item[(5)] Detach $w$ from Loop$(d)$ in $G$ and insert into Fermion$(v_T)$ (i.e. next to $v_T$ on Loop$(v_T)$).
    \item[(6)] Let $T_2=G-{w}$ be the tadpole obtained from $G$ after $w$ is removed.
    \item[(7)] Let $T_1$ be that tadpole obtained in $\Gamma-G$ after $w$ is inserted on Loop$(v_T)$.
    \item[(8)] \textbf{Return} $(T_1,(T_2,d))$.
      \end{enumerate}
  \end{enumerate}

        \end{enumerate}
 \rule{\textwidth}{0.4pt}
 
Before discussing the algorithms, the reader may like to see Example \ref{examplePsiinverse} for applying $\Psi^{-1} $ to $\Psi(T_1,(T_2,d))$ from Figure \ref{Psi2}.

Now, for Algorithm $\Psi$, the two cases (a) and (b) are clearly distinguishable by the types of their outputs, so Let us focus on (b). 

\begin{enumerate}
    \item The special case in ($\Psi$ b:(1)) when $w=v_1$ returns a tadpole without bridges. Indeed, we only added an external leg $r_1$ at the position determined by $d$, and then we inserted the free end $u_2$ (the external leg of $T_2$). None of these steps changes the connectivity of $T_2$, and the return value is indeed in $\mathcal{U}_{10}-\{\mathcal{X}\}$. 
    
    \item In ($\Psi$ b:(2)), when $w\neq v_1$, the result has no bridges, since what we do is roughly joining $T_1$ and $T_2$ by means of two boson edges in a certain way. Thus the result is indeed a tadpole in $\mathcal{U}_{10}-\{\mathcal{X}\}$. Notice that the  first of these joints is attached next to the leg, and its removal leaves the graph with a bridge. This is of most importance in the reverse process.
 
\end{enumerate}
 
 Then, for Algorithm $\Psi^{-1}$, we have the following:
 
 \begin{enumerate}
     \item If the input is a pair, then this uniquely means that the distinguished vertex satisfies $d=u_2$.
     \item If the input is a tadpole that stays bridgeless after the the vertex $a$ next to $v_T$ is  removed then this uniquely means that $T_1=\mathcal{X}$. Indeed, we have seen above that in all the other cases we get a bridge if the first boson edge after the external leg is removed. 
     
     \item If the input reveals a bridge $b_0$ when Boson$(a)$ is removed, then we learn that $a$ and Boson$(a)$ formed the external leg of $T_2$ and we start disentangling $T_2$ from the graph.   Roughly speaking, we need to determine $d$ by using the fact that, in the absence of Boson$(a)$, Boson$(d)$ is a bridge coming from $T_1$. The while loop in the algorithm works on finding the last such bridge.
     
     \item By the engineering of the while-loop, the graph $G$ obtained at the end of the loop has no more bridges, besides, it carries the traces of the last bridge $b$, which determines the distinguished vertex $d$. 
     
     \item After modifying $G$ by removing $w$ we get $(T_2,d)$, and simultaneously we get $T_1$ from the remaining graph by attaching $w$ into Fermion$(v_T)$. By doing so, $T_1$ is also bridgeless.
 \end{enumerate}
This proves the theorm.
\end{proof}

\begin{exm}\label{examplePsiinverse}
Let us apply Algorithm $\Psi^{-1}$ to the tadpole $T$ given in Figure \ref{Psi2} by 
\begin{center}
    \raisebox{0cm}{\includegraphics[scale=0.45]{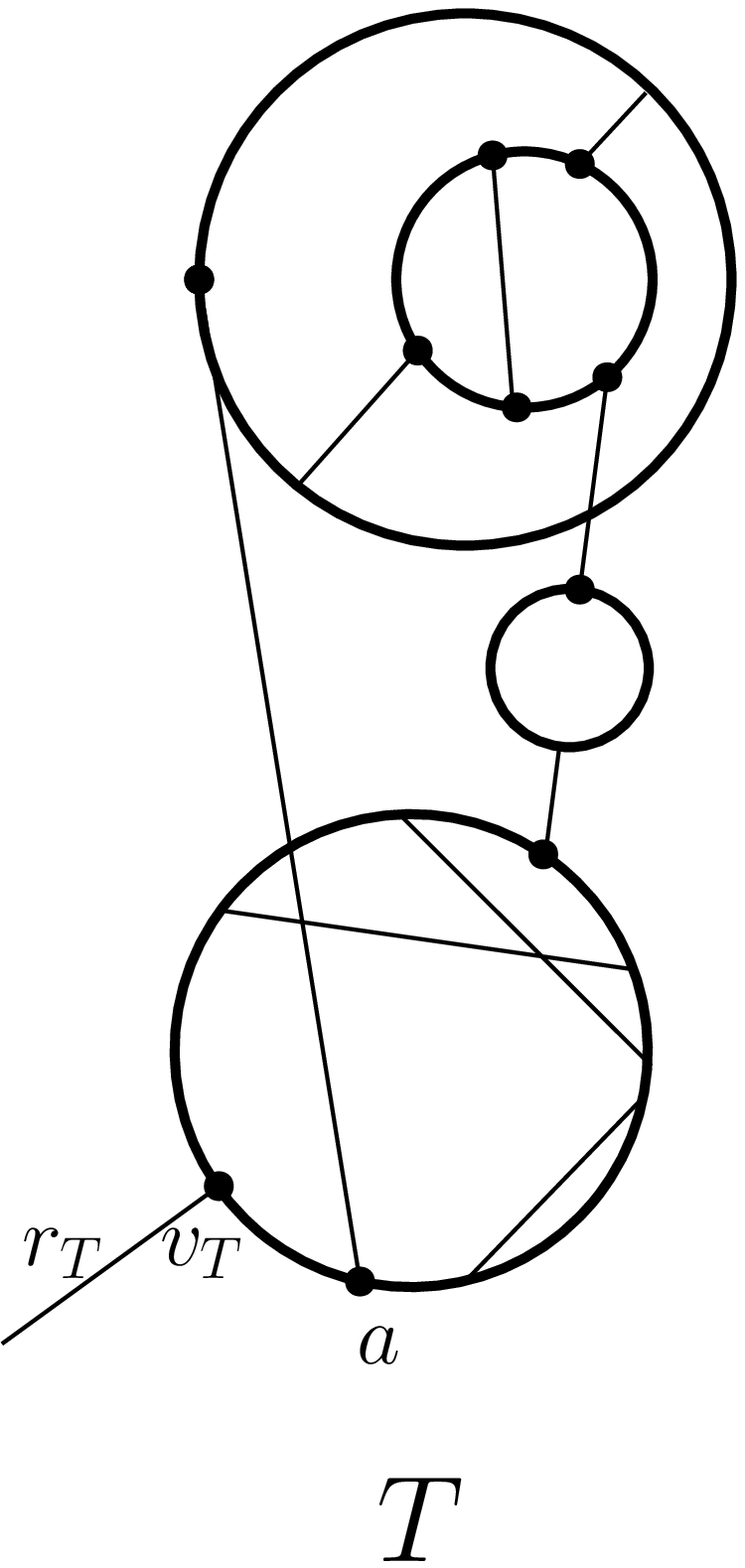}}
\end{center}

\begin{enumerate}
    \item The input is a tadpole in $\mathcal{U}_{10}-\{\mathcal{X}\}$ and so we apply (b).
    \item We determine $a$ as the vertex next to $v_T$ on Loop$(v_T)$, and with it we determine $v_2$, the other end of Boson$(a)$.
    \item We remove vertex $a$ from $T$ and keep Boson$(a)$ attached at $v_2$ as in the figure below.
    
    \begin{center}
    \raisebox{0cm}{\includegraphics[scale=0.75]{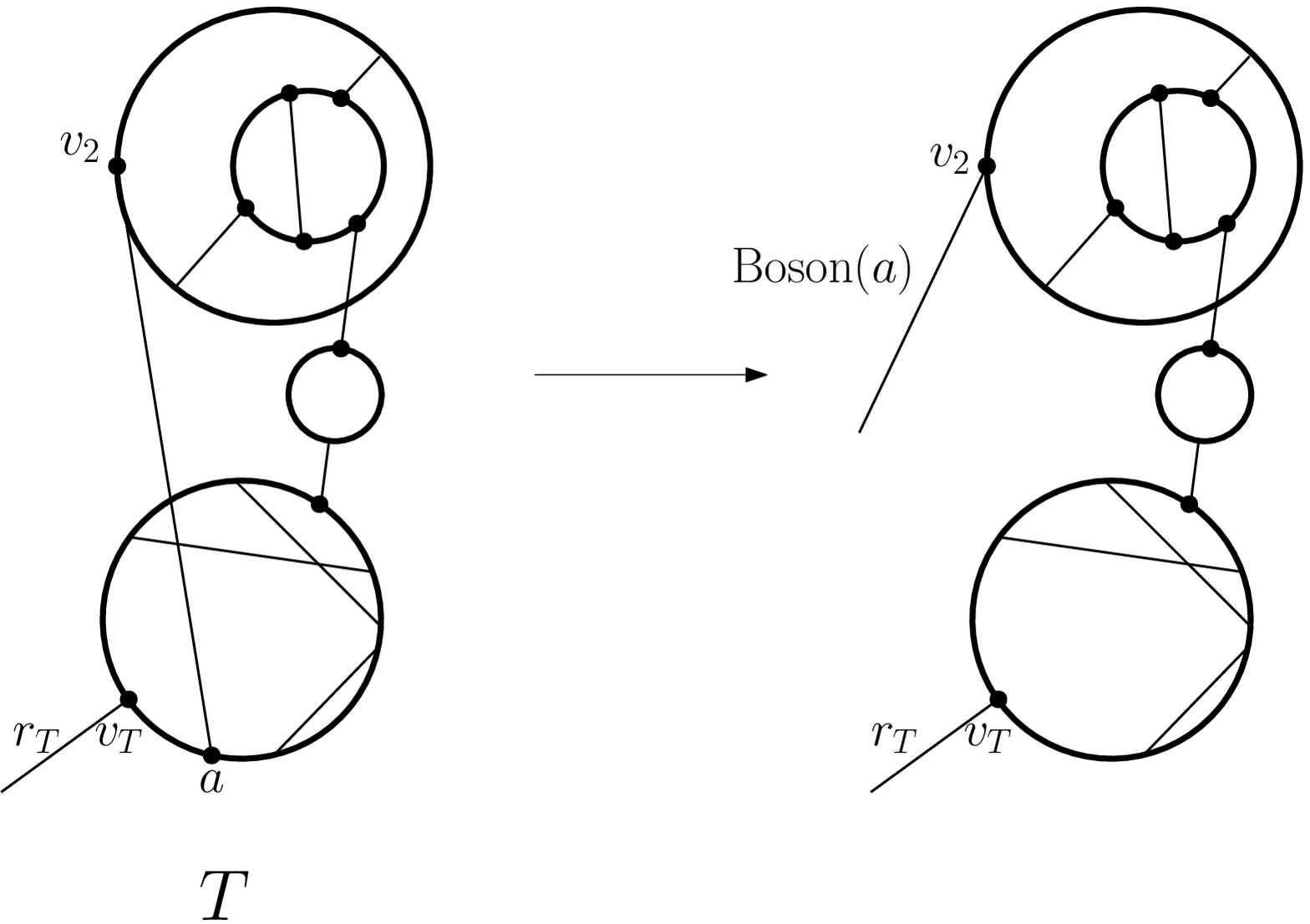}}
\end{center}

\item We check for bridges and we find one of them, we assume that our search provided the bridge $b_0$.
\begin{center}
    \raisebox{0cm}{\includegraphics[scale=0.5]{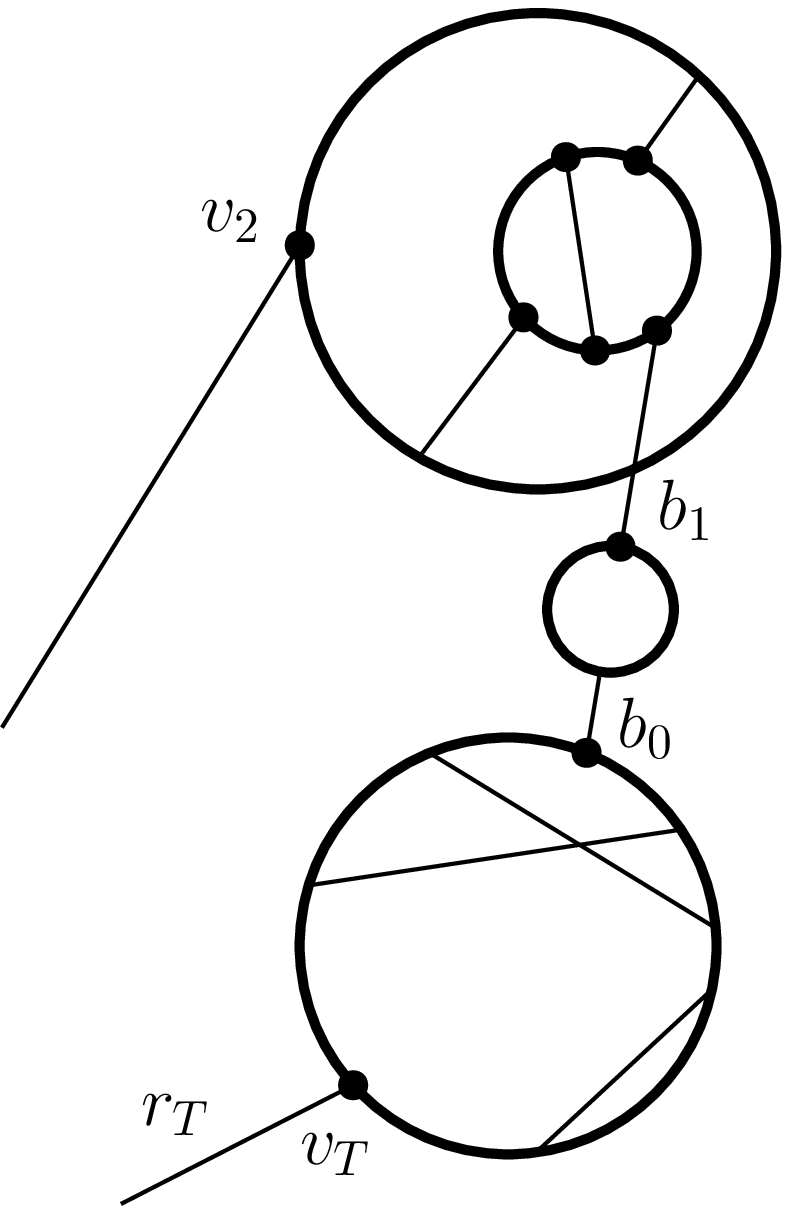}}
\end{center}

\item We enter the while loop with $G=\Gamma$ given above and $b=b_0$:\begin{enumerate}
    \item After the first iteration $G$ is modified to be 
    \begin{center}
    \raisebox{0cm}{\includegraphics[scale=0.5]{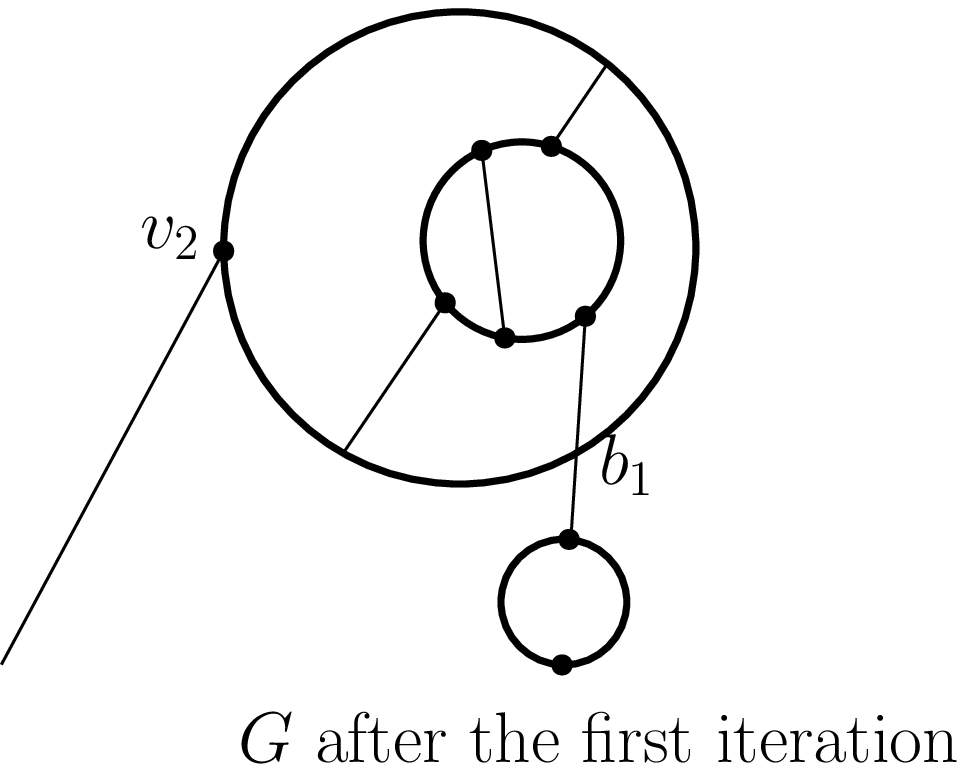}}\end{center}
    
    \item Now $G$ again has a bridge $b$, and, after the second iteration $G$ has no more bridges. After the while-loop $b=b_1$ and $G$ is given by

\begin{center}
    \raisebox{0cm}{\includegraphics[scale=0.5]{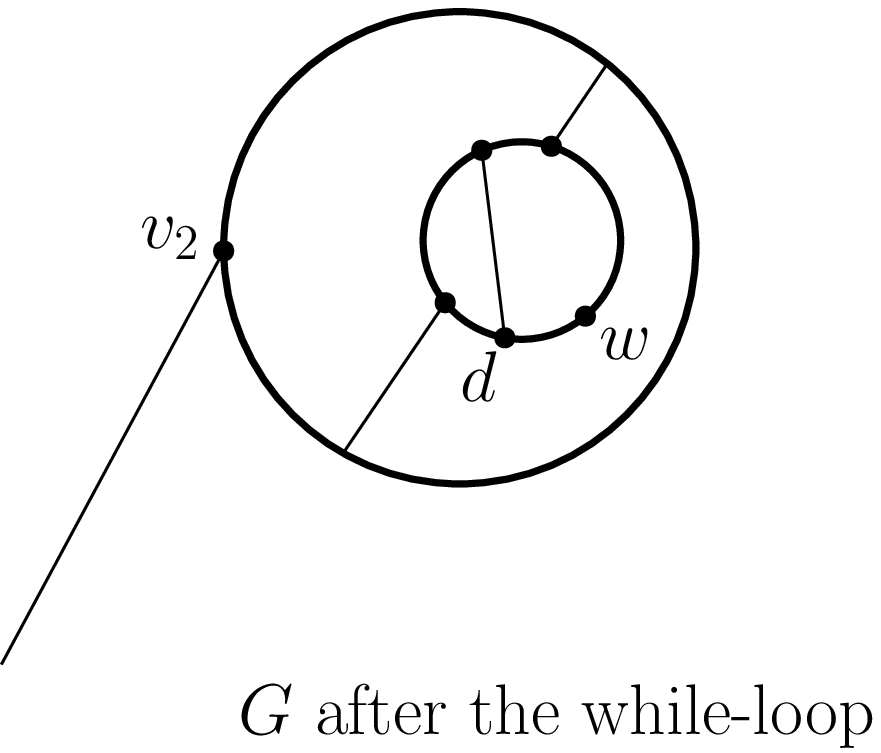}}\end{center}
    
    \item Finally, detach $w$ from $G$, set $T_2=G-w$, reset $G=G-w$, and insert $w$ next to $v_T$ in $\Gamma-G$ to get $T_1$. The result is shown in the figure below.
\end{enumerate} 

    \raisebox{0cm}{\includegraphics[scale=0.7]{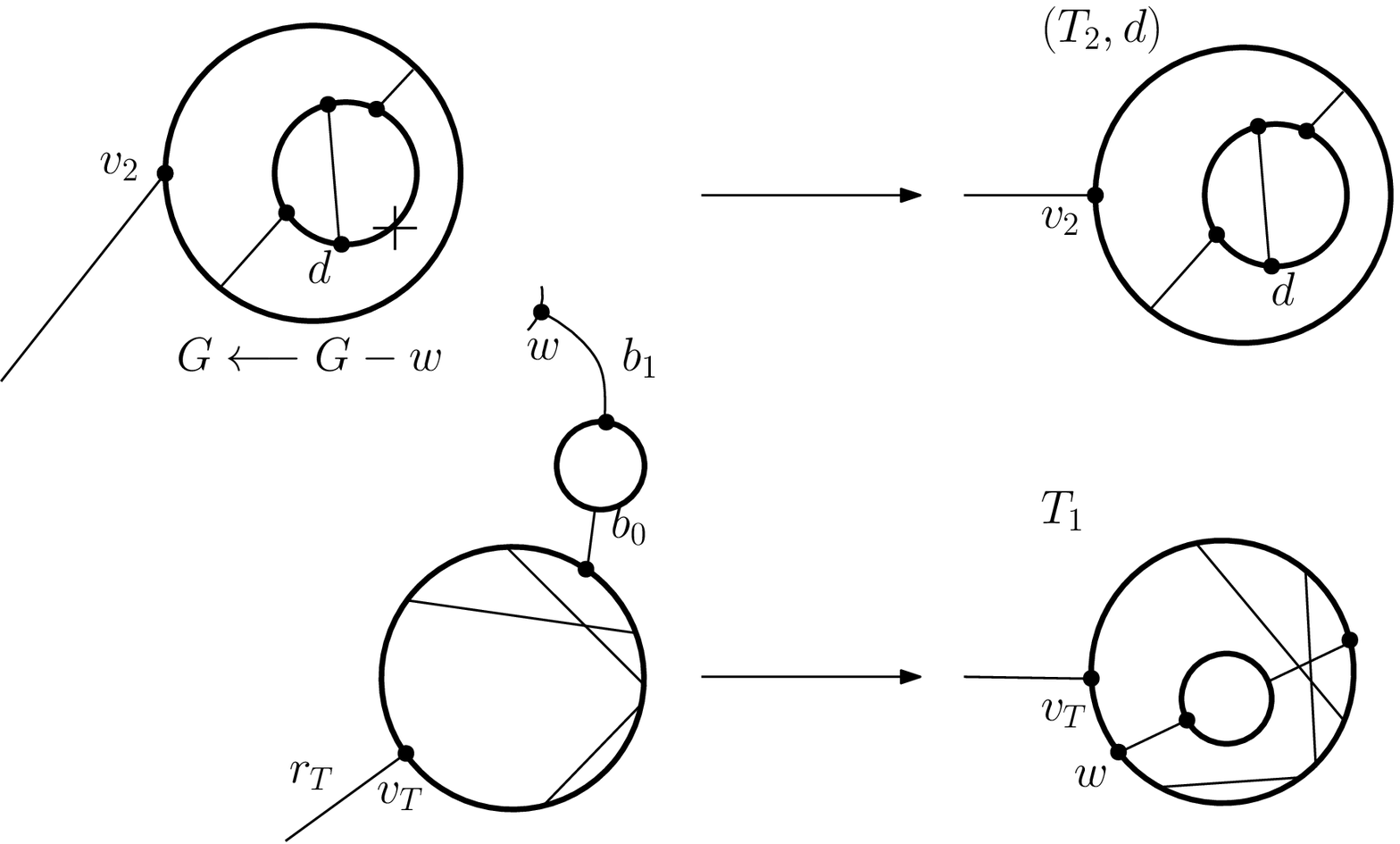}}
\end{enumerate}

\end{exm}

Theorem \ref{myresult1inYukawa} tells us that the two structures, Yukawa 1PI tadpoles and connected chord diagrams satisfy the same recurrence and hence there exists a bijection between the two classes obtained recursively. However, we still have to do one more bit of work to express this bijection. The bijection should respect the sizes, that is, a Yukawa 1PI tadpole with $n$ boson edges (with $n$ loops) should be mapped to a connected chord diagram on $n$ chords.

Moreover, as we can see, the vertices of a tadpole should correspond to the vertices in a connected chord diagram, and the fermion edges should accordingly correspond to the intervals. It has not been made clear so far  how we can order fermion edges in a way that resembles the natural linear order of the intervals in chord diagrams, an  order that is compatible with the decomposition in Theorem \ref{myresult1inYukawa}. Definition \ref{psiorder} below addresses this issue. To see why the order should be defined this way, we have to first recall the root share decomposition of connected chord diagrams. The root share decomposition has been mentioned and used in the proof of Lemma \ref{bij}, and now we need to define it properly:

\begin{dfn}[Root Share Decomposition]
The \textit{root share decomposition} is the map $\nabla: \mathcal{C}\longrightarrow \mathcal{C}\times(\mathcal{C},\mathbb{N})$ defined by 

\[\nabla C=(C_1,(C_2,k)),\]

where $1\leq k\leq |C_2|-1$, and $C_1,C_2$ are obtained as follows:
Among the components produced by removing the root of $C$, $C_2$ is taken to be the first in intersection order with the root. $k$ determines the interval in $C_2$ through which the root used to pass. $C_1$ is then obtained by removing the chords of $C_2$ from $C$. 

If $(C_1,(C_2,k))\in\mathcal{C}\times(\mathcal{C},\mathbb{N})$ is a valid triplet then $\nabla^{-1}(C_1,(C_2,k))$ is the connected chord diagram obtained by placing $C_1$ in the $k$th interval of $C_2$ and pulling the root of $C_1$ out to become the root of the whole diagram (i.e. place it to the left of the root of $C_2$). See Figure \ref{roootshare}.
\end{dfn}

\begin{figure}[h!]
    \centering
    \includegraphics[scale=1]{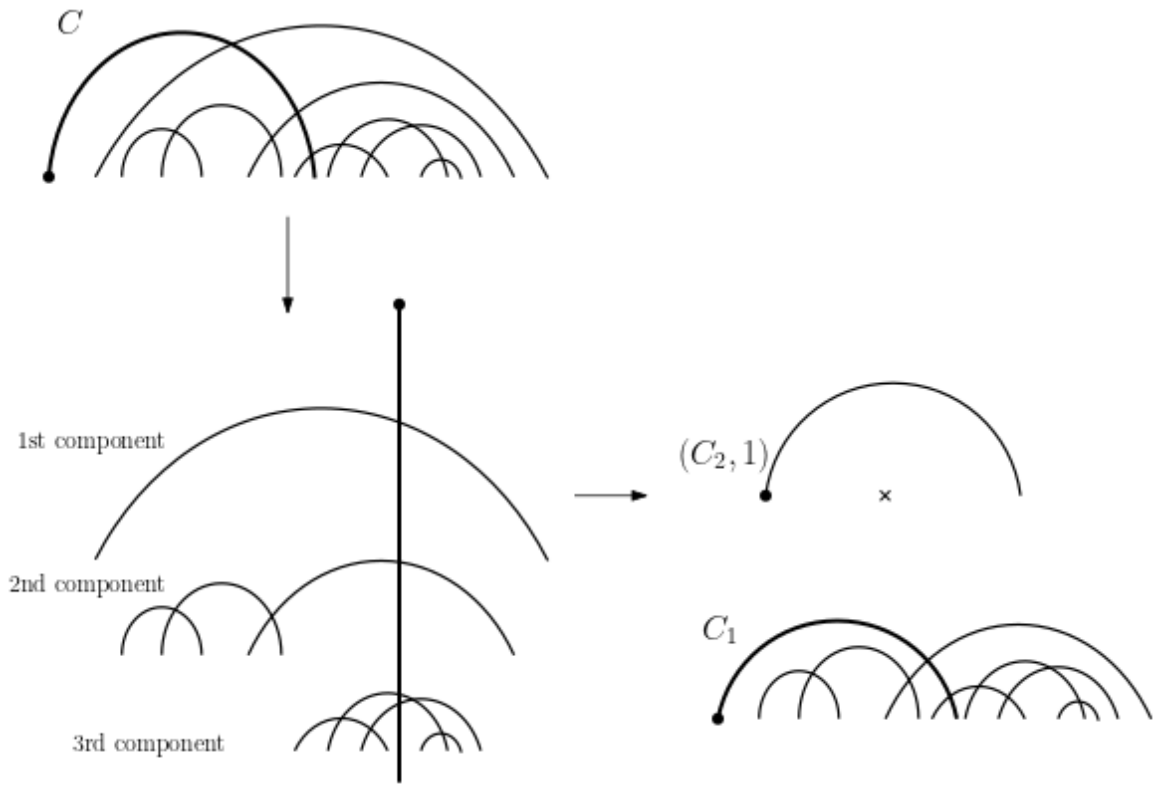}
    \caption{Root share decomposition of a connected chord diagram}
    \label{roootshare}
\end{figure}
\newpage
The effect of the root share decomposition on the linear order of the intervals in $C_1$ and $C_2$ leads us to the following order on fermion edges in a Yukawa 1PI tadpole.

\begin{dfn}[The $\Psi$-order]\label{psiorder}
We define the \textit{$\Psi$-order} on the fermion edges in a Yukawa 1PI tadpole inductively on the size $n$ of the tadpoles as follows:

 $\ast$ For a fermion edge $e$ in a tadpole $T$, its  $\Psi$-order takes values in $\mathbb{N}$ and is to be denoted by $\psi_T(e)$.
\begin{itemize}
   
    \item For $\mathcal{X}$ the unique fermion edge is ordered as 1.
    \item Assume all tadpoles of size less than $n$ are ordered and let $T$ be a tadpole of size $n$. To order $T$ do the following:
    \begin{enumerate}
        \item Apply $\Psi^{-1}$ to $T$ to determine a triplet $(T_1,(T_2,d))$. As before, let $v_T$, $v_1$ and $v_2$ be the leg vertices in $T$, $T_1$ and $T_2$ respectively. Let $w_1$ be the vertex next to $v_1$ in $T_1$. Also let $w_T$ be the vertex in $T$ next to $v_T$, and let $w_d$ be the vertex in $T$ next to the vertex $d$ (i.e. these are the vertices of subdivisions created by $\Psi$). Note that $w_d=v_T$ if $T_1=\mathcal{X}$.
        
        \item By the induction hypothesis it is assumed that we know the $\Psi$-ordering of $T_1$ and $T_2$. Let \[M=\max_{e\in T_1} \psi_{T_1}(e).\]
        
        \end{enumerate}
        \begin{enumerate}
        \item[Case 1:]  $T_1=\mathcal{X}$. Set $\psi_T(\text{Fermion}(v_T))=1$, and for any other fermion edge $e$ in $T$ define $\psi_T(e)$ as
        
        \begin{enumerate}
            \item $\psi_T(e)=\psi_{T_2}(e)+1$ if $ e\in T_2$ and $\psi_{T_2}(e)< \psi_{T_2}(\text{Fermion}(d))$.
            
            \item $\psi_T(\text{Fermion}(d))=\psi_{T_2}(\text{Fermion}(d))+1$ \quad (in $T$ this is the $dv_T$ edge).

            \item $\psi_T(\text{Fermion}(w_T))=\psi_{T_2}(\text{Fermion}(d))+2$.
            
            \item $\psi_T(e)=\psi_{T_2}(e)+2$ if $e\in T_2$.
            \end{enumerate}
        
        \item[Case 2:] $T_1\neq\mathcal{X}$. Set $\psi_T(\text{Fermion}(v_T))=1$, and for any other fermion edge $e$ in $T$ define $\psi_T(e)$ as
        
        \begin{enumerate}
            \item $\psi_T(e)=\psi_{T_2}(e)+1$ if $ e\in T_2$ and $\psi_{T_2}(e)< \psi_{T_2}(\text{Fermion}(d))$.
            
            \item $\psi_T(\text{Fermion}(d))=\psi_{T_2}(\text{Fermion}(d))+1$ \quad (in $T$ this is the $dw_2$ edge).

            \item $\psi_T(\text{Fermion}(w_T))=\psi_{T_1}(\text{Fermion}(w_1))+\psi_{T_2}(\text{Fermion}(d))$.

            \item $\psi_T(e)=\psi_{T_1}(e)+\psi_{T_2}(\text{Fermion}(d))$ \quad if $e\in T_1$. 
            
            \item $\psi_T(\text{Fermion}(w_d))=M+\psi_{T_2}(\text{Fermion}(d))+1$.
            
            \item $\psi_T(e)=\psi_{T_2}(e)+M+1$ if $e\in T_2$.
            
        \end{enumerate}
    \end{enumerate}
\end{itemize}

\end{dfn}

\begin{exm}\label{psiorderexm}
Two examples of the $\Psi$-order are given in Figures \ref{psiorderexm} and \ref{psiorderexm2}. In Figure \ref{psiorderexm3} we give the corresponding chord diagram for the tadpole in Figure \ref{psiorderexm2}. It is worth noticing how the two orders are constructed similarly.

\begin{figure}[h!]
    \centering
    \includegraphics[scale=0.65]{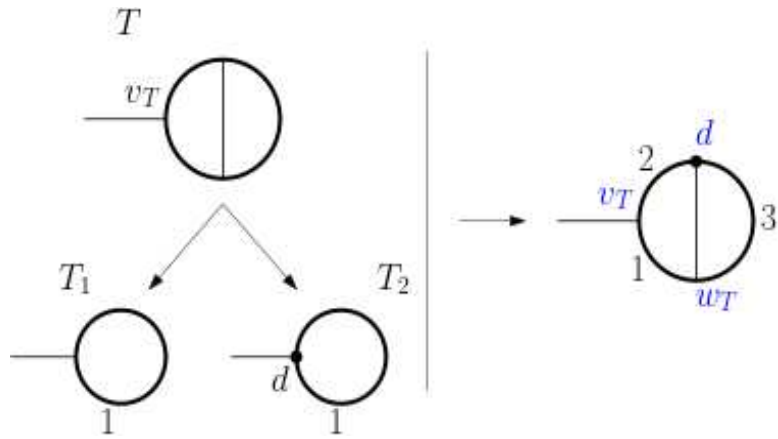}
    \caption{An example of the $\Psi$-order Case 1.}
    \label{psiorderexm44}
\end{figure}

\begin{figure}[h!]
    \centering
    \includegraphics[scale=0.85]{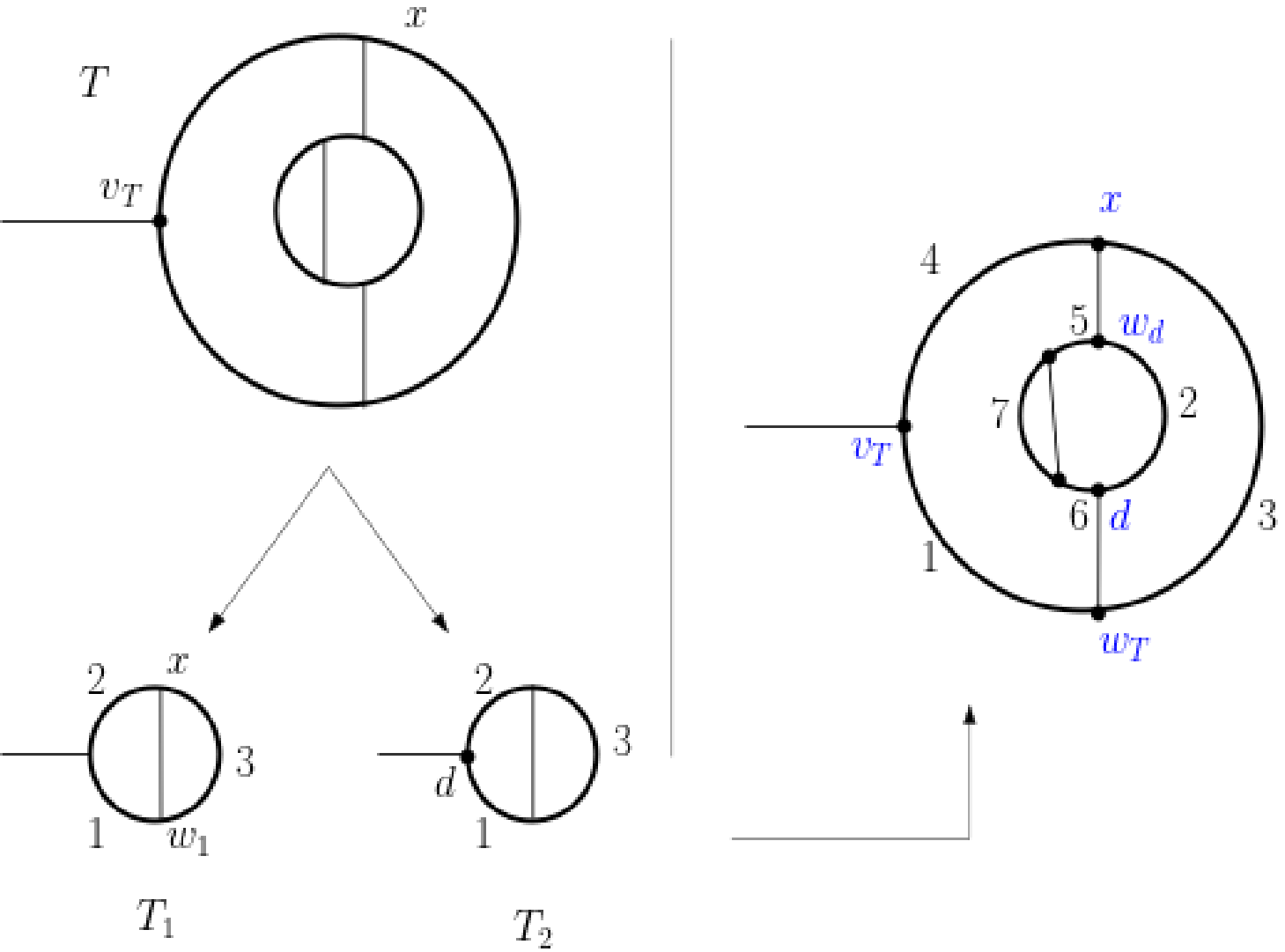}
    \caption{An example of the $\Psi$-order Case 2.}
    \label{psiorderexm2}
\end{figure}
\begin{figure}[H]
    \centering
    \includegraphics[scale=0.75]{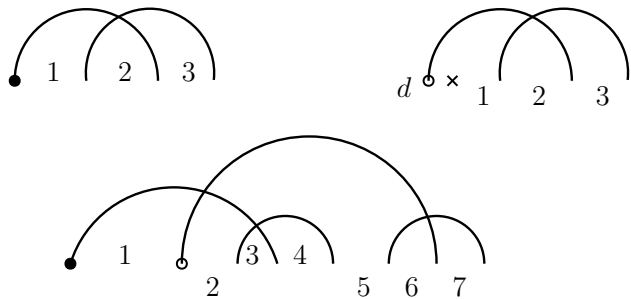}
    \caption{The corresponding chord diagrams for the graphs of Example \ref{psiorderexm}, in Figure \ref{psiorderexm2}.}
    \label{psiorderexm3}
\end{figure}

\end{exm}

Now we can finally express the bijection between Yukawa 1PI tadpoles and connected chord diagrams.

If we use a vertex to indicate an interval in a chord diagram, then we mean, as usual, the interval to the right of the vertex in the linear representation. Analogously we use the fermion edge that comes next to a vertex in counter-clockwise direction. Also note that the interval in the root share decomposition can not be the rightmost interval of the diagram. 

\subsubsection{An Explicit Bijection}

\begin{cor}\label{myresult2inYukawa}
Theorem \ref{myresult1inYukawa} can be used to give an explicit bijection $\Lambda$ between Yukawa theory 1PI tadpoles in (the class $\mathcal{U}_{10}$) and \;$\mathcal{C}$, the class of connected chord diagrams. Namely, $\Lambda: \mathcal{U}_{10}\longrightarrow \mathcal{C}$ is defined recursively as follows:

    \[\Lambda\big(\mathcal{X}\big)\;=\;\raisebox{0cm}{\includegraphics[scale=0.5]{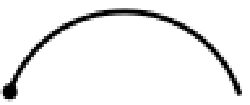}}\;;\qquad\text{and}\quad\]
    \[
    \Lambda(T)=\nabla^{-1}\big(\Lambda(T_1),(\Lambda(T_2),\psi(d)\big),
    \]
    
 where $\Psi^{-1}(T)=(T_1,(T_2,d))$, and, as defined earlier, $\mathcal{X}=\raisebox{-0.25cm}{\includegraphics[scale=0.5]{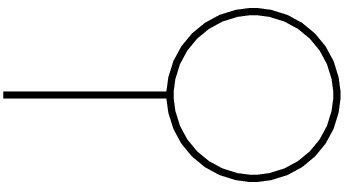}}$.
\end{cor}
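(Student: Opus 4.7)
The plan is to proceed by strong induction on the size $|T|$ of a tadpole, meaning the number of boson edges (including the external leg), which by Lemma \ref{property2yukawa} coincides with the loop number $\ell(T)$. The base case $|T|=1$ is immediate: the only tadpole is $\mathcal{X}$, and the definition assigns $\Lambda(\mathcal{X})$ to the unique connected chord diagram on one chord, so the base case is satisfied trivially. For the inductive step, let $|T|\geq 2$, apply $\Psi^{-1}$ to produce $(T_1,(T_2,d))$, and observe that $|T_1|+|T_2|=|T|$ with both $|T_i|\geq 1$, so the inductive hypothesis applies to $\Lambda(T_1)$ and $\Lambda(T_2)$.

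The main well-definedness check is that $\psi_{T_2}(d)$ is a legitimate input to $\nabla^{-1}$, i.e.\ that $1\leq \psi_{T_2}(d)\leq 2|T_2|-1$. To establish this, I will first prove as a separate sublemma that the $\Psi$-order $\psi_T$ from Definition \ref{psiorder} is in fact a bijection from the set of fermion edges of $T$ onto $\{1,2,\ldots,2|T|-1\}$. The target has the correct cardinality, because counting boson half-edges at the $|V_{3\text{-val}}(T)|=2|T|-1$ trivalent vertices and remembering that each trivalent vertex contributes exactly two fermion half-edges gives that $T$ has $2|T|-1$ fermion edges. The bijectivity claim is then proved by induction on $|T|$, walking through Cases 1 and 2 of Definition \ref{psiorder}: in each case, one verifies that the labels assigned form a contiguous block $\{1,2,\ldots,2|T|-1\}$ by checking that the shifts applied to the labels inherited from $T_1$ and $T_2$ fit together without gaps or overlaps, using the inductive ranges $\{1,\ldots,2|T_1|-1\}$ and $\{1,\ldots,2|T_2|-1\}$ and the identity $2|T|-1=(2|T_1|-1)+(2|T_2|-1)+1$.

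Once the sublemma is in hand, bijectivity of $\Lambda$ follows formally: $\Lambda$ is injective because it is the composition of the injective operations $T\mapsto (T_1,(T_2,d))$ (from Theorem \ref{myresult1inYukawa}, $\Psi^{-1}$ is a bijection), the componentwise inductive bijections $\Lambda\times\Lambda$, the (bijective) $\Psi$-order recoordinatisation of the marked vertex $d$ as an integer $k=\psi_{T_2}(d)$, and $\nabla^{-1}$ which is a bijection from $\mathcal{C}\times(\mathcal{C},\mathbb{N})_{1\leq k\leq 2|C_2|-1}$ to $\mathcal{C}\setminus\{|\}$. Surjectivity is proved by writing down the explicit inverse: given a non-trivial connected chord diagram $C$, apply $\nabla$ to get $(C_1,(C_2,k))$, recover $T_1=\Lambda^{-1}(C_1)$ and $T_2=\Lambda^{-1}(C_2)$ by induction, use surjectivity of the $\Psi$-order to locate the unique vertex $d\in T_2$ with $\psi_{T_2}(d)=k$, and then set $T=\Psi(T_1,(T_2,d))$; this $T$ satisfies $\Lambda(T)=C$ by construction.

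I expect the main obstacle to be the sublemma on the $\Psi$-order, specifically verifying in Case 2 of Definition \ref{psiorder} that the labels $\psi_T(e)=\psi_{T_1}(e)+\psi_{T_2}(\text{Fermion}(d))$ for $e\in T_1$ together with the shifted labels from $T_2$ exactly tile $\{1,\ldots,2|T|-1\}$ and that the special labels at $\text{Fermion}(v_T)$, $\text{Fermion}(w_T)$, $\text{Fermion}(d)$ and $\text{Fermion}(w_d)$ land in the gaps left by those shifts. This amounts to a careful accounting of how the two blocks of inherited labels are interleaved by the subdivisions that $\Psi$ performs; the motivation is that these splits are designed to mimic the effect of $\nabla^{-1}$ on the linear interval order of $\Lambda(T_2)$ once $\Lambda(T_1)$ is inserted into the $k$th interval and its root is pulled to the front, as illustrated by the parallel constructions in Figures \ref{psiorderexm2} and \ref{psiorderexm3}.
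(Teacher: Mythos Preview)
Your proposal is correct and follows essentially the same approach as the paper: the paper's proof is a two-line assertion that $\Lambda$ is well-defined by the uniqueness of the root-share decomposition and is a bijection because $\psi$, $\Psi$, and $\nabla$ are bijections. You unpack this same argument by strong induction, and you correctly identify that the only nontrivial ingredient left implicit in the paper is the sublemma that the $\Psi$-order $\psi_T$ is a bijection from the $2|T|-1$ fermion edges of $T$ onto $\{1,\ldots,2|T|-1\}$; the paper states this without proof, so your plan to verify it by walking through the cases of Definition~\ref{psiorder} is exactly the right gap to fill.
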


\begin{proof}
The proof is straightforward from the definitions of the maps involved. The map is well defined by the uniqueness of the root share decomposition, and is a bijection since $\psi$, $\Psi$, and $\nabla$ are bijections. 
\end{proof}

\begin{exm}\label{completeexmpsi}
Figure \ref{complete} illustrates all the steps from a tadpole $T$ to its corresponding connected chord diagram. Namely, as the corollary states, given a Yukawa 1PI tadpole $T$, the process can be described as follows:
\begin{enumerate}
    \item Use $\Psi^{-1}$ to decompose $T$ all the way down to copies of $\mathcal{X}$, with extra information about positions $d_i$ at each step
    \item Go up again step by step and use the recursive definition of $\psi$ to order all the fermion edges in the graph.
    \item Again start from the bottom to create the corresponding chord diagrams using $\nabla^{-1}$ and the values $\psi(d)$. The last insertion up gives $\Lambda(T)$.
\end{enumerate}

\begin{figure}[p]
    \centering
    \includegraphics[scale=0.8]{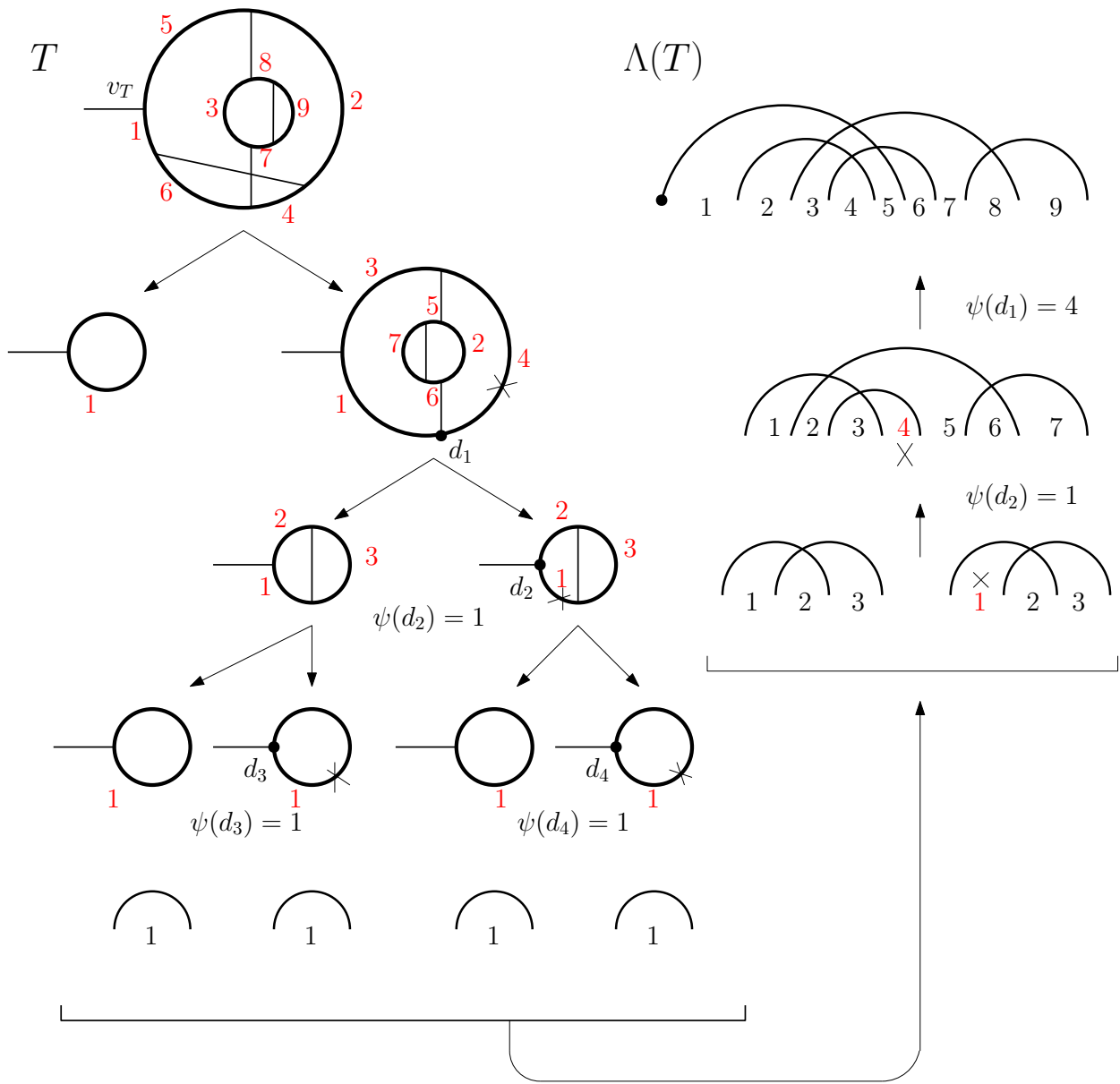}
    \caption{A complete example of the recursive calculation of $\Lambda$.}
    \label{complete}
\end{figure}
\end{exm}

\begin{rem}
It is surprising that, in light of the result in \cite{con}, the bijection between Yukawa 1PI tadpoles and connected chord diagrams gives a bijection between Yukawa 1PI tadpoles and rooted bridgeless combinatorial maps. This will be investigated in future work.
\end{rem}

\newpage\subsection{Yukawa Vacuum Graphs:\quad $\left.\partial_{\phi_c}^0(\partial_{\psi_c}\partial_{\bar{\psi}_c})^0G^{\text{Yuk}}(\hbar,\phi_c,\psi_c)\right|_{\phi_c=\psi_c=0}$}

Here we interpret line 1 in Table \ref{table4}. By definition, $\left.\partial_{\phi_c}^0(\partial_{\psi_c}\partial_{\bar{\psi}_c})^0G^{\text{Yuk}}(\hbar,\phi_c,\psi_c)\right|_{\phi_c=\psi_c=0}$ generates all 1PI Yukawa graphs with no external legs. In physics jargon these are called vacuum graphs. 

\begin{figure}[H]
    \centering
    \includegraphics[scale=0.75]{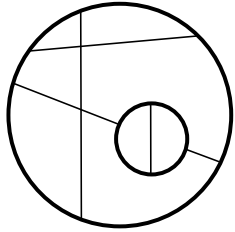}
    \caption{A 1PI vacuum graph in Yukawa theory.}
    \label{vaccc}
\end{figure}

By Lemma \ref{property1Yukawa} we know that for a vaccuum graph $\Gamma$ we will still have $|V(\Gamma)|=f$, the number of internal fermion edges. Consequently, we also still get $p=\ell(\Gamma)-1$ and $|V(\Gamma)|=2p$  (Euler's formula for the first), where $p$ is the number of internal boson edges. Let $V(x)$ be the generating series of 1PI Yukawa vacuum graphs counted by the number of boson edges.

Before giving the chord-diagrammatic interpretation, two things should be noticed. First, in Table \ref{table4} we have $$[\hbar^0]\left.\partial_{\phi_c}^0(\partial_{\psi_c}\partial_{\bar{\psi}_c})^0G^{\text{Yuk}}(\hbar,\phi_c,\psi_c)\right|_{\phi_c=\psi_c=0}=[\hbar^1]\left.\partial_{\phi_c}^0(\partial_{\psi_c}\partial_{\bar{\psi}_c})^0G^{\text{Yuk}}(\hbar,\phi_c,\psi_c)\right|_{\phi_c=\psi_c=0}=0,$$ since we do not consider the empty graph or the plain loop to be 1PI graphs. Second, the entries for the higher powers of $\hbar$ in Table \ref{table4} seem to coincide with the coefficients of \[\displaystyle\frac{C(x)^2}{2x}\]
(see Table \ref{table2}). The following proposition proves this conjecture and is a consequence of Theorem \ref{myresult1inYukawa}.

\begin{prop}\label{myresultykawavacuum}
 Let $V(x)$ be the generating series of 1PI Yukawa vacuum graphs counted by the number of boson edges. Then 
 \[V(x)=\displaystyle\frac{C(x)^2}{2x},\]
which implies that \[[\hbar^{n+1}]\left.\partial_{\phi_c}^0(\partial_{\psi_c}\partial_{\bar{\psi}_c})^0G^{\text{Yuk}}(\hbar,\phi_c,\psi_c)\right|_{\phi_c=\psi_c=0}=[x^n]\displaystyle\frac{C(x)^2}{2x}.\]
\end{prop}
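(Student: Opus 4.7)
The plan is to exhibit a weighted bijection between pairs (Yukawa $1PI$ vacuum graph $\Gamma$ with $n$ internal boson edges, distinguished fermion edge $f\in\Gamma$) and Yukawa $1PI$ tadpoles with $n+1$ chords. Via Theorem \ref{myresult1inYukawa}, which identifies the tadpole series with $C(x)$, this will yield the numerical identity $2nV_n = C_{n+1}$ for $n \geq 1$, which in turn integrates using Lemma \ref{cd}(iii) to give $V(x) = C(x)^2/(2x)$.

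For the bijection, given $(\Gamma,f)$, we plan to subdivide $f$ by inserting a new $3$-valent vertex $v^{*}$ in its interior (which supplies $v^{*}$ with the two internal fermion half-edges it needs) and attach an external boson half-edge at $v^{*}$. Since subdivision is a topological operation and attaching an external leg introduces no internal edge, the $2$-edge-connectivity of $\Gamma$ will persist, so the result is a $1PI$ tadpole with $n+1$ chords and the expected vertex count $|V(T)|=2n+1$. In the other direction, given a $1PI$ tadpole $T$ with $n+1$ chords ($n\geq 1$), we identify the external-leg vertex $v_T$, observe that the fermion loop through $v_T$ must have length at least $2$ (otherwise $v_T$ would be isolated from the rest of the graph, contradicting the connectivity of $T$), and then remove $v_T$ together with its external half-edge while smoothing the two fermion edges at $v_T$ into a single edge $f$. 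These two operations are visibly mutually inverse.

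Once the bijection is in place, a standard orbit--stabilizer computation will give
\[
\sum_{[(\Gamma,f)]} \frac{1}{|\mathrm{Aut}(\Gamma,f)|} \;=\; \sum_{[\Gamma]} \frac{|F(\Gamma)|}{|\mathrm{Aut}(\Gamma)|} \;=\; 2nV_n,
\]
since the $3$-valency Yukawa vertex condition forces $|V(\Gamma)| = |F(\Gamma)| = 2n$ in an $n$-boson vacuum graph (every vertex contributes exactly one boson half-edge and two fermion half-edges). By Theorem \ref{myresult1inYukawa} this common count equals $C_{n+1}$, yielding $2nV_n = C_{n+1}$, or equivalently $V'(x) = (C(x)-x)/(2x^2)$.

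The final step is to apply Lemma \ref{cd}(iii), namely $2xC(x)C'(x) = C(x)^2 + C(x) - x$, to compute
\[
\Big(\tfrac{C(x)^2}{2x}\Big)' \;=\; \frac{2xCC' - C^2}{2x^2} \;=\; \frac{(C^2+C-x)-C^2}{2x^2} \;=\; \frac{C(x)-x}{2x^2},
\]
matching $V'(x)$. Since both $V(x)$ and $C(x)^2/(2x)$ lie in $x\mathbb{R}[[x]]$ (no constant term), equal derivatives will force $V(x) = C(x)^2/(2x)$. The main obstacle is expected to be the careful verification that subdivision and smoothing preserve $1PI$ and the orbit--stabilizer bookkeeping of the $1/|\mathrm{Aut}|$ Feynman weights through the bijection; the generating-function computation itself is routine.
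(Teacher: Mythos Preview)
Your proposal is correct and follows essentially the same route as the paper. The paper states the bijection more tersely as the class identity $\mathcal{U}_{10}-\{\mathcal{X}\}=X\ast\mathcal{U}_{00}^\bullet$ (tadpoles minus the one-vertex tadpole correspond to a vacuum graph with a distinguished fermion edge, together with the extra external boson leg), reads off $C(x)-x=2x^2V'(x)$, and then integrates via Lemma~\ref{cd}(iii) exactly as you do. Your subdivision/smoothing description and orbit--stabilizer bookkeeping make explicit what the paper leaves implicit; in particular, note that both tadpoles and marked pairs $(\Gamma,f)$ have trivial automorphism groups (the external leg or the directed marked edge anchors one fermion loop, and directedness plus connectivity propagates rigidity to the rest), so the weighted and unweighted counts agree and the appeal to Theorem~\ref{myresult1inYukawa} goes through cleanly.
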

\begin{proof}
Let $\mathcal{U}_{00}$ be the class of Yukawa 1PI vacuum graphs. It is clear that $\mathcal{U}_{10}-\{\mathcal{X}\}=X\ast\mathcal{U}_{00}^\bullet$, where $\mathcal{U}_{00}^\bullet$ is the class of vacuum graphs with a distinguished fermion edge (or equivalently, with a distinguished vertex), and $X$ as usual is used for the single constituent whose generating function is $x$ (in this case it refers to an external boson edge to be inserted). Also note that our vacuum graphs must have at least one boson edge (the plain fermion loop is not considered 1PI).

Since there are two vertices for each chosen boson edge, we have that the generating function for $\mathcal{U}_{00}^\bullet$ is 
\[2xV^\prime(x).\]
Thus, by Theorem \ref{myresult1inYukawa}, we should have 
\[C(x)-x=2x^2V^\prime(x).\] 
Then, by Lemma \ref{cd} we know that $2xCC'=C^2+C-x$, and we get
\[
  V^\prime(x)=\displaystyle\frac{1}{2x^2}(C(x)-x)
    =\displaystyle\frac{1}{2x^2}(2xC(x)C^\prime(x)-C(x)^2)
    =\displaystyle\frac{1}{2}\frac{(x(C(x)^2)^\prime-C(x)^2)}{x^2},
\]
which can then be integrated to give the result.
\end{proof}


\subsection{Yukawa Graphs from $\left.\partial_{\phi_c}^2(\partial_{\psi_c}\partial_{\bar{\psi}_c})^0G^{\text{Yuk}}(\hbar,\phi_c,\psi_c)\right|_{\phi_c=\psi_c=0}$}

The Yukawa 1PI graphs counted by loop number in line 3 of Table \ref{table4} are the graphs with exactly two external legs, each of which are boson-type. Again, for any such graph we have by Lemma \ref{property1Yukawa} and Euler's formula that $|V(\Gamma)|=f$, $p=\ell(\Gamma)-1$ and $|V(\Gamma)|=2p$;
 where $p$ is the number of internal boson edges and $f$ is the number of internal fermion edges. 
 
 By their definition, we see that these graphs are simply tadpoles with a distinguished fermion edge at which a \textit{second} external boson leg is inserted. 
 
 \begin{rem}\label{importremark}
 By the word `second' above we literally mean that the roles of the two boson edges are physically different. This will be reflected in that we will always assume that one boson leg is the `left' or `first' one. For example, the graphs 
 \begin{center}
     \raisebox{0cm}{\includegraphics[scale=0.87]{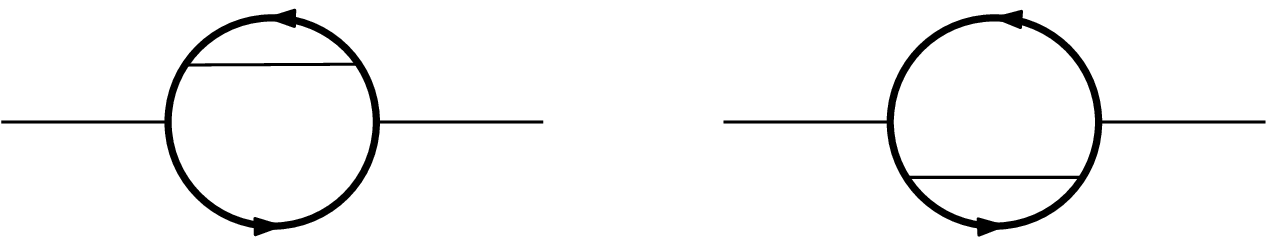}}
 \end{center}
 are considered different even though one of them can be rotated to get the other one.
 \end{rem}
 
 However, in  the process of distinguishing a fermion edge of a tadpole, we have to exclude the fermion edge immediately before the tadpole's leg vertex as it will yield the same graph if the next fermion edge is chosen instead. Thus the generating function of these graphs, counted by the number of all boson edges is given by
 \begin{equation}\label{myvacuumeq1}
     U_{20}(x)=x(2xT^\prime(x)-T(x))=x(2xC'(x)-C(x)),
 \end{equation} where $T$ is as in Theorem \ref{myresult1inYukawa}, the generating function for Yukawa 1PI tadpoles counted by the number of all boson edges. 
 \begin{figure}[H]
     \centering
     \includegraphics[scale=0.7]{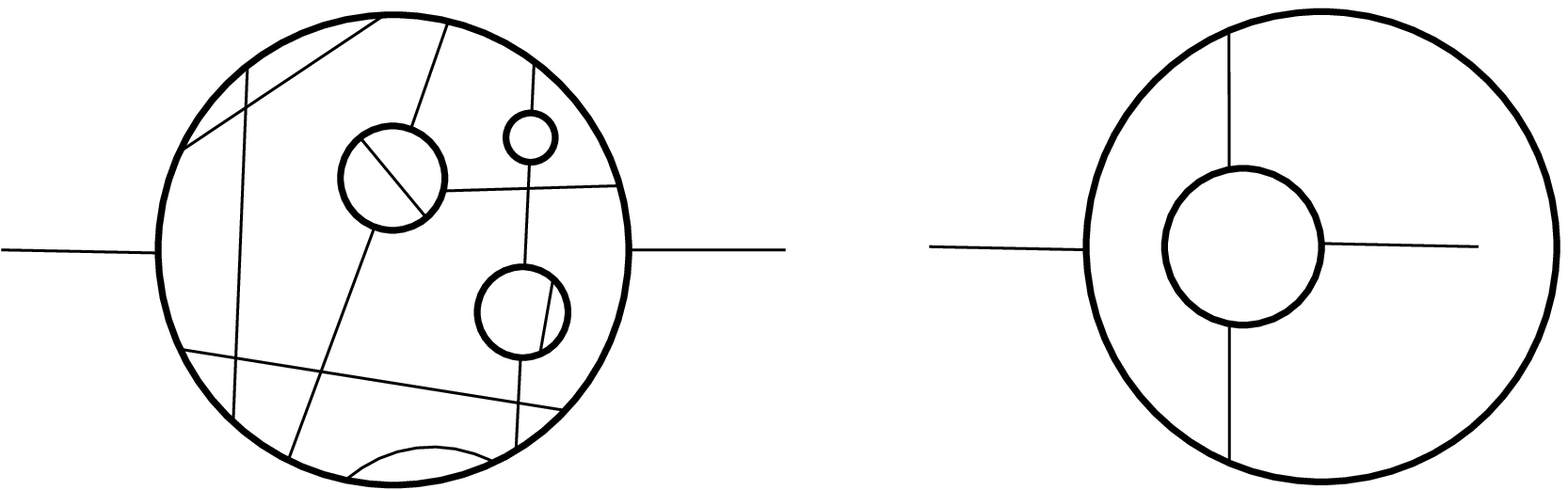}
     \caption{Two graphs generated by $\left.\partial_{\phi_c}^2(\partial_{\psi_c}\partial_{\bar{\psi}_c})^0G^{\text{Yuk}}(\hbar,\phi_c,\psi_c)\right|_{\phi_c=\psi_c=0}$.}
     \label{twoleg}
 \end{figure}
 
\begin{rem}
 Notice that the two boson legs are not necessarily on the same fermion loop, see the second graph in Figure \ref{twoleg} for example.
\end{rem}

Thus, equation \ref{myvacuumeq1} shows that 
\begin{align}\label{vacuumeq}
    [\hbar^{n}]\left.\partial_{\phi_c}^2(\partial_{\psi_c}\partial_{\bar{\psi}_c})^0G^{\text{Yuk}}(\hbar,\phi_c,\psi_c)\right|_{\phi_c=\psi_c=0}&=[x^{n+1}]U_{20}(x)=\;x(2xT^\prime(x)-T(x))\\&=[x^n](2xC'(x)-C(x)),
\end{align}
which is also verified for the first coefficients  by comparing Tables \ref{table1} and \ref{table4}. Further, by Proposition \ref{myproposition2connected}, it follows that  
\begin{align}\label{myequ1}
    [\hbar^{n}]\left.\partial_{\phi_c}^2(\partial_{\psi_c}\partial_{\bar{\psi}_c})^0G^{\text{Yuk}}(\hbar,\phi_c,\psi_c)\right|_{\phi_c=\psi_c=0}&=[x^n] \;\;\displaystyle\frac{C(x)^2}{x} \;\left[\left. \displaystyle\frac{C_{\geq2}(t)}{t^2}\right|_{t=C(x)^2/x}\right], \;\text{or equivalently}\nonumber\\
    U_{20}(x)&=C(x)^2 \;\left[\left. \displaystyle\frac{C_{\geq2}(t)}{t^2}\right|_{t=C(x)^2/x}\right].
\end{align}

This equation will be useful in providing a chord-diagrammatic interpretation for graphs generated by $\left.\partial_{\phi_c}^1(\partial_{\psi_c}\partial_{\bar{\psi}_c})^1G^{\text{Yuk}}(\hbar,\phi_c,\psi_c)\right|_{\phi_c=\psi_c=0}$, as we shall see next. For the next section we will need to give a name for the class of graphs considered here, and, as it became our good habit, we shall denote it by $\mathcal{U}_{20}$.

\subsection{Yukawa Graphs from $\left.\partial_{\phi_c}^1(\partial_{\psi_c}\partial_{\bar{\psi}_c})^1G^{\text{Yuk}}(\hbar,\phi_c,\psi_c)\right|_{\phi_c=\psi_c=0}$}
The graphs generated by $\left.\partial_{\phi_c}^1(\partial_{\psi_c}\partial_{\bar{\psi}_c})^1G^{\text{Yuk}}(\hbar,\phi_c,\psi_c)\right|_{\phi_c=\psi_c=0}$ are the Yukawa 1PI graphs with vertex-type residue. Line 5 in Table \ref{table4} gives the number of these graphs, sized with loop number, up to size 5. These have two external fermion legs in addition to one boson leg (see Figure \ref{exaaample1Y}).
\begin{figure}[H]
     \centering
     \includegraphics[scale=0.6]{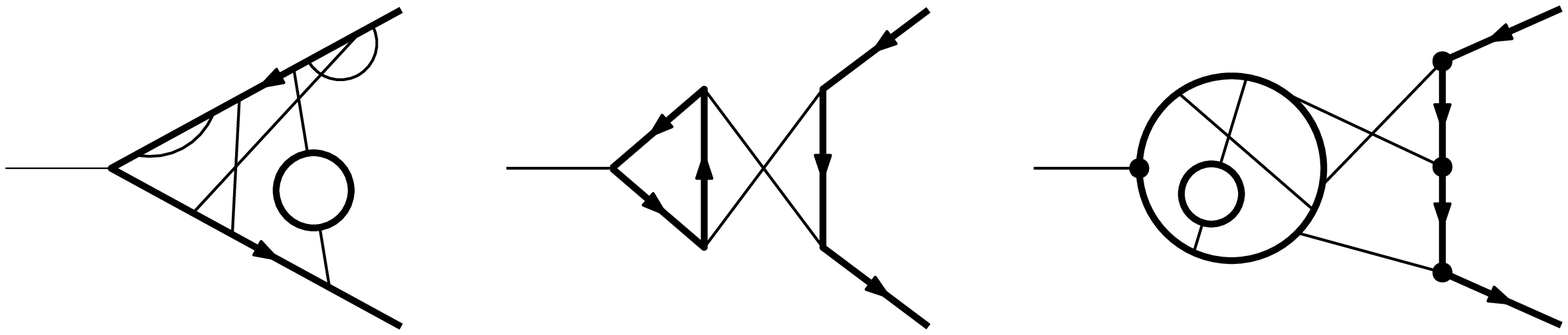}
     \caption{Examples of graphs generated by $\left.\partial_{\phi_c}^1(\partial_{\psi_c}\partial_{\bar{\psi}_c})^1G^{\text{Yuk}}(\hbar,\phi_c,\psi_c)\right|_{\phi_c=\psi_c=0}$.}
     \label{exaaample1Y}
 \end{figure}
Notice that if the ends of the two fermion external legs were identified we still wouldn't get a general tadpole. The reason for this is that, since the graph is  1PI, we can not have something like the one in Figure \ref{forboddengraph} below.
\begin{figure}[H]
    \centering
    \raisebox{0cm}{\includegraphics[scale=0.54]{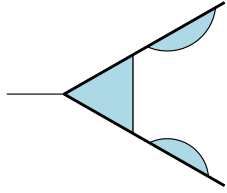}}
    
    \caption{A forbidden graph.}
    \label{forboddengraph}
\end{figure}
Knowing this is exactly the way we get the chord-diagrammatic interpretation. We will let $\mathcal{U}_{11}$ denote the class of al graphs generated by $\left.\partial_{\phi_c}^1(\partial_{\psi_c}\partial_{\bar{\psi}_c})^1G^{\text{Yuk}}(\hbar,\phi_c,\psi_c)\right|_{\phi_c=\psi_c=0}$. Before proceeding to the next theorem recall that by Lemma \ref{property1Yukawa} we have that for any $\Gamma\in\mathcal{U}_{11}$ 
$|V(\Gamma)|=f+1$, and hence by Euler's formula $p=\ell(\Gamma)$, where $f$ (and $p$) is the number of internal fermion (boson) edges. 

\begin{nota}\label{alinota1}
In representing the graphs in $\mathcal{U}_{11}$, we still stick to the counter-clockwise convention, even for the unique path of fermion edges. We shall always represent the graphs in $\mathcal{U}_{11}$ by fixing the boson external leg to the left, and then the fermion external half-edge directed towards the boson-leg vertex will be called the \textit{upper end} and will be denoted with $u_\Gamma$; on the other hand the fermion external half-edge directed away from the boson-leg vertex will be called the \textit{lower end} and will be denoted by $l_\Gamma$.
\end{nota}
\begin{center}
\raisebox{0cm}{\includegraphics[scale=0.6]{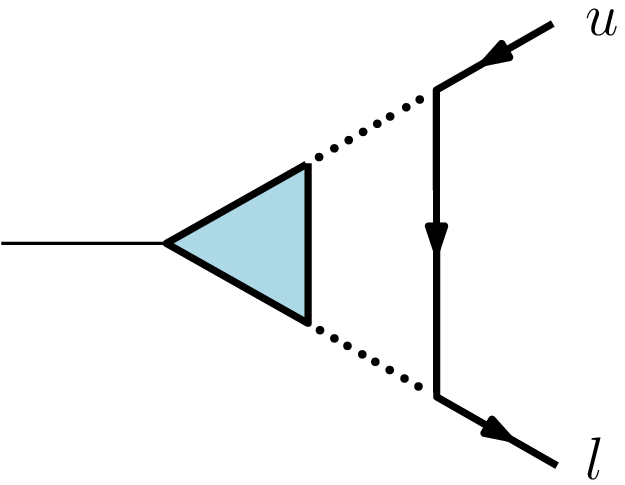}}
\end{center}

\begin{thm}
\label{mypropyukawa3}\label{ali}
 Let $\mathcal{U}_{11}$ be the class of Yukawa 1PI graphs (with vertex-type residue) generated by $\left.\partial_{\phi_c}^1(\partial_{\psi_c}\partial_{\bar{\psi}_c})^1G^{\text{Yuk}}(\hbar,\phi_c,\psi_c)\right|_{\phi_c=\psi_c=0}$, and let $U_{11}(x)$ be the generating series of these graphs, counted by the number of all boson edges. Then  
 
 \[U_{11}(x)=x\;\left. \displaystyle\frac{C_{\geq2}(t)}{t^2}\right|_{t=C(x)^2/x}.\]
\end{thm}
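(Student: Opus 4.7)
Combined with the identity $U_{20}(x) = C(x)^{2}\cdot\frac{C_{\geq 2}(C^{2}/x)}{(C^{2}/x)^{2}}$ from equation \ref{myequ1}, the theorem is equivalent to the identity
\[ C(x)^{2}\,U_{11}(x) \;=\; x\,U_{20}(x). \]
The plan is to establish this identity. First I would perform a short algebraic reduction: use equation \ref{myvacuumeq1} to write $U_{20}(x)=x(2xC'(x)-C(x))$, apply Lemma \ref{cd}(iii) in the form $2xC'-C=(C-x)/C$, and substitute. This reduces the theorem to the compact identity
\[ U_{11}(x) \;=\; \frac{x^{2}\,\bigl(C(x)-x\bigr)}{C(x)^{3}}, \]
or, cleared of denominators, $C(x)^{3}\,U_{11}(x)=x^{2}\bigl(C(x)-x\bigr)$. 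The right-hand side has a transparent combinatorial meaning: $C(x)-x$ enumerates nontrivial connected chord diagrams, which under Corollary \ref{myresult2inYukawa} correspond bijectively to tadpoles in $\mathcal{U}_{10}\setminus\{\mathcal{X}\}$.

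The core step would then be to construct a size-preserving bijection
\[ \mathcal{C}\times\mathcal{C}\times\mathcal{C}\times\mathcal{U}_{11} \;\longleftrightarrow\; \mathcal{X}\times\mathcal{X}\times\bigl(\mathcal{U}_{10}\setminus\{\mathcal{X}\}\bigr), \]
where both sides are counted by total number of boson edges. Given a quadruple $(C_1,C_2,C_3,\Gamma)$, with $T_{i}=\Lambda^{-1}(C_{i})$ the three tadpoles associated to the $C_{i}$'s via Corollary \ref{myresult2inYukawa}, I would glue $T_{1}$ and $T_{2}$ onto the external fermion half-edges $u_{\Gamma}$ and $l_{\Gamma}$ of $\Gamma$ (each $T_{i}$ cut open along a canonical fermion edge of the loop of its boson-leg vertex $v_{T_{i}}$ so as to expose two fermion half-edges to identify with $u_{\Gamma}$ or $l_{\Gamma}$), and then use $T_{3}$ together with the two atomic factors to absorb the external boson leg of $\Gamma$ and record the two insertion sites, producing a single tadpole in $\mathcal{U}_{10}\setminus\{\mathcal{X}\}$. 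The inverse would start from a tadpole $T$ and, in the spirit of Algorithm $\Psi^{-1}$, iteratively peel off three tadpole pieces around the boson-leg vertex until only a $\mathcal{U}_{11}$-core remains.

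The main obstacle will be making the gluing operations canonical and reversible, and in particular verifying that the 1PI property is preserved throughout the decomposition; this will require a bridge analysis analogous to that of Algorithm $\Psi^{-1}$ in the proof of Theorem \ref{myresult1inYukawa}. A separate subtlety is that the boson-leg vertex $v_{\Gamma}$ of $\Gamma\in\mathcal{U}_{11}$ may lie either on the unique fermion path from $l_{\Gamma}$ to $u_{\Gamma}$ or on one of the hanging fermion loops of $\Gamma$, and the bijection must treat both topological cases uniformly. Should a direct bijection prove intractable, a fallback is to derive a recursive decomposition of $\mathcal{U}_{11}$-graphs in the style of the root share decomposition used in Corollary \ref{myresult2inYukawa}, thereby producing a functional equation for $U_{11}(x)$ that can be checked against the reduced target $x^{2}(C-x)/C^{3}$ using Lemma \ref{cd} and Proposition \ref{myproposition2connected}.
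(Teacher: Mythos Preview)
Your reduction to $C(x)^{2}\,U_{11}(x)=x\,U_{20}(x)$ is exactly the identity the paper proves, and your idea of gluing two tadpoles onto the fermion half-edges $u_{\Gamma},l_{\Gamma}$ of $\Gamma\in\mathcal{U}_{11}$ is precisely the paper's bijection. The paper stops there: it constructs $M:\mathcal{U}_{11}\ast(\mathcal{U}_{10}\ast\mathcal{U}_{10})\to\mathcal{X}\ast\mathcal{U}_{20}$ by (i) cutting each tadpole $T_i$ open at the fermion edge adjacent to its leg vertex $v_{T_i}$ (immediately \emph{before} $v_1$ for $T_1$, immediately \emph{after} $v_2$ for $T_2$), (ii) identifying the freed fermion half-edges with $u_{\Gamma}$ and $l_{\Gamma}$, and (iii) merging $v_1$ with $v_2$ and dropping one of the two resulting boson legs, which accounts for the factor $\mathcal{X}$. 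The inverse recovers $f_1,f_2$ as the last (resp.\ first) fermion edge on the upper (resp.\ lower) half of $\mathrm{Loop}(w_r)$ whose removal disconnects the graph once the edge at $w_r$ is cut. No third tadpole is needed.

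Your further algebraic reduction to $C^{3}U_{11}=x^{2}(C-x)$ is correct but counterproductive: the extra factor of $C$ forces you to supply a second bijection $\mathcal{U}_{10}\times\mathcal{U}_{20}\leftrightarrow\mathcal{X}\times(\mathcal{U}_{10}\setminus\{\mathcal{X}\})$, which is equivalent to re-proving $2xTT'=T^{2}+T-x$ at the tadpole level --- i.e., rerunning Algorithm~$\Psi$ from Theorem~\ref{myresult1inYukawa}. Your description of this step (``use $T_{3}$ together with the two atomic factors to absorb the external boson leg'') is exactly where the plan goes vague, and the naive move of identifying $T_{3}$'s boson leg with one of the $\mathcal{U}_{20}$ legs creates a boson bridge and destroys 1PI-ness. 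So drop the reduction and the third tadpole: prove $T^{2}U_{11}=x\,U_{20}$ directly with the two-tadpole gluing you already outlined, then invoke equation~(\ref{myequ1}) and $T=C$.
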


\begin{proof}
We start with another type of graphs, namely with the class $\mathcal{U}_{20}$ of the previous section. We will describe a bijection 
\[M:\mathcal{U}_{11}\ast (\mathcal{U}_{10}\ast\mathcal{U}_{10})\longrightarrow \mathcal{X}\ast\mathcal{U}_{20}.\]
The construction is simple:
Assume $(\Gamma,(T_1,T_2))$ is a triplet from $\mathcal{U}_{11}\ast (\mathcal{U}_{10}\ast\mathcal{U}_{10})$. Let $u$ and $l$ be the upper and lower ends of $\Gamma$ as described in Notation \ref{alinota1}. Now take the tadpoles $T_1$ and $T_2$ and do the following:
\begin{enumerate}
    \item For $T_1$, let $v_1$ be the vertex at the boson leg as before. Let $f_1$ be the fermion edge immediately before $v_1$ on Loop$(v_1)$. Detach $f_1$ from $v_1$ and denote the unique resulting graph with $u(T_1)$. This can be depicted as in Figure \ref{bell1fig} below.
    \begin{figure}[H]
        \centering
       \raisebox{0cm}{\includegraphics[scale=0.73]{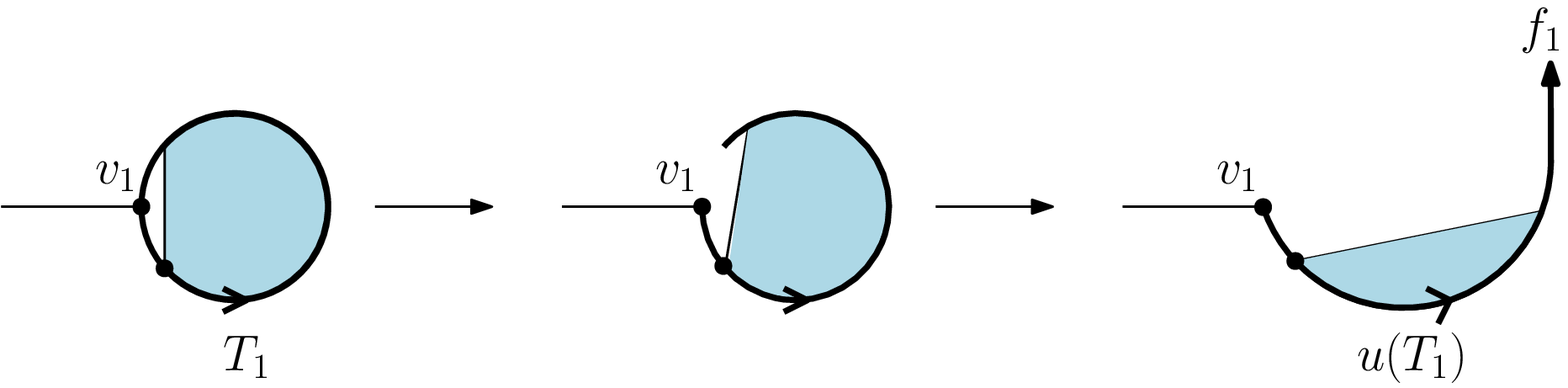}}
       \caption{$u(T_1)$}
        \label{bell1fig}
    \end{figure}
    \item For $T_2$, let $v_2$ be the vertex at the boson leg as before. Let $f_2$ be Fermion$(v_2)$, the fermion edge immediately next to $v_2$ on Loop$(v_2)$. Detach $f_2$ from $v_2$ and denote the unique resulting graph with $l(T_2)$. This can be depicted as in Figure \ref{bell2fig} below.
    \begin{figure}[H]
        \centering
       \raisebox{0cm}{\includegraphics[scale=0.73]{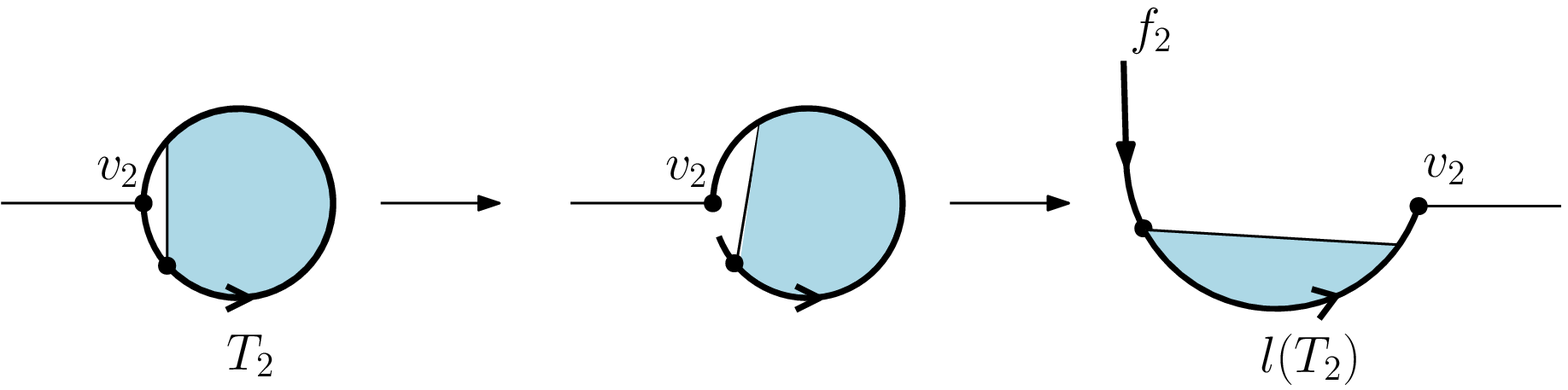}}
       \caption{$l(T_2)$}
        \label{bell2fig}
    \end{figure}
    
    \item Identify $f_1$ with $u$ of $\Gamma$ and identify $f_2$ with $l$. Denote the resulting graph with $\tilde{\Gamma}$
    \item Identify the vertices $v_1$ and $v_2$ in $\tilde{\Gamma}$.
    \item By removing one of the external boson legs we get $M(\Gamma, T_1,T_2)$.
    The process described here can be depicted as in Figure \ref{figm} below.
    \begin{figure}[H]
        \centering
       \raisebox{0cm}{\includegraphics[scale=0.7]{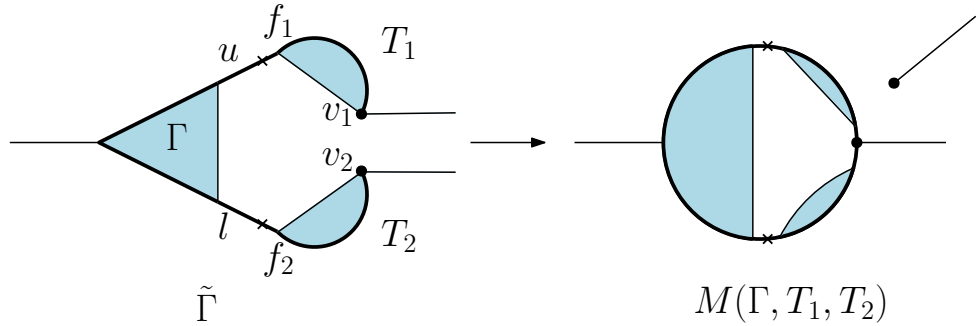}}
       \caption{$M(\Gamma, T_1,T_2)$}
        \label{figm}
    \end{figure}
\end{enumerate}

By our construction, and by the uniqueness of the representation of graphs described in Notation \ref{alinota1}, the map $M$ described this way is well-defined. Moreover, it is reversible: Assume we are given a graph $G$ in $\mathcal{U}_{20}$ in canonical representation (i.e. drawn according to our counter-clockwise convention). Remember from Remark \ref{importremark}, that the representation of $G$ is unique and one boson leg is the `left' external leg. Let $w_l$ be and $w_r$ be the corresponding vertices at the left and right  boson legs respectively. Let us refer to the half fermion loop from $w_r$ to $w_l$ by the \textit{upper half loop}, and similarly we will refer to the half fermion loop from $w_l$ to $w_r$ by the \textit{lower half loop} (see Figure \ref{notaM}).  Now we do the following:
\begin{enumerate}
    \item Starting from $w_r$, remove Fermion$(w_r)$ from the graph $G$, and move along the rest of the upper half loop (direction is as before) searching for the \underline{\textbf{last}} fermion edge whose removal disconnects $G-$Fermion$(w_r)$. Stop the search at $w_l$. Notice that it may happen that no such edge exists. If found, denote this edge by $f_1$. 
    If such an edge doesn't exist we set $f_1=$ Fermion$(w_r)$.  
    
    \item Similarly, starting from $w_l$, remove Fermion$(w_l)$ from the graph $G$, and move along the rest of the lower half loop searching for the \underline{\textbf{first}} fermion edge whose removal disconnects $G-$Fermion$(w_l)$. Stop the search at $w_r$. Notice that it may happen that no such edge exists. If found, denote this edge by $f_2$. 
    If such an edge doesn't exist we set $f_2 =$ (fermion edge immediately before $w_r$).
    
    \item Cut at $f_1$ and $f_2$, and identify their ends with $w_r$ such that the direction of $f_1$ with respect to $w_r$ is counter-clockwise, whereas the direction of $f_2$ with respect to $w_r$ is to be made clockwise.
    
    \item Obtain $T_1$ and $T_2$ by splitting $w_r$ and its boson leg into two copies. The remnant of $G$ is $\Gamma$. 
    
    \begin{figure}[H]
        \centering
       \raisebox{0cm}{\includegraphics[scale=0.77]{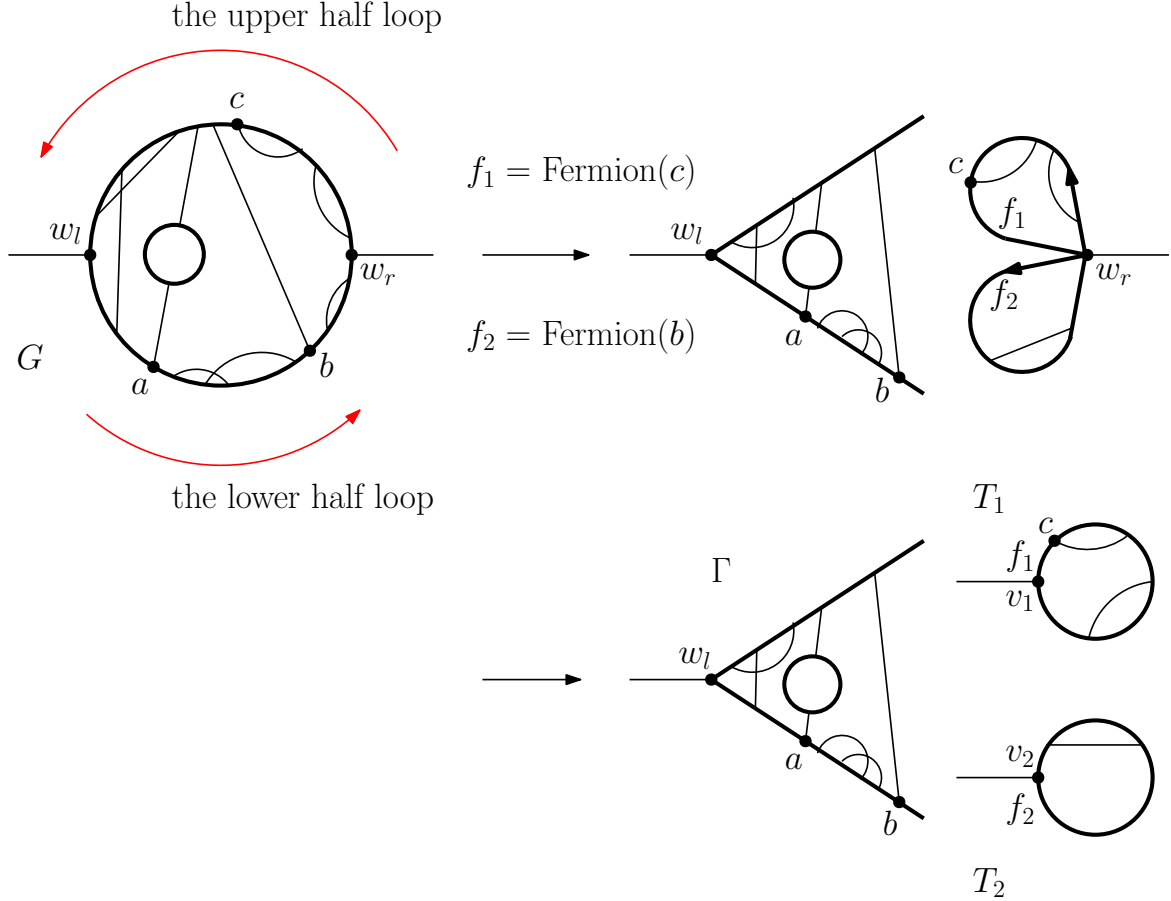}}
       \caption{Calculating $M^{-1}(G)$.}
        \label{notaM}
    \end{figure}
\end{enumerate}

It is worth noting that in Figure \ref{notaM} $f_2$ is Fermion$(b)$ and is not Fermion$(a)$ as can be checked using the definition.

Thus, the map $M:\mathcal{U}_{11}\ast (\mathcal{U}_{10}\ast\mathcal{U}_{10})\longrightarrow \mathcal{X}\ast\mathcal{U}_{20}$ is a bijection. Consequently, on the level of generating functions we will have 
\[
    U_{11}(x) T(x)^2= x \;U_{20}(x),
\]

and hence, by equation (\ref{myequ1}) and Theorem \ref{myresult1inYukawa}, we get  
\[U_{11}(x) C(x)^2=x\;C(x)^2 \;\left[\left. \displaystyle\frac{C_{\geq2}(t)}{t^2}\right|_{t=C(x)^2/x}\right],\]
that is,
\begin{equation}\label{myeq2}
    U_{11}(x)=\left. \displaystyle\frac{C_{\geq2}(t)}{t^2}\right|_{t=C(x)^2/x}, 
\end{equation}
as desired. 
\end{proof}

\begin{rem}\label{remarkafter3legged}

Recall that, from our work in Section \ref{functionalrecurrence2connected}, the RHS of equation (\ref{myeq2}) is the generating function for connected chord diagrams with the extra properties that
\begin{enumerate}
    \item neither the root chord nor the chord of the last end vertex (in the linear order) are cuts, and
    \item neither the root nor the last end vertex (in the linear order) are contained in any reason of connectivity-1.
\end{enumerate}

It is still not very clear what is the direct way to manifest this structure in the world of the graphs in $\mathcal{U}_{11}$, that is, the graphs generated by $\left.\partial_{\phi_c}^1(\partial_{\psi_c}\partial_{\bar{\psi}_c})^1G^{\text{Yuk}}(\hbar,\phi_c,\psi_c)\right|_{\phi_c=\psi_c=0}$.
\end{rem}

Compare Tables \ref{table2} and \ref{table4} for a numeric verification of Theorem \ref{ali} for the first few terms.

\subsection{Yukawa  Graphs from $\left.\partial_{\phi_c}^0(\partial_{\psi_c}\partial_{\bar{\psi}_c})^1G^{\text{Yuk}}(\hbar,\phi_c,\psi_c)\right|_{\phi_c=\psi_c=0}$}

Let $\mathcal{U}_{01}$ be the class of Yukawa 1PI graphs generated by $\left.\partial_{\phi_c}^0(\partial_{\psi_c}\partial_{\bar{\psi}_c})^1G^{\text{Yuk}}(\hbar,\phi_c,\psi_c)\right|_{\phi_c=\psi_c=0}$. Line 4 in Table \ref{table4} gives the number of these graphs, sized with loop number, up to size 5. These are graphs with two external fermion legs and with no  boson leg. 

 For any $\Gamma\in\mathcal{U}_{01}$, we have by Lemma \ref{property1Yukawa} that
$|V(\Gamma)|=f+1$, and hence by Euler's formula $p=\ell(\Gamma)$, where $f$ (and $p$) is the number of internal fermion (boson) edges. In particular, 
\begin{equation}\label{numinternalfermion}
    f=2p-1.
\end{equation}
We let $U_{01}(x)$ be the generating function of the graphs in $\mathcal{U}_{01}$ counted by the number of boson edges.

There are two ways to think about this type of graphs, both can be used to obtain $U_{01}(x)$:
\begin{enumerate}
    \item It is clear that the graphs in $\mathcal{U}_{11}$, which were considered in the last section, are the rooted versions of the graphs in $\mathcal{U}_{01}$, namely,  except for the one-vertex graph, every graph in $\mathcal{U}_{11}$  is obtained from a unique graph in $\mathcal{U}_{01}$ by distinguishing an internal fermion edge and inserting a boson leg. This means that, on the level of generating functions:

\begin{equation}\label{letsdoit}
    U_{11}(x)=x\;(2x\;U_{01}^\prime(x)-U_{01}(x)+1),
\end{equation}where the RHS uses the fact that $f=2p-1$ by equation (\ref{numinternalfermion}), and where the $1$ corresponds to the one-vertex graph graph in $\mathcal{U}_{11}$.

\item The other way is to think of a tadpole as being constructed from a list of graphs from $\mathcal{U}_{01}$. This way is more direct and we shall discuss it below.
\end{enumerate}

\begin{prop}
 A Yukawa 1PI tadpole graph can be decomposed as  boson leg together with a list of graphs from $\mathcal{U}_{01}$. In particular, on the level of generating functions we will have
 \begin{equation}\label{tobeusd}
     T(x)=\displaystyle\frac{x}{1-U_{01}(x)}.
 \end{equation}\label{tobeused2}
 This in turn implies that \begin{equation}
     U_{01}(x)=2xC^\prime(x)-C(x)=C(x)^2 \;\left[\left. \displaystyle\frac{C_{\geq2}(t)}{t^2}\right|_{t=C(x)^2/x}\right]=U_{20}(x).
 \end{equation}
\end{prop}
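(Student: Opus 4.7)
The plan is to establish the combinatorial decomposition first and then derive the explicit form of $U_{01}(x)$ from the identities already known for $C(x)$. Fix a tadpole $T \in \mathcal{U}_{10}$ and form the graph $T^\circ$ by removing $v_T$ together with its external boson leg, turning the two incident fermion edges into two external fermion half-edges based at the in- and out-neighbours of $v_T$ on Loop$(v_T)$. The goal is to show that $T^\circ$ decomposes canonically into an ordered sequence of $1PI$ blocks, each lying in $\mathcal{U}_{01}$, so that tadpoles correspond bijectively to pairs consisting of the boson leg at $v_T$ and a (possibly empty) sequence of elements from $\mathcal{U}_{01}$.

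The key structural step is to show that every bridge of $T^\circ$ is a fermion edge lying on the path Loop$(v_T) - v_T$. Suppose $e$ is a bridge of $T^\circ$ and let $A, B$ be the two components of $T^\circ - e$. If both of the out- and in-neighbours of $v_T$ lay in the same component then, reintroducing $v_T$ together with its two fermion edges, the graph $T - e$ would still be disconnected, contradicting $T \in \mathcal{U}_{10}$ being $1PI$. Hence the two neighbours lie in different components, and the fermion path in Loop$(v_T) - v_T$ joining them must traverse $e$, forcing $e$ to be a fermion edge of that path. A half-edge parity argument --- each vertex of $T$ carries exactly one boson and two fermion half-edges, so a $1PI$ block attached to the rest by a single fermion bridge would have an odd total fermion half-edge count --- rules out any side branches in the bridge tree of $T^\circ$, so this tree is a path. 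Cutting at the bridges yields a canonical ordered sequence $(\Gamma_1, \dots, \Gamma_k)$, and each $\Gamma_i$ inherits exactly one incoming and one outgoing fermion external leg and no boson external leg, so that $\Gamma_i \in \mathcal{U}_{01}$. The inverse map concatenates any such sequence and closes the resulting fermion chain through $v_T$ with a new external boson leg; the closing loop places every former bridging edge on a cycle, yielding a $1PI$ tadpole.

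Translating this bijection into generating functions, with $x$ recording the boson leg at $v_T$ and each $\Gamma_i$ contributing its own boson count, gives $T(x) = x/(1 - U_{01}(x))$. Using $T(x) = C(x)$ from Theorem \ref{myresult1inYukawa}, I would then solve $U_{01}(x) = (C(x) - x)/C(x)$, and apply Lemma \ref{cd}, part (iii), $2xC(x)C'(x) = C(x)(1+C(x)) - x$, which rearranges to $C(x) - x = C(x)\bigl(2xC'(x) - C(x)\bigr)$ and hence yields $U_{01}(x) = 2xC'(x) - C(x)$. The equalities in the statement then follow by combining equation (\ref{myvacuumeq1}), $U_{20}(x) = x(2xC'(x) - C(x))$, with equation (\ref{myequ1}) expressing $U_{20}(x)$ in terms of $C_{\geq 2}$.

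The main obstacle will be the bridge characterisation together with the half-edge parity argument that rules out side branches in the bridge tree; both depend essentially on the $1PI$ hypothesis for $T$ and on the specific three-valent fermion-fermion-boson vertex structure of Yukawa theory. Once these structural lemmas are secured, the rest of the proof is a short chain of algebraic identities drawn from Lemma \ref{cd} and the equations established in the preceding sections.
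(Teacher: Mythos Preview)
Your proof is correct and follows essentially the same route as the paper: open the fermion loop at $v_T$, show that the resulting graph's bridges all lie on the path $\mathrm{Loop}(v_T)-v_T$, cut at these bridges to obtain an ordered sequence of $\mathcal{U}_{01}$-blocks, and then extract $U_{01}=2xC'-C$ from $T=C$ together with Lemma~\ref{cd}. The only cosmetic differences are that the paper detaches a single fermion edge $f_0$ at $v_T$ rather than removing $v_T$ entirely, and that it invokes the form $C(x)=x/(1-(2xC'-C))$ directly rather than passing through $(C-x)/C$; your bridge-location argument in fact supplies the justification that the paper leaves implicit, and the parity argument, while not wrong, is already subsumed by that bridge characterisation.
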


 \begin{proof}
 Given a tadpole $T\in\mathcal{U}_{10}$, we will give a unique decomposition into a boson leg together with  a list of graphs from $\mathcal{U}_{01}$. Let $v_T$ be the vertex at the boson external leg of $T$ as usual. Also let $f_0$ be the fermion edge on Loop$(v_T)$ immediately before $v_T$. 
 
 \begin{enumerate}
     \item If $f_0=$ Fermion$(v_T)$, return $(\raisebox{-0.cm}{\raisebox{-0.05cm}{\includegraphics[scale=0.5]{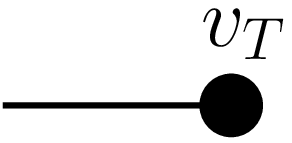}}};\varnothing)$.
     \item  Otherwise, detach $f_0$ and call the resulting graph $u(T)$. \item Starting from $v_T$, move along Loop$(v_T)$ and determine all fermion edges on Loop$(v_T)$ whose removal disconnects $u(T)$. Let this list of fermion edges be 
     \[f_1,f_2,\ldots,f_n.\]
     (Note that it will always be the case that $f_1=$ Fermion$(v_T)$).
     \item For $1\leq i\leq n-1$ define $G_i$ to be the graph obtained from the original $T$ by 
     
     \begin{itemize}
         \item cutting at $f_i$ and $f_{i+1}$, and
         \item deleting the component that contains $v_T$.
     \end{itemize}
     
     \item For $i=n$, define $G_n$ to be the graph obtained from the original $T$ by 
     
     \begin{itemize}
         \item cutting at $f_n$ and $f_{0}$, and
         \item deleting the component that contains $v_T$.
     \end{itemize}
     \item Return $(\raisebox{-0.05cm}{\includegraphics[scale=0.5]{Figures/vt.eps}};G_1,G_2,\ldots,G_n)$. 
 \end{enumerate}
 
 It is easily seen that the graphs $G_i$ are 2-edge connected, in fact the graphs $G_i$ are maximal 1PI's inserted along Loop$(v_T)$. Besides, by their definition, the graphs $G_i$ have a fermion-type residue and are therefore in $\mathcal{U}_{01}$. Moreover, this construction is clearly unique for every tadpole in  $\mathcal{U}_{10}$. 
 
 Conversely, every such list can be uniquely used to produce a tadpole. Figure \ref{Gs} below illustrates two cases of this decomposition.
 \begin{figure}[H]
        \centering
       \raisebox{0cm}{\includegraphics[scale=0.8]{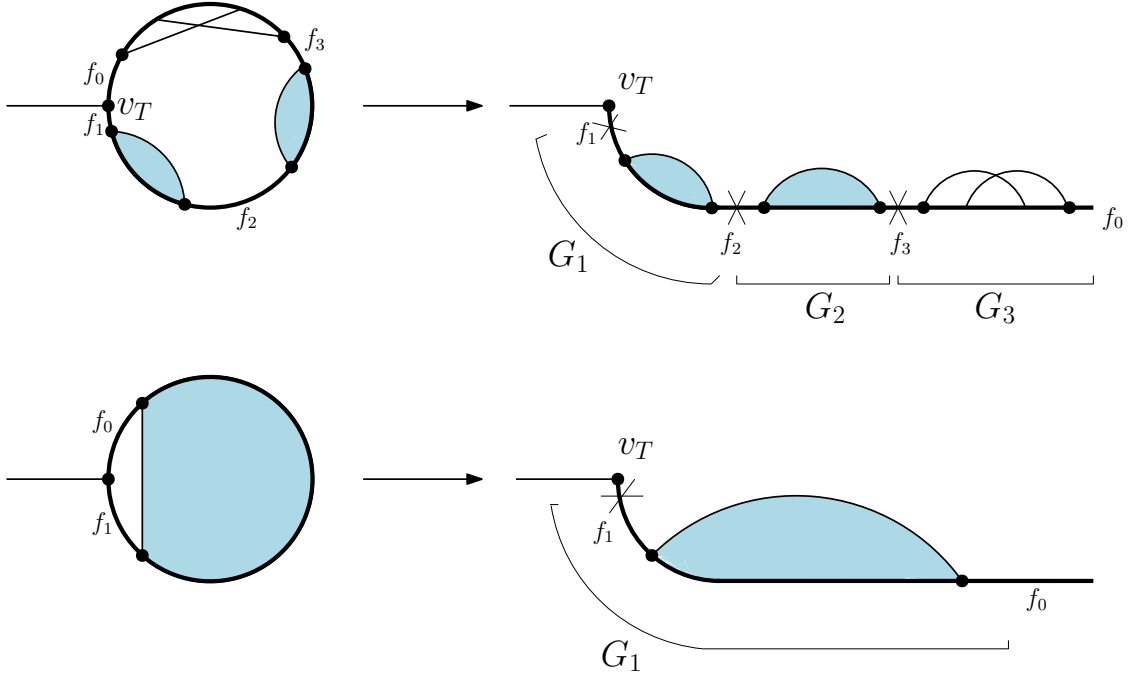}}
       \caption{Examples of the decomposition of tadpoles into a list of graphs from $\mathcal{U}_{01}$.}
        \label{Gs}
    \end{figure}

This gives that, on the level of generating functions,  
  \[T(x)=\displaystyle\frac{x}{1-U_{01}(x)}.\]
 
 By Lemma \ref{cd} recall that $C(x)=\frac{x}{1-(2xC'(x)-C(x))}$, and since we have $T(x)=C(x)$ we get 
     \[U_{01}(x)=2xC^\prime(x)-C(x).\]
 Then it follows from Proposition \ref{myproposition2connected} that  
     
     \[U_{01}(x)=C(x)^2 \;\left[\left. \displaystyle\frac{C_{\geq2}(t)}{t^2}\right|_{t=C(x)^2/x}\right]=U_{20}(x).\]
 \end{proof}

\textbf{Conclusion:} In this section we have obtained a chord-diagrammatic interpretation for a number of proper Green functions in Yukawa theorey and quenched QED. In \cite{michiq}, the asymptotics of these generating series are obtained by means of some sort of singularity analysis. Here, having obtained every generating series in terms of connected chord diagrams, the task of obtaining the asymptotics is straightforward. Indeed, we only need to use our knowledge of $\mathcal{A}_{\frac{1}{2}}^2C(x)$ (and $\mathcal{A}_{\frac{1}{2}}^2C_{\geq2}(x)$) to obtain the asymptotics for the different Yukawa and QQED green functions considered here.

\chapter{Diffeomorphisms of Scalar Quantum Fields via Generating Functions}\label{difchapter}
This chapter addresses a problem independent from the context of chord diagrams, and was actually the first problem to be considered into this PhD project. Remember that a free scalar quantum field is defined via a Lagrangian (density) 
\[
L(\phi) = \frac{1}{2}\partial_\mu \phi(x)\partial^\mu \phi(x) - \frac{m^2}{2}\phi^2(x)
\]

that contains no self-interaction terms, where $m$ is the mass of the $\phi$-particle. A \textit{field diffeomorphism} $F$ is going to be formally defined as a power series in the field
\[\phi\mapsto F(\phi)=a_0\phi(x)+a_1\phi(x)^2+\cdots \;= \sum_{j=0}^\infty a_j \phi^{j+1},\]

where $a_0=1$, i.e. $F$ is tangent to the identity. The problem is then to study the field theory expressed by the transformed Lagrangian, if one applies the diffeomorphism to the Lagrangian equation above. The result is seemingly an interacting theory. 

In classical field theory this is merely a canonical transformation that does not change the Poisson brackets \cite{paulrefers}, and it simply relates theories with different Lagrangians. However, for quantum fields, there are some ambiguities probably due to operator ordering in the path-integral formulation and the topic is therefore controversial \cite{diff1,diff2,diff3,diff4,diff5,diff6,diff7}.

\section{Motivation and Prior Work}

The approach followed in \cite{kreimer2} and \cite{karendiffeo} is a `least-action' approach:  they study field diffeomorphisms order by order in perturbation theory. In \cite{kreimer2} D. Kreimer and A. Velenich showed by direct calculations that, up to six external legs, interacting tree-level amplitudes do vanish. Yet, it was not still known how this can be generalized to higher orders. Note that the vanishing of tree-level amplitudes is crucial as it leads to the vanishing of loop amplitudes. In \cite{karendiffeo} Karen Yeats and Dirk Kreimer proved that if a point field diffeomorphism $\phi(x) \mapsto \sum_{j\geq 0} a_j \phi^{j+1}$ is applied to a free scalar field theory, the resulting field theory, while it appears to have many interaction terms, in fact remains a free theory by appropriate cancellations between diagrams. 

At tree level these cancellations hold whenever the external edges are on-shell, while at loop level they additionally require renormalization with a kinematical renormalization scheme.  This work followed up on the observations by Kreimer and Velenich \cite{kreimer2}.

The arguments of \cite{karendiffeo} proceeded first to reduce the tree level problem to a purely combinatorial problem of proving certain combinatorial identities. These were then proved using Bell polynomials.  Then the loop level results were bootstrapped off the tree level results.

However, the proofs of \cite{karendiffeo}, even at tree level, were unsatisfying as they were both opaque and intricate, consisting of delicate Bell polynomial manipulations which needed to reach fairly deeply into the repertoire of known Bell polynomial identities without obtaining insight.  The authors in \cite{karendiffeo} conjectured that a proof on the level of generating functions could be possible, and could give better insight, especially given the fact that Bell polynomials come from series composition. 

\textbf{Our Contribution:}
This is what we do in this chapter, reproving the tree level cancellations of \cite{karendiffeo} at the level of generating functions, and then leveraging the extra insight gained to see exactly how the solution appears as a compositional inverse, and making an explicit connection with the combinatorial Legendre transform of Jackson, Kempf, and Morales.  The latter is particularly interesting because of the role of the on-shell condition in the outcome of the combinatorial Legendre transform in our situation.  We show that the series whose coefficients are the tree-level amplitudes for the transformed theory is exactly the compositional inverse of the diffeomorphism $F$ applied. Additionally, along the way we give new combinatorial proofs of some Bell polynomial identities due to Cvijovi\'c (see \cite{cvijovic8}).

Note that ultimately every problem considered in this chapter is purely combinatorial.  

In terms of more physical considerations, note that no appeal to the path integral or its measure is used in \cite{karendiffeo} nor here.  All results are proven by rigorous arguments at the diagram level.  Consequently these results are ground truth, and the correct transformations for the path integral and path integral measure can be reverse engineered from them.  That a field diffeomorphism ought to pass nicely though the path integral is often viewed as a near triviality, though others have argued that in fact it does not (see \cite{diffeonot}). Different lines of thought can also be seen in \cite{diff1,diff2,diff3,diff4,diff5,diff6,diff7}.  Settling this rigourously while side stepping the path integral entirely was a major motivation for \cite{karendiffeo} as well as for us here.

Paul Balduf independently arrived at the fact that the series whose coefficients are the tree-level amplitudes is the compositional inverse of the diffeomorphism through analyzing the $S$-matrix \cite{Paul, Paulthesis}. Our method was used in the proof of Theorem 3.2 in \cite{Paul}.

\section{Field Theory Set-up}
Let $F$ be a field diffeomorphism $F:\phi\mapsto F(\phi)=\overset{\infty}{\underset{j=0}\sum} a_j\phi^{j+1}$ (with $a_0=1$). When $F$ is applied to a free field $\phi(x)$ with Lagrangian density \[L(\phi)=\frac{1}{2}\partial_\mu\phi(x)\partial^\mu\phi(x)-\frac{m^2}{2}\phi^2(x),\]
it gives the new Lagrangian \[L_F(\phi)=\frac{1}{2}\partial_\mu F(\phi)\partial^\mu F(\phi)-\frac{m^2}{2}F(\phi)F(\phi),\] where the field $\phi$ is a scalar field from the 4-dimensional Minkowski space-time ($\phi:\mathbb{R}^4\rightarrow\mathbb{R}$).
Expanding out the transformed Lagrangian we obtain
\begin{align*}
L_F(\phi)&=\frac{1}{2}\partial_\mu F(\phi)\partial^\mu F(\phi)-\frac{m^2}{2}F(\phi)F(\phi) \\
    & = \frac{1}{2}\partial_\mu (\phi+a_1\phi^2+a_2\phi^3+\cdots)\partial^\mu (\phi+a_1\phi^2+a_2\phi^3+\cdots)-\frac{m^2}{2}(\phi+a_1\phi^2+\cdots)^2 \\
    & = \frac{1}{2}\partial_\mu \phi\partial^\mu \phi+ a_1\frac{1}{2}\partial_\mu \phi^2\partial^\mu \phi+a_1\frac{1}{2}\partial_\mu \phi\partial^\mu\phi^2+a_1^2\frac{1}{2}\partial_\mu \phi^2\partial^\mu \phi^2 \\
    & \quad + a_2\frac{1}{2}\partial_\mu \phi^3\partial^\mu \phi+a_2\frac{1}{2}\partial_\mu \phi\partial^\mu\phi^3+\cdots\\
    &-\frac{m^2}{2}\phi^2 - (2a_1)\frac{m^2}{2}\phi^3 - (a_1^2+2a_2)\frac{m^2}{2}\phi^4 - \cdots \\
    & = \frac{1}{2}\partial_\mu \phi\partial^\mu \phi+ (4a_1)\frac{1}{2}\phi \partial_\mu \phi\partial^\mu \phi+(4a_1^2+6a_2)\frac{1}{2}\phi^2\partial_\mu \phi\partial^\mu\phi + \cdots  \\
    & \quad -\frac{m^2}{2}\phi^2 - (2a_1)\frac{m^2}{2}\phi^3 - (a_1^2+2a_2)\frac{m^2}{2}\phi^4 - \cdots \\
    & = \frac{1}{2}\partial_\mu \phi\partial^\mu \phi+ \frac{1}{2}\partial_\mu \phi\partial^\mu \phi\sum_{n=1}^\infty\frac{d_n}{n!}\phi^n - \frac{m^2}{2}\phi^2 - \frac{m^2}{2}\sum_{n=1}^{\infty}\frac{c_n}{(n+2)!}\phi^{n+2},
\end{align*}

where $d_n = n!\sum_{j=0}^n (j+1)(n-j+1)a_ja_{n-j}$ and $c_n = (n+2)!\sum_{j=0}^n a_ja_{n-j}$, (see equation 15 of \cite{kreimer2} with slightly different conventions). 

We can see from each term of the original free Lagrangian we obtain a vertex of each order $\geq 3$, (thus we have two types of vertices of each order) which we will call the kinematic and massive vertices respectively.  We read off the Feynman rules to be 

\begin{itemize}
    \item 

\[
    i\frac{d_{n-2}}{2}(p_1^2+p_2^2+\cdots + p_n^2)
\]
for the $n$-point kinematic vertex where $p_1, \ldots, p_n$ are the momenta of the incident edges; and 

\item 
\[
    -i\frac{m^2}{2}c_{n-2}
\]
for the $n$-point massive vertex.  

\item The free part of the Lagrangian is unchanged so the propagator remains
\[
\frac{i}{p^2-m^2}
\]
for momentum $p$.  We are interested in the on-shell $n$-point tree level amplitude.
\end{itemize}

For the combinatorial reader let us spell out in a bit more detail how the above leads to a purely combinatorial problem on trees.  We are working with graphs with external edges.  For a graph theorist such graphs can be constructed as bipartite graphs where if the bipartition is $(A,B)$ then we require that all vertices in $B$ are either of degree $1$ or $2$ and all vertices in $A$ are of degree $\geq 3$.  Then $B$ in fact contains no additional information: the 2-valent vertices of $B$ just mark the \emph{internal edges} of the original graph, while the 1-valent vertices of $B$ mark some bare half-edges of the original graph, known as \emph{external edges} or \emph{legs}.

To calculate the $n$-point tree level amplitude, we must sum over all trees (connected acyclic graphs of the type above) with $n$ external edges and with vertices either kinematic or massive.  For each tree we compute as follows: To each internal and external edge of the tree assign a momentum $p$ in Minkowski space, that is in $\mathbb{R}^4$ but using the pseudo-metric $|(a_0, a_1, a_2, a_3)|^2 = -a_1^2 + a_2^2+a_3^2+a_4^2$ (in fact the choice of signature will not matter).  Following the usual convention we will write $p^2$ for $|p|^2$.  Impose momentum conservation at each vertex, that is the sum of the momenta for the edges incident to any given vertex must be $0$.  Now multiply the factors given by the Feynman rules for each vertex and the propagator for each internal edge to get the contribution of this tree.

The on-shell condition applies only to the external edges and is that $p^2=m^2$ for each external momentum $p$.  Because we are working with a pseudo-metric, note that $p^2=0$ does not imply $p=0$.

In summary, combinatorially we have the following:
\begin{enumerate}
    \item The $n$-point tree level amplitude is the sum over all trees with $n$ external edges. 
    \item Each external edge is labelled, nothing else is.
    \item A momentum variable $p$ is assigned to every internal and external edge.
    \item The on-shell condition is that $p^2=m^2$ holds for the momentum of every external edge, where ($p^2=p\cdot p$).
    \item Conservation of momenta holds at every vertex.
    \item
    The vertices come in two kinds, massive and kinematic, each with its own contribution to the sum given by the Feynman rules. 
    \item The Feynman rule for a kinematic vertex of degree $n$ with momenta $p_1,\ldots,p_n$ for the incident legs is
    \[i\frac{d_{n-2}}{2}(p_1^2+p_2^2+\cdots+p_n^2),\]where
    \[d_r=r!\sum_{j=0}^r(j+1)(r-j+1)a_ja_{r-j}. \]
    \item The Feynman rule for a massive vertex of degree $n$ is
    \[-i\frac{m^2}{2}n!\sum_{j=0}^{n-2}a_ja_{n-2-j}.\]
    
    \item  The Feynman rule for an internal edge is \[\frac{i}{p^2-m^2},\] where $p$ is the momentum assigned to this propagator.
    
    \item  The contribution of each tree is the product of the Feynman rules for its vertices and internal edges (no contribution from external edges).
\end{enumerate}

Now notice that since we are summing over all such trees, we are to get the same value if we consider only a single type of combined vertices each of which is the sum of kinematic and massive vertices of degree $n$, for each $n$.

\subsubsection{Tree-level Amplitudes}
The best way to explore the problem combinatorially is through a small example. Fix an internal edge $e$, and consider all the possible subtrees that may occur below $e$ for a fixed number of legs, $n$. Let $b_n$ be the sum over all such subtrees with the Feynman rules applied to the vertices and edges of the subtree along with the edge $e$ itself.

\begin{exm}
When $n=3$ we have the contributors from the tree graphs in Figure \ref{diffeotr} below.

\begin{figure}[H]
    \centering
    \includegraphics[scale=0.75]{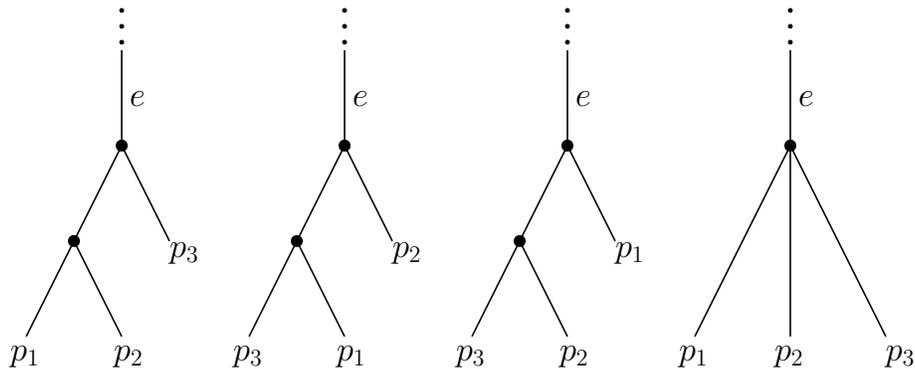}
    \caption{Subtrees below $e$ corresponding to $n=3$ external legs.}
    \label{diffeotr}
\end{figure}

 where the vertical dots above $e$ indicate the rest of the tree.
 Let us recursively compute $b_3$, knowing that $b_2=-2a_1$ (and $b_1=1$) which is easily verified. We shall denote the sum $p_1+p_2+p_3$ by $\mathbf{p}$.
 
 $$b_3=\frac{i^2(\frac{d_2}{2}(p_1^2+p_2^2+p_3^2+\mathbf{p}^2)+c_2)}{\mathbf{p}^2-m^2}+\frac{i^2(\frac{d_1}{2}(p_1^2+(p_2+p_3)^2+\mathbf{p}^2)+c_1)b_2}{\mathbf{p}^2-m^2}+$$
 
 $$+\frac{i^2(\frac{d_1}{2}(p_2^2+(p_1+p_3)^2+\mathbf{p}^2)+c_1)b_2}{\mathbf{p}^2-m^2}+\frac{i^2(\frac{d_1}{2}(p_3^2+(p_1+p_2)^2+\mathbf{p}^2)+c_1)b_2}{\mathbf{p}^2-m^2}.$$
 
 This simplifies to $b_3=-6a_2+12a_1^2$ as the reader may check. It is these cancellations that required an interpretation and triggered the research in \cite{karendiffeo,kreimer2}, especially because of the independence from momenta and masses in the resulting values.

\end{exm}

Returning to the general case, we are interested in the quantity $b_n$, which is the result of fixing an internal edge $e$ and summing over all possible subtrees with $n$ external edges labelled $p_1, p_2, \ldots,p_n$.  Equivalently $b_n$ is the sum over all trees with $n+1$ external edges with one of these edges (called $e$) is not necessarily on-shell, and where additionally we include the propagator for $e$ itself in each term.  

Fortunately, $b_n$ can be computed recursively. Consider the edges below $e$ which are incident to $e$.  Each of them is either external or has another subtree rooted at their other end.  Summing over all possibilities below $e$ means summing over all possibilities for each of these subtrees.  By induction, this gives the following recurrence, see \cite{karendiffeo} for a full proof:
\begin{equation}\label{eq orig b rec}
\begin{gathered}
b_n=-\underset{\underset{P_i\neq\emptyset\;\text{and disjoint}}{P_1\cup\cdots\cup P_k=\{1,\ldots,n\}}}{\sum}b_{|P_1|}\cdots
b_{|P_k|}\times \\\frac{\frac{(k-1)!}{2}\underset{j=0}{\overset{k-1}{\sum}}a_ja_{k-1-j}\Big(-m^2(k+1)k+(j+1)(k-j)\big(\sum^k_{i=1}(\sum_{e\in P_i}p_e)^2+(\sum^n_{s=1}p_s)^2\Big)}{(\sum^n_{s=1}p_s)^2-m^2}.\vspace{0.6cm}\end{gathered}\end{equation}
 
 The idea then was to break this into two recurrences and it turned out that some intricate Bell Polynomial identities give one way to solve for $b_n$. The approach in \cite{karendiffeo} makes an extensive use of Bell polynomials identities on different levels. In our case we will show that the use of Bell polynomials identities can be shortened in a minimal way. Besides, we will give new proofs for a number of these identities. For example, in the Section \ref{bellpolyidsection} we give a simple combinatorial proof for the recent identity obtained by Cvijovi\'c (see \cite{cvijovic8}) in 2013.

 \section{The Role of Bell Polynomials}
 
 \begin{dfn}[Partial Bell Polynomial]\label{bellpoly} The \textit{partial Bell polynomial}, for parameters $n,k$, in an infinite set of indeterminates, is defined by
\begin{align*}B_{n,k}(x_1,x_2,\ldots) &= \underset{P_i\text{'s disjoint, nonempty   }}{\underset{P_1\cup\cdots\cup P_k=\{1,\ldots,n\}}{\underset{\{P_1,\ldots,P_k\}}{\sum}}}x_{|P_1|}x_{|P_2|}\cdots x_{|P_k|}\\&=\underset{\lambda(n,k)}{\sum}\frac{n!}{j_1!j_2!\cdots} \left(\frac{x_1}{1!}\right)^{j_1}\left(\frac{x_2}{2!}\right)^{j_2}\cdots, \end{align*}

where the second sum ranges over all partitions $\lambda=1^{j_1}2^{j_2}\ldots$ of $n$ with $k$ parts, that is, such that \[j_1+j_2+j_3+\cdots=k\;\;\;\text{and}\,\,\,\;j_1+2j_2+3j_3+\cdots=n\;\;\;\text{and}\;\;\; j_i\geq1.\]
\end{dfn}

  Note that, by this definition, the largest index appearing should be $x_{n-k+1}$, thus, any given Bell polynomial uses only a finite number of variables (and is indeed a polynomial). 
  
  On the level of generating functions, one gets
\[ \exp\bigg(u \underset{m\geq1}{\sum}x_m\frac{t^m}{m!}\bigg)=\underset{n,k\geq0}{\sum}B_{n,k}(x_1,x_2,\ldots)\;\frac{t^n}{n!}u^k .\]  This can be used as an alternative definition for Bell polynomials.
 
 To split equation \eqref{eq orig b rec} up usefully, we will use the fact that the problem is symmetric in the external momenta along with the on-shell condition.  Consider expanding all the $p_e^2$ in the numerator and the denominator into sums of squares of external momenta and dot products of distinct external momenta.  All the squares of external momenta are $m^2$ by the on-shell condition, so in both the numerator and denominator collect all of these along with the explicit $m^2$.  The remaining terms all have a factor which is a dot product of distinct external momenta.  By the symmetry in the external momenta, we know that each dot product appears the same number of times, so it suffices to keep track of how many dot product terms there are, without keeping track of which momenta are involved.  So to satisfy equation \eqref{eq orig b rec} it suffices to separately satisfy the $m^2$ part and the dot product part of it.  These two parts are respectively the equations of the following Lemma (see \cite{karendiffeo} for details). 
 
 \begin{lem}{\cite{karendiffeo}} Let $b_n$ be, as before, the sum over all amplitudes of rooted trees with $n+1$ external legs, one of which is off-shell and has a propagator contribution (see the Feynman rules above). Then the sequence $\mathbf{b}=(b_n)$ satisfies equation \eqref{eq orig b rec} if and only if it satisfies the following two recurrences :
 
 \begin{align}
     0&=\overset{n}{\underset{k=1}{\sum}}B_{n,k}(\mathbf{b})\;\frac{(k-1)!}{2}\;\;\overset{k-1}{\underset{j=0}{\sum}}a_ja_{k-1-j}\;\Big[\;2n(j+1)(k-j)-k(k+1)\;\Big]\label{recurrence}\;,\\
 0&=\overset{n}{\underset{k=1}{\sum}}\overset{k-1}{\underset{j=0}{\sum}}a_ja_{k-1-j}(j+1)(k-j)\frac{(k-1)!}{2k}\overset{n}{\underset{s=1}{\sum}}\frac{b_s}{s!(n-s)!}B_{n-s,k-1}(\mathbf{b})(ks(s-1)+n(n-1));\label{di}
 \end{align}
  where $\mathbf{b}=(b_1,b_2,\cdots)$.\end{lem}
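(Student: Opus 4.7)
My plan is to clear the denominator in equation \eqref{eq orig b rec} and use the on-shell condition to decompose the resulting polynomial equation into two independent identities. After multiplying both sides by $(\sum_{s=1}^n p_s)^2 - m^2$ and moving everything to one side, I expand each quadratic form using
\[
\bigg(\sum_{e\in S} p_e\bigg)^2 = |S|m^2 + 2\sum_{\substack{e<e'\\ e,e'\in S}} p_e\cdot p_{e'},
\]
which is valid because $p_e^2=m^2$. The left-hand side of \eqref{eq orig b rec} becomes $b_n[(n-1)m^2 + 2\mathcal{D}_n]$ with $\mathcal{D}_n := \sum_{s<s'}p_s\cdot p_{s'}$, while the right-hand side separates into an $m^2$-coefficient and a linear combination of $\mathcal{D}_n$ and the partial sums $\sum_i \mathcal{D}_{P_i}$. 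Once the on-shell constraint is imposed, $m^2$ and the individual dot products $p_s\cdot p_{s'}$ remain algebraically independent, so the equation must hold coefficient by coefficient.

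Matching the coefficient of $m^2$ produces
\[
b_n(n-1) + \sum_{k\ge 2} B_{n,k}(\mathbf{b})\,\frac{(k-1)!}{2}\sum_j a_j a_{k-1-j}\bigl[2n(j+1)(k-j)-k(k+1)\bigr] = 0,
\]
and evaluating the summand at $k=1$ yields exactly $b_n(n-1)$, so absorbing that term into the sum turns the identity into \eqref{recurrence}. For the dot-product part, the permutation symmetry of the external labels forces the coefficient of every $p_s\cdot p_{s'}$ to be identical; setting the coefficient of $p_1\cdot p_2$ to zero, and writing $E_{n,k}(\mathbf{b})$ for the weighted enumerator of set partitions of $\{1,\dots,n\}$ into $k$ blocks with $\{1,2\}$ lying in a single block, gives
\[
\sum_{k\ge 1} M(k)\bigl[B_{n,k}(\mathbf{b})+E_{n,k}(\mathbf{b})\bigr]=0,\qquad M(k)=(k-1)!\sum_j a_ja_{k-1-j}(j+1)(k-j),
\]
where once again the $k=1$ contribution $2b_n$ is absorbed.

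The last step is to convert this into \eqref{di} by means of two standard Bell polynomial identities obtained by conditioning on a distinguished block: $k\,B_{n,k}(\mathbf{b}) = \sum_s \binom{n}{s} b_s B_{n-s,k-1}(\mathbf{b})$ (distinguished block contains a chosen element) and $E_{n,k}(\mathbf{b}) = \sum_s \binom{n-2}{s-2} b_s B_{n-s,k-1}(\mathbf{b})$ (distinguished block contains a chosen pair). Combining them gives
\[
B_{n,k}(\mathbf{b})+E_{n,k}(\mathbf{b}) = \frac{(n-2)!}{k}\sum_{s=1}^n \frac{b_s}{s!(n-s)!}B_{n-s,k-1}(\mathbf{b})\bigl(ks(s-1)+n(n-1)\bigr),
\]
so dividing through by the nonzero factor $(n-2)!$ turns the dot-product identity into \eqref{di} (with any overall scalar absorbed on the zero side). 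The converse direction is immediate: adding the $m^2$ and dot-product contributions reassembles the cleared form of \eqref{eq orig b rec}. The hardest step will be recognizing the combinatorial meaning of the combination $ks(s-1)+n(n-1)$ appearing in \eqref{di}; once one sees that it is precisely what encodes $B_{n,k}+E_{n,k}$ via the two conditioning identities above, the remaining manipulations are routine.
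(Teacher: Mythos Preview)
Your proposal is correct and follows exactly the strategy the paper outlines before the lemma: clear the denominator, use the on-shell condition to expand every squared sum as $|S|m^2 + 2\sum p_e\cdot p_{e'}$, and then split the identity into its $m^2$-part and its dot-product part by permutation symmetry of the external labels. The paper itself defers the details of this splitting to \cite{karendiffeo}; your Bell-polynomial conversion of $B_{n,k}+E_{n,k}$ into the $ks(s-1)+n(n-1)$ form via conditioning on the block containing a fixed pair is precisely the missing detail, and it checks out (the residual factor of~$2$ relative to \eqref{di} is harmless since the identity is homogeneous equal to zero).
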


  \setlength{\parindent}{0.5cm}
  From these two equations, Karen Yeats and Dirk Kreimer proved that  \[\tag{$\dagger$}b_{n+1}=\overset{n}{\underset{k=1}{\sum}}\frac{(n+k)!}{n!}B_{n,k}(-1!a_1,-2!a_2,\ldots)\;\;,\label{form} \] which might be suggested by the calculation of the first examples of $b_n$'s. 
  
  
  \section{Bell Polynomial Identities}\label{bellpolyidsection}

 A number of Bell polynomial identities are needed in the sequel of this chapter. The identities we are most concerned with were introduced by D. Cvijovi\'c in \cite{cvijovic8} (2013). These identities are key ingredients in the arguments of \cite{karendiffeo}, they are also combinatorially significant \cite{bighunt}. In 2015, S. Eger re-proved some of these identities by translating into integer-valued distributions \cite{eger}. It is surprising, however, that elementary combinatorial proofs are actually quite applicable, and this section is devoted  for displaying them. The reason that these proofs, despite being simple, were not discovered before is probably because the proofs are only seen clear if the appropriate identity is chosen to start with.
 
\setlength{\parindent}{0cm}

\begin{lem}\label{lemma1}

Suppose $n,k>0$, then$$k\;B_{n,k}(x_1,x_2,\ldots)=\;\overset{n}{\underset{s=1}{\sum}}\;{n\choose s} \;x_s \;B_{n-s,k-1}(x_1,x_2,\ldots)\;,$$
and
\[n\;B_{n,k}(x_1,x_2,\ldots)=\;\overset{n}{\underset{s=1}{\sum}}\;{n\choose s}\;s\;x_s\;B_{n-s,k-1}(x_1,x_2,\ldots).\]
                                    \end{lem}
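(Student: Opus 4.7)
Both identities are clean ``mark one object, then classify by the type of the object'' bijections on the underlying set-partition model of $B_{n,k}$. Recall from Definition \ref{bellpoly} that $B_{n,k}(x_1,x_2,\ldots)$ is the weighted sum over all set partitions $\{P_1,\ldots,P_k\}$ of $\{1,\ldots,n\}$ into $k$ nonempty blocks, where each block $P_i$ contributes a factor $x_{|P_i|}$.

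For the first identity, the plan is to interpret both sides as weighted enumerations of set partitions of $\{1,\ldots,n\}$ into $k$ blocks with one of the blocks marked. The left-hand side $k\,B_{n,k}$ does this directly: the factor $k$ chooses which of the $k$ blocks is marked. For the right-hand side, I would instead classify such a marked-block partition by the size $s$ of the marked block. For each $s\in\{1,\ldots,n\}$, I choose which $s$ elements of $\{1,\ldots,n\}$ form the marked block, giving $\binom{n}{s}$; assign it its weight $x_s$; and then partition the remaining $n-s$ elements into the other $k-1$ (unmarked) blocks, contributing $B_{n-s,k-1}(x_1,x_2,\ldots)$. Summing over $s$ yields the right-hand side, and matches the left-hand side term for term.

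For the second identity, the strategy is the same except the ``mark'' lives on an element rather than on a block. Both sides count weighted set partitions of $\{1,\ldots,n\}$ into $k$ blocks with one element of $\{1,\ldots,n\}$ distinguished. The left-hand side is obviously $n\,B_{n,k}$. For the right-hand side, classify by the size $s$ of the block containing the distinguished element: first choose the $s$ elements of this block ($\binom{n}{s}$ ways), then choose which of its $s$ elements is the distinguished one ($s$ ways), giving the factor $\binom{n}{s}\,s$; weight the block by $x_s$; and partition the remaining $n-s$ elements into $k-1$ blocks, contributing $B_{n-s,k-1}$. Summing over $s$ recovers the right-hand side.

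The main (in fact only) thing to verify is that both decompositions are truly bijections on the respective objects, which amounts to observing that the ``block containing the distinguished element'' and the ``marked block'' are canonically determined in each case, and that the remaining elements form an arbitrary set partition of a fully determined residual set into $k-1$ blocks; this is immediate from the definition of a set partition. I do not anticipate any analytic obstacle; the only potential pitfall is a bookkeeping one, namely keeping the roles of ``choose a block'' versus ``choose an element of a block'' straight so that the factor $\binom{n}{s}\,s = n\binom{n-1}{s-1}$ is accounted for exactly once, which the bijective picture makes transparent.
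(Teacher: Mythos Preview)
Your proof is correct and follows essentially the same bijective argument as the paper: interpret $k\,B_{n,k}$ as set partitions with a distinguished block (the paper calls this ``rooted at one part'') and $n\,B_{n,k}$ as set partitions with a distinguished element, then in each case classify by the size of the block carrying the mark. The only difference is terminology.
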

\begin{proof}For the first identity, the left hand side is the generating function for partitions with $k$ parts which are rooted at one part (localization). Seen another way, we may first choose $s$ arbitrary  elements from $\{1,2,\ldots,n\}$ to form our root part, and then generate all possible partitions with $k-1$ parts over the remaining $n-s$ elements, thus getting the right hand side.

For the second identity, the left hand side stands for partitions with $k$ parts, which are rooted in a finer way than in the previous setting, namely, they rooted at one on the $n$ elements. Again, we can do this rather differently (localizing in two levels): First choose $s$ special elements that will form the part which hosts the root, then choose the root from amongst them (in $s$ ways); finally generate all partitions with $k-1$ elements over the remaining elements, hence getting the right hand side.   \end{proof}

The next theorem is the main theorem in \cite{cvijovic8}. The proof presented here is new and does not make any reference to the analytic methods used in proving the identities in \cite{cvijovic8}. The proof only depends on the combinatorial meaning of Definition \ref{bellpoly}.

\begin{thm}\label{report}
The following Bell identities hold, where $B_{n,k}$ stands for $B_{n,k}(x_1,x_2,\ldots)$, the partial Bell polynomial with $k$ parts.
\begin{equation}
    B_{n,k}=\frac{1}{x_1}\cdot\frac{1}{n-k}\;\overset{n-k}{\underset{\alpha=1}{\sum}}\;{n\choose \alpha} \Big[(k+1)-\frac{n+1}{\alpha+1}\Big]\;x_{\alpha+1} \;B_{n-\alpha,k}\;\;,\label{id1}
\end{equation}

\begin{equation}
    B_{n,k_1+k_2}=\frac{k_1!\;k_2!}{(k_1+k_2)!}\;\overset{n}{\underset{\alpha=0}{\sum}}\;{n\choose \alpha} \;B_{\alpha,k_1}\;B_{n-\alpha,k_2}\;\;,\label{id2}
\end{equation}

\begin{equation}
    B_{n,k+1}=\frac{1}{(k+1)!}
\overset{n-1}{\underset{\alpha_1=k}{\sum}}
\overset{\alpha_1-1}{\underset{\alpha_2=k-1}{\sum}}\cdots
\overset{\alpha_{k-1}-1}{\underset{\alpha_k=1}{\sum}}
{n\choose \alpha_1}{\alpha_1\choose \alpha_2}\cdots{\alpha_{k-1}\choose \alpha_k} x_{n-\alpha_1}\cdots x_{\alpha_{k-1}-\alpha_k}x_{\alpha_k}.\label{id3}
\end{equation}
\end{thm}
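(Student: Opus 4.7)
The plan is to prove all three identities by combinatorial arguments grounded in the set-partition interpretation of $B_{n,k}$ (partitions of $[n]$ into $k$ nonempty blocks, with each block of size $j$ contributing $x_j$), augmented by one algebraic manipulation for \eqref{id1}.

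For \eqref{id2}, I would begin by counting triples consisting of a subset $A \subseteq [n]$ of size $\alpha$, a partition of $A$ into $k_1$ blocks, and a partition of $[n]\setminus A$ into $k_2$ blocks. Summing over $\alpha$ produces the right-hand side (without the prefactor). Each such triple encodes a partition of $[n]$ into $k_1+k_2$ blocks together with a $2$-colouring of the blocks in which $k_1$ are red and $k_2$ are blue. Since any partition with $k_1+k_2$ blocks admits exactly $\binom{k_1+k_2}{k_1}$ such colourings, dividing by this binomial coefficient yields $B_{n,k_1+k_2}$ and the identity follows.

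For \eqref{id3}, I would iterate the first half of Lemma \ref{lemma1}, rewritten after the substitution $s \mapsto n-\alpha_1$ as $(k+1) B_{n,k+1} = \sum_{\alpha_1} \binom{n}{\alpha_1} x_{n-\alpha_1} B_{\alpha_1, k}$. Applying the same recursion to $B_{\alpha_1,k}$, then to $B_{\alpha_2,k-1}$, and so on exactly $k$ times reduces the problem to $B_{\alpha_k,1} = x_{\alpha_k}$. The accumulated scalar is $(k+1)k(k-1)\cdots 1 = (k+1)!$, and the strict bounds $\alpha_{i} < \alpha_{i-1}$ come from the vanishing of $B_{\alpha_i, k-i+1}$ unless $\alpha_i \geq k-i+1$, so only strictly descending chains contribute.

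Identity \eqref{id1} is the most interesting: the strategy is to compare two distinct recursions for $B_{n+1,k+1}$ and read off \eqref{id1} from the discrepancy. The first recursion is Lemma \ref{lemma1} applied to the indices $(n+1, k+1)$, giving $(k+1) B_{n+1,k+1} = \sum_{\alpha=0}^n \binom{n+1}{\alpha+1} x_{\alpha+1} B_{n-\alpha, k}$. The second is a marked-element decomposition: in any partition of $\{0,1,\ldots,n\}$ into $k+1$ blocks, condition on the block containing $0$, which has size $\alpha+1$ and is determined by choosing a subset of $\{1,\ldots,n\}$ of size $\alpha$; this yields $B_{n+1,k+1} = \sum_{\alpha=0}^n \binom{n}{\alpha} x_{\alpha+1} B_{n-\alpha, k}$. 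Multiplying the second by $(k+1)$ and subtracting cancels $B_{n+1,k+1}$, leaving $0 = \sum_{\alpha=0}^n \binom{n}{\alpha}\bigl[(k+1) - \tfrac{n+1}{\alpha+1}\bigr] x_{\alpha+1} B_{n-\alpha, k}$ after using $\binom{n+1}{\alpha+1} = \tfrac{n+1}{\alpha+1}\binom{n}{\alpha}$. Isolating the $\alpha=0$ term, whose bracketed factor $(k+1)-(n+1) = -(n-k)$ contributes $-(n-k) x_1 B_{n,k}$, and observing that terms with $\alpha > n-k$ vanish because $B_{n-\alpha,k}=0$, gives \eqref{id1} upon dividing by $(n-k) x_1$. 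The main obstacle will be spotting the right "second" recursion to pair with Lemma \ref{lemma1}; once the marked-element identity is in hand, the entire derivation collapses to a single algebraic subtraction, which is why the identity looks more complicated than it really is.
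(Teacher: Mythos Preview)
Your proof is correct and matches the paper's approach essentially line for line: the same marked-element ``starter'' identity paired with Lemma~\ref{lemma1} for \eqref{id1}, the same block-colouring double count for \eqref{id2}, and for \eqref{id3} your iteration of Lemma~\ref{lemma1} is equivalent to the paper's remark that \eqref{id3} follows from \eqref{id2} (since applying \eqref{id2} with $k_2=1$ \emph{is} the first identity of Lemma~\ref{lemma1}).
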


\begin{proof}
Identity (\ref{id3}) is immediate from (\ref{id2}), so we start by proving (\ref{id1}). The following `starter' identity is clear from the definition of partial Bell polynomials:
\[B_{n+1,k+1}=\overset{n-k}{\underset{\alpha=0}{\sum}}\;{n\choose \alpha}\;x_{\alpha+1} \;B_{n-\alpha,k}\;.\]
Indeed, the identity exactly describes the natural passage from partitions of the set \;$\mathbb{N}_n=\{1,2,\ldots,n\}$ to partitions of $\mathbb{N}_{n+1}=\{1,2,\ldots,n,n+1\}$. Namely, to form all partitions of $\mathbb{N}_{n+1}$ with $k+1$ parts in which $n+1$ appears in a part of size $\alpha+1$, we can first choose $\alpha$ elements (inall possible ways) from $\mathbb{N}_n$ to be in the same part with $n+1$, and then generate all partitions with $k$ parts on the remaining $n-\alpha$ elements of $\mathbb{N}_n$. 

Now, by Lemma \ref{lemma1}, multiply both sides by $(k+1)$ to further get 
\[\overset{(n+1)-k}{\underset{s=1}{\sum}}\;{n+1\choose s}\;x_s\;B_{n+1-s,k}= (k+1)\;B_{n+1,k+1}=(k+1)\;\overset{n-k}{\underset{\alpha=0}{\sum}}\;{n\choose \alpha}\;x_{\alpha+1} \;B_{n-\alpha,k}\;.\]
Reindexing the left sum by $\alpha=s-1$, and explicitly writing the first term ($\alpha=0$) of both sides, we arrive at
\[(n+1) x_1B_{n,k}\;+\;\overset{n-k}{\underset{\alpha=1}{\sum}}\;{n+1\choose \alpha+1}x_{\alpha+1}B_{n-\alpha,k}= (k+1)x_1 B_{n,k}\;+\;(k+1)\overset{n-k}{\underset{\alpha=1}{\sum}}{n\choose \alpha}x_{\alpha+1} B_{n-\alpha,k}.\]

Hence,
\begin{align*}
(n-k) \;x_1\; B_{n,k}\;&=\;\overset{n-k}{\underset{\alpha=1}{\sum}}\;\Big[(k+1){n\choose \alpha}-{n+1\choose \alpha+1}\Big]\;x_{\alpha+1}\;B_{n-\alpha,k}\\&=\overset{n-k}{\underset{\alpha=1}{\sum}}\;{n\choose \alpha} \Big[(k+1)-\frac{n+1}{\alpha+1}\Big]\;x_{\alpha+1} \;B_{n-\alpha,k}\;,\end{align*}which gives identity (\ref{id1}).

Finally, identity \ref{id2} is actually easier. Given $k=k_1+k_2$, the generating function for partitions into $k$ parts with $k_1$ `distinguished' parts is given by ${k_1+k_2\choose k_1} B_{n,k_1+k_2}$. Another way is to first select $\alpha$ elements and use them to build a partition on $k_1$ parts (these are now naturally `highlighted' by this choice), and then generate a partition of the remaining $n-\alpha$ elements on $k_2$ parts.

\end{proof}

    \section{Generating Functions Method}
  
The formula for $b_n$ in (\ref{form}) turns out to be exactly the compositional inverse of the diffeomorphism $F$. This can be seen through an old result that is mentioned in \cite[p.150-151]{comtet}, seemingly obtained independently by B\"{o}dewadt (1942) and Riordan (1968) among others. However, it seems that this expansion for compositional inverses is not widely known, and was not known to the authors in \cite{karendiffeo}. 
 We derive a functional differential equation whose solution is the inverse of $F$. The idea is to rewrite equations (\ref{recurrence}) and (\ref{di}) so that we can apply the next Fa\`a di Bruno's composition of power series relation.

 \begin{lem}[Fa\`a di Bruno]\label{faa}
 Given two power series $f(t)=\overset{\infty}{\underset{n=0}{\sum}}\;f_n\displaystyle\frac{t^n}{n!}$ and $g(t)=\overset{\infty}{\underset{n=0}{\sum}}\;g_n\displaystyle\frac{t^n}{n!}$, the composition $\;h(t):=f(g(t))$ can be written as $h(t)=\overset{\infty}{\underset{n=0}{\sum}}h_n\displaystyle\frac{t^n}{n!},$ where
\[h_n=\overset{n}{\underset{k=0}{\sum}}\;f_k\; B_{n,k}(g_1,g_2,\ldots).\]
 \end{lem}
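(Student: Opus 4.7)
The plan is to derive this from the exponential generating function characterization of partial Bell polynomials stated right after Definition 4.3.1, namely
\[
\exp\!\left(u \sum_{m \geq 1} x_m \frac{t^m}{m!}\right) = \sum_{n,k \geq 0} B_{n,k}(x_1,x_2,\ldots)\,\frac{t^n}{n!}\,u^k.
\]
First, I would note the standing assumption $g_0 = 0$ (otherwise the composition $f(g(t))$ is not well defined as a formal power series, since each power $g(t)^k$ would contribute to every coefficient $h_n$). With this, $g(t) = \sum_{m \geq 1} g_m t^m/m!$.

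Next, I would specialise the displayed EGF identity by taking $x_m = g_m$, and then extract the coefficient of $u^k$ on both sides. The left-hand side becomes $g(t)^k/k!$, while the right-hand side becomes $\sum_{n \geq k} B_{n,k}(g_1, g_2, \ldots)\,t^n/n!$. Thus
\[
\frac{g(t)^k}{k!} \;=\; \sum_{n \geq k} B_{n,k}(g_1, g_2, \ldots)\,\frac{t^n}{n!}.
\]

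Finally, I would substitute this into the series expansion $f(g(t)) = \sum_{k \geq 0} f_k\, g(t)^k/k!$ and interchange the order of summation:
\[
f(g(t)) \;=\; \sum_{k \geq 0} f_k \sum_{n \geq k} B_{n,k}(g_1, g_2,\ldots)\,\frac{t^n}{n!} \;=\; \sum_{n \geq 0} \left( \sum_{k=0}^{n} f_k\, B_{n,k}(g_1, g_2, \ldots) \right) \frac{t^n}{n!}.
\]
Reading off the coefficient of $t^n/n!$ yields $h_n = \sum_{k=0}^n f_k\, B_{n,k}(g_1, g_2, \ldots)$, which is the claim.

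There is essentially no hard step here: once the EGF formula for $B_{n,k}$ is in hand, the only things to verify are the interchange of summation (valid in $\mathbb{R}[[t]]$ because for each fixed $n$ only finitely many $(n,k)$ pairs contribute, since $B_{n,k}=0$ for $k>n$ when $g_0 = 0$) and the truncation of the $k$-sum to $0 \leq k \leq n$. The only mild subtlety worth flagging in the write-up is the $g_0 = 0$ hypothesis, which is implicit in Fa\`a di Bruno's setting and is what makes the $k$-sum finite for each $n$.
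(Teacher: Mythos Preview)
Your proof is correct. The paper does not actually provide a proof of this lemma; it is stated as the classical Fa\`a di Bruno formula and used without justification in the subsequent Proposition. Your derivation from the exponential generating function identity for partial Bell polynomials is the standard one, and your remark about the implicit hypothesis $g_0=0$ is apt (and indeed necessary for the formula as stated).
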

 
 \begin{prop}\label{1pm}Let $F(t)=\overset{\infty}{\underset{j=0}{\sum}}a_j\;t^{j+1}$ be a diffeomorphism of fields as before, and set $\;G(t):=\overset{\infty}{\underset{n=1}{\sum}}b_n\displaystyle\frac{t^n}{n!}$ where the $b_n$'s satisfy the recurrences \ref{recurrence} and \ref{di}. Define 
 \begin{equation}\label{PandQ}
     Q(t):=\frac{1}{2} \;\frac{d}{dt}\Big(\big(F(t)\big)^2\Big)\;\;\;\;\;\text{and}\;\;\;\;\;P(t):=\int\Big(\frac{d}{dt}F(t)\Big)^2 dt\;.
 \end{equation}
 Then, on the level of generating functions, the recurrence (\ref{recurrence}) is equivalent to the differential equation 
 \begin{equation}
     0=t\;\frac{d}{dt}P\big(G(t)\big)\;-\; Q\big(G(t)\big)\;,  \label{myequation}
 \end{equation} and the recurrence (\ref{di}) is equivalent to the differential equation 
 \begin{equation}
     0=\;\frac{d^2}{dt^2}P\big(G(t)\big)\;+\; \frac{d^2G}{dt^2}\cdot\frac{d}{dG}P\big(G(t)\big)\;.\; \label{myequation2}
 \end{equation}\end{prop}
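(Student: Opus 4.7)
The strategy is to expand $P$ and $Q$ as exponential generating functions with coefficients built from the $a_j$, and then recognize each of the two recurrences as the vanishing of a specific power-series expression produced by Faà di Bruno's formula (Lemma~\ref{faa}) together with Lemma~\ref{lemma1}. A direct expansion of the definitions (\ref{PandQ}) gives
\[
P(t)=\sum_{k\ge 1}P_k\,\frac{t^k}{k!},\qquad Q(t)=\sum_{k\ge 1}Q_k\,\frac{t^k}{k!},
\]
with
\[
P_k=(k-1)!\sum_{j=0}^{k-1}a_j a_{k-1-j}(j+1)(k-j),\qquad Q_k=\frac{(k+1)!}{2}\sum_{j=0}^{k-1}a_j a_{k-1-j}.
\]
The key numerical identity is $\tfrac{(k-1)!}{2}[2n(j+1)(k-j)-k(k+1)]=n(k-1)!(j+1)(k-j)-\tfrac{(k+1)!}{2}$, so the inner sum over $j$ in (\ref{recurrence}) collapses to $nP_k-Q_k$. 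Thus (\ref{recurrence}) is the assertion that $\sum_{k}(nP_k-Q_k)B_{n,k}(\mathbf b)=0$ for every $n\ge 1$. Faà di Bruno gives $[t^n/n!]P(G(t))=\sum_k P_k B_{n,k}(\mathbf b)$, while the extra factor $n$ is produced by $t\tfrac{d}{dt}$; the recurrence is therefore equivalent, coefficient by coefficient, to $t\,\tfrac{d}{dt}P(G(t))-Q(G(t))=0$, which is (\ref{myequation}).

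For (\ref{myequation2}) I would split the bracket $[ks(s-1)+n(n-1)]$ in (\ref{di}) into its two summands and multiply the whole equation by $n!$. In the $n(n-1)$ piece, the inner sum $\sum_s\binom{n}{s}b_s B_{n-s,k-1}(\mathbf b)$ equals $kB_{n,k}(\mathbf b)$ by Lemma~\ref{lemma1}, so the $k$ cancels $\tfrac{1}{k}$ and the piece contributes $\tfrac{n(n-1)}{2}\sum_k P_k B_{n,k}(\mathbf b)=\tfrac{n(n-1)}{2}\cdot n!\,[t^n/n!]P(G(t))$. In the $ks(s-1)$ piece, the identity $s(s-1)\binom{n}{s}=n(n-1)\binom{n-2}{s-2}$ converts the sum into a convolution in $b_{s+2}$ against $\sum_{k\ge 1}P_k B_{m,k-1}(\mathbf b)=[t^m/m!]P'(G(t))$; after the reindexing $s\mapsto s-2$ this is exactly $n(n-1)\cdot[t^{n-2}/(n-2)!]\,G''(t)P'(G(t))$. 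Dividing out the common factor $\tfrac{n(n-1)}{2}$ (the recurrence being trivially $0=0$ for $n\in\{0,1\}$), (\ref{di}) becomes $[t^{n-2}/(n-2)!]\,G''(t)P'(G(t))+[t^n/n!]\,P(G(t))=0$ for every $n\ge 2$, i.e.\ the equality of formal power series $[P(G(t))]''+G''(t)\,P'(G(t))=0$, which is (\ref{myequation2}).

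I expect the main obstacle to be bookkeeping rather than anything deep: cleanly matching the two convolutions with the intended factored derivatives requires tracking several index shifts simultaneously ($s\mapsto s-2$, $m\mapsto n-2$, and the recurring $k\mapsto k-1$ from $B_{n-s,k-1}$), and identifying $\sum_{k\ge 1}P_k B_{m,k-1}(\mathbf b)$ with $[t^m/m!]P'(G(t))$ rather than with $[t^m/m!]P(G(t))$. Once these shifts are in place, both directions of each `if and only if' follow at once, since the equivalence is just the standard equality of two power series via their coefficient sequences.
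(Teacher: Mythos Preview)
Your approach is correct and essentially identical to the paper's: expand $P$ and $Q$ to identify their exponential coefficients, apply Fa\`a di Bruno to recognise $nP_k-Q_k$ in recurrence~(\ref{recurrence}), and for~(\ref{di}) split the bracket $ks(s-1)+n(n-1)$, use Lemma~\ref{lemma1} on the $n(n-1)$ piece and a convolution against $P'(G)$ on the $ks(s-1)$ piece. Two small bookkeeping slips to fix: in the $n(n-1)$ piece the extra factor $n!$ should not be there (Fa\`a di Bruno already gives $\sum_k P_kB_{n,k}=[t^n/n!]P(G)$), and the $ks(s-1)$ piece should carry the same $\tfrac{1}{2}$ as the first piece (from the $\tfrac{(k-1)!}{2k}$), so that both pieces share the common factor $\tfrac{n(n-1)}{2}$ you then divide out.
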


  \begin{proof}
 (1) Proving (\ref{myequation}): By their definition, $P$ and $Q$ can be expanded as 
 \begin{equation}
     Q(t)=\frac{1}{2} \;\frac{d}{dt}\Big((F(t))^2\Big)=\overset{\infty}{\underset{k=1}{\sum}}\;\overbrace{\Big(k!\;\frac{(k+1)}{2}\;\;\overset{k-1}{\underset{j=0}{\sum}}a_ja_{k-1-j}\Big)}^{q_k}\;\frac{t^k}{k!}\;\;,
 \end{equation} and
  \begin{equation}
      P(t)=\int\Big(\frac{d}{dt}F(t)\Big)^2 dt=\overset{\infty}{\underset{k=1}{\sum}}\;\overbrace{\Big(k!\;\frac{1}{k}\;\overset{k-1}{\underset{j=0}{\sum}}a_ja_{k-1-j}\;(j+1)(k-j)\Big)}^{p_k}\;\frac{t^k}{k!}\;.
  \end{equation}
 Consequently, by the Fa\`a di Bruno's formula (Lemma \ref{faa}), we have 
 \begin{align}
     Q(G(t))&=\overset{\infty}{\underset{n=1}{\sum}}\;\Big(\overset{n}{\underset{k=1}{\sum}}\;q_k\;B_{n,k}(b_1,b_2,\ldots)\Big)\;\frac{t^n}{n!}\;,\\
     P(G(t))&=\overset{\infty}{\underset{n=1}{\sum}}\;\Big(\overset{n}{\underset{k=1}{\sum}}\;p_k\;B_{n,k}(b_1,b_2,\ldots)\Big)\;\frac{t^n}{n!}\;.
 \end{align}
In particular, 
  \[t\;\frac{d}{dt}P(G(t))=\overset{\infty}{\underset{n=1}{\sum}}\;n\;\Big(\overset{n}{\underset{k=1}{\sum}}\;p_k\;B_{n,k}(b_1,b_2,\ldots)\Big)\;\frac{t^n}{n!}\;.\]
 
 Then (\ref{myequation}) is given by
  \begin{align*}0&=t\;\frac{d}{dt}P\big(G(t)\big)\;-\; Q\big(G(t)\big)\\&=\overset{\infty}{\underset{n=1}{\sum}}\;\Big(\overset{n}{\underset{k=1}{\sum}}B_{n,k}(b_1,b_2,\ldots) \;(n\;p_k-q_k)\Big)\;\frac{t^n}{n!}.
 \end{align*}
 
 That is, equation (\ref{myequation}) is equivalent to that for all $n\geq1$ 
  \begin{align*}0&=\overset{n}{\underset{k=1}{\sum}}B_{n,k}(b_1,b_2,\ldots) \;(n\;p_k-q_k)\\&=\overset{n}{\underset{k=1}{\sum}}B_{n,k}(b_1,b_2,\ldots)\left[n\Big(k!\frac{1}{k}\overset{k-1}{\underset{j=0}{\sum}}a_ja_{k-1-j}(j+1)(k-j)\Big)-\Big(k!\frac{(k+1)}{2}\overset{k-1}{\underset{j=0}{\sum}}a_ja_{k-1-j}\Big)\right]\\&=\overset{n}{\underset{k=1}{\sum}}B_{n,k}(b_1,b_2,\ldots)\frac{(k-1)!}{2}\;\;\overset{k-1}{\underset{j=0}{\sum}}a_ja_{k-1-j}\;\Big[\;2n(j+1)(k-j)-k(k+1)\;\Big],\end{align*}
 which is exactly the recurrence (\ref{recurrence}).\\

 (2) Proving (\ref{myequation2}):
 
  We can cancel out the factor of $1/2$  and rearrange recurrence (\ref{di}) as 
 \begin{align*}0=\;n(n-1)&\overset{n}{\underset{k=1}{\sum}}\;p_k\cdot\frac{1}{k}\overset{n-k+1}{\underset{s=1}{\sum}}\frac{b_s}{s!(n-s)!}B_{n-s,k-1}(b_1,b_2,\ldots)\;\\+\;&\overset{n}{\underset{k=1}{\sum}}\;p_k\;\overset{n-k+1}{\underset{s=1}{\sum}}\frac{b_s\;s(s-1)}{s!(n-s)!}B_{n-s,k-1}(b_1,b_2,\ldots)\;.\end{align*}
 
 Now, recall that
 \begin{align*}k\;B_{n,k}(b_1,b_2,\ldots)&=n!\;\overset{n}{\underset{s=0}{\sum}}\;\frac{b_s}{s!(n-s)!}B_{n-s,k-1}(b_1,b_2,\ldots)\\&=n!\overset{n-k+1}{\underset{s=1}{\sum}}\frac{b_s}{s!(n-s)!}B_{n-s,k-1}(b_1,b_2,\ldots)\;,\end{align*}
 since $b_0=0$ by convention, and since Bell polynomials vanish whenever the number of parts is greater than the size. The latter reason is exactly what allows us to also change the bounds of the summations in the above equation to finally get  
  \begin{align*}0&=\;\frac{n(n-1)}{n!}\overset{n}{\underset{k=1}{\sum}}\;p_k\;B_{n,k}(b_1,b_2,\ldots)\;\\&+\;\overset{n}{\underset{s=0}{\sum}}\;\frac{s(s-1)\;b_s  }{s!(n-s)!}\overset{n-s+1}{\underset{k=1}{\sum}}p_k \;B_{n-s,k-1}(b_1,b_2,\ldots)\\&=[t^n]\; t^2\;\frac{d^2}{dt^2}P\big(G(t)\big) \;+\; \overset{n}{\underset{s=0}{\sum}} [t^s] \big(t^2\frac{d^2G}{dt^2}\big) \cdot    [t^{n-s}] \Bigg(\frac{dP}{dt}\Bigg)(G),\end{align*}
 which establishes the claim (notice that $ \Big(\frac{dP}{dt}\Big)(G(t))= \frac{d}{dG}P(G)$ ).\\
 \end{proof}

 \begin{cor}\label{corcorcor}The compositional inverse $F^{-1}$ of the diffeomorphism $F$ is a solution for (\ref{myequation}) and (\ref{myequation2}).\end{cor}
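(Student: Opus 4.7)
The plan is to substitute $G = F^{-1}$ directly into equations (\ref{myequation}) and (\ref{myequation2}) and verify them via the chain rule. The fundamental identity throughout will be $F'(G(t))\,G'(t) = 1$, obtained by differentiating the defining relation $F(G(t)) = t$. Note that from (\ref{PandQ}) we have $Q(t) = F(t)F'(t)$ and $P'(t) = (F'(t))^2$, which are what we need to feed into the chain rule.

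For equation (\ref{myequation}), I would first compute
\[
\frac{d}{dt}P(G(t)) \;=\; P'(G(t))\,G'(t) \;=\; \bigl(F'(G(t))\bigr)^2 \,G'(t) \;=\; F'(G(t)),
\]
where the last equality uses the fundamental identity. On the other side,
\[
Q(G(t)) \;=\; F(G(t))\,F'(G(t)) \;=\; t\,F'(G(t)).
\]
So $t\,\frac{d}{dt}P(G(t)) = tF'(G(t)) = Q(G(t))$, and (\ref{myequation}) is verified.

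For equation (\ref{myequation2}), the step I would take first is to differentiate $F'(G(t))G'(t) = 1$ once more to obtain
\[
F''(G(t))\bigl(G'(t)\bigr)^2 + F'(G(t))\,G''(t) = 0,
\]
which gives the convenient expression $G''(t) = -F''(G(t))\bigl(G'(t)\bigr)^2 / F'(G(t))$. Then, using the formula we already derived for $\frac{d}{dt}P(G(t))$, I would compute
\[
\frac{d^2}{dt^2}P(G(t)) \;=\; \frac{d}{dt}F'(G(t)) \;=\; F''(G(t))\,G'(t),
\]
and
\[
G''(t)\cdot P'(G(t)) \;=\; G''(t)\bigl(F'(G(t))\bigr)^2 \;=\; -F''(G(t))\bigl(G'(t)\bigr)^2 F'(G(t)) \;=\; -F''(G(t))\,G'(t),
\]
where again the final simplification uses $F'(G)G' = 1$. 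Adding the two expressions gives zero, establishing (\ref{myequation2}).

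The proof is essentially mechanical; there is no genuine obstacle. The only care needed is in tracking the chain rule cleanly and in recognizing at each step when to collapse $F'(G)\,G'$ down to $1$. The conceptual payoff, however, is that the two differential equations derived in Proposition \ref{1pm} from the combinatorial recurrences (\ref{recurrence}) and (\ref{di}) turn out to be precisely the equations naturally satisfied by the compositional inverse of $F$; combined with the uniqueness of the power series solution tangent to the identity, this will yield $G = F^{-1}$, which is the desired conceptual interpretation of the formula ($\dagger$) for $b_n$.
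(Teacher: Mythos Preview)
Your proof is correct. Both your argument and the paper's rely on the same ingredients — the chain rule, $P'=(F')^2$, $Q=FF'$, and the defining relation $F(G(t))=t$ together with its first and second derivatives — so in substance they are the same. The organizational difference is that you substitute $G=F^{-1}$ at the outset and verify each equation directly, whereas the paper first massages each equation (without yet assuming $G=F^{-1}$) down to a simpler ODE: (\ref{myequation}) is reduced to $F(G(t))=t\,\frac{d}{dt}F(G(t))$, and (\ref{myequation2}) to $\frac{d}{dG}\big(\frac{d}{dt}F(G(t))\big)=0$, after which $G=F^{-1}$ is read off as a solution. Your direct verification is cleaner for the stated corollary; the paper's reduction has the slight advantage of exposing what the equations say about a general $G$, not just confirming one particular solution.
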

 
 \begin{proof}

 (1) Equation (\ref{myequation}) can be simplified further: 
 \begin{align*}
0&=t\;\frac{d}{dt}P\big(G(t)\big)\;-\; Q\big(G(t)\big)\\&=t\;\frac{d}{dG}P\big(G(t)\big)\;\frac{dG}{dt}-\; \frac{1}{2}\frac{d}{dG}\Big(F\big(G(t)\big)\Big)^2\\&=t\;\Big(\frac{d}{dG}F\big(G(t)\big)\Big)^2\;\frac{dG}{dt}\;-\;F\big(G(t)\big)\;\frac{d}{dG}F\big(G(t)\big).
\end{align*} 
 
 Now we might assume that $\frac{d}{dG}F\big(G(t)\big)\neq 0$, that is just $F(G(t))$ is not a constant, and then we have 

\[0=t\;\frac{d}{dG}F\big(G(t)\big)\;\frac{dG}{dt}\;-\;F\big(G(t)\big).\]

 That is to say,
 \[F\big(G(t)\big)=t\;\frac{d}{dt}F\big(G(t)\big),\] and a simple separation of variables then gives that $G=F^{-1}$ is a solution.
 
 \bigskip
 \bigskip
 (2) Also Equation (\ref{myequation2}) boils down maybe more insightfully:
 
 \begin{align*}
 0&=\;\frac{d^2}{dt^2}P\big(G(t)\big)\;+\; \frac{d^2G}{dt^2}\cdot\frac{d}{dG}P\big(G(t)\big)\\
  &=\;\frac{d}{dt}\Big(\frac{d}{dG}P\big(G(t)\big)\cdot\frac{dG}{dt}\Big)\;+\; \frac{d^2G}{dt^2}\cdot\frac{d}{dG}P\big(G(t)\big)\\
  &=2\;\frac{d^2G}{dt^2}\cdot\frac{d}{dG}P\big(G(t)\big)\;+\;\frac{d}{dt}\Big(\frac{d}{dG}P\big(G(t)\big)\Big)\cdot\frac{dG}{dt}\\
  &=2\;\frac{d^2G}{dt^2}\cdot\Big(\frac{d}{dG}F\big(G(t)\big)\Big)^2\;+\;\frac{d}{dt}\Bigg(\Big(\frac{d}{dG}F\big(G(t)\big)\Big)^2\Bigg)\cdot\frac{dG}{dt}\\
  &=2\;\frac{d^2G}{dt^2}\cdot\Big(\frac{d}{dG}F\big(G(t)\big)\Big)^2\;+\;2\;\frac{d}{dG}F\big(G(t)\big)\cdot\frac{d}{dG}\Big(\frac{d}{dG}F\big(G(t)\big)\Big)\cdot\Big(\frac{dG}{dt}\Big)^2.
 \end{align*}
 
Again we assume that $G(t)$ is not a constant, and so neither $F(G(t))$ is. Hence,

\begin{align*}
 0&=\frac{d^2G}{dt^2}\cdot\frac{d}{dG}F\big(G(t)\big)\;+\;\frac{d^2}{dG^2}F\big(G(t)\big)\cdot\Big(\frac{dG}{dt}\Big)^2\\
  &=\frac{d}{dG}\Big(\frac{dG}{dt}\Big)\cdot\frac{dG}{dt}\cdot\frac{d}{dG}F\big(G(t)\big)\;+\;\frac{d^2}{dG^2}F\big(G(t)\big)\cdot\Big(\frac{dG}{dt}\Big)^2,
 \end{align*}
  
  Then by our assumption above
 \begin{align*}
 0&=\frac{d}{dG}\Big(\frac{dG}{dt}\Big)\cdot\frac{d}{dG}F\big(G(t)\big)\;+\;\frac{d^2}{dG^2}F\big(G(t)\big)\cdot\frac{dG}{dt}\\&=\frac{d}{dG}\Big[\frac{dG}{dt}\cdot\frac{d}{dG}F\big(G(t)\big)\Big]\\&=\frac{d}{dG}\Bigg(\frac{d}{dt}F\big(G(t)\big)\Bigg).
 \end{align*}
 
 Again this leads to that $G=F^{-1}$ is a solution, which proves the corollary.
 \end{proof}

 Thus, we have shown that the series whose coefficients are the solution $(b_n)$ of  recurrences (\ref{recurrence}) and (\ref{di})  is exactly the compositional inverse of the diffeomorphism $F$. We further conjecture that there may be another way for arriving at this fact, and is probably related to the Legendre transform, since, as we shall see in the next section, the definition of the combinatorial Legendre transform goes through an almost identical process of summing over tree graphs and involves compositional inverses \cite{kjm1,kjm2}.

\section{Relation with the Legendre Transform}

Given a function $f:x\mapsto f(x)$, it might be desirable, in many contexts, to express every thing in terms of $y=f'(x)$ instead of $x$, without losing information about the function. The (analytic) Legendre transform does indeed achieve this goal but for a restricted class of functions, namely convex smooth functions. However, physicists usually use the Legendre transform even when the functions involved fail to satisfy these requirements. The surprise is when such calculations match with experimental results. In \cite{kjm1}, a combinatorial Legendre transform is defined which generalizes the analytic version and unveils the hidden robust algebraic structure of the Legendre transform. 

The defining relation for the Legendre transform $\mathrm{L}$ is given as \[\mathrm{L}f(z)=-z g(z)+ (f\circ g)(z),\]

where $g$ is the compositional inverse of the derivative, namely, $x=g(y)$.

The idea in \cite{kjm1} was to realize this relation as coming from the following combinatorial bijection between classes:

\[v_1 \partial_{v_1} \mathcal{T}^l\uplus\biguplus_{k\geq 2} v_k\circ (\partial_{v_1}\mathcal{T}^l)\sim \mathcal{T}^l\uplus (e\ast(\mathcal{E}_2\circ e^{-1}\partial_{v_1} \mathcal{T}^l),
\]
where $\mathcal{T}^l$ is the class of labelled trees, and $v_1\partial_{v_1}$ is the operation of rooting or distinguishing at a 1-vertex (vertex of degree 1), $e,e^{-1}$ for edges and anti-edges (the inverse with respect to the gluing operation) respectively. 

\begin{dfn}[Combinatorial Legendre Transform \cite{kjm2}]If $A\in R[[x]]$ and $A'^{-1}$ exists, then the \textit{combinatorial Legendre transform} is defined to be \[(\mathrm{L}A)(x)=(A\circ A'^{-1})(x)-x A'^{-1}(x).\]\end{dfn}
\begin{thm}[\cite{kjm2}]

Let $T_F(y)= [[\mathcal{T}^l,\omega_{v_1}\otimes\omega_e\otimes\omega]](y,u,\lambda_2,\lambda_3,\ldots)$, be the generating series of labelled tree graphs with indeterminates $y,u,\lambda_i$ standing for leaves, edges and vertices of higher orders, respectively; and where  $F(x)=-u^{-1}\frac{x^2}{2!}+\sum_{k\geq2} \lambda_k \frac{x^k}{k!}$. Then $(\mathrm{L}F)(y)=T_F(-y)$. That is, the Legendre transform is exactly the generating series of tree graphs with the prescribed weights and variables.
\end{thm}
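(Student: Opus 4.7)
The plan is to establish the equality $(\mathrm{L}F)(y)=T_F(-y)$ by interpreting the right-hand side of the Legendre transform formula combinatorially, using the species-level bijection already alluded to in the chapter. First I would compute $F'(x)=-u^{-1}x+\sum_{k\geq 2}\lambda_k\frac{x^{k-1}}{(k-1)!}$ and apply Lagrange inversion (in the form of Lemma \ref{faa} combined with Corollary \ref{corolift}, which is the standard tool used for this type of tree series) to identify $F'^{-1}(y)$ with the generating series for labelled trees rooted at a distinguished leaf, with weight $y$ per leaf, $u$ per edge and $\lambda_k$ per $k$-valent internal vertex. The factor $u^{-1}$ in front of the quadratic term of $F$ is precisely what produces the ``edge'' weight via the geometric-series mechanism familiar from the Dyson--Schwinger setup of Section \ref{DSE}; this is the reason trees, rather than just rooted vertices, appear.

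Second, I would compute $(F\circ F'^{-1})(y)$ by substitution. The term $\lambda_k\frac{(F'^{-1}(y))^k}{k!}$ enumerates an (internal) $k$-valent vertex with $k$ leaf-rooted subtrees attached at its half-edges, which by the exponential/species formula is precisely the generating series of labelled trees with a \emph{marked interior vertex} of degree $k$. Summing over $k\geq 2$ gives trees marked at an arbitrary interior vertex. The quadratic term $-u^{-1}\frac{(F'^{-1}(y))^2}{2!}$ similarly represents trees with a \emph{marked edge} (the $u^{-1}$ absorbing the edge weight), with the sign indicating the subtraction. Thus $(F\circ F'^{-1})(y)$ equals the generating series of trees carrying either a marked interior vertex (positive contribution) or a marked edge (negative contribution).

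Third, the subtraction term $-yF'^{-1}(y)$ is identified as minus the generating series for trees with a \emph{marked leaf}. Then the key algebraic identity is the Euler relation for trees
\begin{equation*}
(\text{interior vertices})-(\text{edges})+(\text{leaves})=1,
\end{equation*}
which, after being weighted by $\omega_{v_1}\otimes\omega_e\otimes\omega$ and summed over all labelled trees, yields
\begin{equation*}
(F\circ F'^{-1})(y)-yF'^{-1}(y)=\sum_{T\in\mathcal{T}^l}\omega_{v_1}(T)\,\omega_e(T)\,\omega(T),
\end{equation*}
up to sign bookkeeping. Tracking how the minus sign on $u^{-1}\frac{x^2}{2!}$ propagates through the composition produces an alternating sign $(-1)^{|L(T)|}$ on each tree (where $L(T)$ is the leaf set), which is exactly what replaces $y$ by $-y$ in the leaf-weight, giving $T_F(-y)$.

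The main obstacle will be making the inclusion-exclusion of the Euler relation rigorous at the species/weighted-generating-function level, which amounts to verifying precisely the bijection displayed in the text, namely
\begin{equation*}
v_1\,\partial_{v_1}\mathcal{T}^l\;\uplus\;\biguplus_{k\geq 2}\bigl(v_k\circ\partial_{v_1}\mathcal{T}^l\bigr)\;\sim\;\mathcal{T}^l\;\uplus\;\bigl(e*(\mathcal{E}_2\circ e^{-1}\partial_{v_1}\mathcal{T}^l)\bigr),
\end{equation*}
together with the sign tracking between the $u^{-1}$ factor, the edge contribution, and the final substitution $y\mapsto -y$. Once this bijection is established as a weighted equality of species, translating it into power series via the standard dictionary yields the Legendre-transform identity directly, completing the proof.
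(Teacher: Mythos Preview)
The paper does not give its own proof of this theorem: it is quoted verbatim from \cite{kjm2}, and the only material the thesis supplies is the species-level bijection displayed immediately before the statement. Your proposal is precisely an expansion of that sketch into a full argument, so in that sense it is aligned with what the thesis indicates, and there is nothing in the paper to compare against beyond that one displayed bijection.

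That said, a couple of points in your write-up would need tightening if this were to stand as an actual proof. First, the Euler relation you invoke is $|V|-|E|=1$ for any tree; your split into ``interior vertices $-$ edges $+$ leaves $=1$'' is the same identity only after you separate $V$ into leaves and interior vertices, so be careful that the leaf term enters with a plus sign, not as a separate correction. Second, the sign bookkeeping is the genuinely delicate part: the minus sign in $-u^{-1}x^2/2!$ does not simply produce $(-1)^{|L(T)|}$ on its own, since it is attached to the \emph{edge} term of $F$, not the leaf term. The substitution $y\mapsto -y$ in the final answer comes rather from how the inversion $F'^{-1}$ interacts with the leading $-u^{-1}x$ in $F'$; you have the right ingredients but the attribution of where the sign originates is slightly off. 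These are fixable issues of exposition rather than a wrong strategy.
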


This alternative Legendre transform is applicable to all formal power series with vanishing constant and linear terms, and the most important is that the new Legendre transform drops the convexity constraint.

We are looking for the right tie between this version of the Legendre transform and our result for the tree-level amplitudes. In the two situations we form the all-tree sum of a given power series and eventually compositional inverses are involved. A physical reason should be also accessible.

\begin{center}
    $\sim$
\end{center}

\chapter{Conclusions and Further Speculations}\label{chfurther}

In this final chapter we list the results explicitly and mention a number of questions that could not be settled in this thesis. The questions offer a wide workspace for the future, and may be of interest to researchers and grad students working in the same area.

\section{Results}
\begin{enumerate}
    \item In Chapter \ref{chchords1} our ultimate goal is Corollary \ref{endcoro}, in which we prove that \href{https://oeis.org/A088221}{A088221} counts pairs of connected chord diagrams. In the way to prove this corollary we had to prove a number of propositions: Lemma \ref{bij}, Theorem \ref{mainth}, Lemma \ref{inde}, and Corollary \ref{coro}.
    
    \item In Chapter \ref{chapterchords2} we study $2$-connected chord diagrams and their asymptotic behaviour. We prove Lemma \ref{B} and then use it to derive a decomposition for connectivity-$1$ chord diagrams represented by equation \ref{C11eq}. Then, in Proposition \ref{myproposition2connected}, we prove a functional relation between connected and $2$-connected chord diagrams. We use this result to compute an asymptotic behaviour for $2$-connected chord diagrams as in equation \ref{computC2asympt}. This expansion extends the result by Kleitman \cite{kleit} in the case of $2$-connected chord diagrams. Kleitman's argument approximates the proportion of $2$-connected chord diagrams (irreducible linked diagrams) to $e^{-2}$. This value corresponds only to the first term in the expansion obtained here.
    
    The coefficients of $2$-connected diagrams were noticed to coincide with some other series in more physical contexts in the work of M. Borinsky and D. J. Broadhurst. Namely, they agreed with the counterterms series $\;z_{\phi_c|\psi_c|^2}(\hbar_{\text{ren}})\;$ and $\;z_{|\psi_c|^2}(\hbar_{\text{ren}})\;$ in quenched QED. This relation is what we prove in Theorem \ref{myresultinquenched}, then we use our information about the asymptotics of $2$-connected chord diagrams to obtain the asymptotic behaviour of these series without referring to singularity analysis.
    
    We also interpret a number of Yukawa theory green functions in terms of connected chord diagrams. These results are represented in Section \ref{YYYYY}. All lemmas, propositions, theorems and corollaries in Section \ref{YYYYY} are original work.
    
    \item In Chapter \ref{difchapter} we study diffeomorphisms of quantum fields. We prove a cancellation property in the tree-level amplitudes of theories obtained from free theories through field diffeomorphisms. We show this through Proposition \ref{1pm} and Corollary \ref{corcorcor}. The proofs presented here establish the cancellation property is significantly simpler than the proofs used in \cite{karendiffeo}. One important advantage of the proofs on the level of generating series is that we could show that the series $\;G(t):=\overset{\infty}{\underset{n=1}{\sum}}b_n\displaystyle\frac{t^n}{n!}$ is exactly the compositional inverse of the applied diffeomorphism. In Theorem \ref{report} we also give simple proofs of some Bell Polynomial identities  that were recently established by D. Cvijovi\'c in \cite{cvijovic8} (2013). Lemma \ref{lemma1} is proved and is then used to prove Theorem \ref{report}.
\end{enumerate}

\section{Questions and Discussions}
\begin{enumerate}
    \item In Chapter \ref{difchapter}, we saw that the tree-level amplitudes of the transformed theory generate the compositional inverse of the diffeomorphism which was applied to the original free theory. We conjecture that there should be a better, more evident, way of deriving this fact. Probably this should be also related to the Legendre transform.
    
    \item The asymptotic expansion $\mathcal{A}^2_\frac{1}{2}C$ of connected chord diagrams consists of a polynomial and an exponential function in $C(x)$, this phenomenon is related to the mathematical area of resurgence and transseries \cite{resurgence} where non-perturbative information is grafted into perturbative ones. This can be studied by means of the Borel summation of the coefficients of the series considered. In \cite{Geraldmichi} M. Borinsky and G. Dunne study transseries solutions for the differential equation of connected chord diagrams. They seek a combinatorial interpretation of the non-perturbative terms that arise when the Borel summation is used for Yukawa theory generating functions, and, of course, this type of studies can lead to many interesting combinatorial problems.
    
    \item The results by Kleitman \cite{kleit}, although they stop at the first term of what we would like to obtain as an asymptotic expansion, they give a positive sign that a functional relation for $k$-connected chord diagrams can be derived in general for arbitrary $k$.  
    
    \item It seems that connected chord diagrams can be used in expressing many Green functions in QFT as we saw in the case of quenched QED and the many Yukawa theory Green functions. So what other Green functions counting problems can benefit the enumeration of connected chord diagrams?
    
    \item A particular case for the previous question is that probably the renormalization quantities in Table 19 in \cite{michiq} are also related to connected chord diagrams, how can we express this as we did in the case of quenched QED?
\end{enumerate}



\bibliographystyle{plain}
\cleardoublepage 
\phantomsection  
\renewcommand*{\bibname}{References}

\addcontentsline{toc}{chapter}{\textbf{References}}

\bibliography{References}

\nocite{*}


\appendix
\chapter*{APPENDICES}
\addcontentsline{toc}{chapter}{APPENDICES}
\chapter{Enumerative Tools: Combinatorial Structures and Lagrange Inversion}\label{Appendix1}

This appendix is a primer of the language of enumerative combinatorics, following the lecture notes I took after Prof. David Wagner in CO 630 during Winter 2017 (University of Waterloo).

\subsection{Combinatorial Structures}

\begin{dfn}[Combinatorial Structures]
A \textit{combinatorial structure} on a finite set means a finite set together with some additional information defined on the set. This can be a graph structure on a set of vertices, or a permutaion, etc.
\end{dfn}

We will talk about structures in terms of \textit{classes of structures}. For example, the class $\mathcal{G}$ of all graphs consists of all finite sets $V$ and all graph structures on this set of verices. The class $\mathcal{G}$ is not a set in the sense of rigorous set theory and has to be defined to be a `class'. Notice that $\mathcal{G}$  associates to every finite set $V$ a finite set $\mathcal{G}_V$ of all possible  $\mathcal{G}$-structures that can be defined over $V$.

\begin{dfn}[Classes of Structures]
A \textit{class} $\mathcal{A}$ of structures associates to every finite set $X$ another finite set $\mathcal{A}_X $ such that 
\begin{enumerate}
    \item $\mathcal{A}_X\cap\mathcal{A}_Y=\varnothing$ whenever $X$ and $Y$ are distinct; and
    
    \item $|\mathcal{A}_X|=|\mathcal{A}_Y|$ if and only if $|X|=|Y|$.
\end{enumerate}
\end{dfn}

By the above definition we can restrict to only one representative set of each size, namely we set $\mathcal{A}_n$ be the set of non-isomorphic structures associated with the set $\{1,\ldots,n\}$. The  generating function for this class then should be 
\[A(x)=\sum_{n=0}^\infty |\mathcal{A}_n| x^n.\]

\begin{dfn}[Product Operations on Classes]
The following are some of the operations on combinatorial classes that are most often needed in enumerative problems:
\begin{enumerate}
    \item The \textit{product} $\mathcal{A}\ast\mathcal{B}$ of two classes $\mathcal{A}$ and $\mathcal{B}$ will stand for the class of structures that are defined over a finite set $X$ by first partitioning the set $X$ into two parts and applying an $\mathcal{A}$-structure to one part and applying a $\mathcal{B}$-structure to the other part.
    \[(\mathcal{A}\ast\mathcal{B})_X:=\bigcup_{S\subseteq X}(\mathcal{A}_S\times \mathcal{B}_{(X-S)}).\]
    The generating function of the product is easily seen to be $A(x)B(x)$.
    \item The class of \textit{sequences} of a given class $\mathcal{A}$, denoted by $\mathcal{A}^\ast$ is similarly defined as the product, the set is subdivided into an ordered sequence of parts, each of which receives an $\mathcal{A}$-structure. The generating function is 
    
    \[A^\ast(x)=\displaystyle\frac{1}{1-A(x)}.\]
\end{enumerate}

\end{dfn}

\subsection{Lagrange's Inversion}
The material provided in this section can be found in many  algebraic combinatorics references, e.g. the reader can refer to \cite{goulden}.
\begin{thm}[Lagrange's Implicit Function Theorem (LIFT)] Let $G(x)\in\mathbb{C}[[x]]$ be invertible, then there is a unique solution in $\mathbb{C}[[x]]$ to the functional equation $$R(x)=x\;G(R(x)).$$ Moreover, if $F\in\mathbb{C}[[x]]$, then, for $n\geq1$, \[[x^n]F(R(x))=\displaystyle\frac{1}{n}[t^{n-1}]F'(t)G(t)^n.\]
\end{thm}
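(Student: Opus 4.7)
The plan is to split the theorem into two parts: (a) existence and uniqueness of $R(x)$, and (b) the coefficient-extraction formula.

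For (a), I would write $R(x) = \sum_{k\geq 1} r_k x^k$ (with $r_0 = 0$ forced by $R(0) = 0\cdot G(0) = 0$) and compare coefficients in $R(x) = x G(R(x))$. Since $R$ has no constant term, the coefficient $[x^n]G(R(x))$ depends only on $r_1, \dots, r_{n-1}$; writing $G(x) = \sum_{k\geq 0} g_k x^k$, one gets $r_1 = g_0 = G(0)$ (nonzero by the invertibility hypothesis), and in general $r_n = [x^{n-1}] G(R(x))$ is determined recursively from $r_1, \dots, r_{n-1}$. This gives both existence and uniqueness in $\mathbb{C}[[x]]$.

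For (b), the key observation is that $R(x) = xG(R(x))$ is equivalent to saying $R$ is the compositional inverse of $\phi(t) := t/G(t)$, because if we set $t = R(x)$ the equation becomes $x = t/G(t) = \phi(t)$. Note that $\phi(0) = 0$ and $\phi'(0) = 1/G(0) \neq 0$, so $\phi$ has a compositional inverse, which must coincide with $R$. I would then use the formal residue formulation $[x^n] F(R(x)) = [x^{-1}] x^{-n-1} F(R(x))$ together with the formal change-of-variable rule for residues: if $x = \phi(t)$ with $\phi(0)=0$, $\phi'(0) \neq 0$, then for any formal Laurent series $H$, $[x^{-1}] H(x) = [t^{-1}] H(\phi(t))\, \phi'(t)$. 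Substituting $x = t/G(t)$, $R(x) = t$, $\phi'(t) = (G(t) - tG'(t))/G(t)^2$, this transforms the target into
\[
[x^n] F(R(x)) = [t^{n}] F(t) G(t)^n - [t^{n-1}] F(t) G'(t) G(t)^{n-1}.
\]

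The final step, which I expect to be the only mildly tricky manipulation, is an integration-by-parts for formal power series applied to $[t^{n-1}] F(t) G'(t) G(t)^{n-1} = \tfrac{1}{n} [t^{n-1}] F(t) \tfrac{d}{dt} G(t)^n$. Using $[t^{n-1}] (FG^n)'(t) = n [t^n] F(t) G(t)^n$ together with the product rule gives
\[
\tfrac{1}{n}[t^{n-1}] F(t)\, \tfrac{d}{dt} G(t)^n = [t^n] F(t) G(t)^n - \tfrac{1}{n}[t^{n-1}] F'(t) G(t)^n.
\]
Plugging this back produces the desired cancellation of the $[t^n]F(t)G(t)^n$ term and leaves exactly $\tfrac{1}{n}[t^{n-1}] F'(t) G(t)^n$. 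The one bookkeeping item worth a sentence of justification is the formal residue change of variable itself; it can be established by checking it on the monomials $H(x) = x^k$ for $k \in \mathbb{Z}$ and extending by linearity/continuity in the $x$-adic (or Laurent) topology, which is entirely standard.
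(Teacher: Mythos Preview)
Your proof is correct; both the recursive existence/uniqueness argument and the formal-residue computation with the change of variable $x=t/G(t)$ are standard and cleanly executed, and the integration-by-parts step is carried out without error.

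However, the paper does \emph{not} prove this theorem. It is stated in the appendix as a well-known background result, with a pointer to the literature (Goulden--Jackson) in lieu of a proof; the only argument the paper actually gives in that section is for the corollary that follows. So there is nothing to compare your approach against here: you have supplied a complete (and entirely classical) proof where the paper simply quotes the statement.
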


\begin{cor}\label{corolift}In the same context of LIFT, if $a_n=[x^n]H(x)G^n(x)$ where $H\in\mathbb{C}[[x]]$, then 
\[\sum_{n\geq0}a_n x^n=\displaystyle\frac{H(R(x))}{1-x\;G'(R(x))}.\]

\end{cor}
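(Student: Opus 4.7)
The plan is to reduce the claim to a direct application of the Lagrange Implicit Function Theorem stated just above it, by recognising the right-hand side as the derivative (with respect to $x$) of a composition with $R(x)$. The key algebraic manipulation comes from the defining relation $R(x)=xG(R(x))$: differentiating both sides in $x$ yields $R'(x)=G(R(x))+xG'(R(x))R'(x)$, whence
\[
R'(x)\;=\;\displaystyle\frac{G(R(x))}{1-xG'(R(x))}.
\]
Rearranging this gives the useful identity $\frac{1}{1-xG'(R(x))}=\frac{R'(x)}{G(R(x))}$, so the right-hand side of the corollary can be rewritten as
\[
\Psi(x)\;:=\;\displaystyle\frac{H(R(x))}{1-xG'(R(x))}\;=\;R'(x)\cdot\displaystyle\frac{H(R(x))}{G(R(x))}.
\]

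Next I will introduce a formal antiderivative $M(t)$ of $H(t)/G(t)$ (this makes sense because $G(0)\neq 0$ guarantees $H/G\in\mathbb{C}[[t]]$, and the constant of integration is irrelevant since it is killed upon differentiation). Then $M'(t)=H(t)/G(t)$ and the chain rule gives exactly
\[
\displaystyle\frac{d}{dx}M(R(x))\;=\;M'(R(x))R'(x)\;=\;\displaystyle\frac{H(R(x))R'(x)}{G(R(x))}\;=\;\Psi(x).
\]
Hence $[x^n]\Psi(x)=(n+1)\,[x^{n+1}]M(R(x))$ for all $n\geq 0$.

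Now I apply the LIFT to the composition $M(R(x))$: for $n+1\geq 1$,
\[
[x^{n+1}]M(R(x))\;=\;\displaystyle\frac{1}{n+1}\,[t^{n}]\,M'(t)\,G(t)^{n+1}\;=\;\displaystyle\frac{1}{n+1}\,[t^{n}]\,\displaystyle\frac{H(t)}{G(t)}\,G(t)^{n+1}\;=\;\displaystyle\frac{1}{n+1}\,[t^{n}]H(t)G(t)^{n}.
\]
Multiplying by $n+1$ yields $[x^n]\Psi(x)=[t^n]H(t)G(t)^n=a_n$, which is exactly the claim. A quick sanity check at $n=0$ confirms $\Psi(0)=H(R(0))=H(0)=a_0$, so the formula holds uniformly for $n\geq 0$.

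The main obstacle is really just spotting the reformulation $\Psi(x)=\frac{d}{dx}M(R(x))$; once that identity is in hand, the rest is a mechanical invocation of LIFT. The only subtlety worth double-checking is that $H/G$ lies in $\mathbb{C}[[t]]$ (true since $G$ is invertible by hypothesis) and that the freedom in the constant of integration of $M$ does not affect the answer — both are immediate.
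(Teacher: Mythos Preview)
Your proof is correct and is essentially identical to the paper's argument: the paper also differentiates $R=xG(R)$, writes $H=F'G$ for some $F$ (your $M$ is exactly this $F$, since $F'=H/G=M'$), recognises the right-hand side as $\frac{d}{dx}F(R(x))$, and then applies LIFT coefficient-wise. The only differences are notational and in the order of exposition.
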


\begin{proof}Differentiating the defining equation of $R$ we get\[R'=G(R)+x G'(R) R'.\] Now, given that $G$ is invertible we can write $H=F'G$ for some $F\in\mathbb{C}[[x]]$. Thus we have \[\displaystyle\frac{F'(R)G(R)}{1-x\;G'(R)}=\displaystyle\frac{dF(R)}
{dx}=\sum_{n\geq1}x^{n-1}[t^{n-1}]F'(t)G(t)^n=\sum_{n\geq0}x^{n}[t^{n}]F'(t)G(t)^{n+1},\] and the result follows. Note that the second equality above is through LIFT. \end{proof}

\begin{center}
    $\sim$
\end{center}

\end{document}